\newcommand{\C}{\mathscr{C}}
\newcommand{\chiabs}{\chi^{\mathrm{abs}}}
\newcommand{\chidr}{\chi^{\phantom{1}}_{\mathrm{dR}}}
\newcommand{\Ds}{\mathscr{D}}
\newcommand{\End}{\mathrm{End}}
\newcommand{\Fin}{\mathrm{(Fin)}}
\newcommand{\Fingen}{\mathrm{(\chi^{\mathrm{gen}})}}
\renewcommand{\H}{\mathscr{H}}
\newcommand{\Hdr}{\mathrm{H}_{\mathrm{dR}}}
\newcommand{\hdr}{\mathrm{h}_{\mathrm{dR}}}
\newcommand{\M}{\mathrm{M}}
\newcommand{\N}{\mathrm{N}}
\newcommand{\NE}{E}
\renewcommand{\O}{\mathscr{O}}
\newcommand{\sol}{\mathrm{sol}}
\newcommand{\R}{\mathcal{R}}
\newcommand{\QS}{\textrm{quasi-Stein} }
\newcommand{\SF}{{S_{\Fs}}}
\newcommand{\sing}[1]{\mathrm{Sing}(#1)}
\newcommand{\sm}[1]{\begin{smallmatrix}#1\end{smallmatrix}}
\newcommand{\simto}{\xrightarrow{\sim}}
\DeclareMathOperator\rk{rank}
\DeclareMathOperator\Irr{Irr}
\DeclareMathOperator\Coker{Coker}
\DeclareMathOperator\Ker{Ker}
\DeclareMathOperator\Hom{Hom}
\newcommand\an{\mathrm{an}}
\newcommand\alg{\mathrm{alg}}
\newcommand{\ho}{\ensuremath{\hat{\otimes}}}
\newcommand\wti{\widetilde}
\newcommand{\eps}{\varepsilon}
\newcommand\Es{\mathscr{E}}
\newcommand\Fs{\mathscr{F}}
\newcommand\Gs{\mathscr{G}}
\newcommand\Hs{\mathscr{H}}
\newcommand\Os{\mathscr{O}}
\newcommand\Ls{\mathscr{L}}
\newcommand\Ns{\mathscr{N}}
\newcommand\Gc{\mathcal{G}}
\newcommand\Fc{\mathcal{F}}
\newcommand\Rc{\mathcal{R}}
\newcommand\Fk{\mathfrak{F}}
\newcommand\Yk{\mathfrak{Y}}
\newcommand\Zk{\mathfrak{Z}}
\newcommand\NN{\mathbb{N}}
\newcommand\QQ{\mathbb{Q}}
\newcommand\ERRE{\mathbb{R}}
\newcommand\Z{\mathbb{Z}}
\renewcommand\Im{\mathrm{Im}}
\newcommand{\E}[2]{\ensuremath{\mathbb{A}^{#1,\mathrm{an}}_{#2}}}
\newcommand{\AfK}{\mathbb{A}^{1,\an}_K}
\newcommand\of[3]{\mathopen#1 #2 \mathopen#3}
\newcommand{\type}{\textrm{type}}
\newcommand\wc{{\mkern 2mu\cdot\mkern 2mu}}
\newcommand\va{|\wc|}
\newcommand\wKa{\widehat{K^\alg}}
\newcommand{\REF}{{\color{red}REF}}
\newcommand{\comment}[1]{
\noindent{\red \framebox{\framebox{\framebox{
\begin{minipage}{450pt}#1
\end{minipage}}}}}
}
\newcommand{\smallcomment}[1]{
{\red \framebox{\framebox{\framebox{
#1}}}}
}
\newcommand{\comm}[1]{
\noindent{\magenta \framebox{\framebox{\framebox{
\begin{minipage}{450pt}#1
\end{minipage}}}}}
}
\def\swappedhead#1#2#3{%
  \thmname{#1}\;%
  \thmnumber{\@upn{\the\thm@headfont#2\@ifnotempty{#1}}}%
  \thmnote{\,{\the\thm@notefont(#3)}}{.~}}
\newtheoremstyle{dotless-thm}
  {10pt}
  {10pt}
  {\itshape}
  {}
  {\bfseries}
  {}
  {.0em}
  {}
\theoremstyle{dotless-thm}
\newtheorem{theorem}{\textbf{Theorem}}[subsection]
\newtheorem{def-intro}{\textbf{\textsc{Definition}}}
\newtheorem{thm-intro}{\textbf{\textsc{Theorem}}}
\newtheorem{rk-intro}[thm-intro]{\textbf{\textsc{Remark}}}
\newtheorem{cor-intro}[thm-intro]{\textbf{\textsc{Corollary}}}
\newtheorem{proposition}[theorem]{\textbf{Proposition}}
\newtheorem{lemma}[theorem]{\textbf{Lemma}}
\newtheorem{corollary}[theorem]{\textbf{Corollary}}
\newtheorem{definition}[theorem]{\textbf{Definition}}
\newtheorem{remark}[theorem]{\textbf{Remark}}
\newtheorem{example}[theorem]{\textbf{Example}}
\newtheorem{setting}[theorem]{\textbf{Setting}}
\newtheorem{notation}[theorem]{\textbf{Notation}}
\numberwithin{equation}{section}
\title[Convergence Newton 
polygon V: local index theorems]{
The convergence Newton polygon of a $p$-adic 
differential equation V : local index theorems}
\author{Jérôme Poineau}
\email{jerome.poineau@unicaen.fr}
\address{Laboratoire de Mathématiques Nicolas Oresme, Université de Caen, BP 5186, 14032 Caen, France}
\author{Andrea Pulita}
\email{andrea.pulita@univ-grenoble-alpes.fr}
\address{Univ. Grenoble Alpes, CNRS, IF, 38000 Grenoble, France.}
\date{\today}
\subjclass{Primary 12h25, 47A53; Secondary 14G22}
\keywords{$p$-adic differential equations, 
Berkovich spaces, de Rham cohomology, index, 
irregularity, radius of convergence, Newton polygon,  
Grothendieck-Ogg-Shafarevich formula, 
super-harmonicity, Banach spaces}
\begin{abstract}
In this paper and its sequel 
we consider locally-free $\O_X$-modules 
together with a connection over a quasi-smooth Berkovich curve $X$.
We obtain necessary and sufficient 
conditions for the finite dimensionality of their de Rham 
cohomology over local domains such as disks and annuli. 
We deal with both analytic and 
meromorphic connections and we derive index formulas 
relating the index to the behavior of the radii of 
convergence of their solutions at the boundary of the 
curve $X$.
We introduce the notion of absolute local index. We prove 
that it is an intrinsic notion extending the previous notions 
of Robba's generalized index and that 
of $p$-adic exponents. This condition arises at the 
boundary of the curve $X$ 
and it is an exact condition for the finite 
dimensionality of the de Rham cohomology. 
We derive comparison results between formal, 
meromorphic and analytic de Rham cohomologies.
\end{abstract}
\begin{document}
\maketitle

\begin{center}
Version of \today
\end{center}

\makeatletter
\renewcommand\tableofcontents{%
    \subsection*{\contentsname}%
    \@starttoc{toc}%
    }
\makeatother

\begin{small}
\setcounter{tocdepth}{3} \tableofcontents
\end{small}

\setcounter{section}{0}

\section*{Introduction}
\addcontentsline{toc}{section}{Introduction}

Over the last 60 years, $p$-adic cohomologies for varieties in positive characteristic~$p$ have been 
experiencing an intensive development. Beside the original goal of Grothendieck's program to construct of a Weil cohomology with $p$-adic coefficients,
a new panoply of applications recently emerged 
(see for instance the survey 
\cite{Ked-p-adic-cohomology}). 
Compared to their $\ell$-adic counterpart, one advantage of the 
$p$-adic cohomologies lies in their more constructive nature, which 
allows a large range of explicit examples and 
effective computations (cf. \cite{Kedlaya-practice}). 

The theory that eventually emerged to describe the 
cohomology of a scheme in positive characteristic 
is known today as \emph{rigid cohomology}. 
It was introduced by P.Berthelot 
\cite{Berthelot-rigide, 
Le-Stum-Book, Kedlaya-Weil-II, Tsuzuki-rigide} and it 
unifies the previous cohomology theories by several authors.
\if{ such as J.P.Serre \cite{Serre-Witt-vectors-coh}, B.Dwork \cite{Dw-hyp,Robba-naive-Dwork}, P.Monsky and G.Washnitzer \cite{Monsky-Washnitzer-F1, Monsky-F2,Monsky-F3}, P.Berthelot and A.Ogus \cite{Berthelot-these, Berthelot-Ogus-Book},  L.Illusie \cite{Illusie-de-Rham-Witt-complex} and S.Lubkin \cite{Lubkin-bounded-Witt}. 
}\fi
To have an idea about the panorama we quote \cite{Kedlaya-finiteness, Ked-p-adic-cohomology} and therein introductions and bibliographies, see also \cite{Caro-Tsuzuki, Caro, 
Caro-Abe, Scholze-Bhatt} for more recent evolutions.

Following the original ideas of 
Monsky and Washnitzer (relying of prior ideas of 
B.Dwork), the construction involves associating to a
variety in characteristic~$p$ some appropriate liftings, which are $p$-adic analytic spaces, together with certain coefficients on them. In the key situations, 
these coefficients are nice vector bundles with 
integrable connections (here called simply differential 
equations) satisfying some specific conditions such as \emph{overconvergence}, and equipped with  a \emph{Frobenius structure}. Rigid cohomology is finally computed as the de Rham cohomology of those connections.

%

\if{Notably B.Dwork 
(cf. \cite{Dw-hyp,Robba-naive-Dwork}) who notably introduced the key notion of Frobenius 
structure and overconvergence; P.Monsky and 
G.Washnitzer 
\cite{Monsky-Washnitzer-F1, Monsky-F2,Monsky-F3}; 
and the cristalline cohomology developed by
Berthelot, Ogus et al. (cf. \cite{Berthelot-these, Berthelot-Ogus-Book}); 
Serre's Witt vectors cohomology (cf. \cite{Serre-Witt-vectors-coh}); and more generally the 
de Rham-Witt complex of P.Deligne and L.Illusie 
(cf. \cite{Illusie-de-Rham-Witt-complex}); 
Lubkin's bounded Witt vectors 
(cf. \cite{Lubkin-bounded-Witt}). 
}\fi 
While in algebraic geometry over a field of characteristic~$0$, the de Rham cohomology spaces are 
finite-dimensional under mild assumptions
(cf. \cite{Grothendieck-DR-coh, Monsky-finiteness-alg, Hartshorne-de-Rham} and 
\cite[Proposition 8.4.1]{ Mebkhout-book}), this property often fails in the $p$-adic analytic realm. 
Finding the exact conditions to ensure finite dimensionality is one of the 
central problems of the theory (needed to 
obtain a Weil cohomology theory). In the specific case of 
rigid cohomology, it is the result 
of the contributions by several authors such as 
P.Berthelot, G.Christol, R.Crew, 
E.Grosse-Kl\"onne, Z.Mebkhout, 
P.Monsky, N.Tsuzuki, and it was eventually achieved by K.S.Kedlaya (cf. \cite{Kedlaya-finiteness}).


%

The problem of the finite dimensionality of rigid 
cohomology spaces fits into the larger problem of the 
finite dimensionality for the more general theory 
of \emph{de Rham cohomology of differential 
equations over non-archimedean analytic spaces}, that 
we study here using the language of Berkovich spaces. 
In this extended framework, few results exist and they are 
mainly devoted to the one dimensional case. 
The case of a projective space is known since 
\cite{Adolphson, Balda-Comparaison, 
Balda-Comparison-II, 
Chiarellotto-Comparison-2, Chiarellotto-Comparison, 
Andre-Balda-Book, Andre-Comparison}
and relies on the comparison between the algebraic and 
analytic de Rham cohomologies (cf. 
\cite[Section 1.5]{NP-V} for a general claim in our 
context).

Beyond that, the case of the affine line has aroused much interest and conditional results are known, 
mainly due to P.Robba and E.Pons 
\cite{Clark, Ro-I, Ro-II, RoIII, RoIV, Pons}. In the $p$-adic 
setting, Robba introduced the 
key notion of \emph{generalized index} of a differential 
equation, which is an algebraic condition at the boundary 
of the curve for the finiteness of the global index of the 
differential equation. These boundary conditions follow the general 
philosophy introduced by M.F.Atiyah and I.M.Singer (cf. 
\cite{Atiyah-Singer-announcement,Atiyah-Singer}) 
according to which the global 
finiteness of the index is controlled by some 
quantities of topological nature at the boundary. Robba 
was indeed able to relate the generalized index of a 
differential equation to the 
\emph{behavior of the radii of convergence of its 
solutions at the boundary of the curve}
(cf. \cite{RoIV,Young}). 
However, surprisingly enough, even over a Berkovich 
curve as simple as a disk or an annulus, the precise 
conditions ensuring the finite dimensionality of the de 
Rham cohomology of a general differential equation remained 
essentially unknown. 

In the case of open annuli, the existing results are 
mainly due to G.Christol and Z.Mebkhout in 
\cite{Ch-Me-I, Ch-Me-II, Ch-Me-III, Ch-Me-IV}, 
relying on prior ideas and results of 
P.Robba, as mentioned before, and some 
other authors such as \cite{Crew-Finiteness} for 
instance. Christol and Mebkhout focus 
on rigid cohomology and they quickly restrict 
the range of their study to the Robba ring, 
and to differential modules satisfying a condition on 
the radii of convergence called \emph{solvability} and a 
technical condition about the nature of the exponents 
called \textbf{DNL} (diff\'erence non Liouville), which are 
all satisfied in presence of a Frobenius structure. Moreover, they consider a discretely 
valued base field of mixed characteristic, which excludes 
many possible base fields. 
Although their techniques are quite general, 
these conditions drastically restrict the class of 
differential equations for which their results hold.

In this paper and its sequel \cite{NP-V}, 
we deal with general vector bundles with connections
(with no extra conditions) over quasi-smooth Berkovich curves over 
non-archimedean fields of characteristic zero and 
provide necessary and sufficient conditions for the finite 
dimensionality of their \emph{analytic} and 
\emph{meromorphic} de Rham cohomologies. Recall that such curves are known to admit triangulations, that is to say locally finite subsets of points whose complements are disjoint unions of virtual disks and virtual annuli. This structure result determines our strategy. In this paper, we focus on  differential equations over the basic pieces: disks and annuli (and their generalizations). In this setting, we manage to characterize the finite dimensionality of the de Rham cohomology and prove index formulas (cf. Theorems 
\ref{Thm : index of finite opens} and \ref{Thm : Index meromorphic disk} and Corollary \ref{cor:cohomologypseudoannulus}).

 In~\cite{NP-V}, we will build on those results to investigate differential equations on arbitrary curves. We refer to the introduction of \cite{NP-V} for a 
panorama of the strategy. Roughly speaking, 
we show that the finite dimensionality of the de Rham 
cohomology is controlled by a finiteness property of the 
radii of convergence of the solutions 
at the boundary of the Berkovich curve. 
\if{Indirectly, this relies on a result about the 
local finiteness of the behavior of these radii 
(cf. \cite{NP-I,NP-II,Kedlaya-draft}, see also 
\cite{RoIV, Pons, Ch-Dw,  DV-Balda, 
Balda-Inventiones} for prior results).

\comment{Je ne comprends pas la derni\`ere phrase. On 
n'a rien prouv\'e sur le comportement des rayons au 
bord, si ? Je pense qu'on peut enlever la fin, \`a partir de 
``a result which''.}
\comm{J'ai reformulÃ©. On a montrÃ© la finitude locale. Cela entraine implicitement que les conneries peuvent arriver seulement quand l'on approche la frontiÃ¨re 
ouverte. Sans finitude locale on aurait pu avoir le bordel 
absolu pour les rayons quand on approche la frontiÃ¨re. Plus spÃ©cifiquÃ©ment, dans nos Ã©noncÃ©s et preuve sur la finitude de la cohomologie pour les courbes on se place dans deux contextes: soit on suppose directement que les rayons sont affines Ã  la frontiÃ¨re et on fait cela car on sait implictement que notre resultat de finitude locale entraine que les rayons sont constants en dÃ©hors du squelette d'une couronne au bord de la courbe; soit si les rayons ne sont pas affines on retreci un peu la courbe de sorte que les rayons sont linÃ©aires au bord de la courbe retrecie, ce qui est vrai encore une fois grace aux propriÃ©tÃ©s de finitude locales qu'on a dÃ©montrÃ©... }
\comment{OK, je comprends ce que tu veux dire maintenant. Je trouve cependant que c'est mal plac\'e ici. En plus, ce contr\^ole au bord est vraiment fait dans ce papier. Je propose d'enlever la phrase. Je vais redire \c ca un peu plus loin.}
}\fi

\if{
\comm{
Tu as en mÃÂªme temps raison et tort. Tu as raison parce que je parle d'un voisinage d'un point de Berkovich et je vais reprendre cette partie pour l'introduction de NP-V. 
Tu as tort car les \'equations diff\'erentielles sur les couronnes ouvertes et l'anneau de Robba c'est vraiment 
le truc de Ch-Me. Pratiquement tout Ch-Me 3 et le 9/10 
de Ch-Me 4 sont sur les couronnes et/ou l'anneau de Robba.\\

Dans leur travaux, il n'y a qu'une 
section sur la cohomologie des courbes au sens que j'ai 
indiqu\'e ci plus haut (voisinage d'un point de Berkovich), c'est la section \cite[p.659-662]{Ch-Me-IV}.
Et, entre nous, les \'enonc\'es sont compr\'ehensibles, mais 
les preuves sont par endroit vraiment incompr\'ehensibles 
et manquent des morceaux importants ... j'avais 
demand\'e \`a Gilles Christol de me l'expliquer (je parle 
toujours de \cite[p.659-662]{Ch-Me-IV}), mais il a dit 
qu'il n'avait jamais compris ce paragraphe ecrit par 
Mebkhout... :-).

Plus tard, j'ai invit\'e Mebkhout \`a Grenoble (vers 2016-17 
je crois) et j'ai fini par comprendre que leur preuve 
coincide essentiellement avec la n\^otre. Je peux recuperer 
les infos si besoin, mais je crois me souvenir qu'ils citent 
un th\'eor\`eme sans donner la reference \`a un papier qui 
semble \^etre ce que nous on appelle le th\'eor\`eme de Quing 
Liu pour plonger le truc dans une courbe projective. Mais 
j'ai un souvenir, qu'ils appliquent un r\'esultat d'indice de 
\cite{Ch-Me-I} qui est demontr\'e pour la droite projective, 
mais pas en g\'en\'eral... Ensuite, ils appliquent des 
r\'esultats comme les notres, mais il y a des points 
critiques o\`u des morceaux manquent.\\

Par ailleurs, je me souviens notamment que dans 
certains endroits ils d\'emontrent des r\'esultats sur l'indice 
g\'en\'eralis\'e seulement dans le cas d'une base cyclique, 
alors que ÃÂ§a d\'epend de la base choisie.... par endroit ils 
prennent des polynomes diff\'erentielles \`a coefficients 
polynomiales et ensuite ils appliquent le resultat dans le 
cas g\'en\'eral sans se soucier du fait qu'il fallait g\'en\'eraliser 
leur r\'esultats au cas de syst\`emes diff\'erentielles \`a 
coefficients dans l'anneau de Robba et pas juste des 
polynÃÅ½mes... plein de choses comme ÃÂ§a...  
On peut aussi mentionner que leur resultats sont tous sur 
un corps spheriquement complet (mÃÂªme localemeent 
compact \`a vrai dire). Et toujours sous des conditions 
Liouville.
}

\comment{Je ne critique pas le fait que tu parles de CM, mais surtout la fa\c con dont tu le fais. Le d\'ebut du paragraphe parle de courbe \`a bonne r\'eduction, qui est essentiellement le cas orthogonal \`a celui qu'on traite ici. Idem pour la surconvergence. Je te laisse r\'e\'ecrire le paragraphe si tu veux le garder. Tu connais toutes ces r\'ef\'erences bien mieux que moi.

Pour la deuxi\`eme partie du paragraphe, \c ca ressemble en effet \`a une intro de NP V. Pas s\^ur que \c ca ait sa place ici.}
}\fi

We now describe more precisely the main results of the 
paper.

In Section~\ref{Secn1}, we collect some definitions and basic properties that we will use in the rest of the text~: pseudo-annuli (among which annuli and punctured disks), pseudo-triangulations, radii of convergence, Newton polygon, de Rham cohomology, etc. We take this opportunity to define a notion of local irregularity of a differential equation~$\Fs$, following Robba, for any good germ of segment~$b$ in a curve (in any residual characteristic). Up to some normalization factor, the irregularity~$\Irr_{b}(\Fs)$ is the derivative of the total height of the Newton polygon (cf. Definition \ref{def:generalizedirregularity}). 
We will later prove that it recovers the classical notion by Deligne-Malgrange-Ramis 
 in the case of 
formal power series (cf. 
Proposition \ref{Prop : Irr-form=Irr-x-1}).

Section~\ref{sec:indexpseudo-annuli} is devoted to 
Robba's notion of \emph{generalized index}, which is an 
invariant associated with operators satisfying the 
so-called Fredholm property. As for the irregularity, 
choosing as operator the given connection on~$\Fs$, we 
extend its scope and define it for any germ of 
segment~$b$ at the boundary of a pseudo-annulus~$C$ 
(in any residual characteristic). As we will see in 
\cite{NP-V}, this allows to define the generalized index 
on every good germ of segment of a quasi-smooth 
Berkovich curve. More precisely, we show that Robba's generalized index depends on some choices, and we refine it by introducing the notion of \emph{absolute index}~$\chiabs_{b}(\Fs)$ (cf. Definitions~\ref{def:Fredholmnabla} and~\ref{def:Fredholmnablageneral}), which we prove to be an intrinsic notion (cf. Proposition \ref{Prop : independence on the coordinate chiabs}). 

We then extend the existing results about generalized index and their relationship with irregularity. Let $C$ be a pseudo-annulus. Recall that, when the radii of~$\Fs$ are log-affine along the skeleton of~$C$, we may write $\Fs = \Fs^{\mathrm{Robba}}\oplus
\Fs^{<\mathrm{sol}}$, where $\Fs^{\mathrm{Robba}}$ is the Robba part of~$\Fs$, characterized by the fact that all its radii are equal to~1 on the skeleton of~$C$ (see \cite[Theorem~5.3.1]{NP-III})

\begin{proposition}[cf. Proposition \protect{\ref{Prop : chirel=Irr}}]
Assume that the radii of~$\Fs$ are log-affine along the skeleton of~$C$. Let $b$ be a germ of segment at the boundary of~$C$.
Then $\Fs^{<\mathrm{sol}}$ is Fredholm at~$b$ and its absolute index coincides with the irregularity:
\begin{equation}\label{eq : chiabs sol = Irr spns}
\chiabs_b(\Fs^{<\mathrm{sol}})\;=\;
\Irr_b(\Fs^{<\mathrm{sol}})\;=\;\Irr_b(\Fs)\;.
\end{equation}
\end{proposition}
The proof is largely inspired by the 
methods of Robba (in the rank one case, cf. \cite{RoIV}), and of Christol and Mebkhout (for 
solvable differential modules over the Robba ring, cf. \cite{Ch-Me-III}). 


We conclude the section by stating a result providing 
exact conditions for the finite dimensionality of the de 
Rham cohomology on a pseudo-annulus~$C$ (cf. Theorem \ref{Thm : index of finite opens}). It also 
includes a formula expressing the index for the de Rham 
cohomology in terms of the irregularities at the boundary.

\begin{theorem}[cf. Theorem \protect{\ref{Thm : index of finite opens}}]\label{THM 2}
Assume that the radii of~$\Fs$ are log-affine along the germs of segments~$b_{0}$ and~$b_{1}$ at the boundary of~$C$.
Then $\Fs$ has finite-dimensional de Rham cohomology over~$C$ if, and only if, $\Fs^{\mathrm{Robba}}$ is Fredholm at~$b_{0}$ and~$b_{1}$. 

In this case, assume moreover that $\chiabs_{b_0}(\Fs^{\mathrm{Robba}}) = -\chiabs_{b_1}(\Fs^{\mathrm{Robba}})$. Then, we have 
\begin{equation}
\chidr(C,\Fs)
\;=\;\Irr_{b_0}(\Fs)+\Irr_{b_1}(\Fs)\;.
\end{equation}
\end{theorem}

Using the fact that the radii of convergence are controlled by locally finite data
(cf. \cite{NP-I,NP-II,Kedlaya-draft}, see also 
\cite{RoIV, Pons, Ch-Dw,  DV-Balda, 
Balda-Inventiones} for prior results), and assuming some additional conditions on the exponents of the equation (a non Liouville condition, cf. Appendix~\ref{Liouville condition}, which implies a Fredholm property and the vanishing of some absolute indices), we are able to characterize the finite dimensionality of the de Rham cohomology in terms of the behavior of the radii of convergence.

\begin{corollary}[Corollary \protect{\ref{cor:cohomologypseudoannulusLiouville}}]\label{cor:HCFintro}\label{cor:cohomologypseudoannulusLiouville}
Assume that $\Fs$ is free of 
Liouville numbers along the germs of segments~$b_{0}$ and~$b_{1}$ at the boundary of~$C$.

Then, the following assertions are equivalent:
\begin{enumerate}[a)]
\item for each $i\ge 0$, $\Hdr^i(C,\Fs)$ is finite dimensional;
\item the total height of~$\Fs$ is log-affine along~$b_{0}$ and~$b_{1}$.
\end{enumerate}

Moreover, when these properties hold, we have 
\begin{equation}\label{eq:chidrpseudo-annulus}
\chidr(C,\Fs)
\;=\;\Irr_{b_0}(\Fs)+
\Irr_{b_1}(\Fs)\;.
\end{equation}
\end{corollary}

In Section~\ref{Disk-merom}, we investigate the case of a differential equation over an open disk~$D$ (or a slightly more general space), possibly with meromorphic singularities on a finite set of rigid points~$Z$. To be more precise about the definition of such an object, denote by $\O(D)[\ast Z]$ the ring obtained from~$\O(D)$ by inverting the non-zero functions vanishing at some point in~$Z$. A differential equation~$\Fc$ over~$D$ \emph{meromorphic at~$Z$} is then a locally free $\O(D)[\ast Z]$-module endowed with a connection. The de Rham cohomology spaces computed in this setting will be called \emph{meromorphic} and denoted by $\Hdr^i(D(*Z),\Fc)$ (cf. Section~\ref{Section : mero-alg} for the definitions).

One checks that the radii of convergence of such an~$\Fc$ are log-affine in the direction of any point $z\in Z$ (cf. Lemma \ref{lem:merosinglinear-1}). As a consequence, it is possible to define the irregularity~$\Irr_{z}(\Fc)$ of~$\Fc$ at~$z$. 

We obtain a characterization of the finite dimensionality of the meromophic de Rham cohomology (cf. Theorem \ref{Thm : Index meromorphic disk}). Set $Y := D - Z$ 
and denote by $\Fs$ the analytification of $\Fc$ (that is 
the restriction of $\Fc$ to $\Fs$). Let $\chi_{c}(Y)$ be 
the geometric \'etale Euler characteristic with compact support (which, in this case, is the opposite of the cardinality of $Z\otimes K^\textrm{alg}$ plus 
$1$). 

\begin{theorem}[Theorem 
\protect{\ref{Thm : Index meromorphic disk}}]\label{Thm. 003}
Assume that $\Fc$ is a free $\O(D)[\ast Z]$-module and that its radii of convergence are $\log$-affine 
along the germ of segment~$b$ at the open boundary 
of~$D$. Let $C_b$ be an open 
annulus at the open boundary 
of $D$, not intersecting $Z$. 
Then $\Fc$ has finite-dimensional meromorphic 
cohomology spaces $\Hdr^i(D(*Z),\Fc)$ if, and only if, 
the Robba part $\Fs_{|C_b}^{\mathrm{Robba}}$  
is Fredholm at $b$.

In this case, assume moreover that $\chiabs_{b}(\Fs_{|C_b}^{\mathrm{Robba}})=0$. Then, we have 
\begin{equation}
\label{eq : meromorphic index formula punctured disk-bis}
\chidr(D(*Z),\Fc)\;=\; 
\chi_c(Y)\cdot\mathrm{rank}(\Fc)-
\Irr_{Y}(\Fs)\;.
\end{equation}
\end{theorem}

In the previous statement, the term~$\Irr_{Y}(\Fc)$ is 
defined by means of the irregularities at the germ~$b$ at 
the boundary of~$D$ and at the points of~$Z$ (cf. 
\eqref{eq : global irr}). It is a special case of a more 
general notion of \emph{global irregularity}, that will 
systematically appear in~\cite{NP-V}. Theorem \ref{Thm. 003} is indeed the crucial result serving as a  basis for 
the finite dimensionality results of \cite{NP-V} for general 
quasi-smooth curves.

Let us add one specific word about meromorphic singularities in general. Let $z\in Z$. The completion of the local ring at~$Z$ is isomorphic to a ring of power series over a field, hence the differential equation~$\Fc$ induces a differential equation over a field of Laurent series. The latter is a classical setting where notions of Newton polygons and irregularities have already been defined (cf. \cite{Malgrange-Irreg, VS, Correspondance-Malgrange-Ramis}). While no direct relation seems to exist between the convergence and formal polygons, we prove that their derivatives (in the sense of Section \ref{Derived newton polygon at b0}) coincide (cf. Proposition \ref{Corollary : Irr^F=Irr}). It follows that the irregularities are the same (up to a sign).

An important feature of Theorem~\ref{Thm. 003} is that the only 
condition concerns the behavior of~$\Fc$ at the open 
boundary $b$ of the disk, and \emph{not at the points 
of~$Z$}. In contrast, the finite dimensionality of the \emph{analytic} cohomology 
$\Hdr^i(Y,\Fs)$ of $\Fs$ over $Y$ requires 
conditions at the 
points of~$Z$, as we will prove in \cite{NP-V} for global curves.

The relationship between meromorphic and analytic cohomology is the subject of Section~\ref{App : Some comparison results}. In particular, Theorem \ref{Thm. 003} will be used to obtain a comparison 
theorem between the analytic and meromorphic 
cohomologies for general quasi-smooth Berkovich curves, 
where no finite dimensionality of the 
cohomologies is required (cf. Corollary 
\ref{Coro : Mero = analif Liouville}). 

\begin{theorem}[Corollary \protect{\ref{Coro : Mero = analif Liouville}}]
Let $Y$ be a quasi-smooth $K$-analytic curve and $Z$ 
be a locally finite subset of $K$-rational points of~$Y$. 
Let $\Fc$ be a differential equation on $Y(*Z)$. 
Set $X:=Y-Z$ and $\Fs:=\Fc_{|X}$. 

For each $z\in Z$, let $D_z\subseteq Y$ be 
an open disk centered at $z$. 
We assume that for $z\neq z'$ one has 
$D_z\cap D_{z'}=\emptyset$. 
Denote by $b_{D_z}$ the open boundary of 
$D_z$ and by $b_z$ the 
germ of segment out of 
$z$.

Assume that, for all 
$z\in Z$, one has
\begin{enumerate}
\item $\Fc$ is free as $\O_{D_z}[*z]$-module;
\item if $D_z$ is a connected component of $Y$, then $\Fs$ has log-affine radii at $b_{D_z}$;
\item $\Fs$  
satisfies the assumptions of Theorem \ref{THM 2} on the pseudo-annulus $D_z-\{z\}$. 
\end{enumerate}

Then, for each $i \ge 0$, we have a natural isomorphism
\begin{equation}
\Hdr^i(Y(*Z),\Fc)\; \xrightarrow[]{\sim}\;
\Hdr^i(X,\Fs)\;.
\end{equation}
\end{theorem}

This result generalizes \cite{Balda-Comparaison, Chiarellotto-Comparison}  beyond analytifications of algebraic curves. Moreover, we remove the assumptions about the residual field (which is allowed to have any characteristic)
and we obtain an essential, yet technical, 
improvement regarding the assumptions at the points of $Z$. 
Namely, while in \cite{Balda-Comparaison,
Chiarellotto-Comparison} one finds the classical 
conditions of Clark \cite{Clark} on the formal exponents 
of $\Fc$ at the points of $Z$,\footnote{Which also 
implies finite dimensionality of the cohomologies, by their 
comparison theorem.} 
Corollary \ref{Coro : Mero = analif Liouville} has more 
precise assumptions involving Fredholm conditions 
instead.


The technical heart of Section~\ref{App : Some comparison results} consists of local comparison 
results between the formal, meromorphic and analytic de 
Rham cohomologies of a differential equation around a 
meromorphic singularity, that we deduce from Theorems~\ref{THM 2} and \ref{Thm. 003}.
To state them, let us 
consider an open disk~$D$ centered at~0 and a 
differential equation~$\Fc$ on~$D$ with a unique 
meromorphic singularity at~0. By restriction, it induces 
an analytic differential equation $\Fs := 
\Fc_{\vert D-\{0\}}$. We may associate several local objects to this 
situation (cf. \eqref{eq : setting merom}): 
\begin{itemize}
\item $K((T))$, the field of formal Laurent series;
\item  $K(\{T\})$, the field of meromorphic convergent Laurent series (series in $K((T))$ coming from functions on some disk that are meromorphic at~$0$);
\item  $\mathfrak{R}_0$, the Robba ring at~$0$ (series in $K((T))$ coming from functions on some open disk punctured at~$0$, hence possibly exhibiting an essential singularity at~$0$);
\item $\Os(C')$, the ring of functions of a sub-pseudo-annulus~$C'$ of $D-\{0\}$.
\end{itemize}
Accordingly, we define several differential equations: 
$\M := \Fc\otimes K((T))$ (formal case), 
$\Fc_0^\dag := \Fc\otimes K(\{T\})$ (local 
meromorphic case), 
$\Fs_0^\dag := \Fc\otimes \mathfrak{R}_{0}$ 
(local analytic case) and $\Fs_{|C'}$.
These different localizations of $\Fc$ 
are related by localization maps (cf. Diagram 
\eqref{eq: rings at 0}). 
The goal of Section~\ref{App : Some comparison results} is to compare 
the categories of such equations and their cohomologies. 
More precisely, we provide conditions for the following restriction maps (independently) to be 
isomorphisms: 
\begin{equation}
\Hdr^i(C',\Fs_{|C'})
\;\xleftarrow{\;\quad\;}\;
\Hdr^i(D(*0),\Fc)\;\xrightarrow{\;\quad\;}\;
\Hdr^i(K(\{T\}),\Fc_0^\dag)\;\xrightarrow{\;\quad\;}\;
\Hdr^i(K((T)),\M)
\end{equation}
(cf. Corollaries \ref{Corollary : hyp=> resiso mero-form}, \ref{Cor : comparison mero-formal K(t) and K((t))}, \ref{Comparison Rigid formal-1}, 
\ref{Comparison Rigid formal-2}, 
\ref{Cor : res=iso merodag anal-tre} and Theorem \ref{Theorem : restriction meromorphic}).
If $C'=D'-\{0\}$ for some open sub-disk of $D$, we also deal with the restriction $\Hdr^i(C',\Fs_{|C'})
\xrightarrow{\quad}\Hdr^i(\mathfrak{R}_0^\dag,\Fs_0^\dag)$ (cf. Theorem \ref{Theorem : restriction meromorphic}). Roughly speaking, these restriction maps are 
isomorphisms over the largest open disk $D$ on which 
the radii of $\Fc$ are all $\log$-affine. 
This precision about the disk where the isomorphism 
holds is another improvement of Clark's result 
\cite{Clark}.

An important application of these comparison 
results is the following. 
Recall that, in any of the above categories of differential 
modules, the $\Hdr^0$ of the internal 
$\Hom$ is the set of morphisms in that category. 
Therefore, the above equalities of $\Hdr^0$ can be used 
to prove that the restriction functors are 
\emph{fully faithful} on the pair of objects whose 
internal $\Hom$ satisfy the assumptions needed for the 
above comparison results. More precisely, we show that 
the restriction functors induce an equivalence 
on convenient sub-categories (cf. Corollaries 
\ref{Cor : descent turrittin K(T)},
\ref{Cor : equivalente D(*0)-form},
\ref{Cor : equivalente merodag-form} and
\ref{Cor : equivalente D(*0)-Robba}). 
These equivalences allow in particular to transport 
classification results from a locus to another inside the 
disk $D$. For instance, we prove that one has an 
equivalence of categories between a certain sub-category 
of formal differential modules over $K((T))$ and a 
sub-category of differential equations over the 
Robba ring at the open boundary of $D$
(cf. Corollary \ref{Cor : equivalente D(*0)-Robba}).

As an example, 
in Section \ref{Section : Descent Turritin-Balda} 
we show that the formal
Turrittin-Hukuhara-Levelt decomposition over $K((T))$ 
extends to $D(*0)$ for a certain disk $D$
which is, roughly speaking, the largest disk where the 
radii of convergence of our differential module $\Fc$ 
are $\log$-affine (cf. 
Corollary \ref{Corollary : descent Turrittin - CAN}). 
This improves a similar 
result by F.Baldassarri \cite{Balda-Turritin} 
who proved that the decomposition 
extends from $K((T))$ to $K(\{T\})$. While both proofs 
are based on the principle of using the equality between 
cohomology to deduce the fully faithfulness (which 
is well known, cf. for instance 
\cite[proof of Theorem 2]{GAGA}) 
the proof of Baldassarri is based on the equality  
$\Hdr^0(M,K((T)))=\Hdr^0(\Fc_0^\dag,K(\{T\}))$, 
which holds under Clark's condition on the formal 
exponents at $0$ (cf. \cite{Clark}). 
As we have explained, we have weaker 
assumptions (cf. Lemma \ref{Lemma : formal Liouv imply ch-me Liouv for Hom(F,M)}), 
and a stronger conclusion, which gives us some control on the disk where the decomposition extends, and holds not only over a $p$-adic field, but in arbitrary residual characteristic.

To state another application, recall that N.M.Katz proved that a differential module $\mathrm{N}$ over $K((T))$ extends 
to a differential module $\mathrm{Can}(\mathrm{N})$, called Katz's canonical 
extension, over the affine line punctured at~0. 
It has a meromorphic singularity at $0$ and a regular 
singularity at $\infty$ (cf. \cite{Katz-Can}).
In our setting, we prove that there exists an open 
disk $D'\subset D$ around $0$ on which the 
differential equation $\Fc$ is isomorphic to 
$\mathrm{Can}(\M)$ (cf. Corollary 
\ref{Corollary : descent Turrittin - CAN}). Again, we do 
that without specific assumption on the ground field, with 
general assumptions of Fredholm type generalizing 
those of Clark and with a control on the size of the disk~$D'$.


We also consider decompositions of the modules. Since Katz's canonical extension 
is a functor, the classical decomposition of 
$\M$ by the slopes of its classical formal Newton polygon 
(cf. \cite{VS} for instance) 
extends to a decomposition of $\mathrm{Can}(\M)$.
Recall that we also have a decomposition by 
the radii of convergence of $\Fs_0^\dag$ (cf. \cite{NP-III}). When an isomorphism 
$\Fc_{|D'}\xrightarrow{\;\sim\;}\mathrm{Can}(\M)$ as 
above exists, we compare the decompositions
and show that the convergent one is finer 
with respect to the formal one 
(cf. Corollary \ref{Cor : Formal VS conv deco}).

Appendix \ref{Liouville condition} is devoted to the notion of exponents of a differential equation. Roughly speaking, the exponents are certain 
elements of the ground field that are associated with the 
differential equation at some loci of the curve (singular 
points or germs of segments) and
whose \emph{type} controls the 
finite dimensionality of the absolute and generalized 
indexes. 

This technical notion was introduced by G.Christol and 
Z.Mebkhout for $p$-adic differential modules over an 
open annulus satisfying the Robba property, i.e. assuming 
that the radii of convergence of the solutions at the 
generic points are all maximal (cf. \cite{Ch-Me-II}). In 
analogy with Y.I.Manin's and Clark's theory around $0$ 
(cf. \cite{Man, Clark}), they prove that, if 
the exponent has non-Liouville differences, the 
differential module splits into rank one sub-quotients, 
has finite dimensional de Rham cohomology, and index~$0$. In particular, these assumptions are 
fulfilled by differential modules over the Robba ring with  
a Frobenius structure. 

In order to decrease the level of 
technicality required by Christol-Mebkhout's definition,
B.Dwork proposed an alternative 
(relatively easier and potentially not equivalent) notion 
of exponent which is the one uniformly used in literature 
since then (cf. \cite{Dwork-Exponents}).
More recently K.S.Kedlaya was able to obtain further 
simplifications and to generalize the theory over 
non-trivially valued 
base fields with arbitrary residual fields
\cite{Kedlaya-draft, Kedlaya-Shiho, Kedlaya-book-2}. 


In Appendix \ref{Liouville condition}, without dealing directly with 
the complexity of the problem, we recall the existing 
results and slightly improve them by providing some 
geometric features. We adapt the theory of 
exponents to our context in order to define a 
notion of differential equation \emph{free of Liouville 
numbers}, 
first over pseudo-annuli, then over good germs of 
segments (cf. Definitions \ref{Def : Free of LN over R} 
and \ref{def:NLgerm}).  
This condition implies in particular that the equation 
may be written locally as successive extensions of nice 
differential modules of rank~1, which has important 
consequences for the cohomology.  
Joint with Theorem~\ref{Thm. 003}, it implies Corollary~\ref{cor:HCFintro}.

%
%


\if{In particular, we show how to deal with 
the discrepancy of definitions existing 
between formal exponents at a meromorphic singularity 
and the $p$-adic ones (cf. \ref{}).
}\fi

\subsubsection*{Acknowledgments.}
We thank Yves Andr\'e, Francesco Baldassarri, Gilles 
Christol, Richard Crew, Kiran S. Kedlaya, Adriano 
Marmora, Nicola Mazzari, Zoghman Mebkhout, 
Bertrand To\"en, and Nobuo Tsuzuki for helpful 
discussions. 

\if{

\newpage

In this paper and in its sequel \cite{NP-V}, 
we approach the problem from another angle. 
We deal with general differential equations (with no extra 
conditions) over a quasi-smooth Berkovich curve 
in characteristic $0$ and we provide necessary and 
sufficient conditions for the finite dimensionality of their 
\emph{analytic} and 
\emph{meromorphic} de Rham cohomologies. 
The conditions for the finite dimensionality rely on the 
behavior of two quantities at the boundary of the curve:
\begin{itemize}
\item The exponents of the differential equation;
\item The radii of convergence of the Taylor 
solutions of the differential equation.
\end{itemize}
These are local conditions and they are the object of the 
present paper, while the global approach is dealth in \cite{NP-V}.

The variation of the radii in a neighborhood of the 
boundary is the key condition. In order to deal with the problem related to the discontinuity of the topological spaces in the ultrametric world, the language which has been showing up to be the more adapted is that of Berkovich spaces.
Indeed, it allows an appropriate description of 
these boundary conditions in relation to their 
\emph{continuity} properties and the behavior of their 
\emph{slopes} along the boundary. 
In \cite{NP-I,NP-II}, we have been able to show that the 
behavior of the radii of convergence along the Berkovich 
curve is \emph{continuous} and that it satisfies a strong 
\emph{finiteness condition} 
which will be a crucial for the finiteness of 
the de Rham cohomology.

....

We introduce new techniques allowing to split the 
differential equation at the boundary into a Robba's part 
characterized by the fact that the radii of convergence of 
the solutions are maximal, and a \emph{spectral non solvable} part characterized by   

....

In the Berkovich 
language, the results of P.Robba, G.Christol and 
Z.Mebkhout may be qualified as of local nature around a 
Berkovich point. 
It is tempting to try glue these local finiteness 
results. This was suggested by F.Baldassarri in a 
sequence of talks given between 2010 and 2012. 
The gluing process requires the continuity and the 
finiteness properties of the radii of \cite{NP-I,NP-II}, 
properties that were conjctured by F.Baldassarri in 
\cite{Balda-inventiones}.
Unfortunately, this is still not enough since the range of 
validity of the Christol-Mebkhout-Robba results 
meets that of rigid cohomology and 
it drastically restricts the class of differential equations 
for which they hold. Indeed, there are very few 
differential equations satisfying such specific conditions 
as the overconvergence and the existence of a 
Frobenius structure at every Berkovihc point. 
We then began a program in 2012 aiming to remove the 
assumptions of rigid cohomology and work with 
general ``\emph{local}''
 differential equations (without Frobenius nor 
overconvergence assumptions) and then 
glue them. This required in particular 
a decomposition result \cite{NP-III} which permits, 
in its local form, to split the equation into 
pieces that were locally either coming from rigid 
cohomology or with trivial de Rham cohomology groups.  
We completed that program at around 2014. 
However, there was another key point which made the 
final statement so obtained relatively unsatisfactory. 
While the continuity and finiteness properties of the radii 
hold for every differential equation, the conditions about 
the exponents coming from rigid cohomology 
was needed in the gluing process at every 
open of our covering. In the end the condition on the 
exponents appeared essentially at every point of the 
curve at which the radii were maximal. 
Conditions about the exponents are relatively involved 
and essentially hard to control.
Almost at the same time, we understood how to remove 
the condition about the exponents from the inside the 
curve, but this requires a more direct approach avoiding 
the gluing process suggested by F.Baldassarri. 
We then decided to do not publish our results in 
that form and we preferred to extend our approach 
reproving the results from another angle. 

The strategy we adopt here 
follows the following steps
\begin{itemize}
\item Using the finiteness property of the radii of 
convergence, we show how to 
remove the boundary point from the curve preserving 
the finite dimensionality of the de Rham cohomology.
\item By a paracompactness property of Berkovich 
curves, we may express our curve as an increasing union 
of \emph{finite} sub-curves (cf. Definition \ref{}).
We obtain then a limit formula expressing the 
de Rham cohomology over the 
total curve as the \emph{limit of the de Rham 
cohomologies} over a well chosen countable 
sequence of finite sub-curves 
at which the radii of the differential equation have a 
global finiteness property. Moreover, we prove that the 
dimensions of the cohomology groups stabilize.
\item In this way, we reduce to the situation where our 
curve admits an embedding into a projective curve and 
its complement is a finite disjoint union of disks. 
We obtain then an 
\emph{algebraizability result}, generalizing 
the one in \cite{Ch-Me-IV},\footnote{We recall that 
the algebrizability result in \cite{Ch-Me} is of local 
nature, it concerns the neighborhood of a Berkovich 
point, and it holds only for differential equations 
coming from rigid cohomology.} 
ensuring that our differential equation is the restriction of 
a differential equation defined on the whole 
projective curve having possibly a finite number of  
meromorphic singularities that are placed 
in the complement of our original curve.
\item 
We prove that the extended differential equation 
has a finite dimensional de Rham cohomology over the 
disks and their intersection with the finite curve. 
\item We may express the projective curve as the union of the curve and the disks. This forms
Using the Mayer-Vietoris long exact sequence, we 
may express the de Rham cohomology of the differential 
equation over the whole projective curve 
deduce the 
\end{itemize}

The conditions for the finite dimensionality of the de 
Rham cohomology in the global case, concern the 
behavior at the boundary 
of the curve of certain quantities. The first of them is a 
certain local index, called generalized index in 
\cite{Robba, Ch-Me-III, Ch-Me-IV} of them is  

For a general curve the finite dimensionality 

In this paper, we provide an exact condition for the finite 
dimensionality of the analytic and meromorhic 
de Rham cohomologies of a differential equation over disks and annuli. This

\comm{Reste ÃÂ  faire dans l'introduction :

1) Dire un mot sur le fait que l'hypothÃÂÃÂ¡se NL est vide si 
$p=0$. Dire aussi que elle remplace une hypothÃÂÃÂ¡se 
de ``\emph{cohomologie locale finie}'' ... \\

2) Dire un mot sur le fait que rÃÂÃÂ©lative compacitÃÂÃÂ© entraine 
la finitude cohomologique dans le cas surconvergent et 
qu'elle peut ÃÂÃÂªtre vue comme la raison qui entraine que la 
surconvergence marche ...

Avec des mots diffÃÂÃÂ©rents : la surconvergence peut 
s'encadrer dans la notion plus gÃÂÃÂ©nÃÂÃÂ©rale de rÃÂÃÂ©ltive 
compacitÃÂÃÂ©, qui assure la finitude dimensionnelle de la 
cohomologie de la mÃÂÃÂªme maniÃÂÃÂ¡re ... \\

3) Donner le contre-exemple au fait que finitude des 
graphes ÃÂÃÂ©quivaut ÃÂ  la finitude de la cohomologie ...\\

4) Parler dans l'introduction du Passage ÃÂ  la limite de la 
section 3, parler de la suite $x\mapsto\chi(x,S,\Fs)$ ... 
dire que ÃÂÃÂ§a met ensemble la complexitÃÂÃÂ© de la courbe (i.e.
$\chi_c(U_n)$) et la complexitÃÂÃÂ© de l'ÃÂÃÂ©quation (la somme 
des $\Irr_b(\Fs)$)...\\

}
\comment{Riscrivere gli enunciati !!! Sono cambiati 

Ajouter les references de Baldassarri.

Demander ÃÂ  Y.AndrÃÂÃÂ© de l'aide pour les references ...

...

...

...

....

...

}

\comm{Se rappeler de corriger les references ÃÂ  NP-I et NP-II, la numerotation a changÃÂÃÂ© aprÃÂÃÂ¡s publication.}
In this paper, we are interested in differential equations 
over $p$-adic quasi-smooth Berkovich curves. Under mild 
assumptions on the curves, we characterize the equations 
with \emph{finite-dimensional} de Rham cohomology. 
Moreover, we obtain a \emph{global index formula} 
relating the index of the differential equation to the Euler 
characteristic of the curve \textit{via} a \emph{global 
irregularity}, which we define in terms of the slopes of 
the global radii of convergence of the equation. 
Surprisingly enough these results were unknown even 
in the basic case of differential equations over disks or 
annuli.

The origins of this subject lie in the proof of the rationality of Zeta function by Bernard Dwork \cite{Dwork-zeta}. Although purely analytic at first sight, his proof was later shown to fit into a cohomological setting of de Rham type \cite{Robba-intro-naive-coh-Dwork}, \cite{Dw-hyp}. Since then, several actors have contributed to the development of the cohomological theories of de Rham 
type in the $p$-adic context. We invite the reader to consult the paper 
\cite{Ked-p-adic-cohomology} and related bibliography to 
have a general overview, 
and also the fundamental papers \cite{Adolphson}, \cite{Balda-Comparaison}, and \cite{RoIII}, \cite{RoIV} that are not mentioned there.

As firstly observed by 
Bernard Dwork (cf. \cite{Dw-I}, \cite{Dw-II}), and largely exploited by Philippe Robba (cf. \cite{RoIII}), the major tool 
for the study of $p$-adic differential equations is the radius of convergence of the differential equation. 
One of the crucial contributions of Robba was to relate the index of a $p$-adic differential operator with rational coefficients to the slopes of the radius of convergence by means of a Grothendieck-Ogg-Shafarevich formula 
(cf. \cite{RoIII} and \cite{RoIV}). The program indicated by Robba has been subsequently completed 
by Gilles Christol and Zoghman Mebkhout in the framework of rigid cohomology (cf. \cite{Ch-Me-I}, \cite{Ch-Me-II}, \cite{Ch-Me-III}, \cite{Ch-Me-IV}). 

From the point of view of rigid cohomology, differential equations are a category of coefficients for a good cohomological 
theory of algebraic varieties of characteristic $p>0$. The approach of this paper (and also of our former articles \cite{NP-I}, 
\cite{NP-II}, \cite{NP-III}) is different. We start with a quasi-smooth $K$-analytic Berkovich curve and deal with the de 
Rham cohomology of an \emph{arbitrary} locally free $\O_X$-module~$\Fs$ endowed with a connection $\nabla$. We call 
the couple $(\Fs,\nabla)$ a differential equation on~$X$. 


This category of differential equations is abelian, 
and it covers the class of differential equations 
studied in rigid cohomology. 
Namely the equations coming from rigid 
cohomology are often subjected to some conditions: 
\begin{enumerate}
\item They have \emph{overconvergent coefficients}. This means that $X$ is a compact curve
embedded into a projective curve $\overline{X}$ and that $\Fs$ is defined on an unspecified open neighborhood $U$ of $X$ in $\overline{X}$.

\item There are certain \emph{solvability} conditions that must be satisfied by their radii 
of convergence at the boundary of $X$.  
This means that the radii of convergence of their solutions are all maximal at these 
points, and hence on the whole curve~$X$. The only region where these radii may fail to be maximal is along  $U-X$ 
\textit{i.e.} along the ``\emph{overconvergent boundary}'' of~$X$.
This solvability condition often comes from the existence of a Frobenius action.
\end{enumerate}
We remove  both these assumptions from the picture, 
and we work with a general differential equation $\Fs$ 
on~$X$. 

\medbreak


As mentioned the most basic notion is that of radii of 
convergence (see \cite{Balda-Inventiones}, \cite{NP-I} and \cite{NP-II}). 
Roughly speaking, at every point $x\in X$, 
those radii correspond to the radii of convergence of 
Taylor solutions of the differential equation 
$\Fs$ around~$x$. 
To handle normalization issues, one needs to put some 
additional structure on the curve. Here, it will be a 
pseudo-triangulation~$S$: a locally finite subset 
of~$X$ whose complement in made of simple pieces 
like disks and annuli. (In the good cases, the choice of a 
pseudo-triangulation is not far from that of a semistable 
model as in~\cite{Balda-Inventiones}). We may use it 
to define a skeleton~$\Gamma_{S}$:  a locally finite 
subgraph of~$X$ that is a deformation retract of~$X$ 
in most cases. In this introduction, we leave aside the 
complications arising from the choice of~$S$.

Once the radii are defined, one may construct a geometric object (that is almost trivial 
under the above solvability condition): a controlling graph 
\begin{equation}
\Gamma_S(\Fs)\;\subset\; X
\end{equation}
inside the Berkovich curve~$X$ outside which the radii are locally constant. In \cite{NP-I} and \cite{NP-II}, we proved that 
$\Gamma_S(\Fs)$ is a locally finite connected 
graph and that the radii of~$\Fs$ are continuous 
functions on~$X$.

Recently, Francesco Baldassarri observed a link between 
the finiteness of the controlling graph and the finiteness 
of the de Rham cohomology.
Over the last years, he gave several 
talks on the subject and outlined a natural strategy to compute the cohomology: 
cutting the curve into simpler pieces according to the controlling graph, 
computing their cohomology by Christol-Mebkhout's theorems and 
putting everything back together thanks to a spectral sequence. 
He also explained that index formulas could be derived in this way, and 
that the local irregularities at a Berkovich point were to be interpreted 
as Laplacians of radii of convergence.

Several steps towards the completion of this program have been obtained recently in  
\cite{NP-I}, \cite{NP-II}, \cite{NP-III}, \cite{Kedlaya-draft}. 
One of the major technical difficulties 
was to prove the local finiteness of the 
controlling graph, obtained firstly in \cite{NP-I} and 
\cite{NP-II}, and then reproved in \cite{Kedlaya-draft} 
with a shorter proof based on similar methods.

In this paper we prove a necessary and sufficient 
criterion (which is fulfilled if the controlling graph is 
finite)
for the finiteness of the de Rham cohomology 
for arbitrary curves without boundary (under a mild finiteness 
assumption), and even curves with boundary under a non-solvability 
assumption. 
Moreover, we establish a Grothendieck-Ogg-Shafarevich formula of 
\emph{global nature} (cf. Theorems \ref{THM-INTRO-1}, 
\ref{THM-INTRO-2}, \ref{THM-INTRO-3} below), expressing the 
index of $\Fs$ in terms of the 
Euler characteristic $\chi_{c}(X)$ of $X$
and the \emph{global irregularity} of $\Fs$. The latter involves the slopes of the total height of the convergence Newton 
polygon at the open and closed boundary of $X$.

%
%
%

\medbreak

We now describe more specifically the contents of this paper. As firstly observed by Robba \cite{Ro-I} and Dwork-Robba 
\cite{Dw-Robba}, locally around a point, the differential equation~$\Fs$ splits into a part containing the solvable radii and 
another one containing the smaller ones (cf. \cite{NP-III}). 
In this decomposition, Robba observed (cf. \cite{Ro-I}) that the local de Rham cohomology of the non-solvable part is 
zero: ``\emph{locally, non-solvable differential equations have no cohomology}''.

On the other hand, to ensure finite-dimensionality, we need conditions at the boundary. As an example, the trivial equation 
over a closed disk has infinite dimensional de Rham cohomology. 
Indeed consider the derivation acting on the ring of functions over a closed disk. 
The formal primitives of such functions may fail to converge 
on the closed disk and the derivation actually has infinite dimensional cokernel. 

One solution consists in imposing overconvergence conditions, which more or less amounts to considering 
spaces with no boundary. One may also restrict the 
study to differential equations with no solvable radii at the boundary of $X$.

Now consider the equation~$\Fs$ from a global point of view on the 
Berkovich curve $X$. Even though its radii are not solvable at the boundary of $X$, its 
cohomology is highly non trivial. 
We prove in fact that there are local contributions to the cohomology arising from 
some other points where some of the radii are solvable.

\if{We prove in fact that its global index takes into account the local 
indexes of $\Fs$ at the boundary points of $\Gamma_S(\Fs)$ and at  
the points $x$ of $\Gamma_S$ where some of the radii are 
solvable and have a break at $x$. 
}\fi




\begin{def-intro}
We say that a pseudo-triangulation~$S$ of~$X$ is adapted to~$\Fs$ if the radii of~$\Fs$ are log-linear on the edges of $\Gamma_{S} - S$.
\end{def-intro}
The following result is the basic point on which our criterion is based (see Theorem \ref{THM-INTRO-3}). 
We refer to Theorem \ref{thm:finitetriangulationddc} for a more precise claim:
\begin{thm-intro}[cf. 
Theorem \ref{thm:finitetriangulationddc}]
\label{THM-INTRO-1}
Assume that the residue characteristic of~$K$ is not~2. Let $X$ be a connected quasi-smooth $K$-analytic Berkovich curve and
let $\Fs$ be a differential equation on $X$. Assume that there exists a non-empty finite pseudo-triangulation~$S$ of~$X$ such that
\begin{enumerate}
\item $\Fs$ is free of Liouville numbers along $\Gamma_S$;
\item $S$ is adapted to~$\Fs$;
\item the radii of $\Fs$ are spectral non-solvable at the points 
of the boundary $\partial X$ of $X$.
\end{enumerate}
Then the de Rham cohomology of $\Fs$ is finite-dimensional. 
\end{thm-intro}



Remark that the finiteness of $S$ implies that the curve $X$ has 
finite genus.
The non-Liouville condition is technical. By results of 
Christol and 
Mebkhout (see \cite{Ch-Me-III}, \cite{Ch-Me-IV}),
it ensures finite-dimensionality of the \emph{local cohomology}. 
The strategy of the proof of Theorem~\ref{THM-INTRO-1} basically 
consists in using this fact together with the finiteness of the 
triangulation in order to construct a \emph{finite} open covering 
of $X$ whose elements have finite cohomology. 
The result then follows from Mayer-Vietoris exact sequence.


Christol and Mebkhout's result is actually more precise: they prove a Grothendieck-Ogg-Shafarevich formula for the local index. We are also able to prove a global version of it.

%

\begin{thm-intro}[cf. Corollary \ref{cor:GOS}]\label{THM-INTRO-2}
Under the assumptions of Theorem \ref{THM-INTRO-1}, the index of $\Fs$ is expressed 
by the following formula of Grothendieck-Ogg-Shafarevich type:
\begin{equation}\label{intro GOS}
\chidr(X,\Fs)\;=\;\mathrm{rank}(\Fs)\cdot\chi_{c}(X) - 
\mathrm{Irr}_X(\Fs)\;.
\end{equation}
\end{thm-intro}

When~$K$ is algebraically closed, the quantity $\chi_{c}(X)$ is defined by
\begin{equation}\label{eq : calcul de l'indice -INTRO}
\chi_c(X)\;:=\;2-2g(X)-N(X)\;,
\end{equation}
where $g(X)$ is the genus of $X$ in the sense of \cite{Liu} 
and $N(X)$ is the number of germs of open segments in $X$ 
that are not relatively compact in $X$. We call it the ``\emph{open boundary}" of $X$ and denote it by~$\partial^o X$, as 
opposed to its ``\emph{closed boundary}'' $\partial X$.

The quantity $\mathrm{Irr}_X(\Fs)$ represents the global irregularity 
of $\Fs$. It is given by a sum of local terms. Some of these terms are 
the slopes of the radii at the open and closed boundaries of $X$.
The other local terms are related to the number of segments of 
$\Gamma_S$ that are incident upon the (closed) boundary of $X$ 
(cf. Definition \ref{Def : Global irreg}):
\begin{equation}\label{itro-04}
\mathrm{Irr}_X(\Fs)\;:=\;
\sum_{x\in\partial X} (
dd^c H_{S,r}(x,\Fs) + r \cdot \chi(x,S)) + 
\sum_{b\in \partial^o X}
\partial_bH_{\emptyset,r}(-,\Fs_{|\mathfrak{R}_b})\;,
\end{equation}
where $r$ is the rank of~$\Fs$, $H_{S,r}(x,\Fs)$ is the total height of the convergence Newton polygon, 
$\partial_b H_{\emptyset,r}(-,\Fs_{|\mathfrak{R}_b})$ is the slope along the direction $b$ 
of its restriction to the Robba ring $\mathfrak{R}_b$
and $dd^c H_{S,r}(x,\Fs)$ is its Laplacian at~$x$, \textit{i.e.} the sum of its slopes on all the directions out of~$x$. 
When~$K$ is algebraically closed, $\chi(x,S)$ is defined by
\begin{equation}
\chi(x,S) := 2 - 2g(x) - N_S(x)\;,
\end{equation} 
where~$N_S(x)$ is the number of germs of segments out of~$x$ that belong to~$\Gamma_S$. In general, one 
defines~$\chi(x,S)$ by summing up the contributions of the antecedents of~$x$ over a completed algebraic closure of~$K$.





If $X$ is relatively compact into a larger curve on which $\Fs$ is defined (for instance if~$X$ is compact and~$\Fs$ 
overconvergent), some assumptions are fairly automatic (see Corollary~\ref{Cor : relatively compact :: :: }). 
So we have a large class of equations that fulfill the theorem, namely those having over-convergent coefficients. 


In the general case, it is not possible to find a finite pseudo-triangulation that satisfy the hypotheses of the theorem, for instance if the radii of the equation present infinitely many breaks as one approaches the open boundary of~$X$. We are actually able to characterize the equations giving rise to finite cohomology.


%
%

Let us first give a few definitions. When~$K$ is algebraically closed and~$x$ is a point of type~2 or~3, we set
\begin{equation}
\chi(x,S,\Fs) := r\cdot\chi(x,S)+\sum_{b}\partial_bH_{S,r}(x,\Fs),
\end{equation}
where $b$ runs through the set of directions out of $x$ belonging to $\Gamma_S$. In general, one defines~$\chi(x,S,\Fs)$ by 
summing up the contributions of the antecedents of~$x$ over a completed algebraic closure of~$K$.

For every $y\in \partial X$, we denote by~$\Ds_{y}$ the set of connected components of $X - \{y\}$ that are isomorphic to virtual open disks with boundary~$y$. For such a virtual open disk~$D$, we denote by~$b_{D}$ the germ of segment out of~$y$ represented by~$D$. We set $\Ds_{\partial X} := \bigsqcup_{y\in \partial X} \Ds_{y}$.

An open pseudo-disk is a non-compact boundary-free 
contractible curve of genus~0 (see Definition~\ref{Def : 
Pseudo-disk}). This case can receive a direct treatment 
(see Proposition~\ref{prop:indexpseudodisk}). Remark 
that the projective analytic case reduces to the algebraic 
one, hence it is well-known.

%

\begin{thm-intro}
\label{THM-INTRO-3}
Assume that~$K$ is not trivially valued and that its 
residue characteristic is not~2. Let~$X$ be a connected 
quasi-smooth $K$-analytic Berkovich curve of finite 
genus that is neither a pseudo-disk nor a projective 
curve. Let~$\Fs$ be a differential equation on~$X$. 
Let~$S$ be a minimal triangulation of~$X$ that is 
adapted to~$\Fs$. Assume that
\begin{enumerate}
\item $\Fs$ is free of Liouville numbers along $\Gamma_S$;
\item the radii of $\Fs$ are spectral non-solvable at the points 
of the boundary $\partial X$ of $X$.
\end{enumerate}
Then the de Rham cohomology of~$X$ is finite-dimensional if, and only if, 
\begin{enumerate}
\item $\chi(-,S,\Fs) = 0$ at almost every point of~$S$;
\item for almost every~$D\in\Ds_{\partial X}$, we have $\partial_{b_{D}} H_{\emptyset,r}(-,\Fs_{|D}) = 0$,
\end{enumerate}
where~$r$ is the rank of~$\Fs$.
\end{thm-intro}

We refer to Corollary~\ref{Cor : MAIN COR} for a more precise statement.


The proof is a limit process originally coming from \cite{Ch-Me-III}. We write~$X$ as the union of a countable sequence of subspaces $(X_{n})_{n\ge 0}$ to which we can apply Theorem~\ref{THM-INTRO-1}. Under some extra hypotheses, we prove that, for $i=0,1$, we have
\begin{equation}
\Hdr^i(X,\Fs) \;=\; \varprojlim_n\Hdr^i(X_n,\Fs_{|X_n})\
\end{equation}	
(see Theorem~\ref{Thm . : X_n-->X}). A large part of the work is devoted to constructing a suitable sequence.


%

\medbreak

Along the paper, we also prove other results of independent interest. In particular, we deal with the technical problem of \emph{super-harmonicity} of the partial heights of the convergence Newton polygon. We know, since \cite{NP-I} and \cite{NP-III}, that there is potentially only a locally finite set $\C_{S,r}(\Fs)$ of pathological points in $X$ where super-harmonicity may fail. Here, we are able to prove super-harmonicity at those points too under some technical assumptions. More precisely, we have the following result, whose proof relies on Dwork's dual theory.

\begin{thm-intro}[cf. Theorem \ref{Thm : ddkgfuyg i separates ttthn ejl}]\label{THM-INTRO-4}
Let~$X$ be a quasi-smooth $K$-analytic curve.
Let $\Fs$ be a differential equation of rank $r$ on~$X$.
Let $x\in\C_{S,r}(\Fs)$, let $D_x$ be the virtual closed disk in 
$X-\Gamma_S$ with boundary $x$, and let $V=V_S(x,\Fs)$ be the union of all virtual open disks in $D_x$  
with boundary $x$ on which all the radii are constants. Assume that  
\begin{enumerate}
\item the canonical inclusion 
$\Hdr^0(D_x^\dag,\Fs)\subseteq \Hdr^0(V^\dag,\Fs)$ is an 
equality;
\item the radii of $\Fs$ are compatible with duals over $D_x$;
\item $\Fs$ is free of Liouville numbers at  $x$ (cf. 
Definition \ref{def. eqfree of LN}). 
\end{enumerate}
Then for all $i=1,\ldots,r$ the partial height 
$H_{S,i}(-,\Fs)$ is super-harmonic at $x$. 
\end{thm-intro}
We then deal with a result concerning the descent of the constants.
Indeed in order to apply Christol--Mebkhout's local cohomology results, as in the proof of Theorems~\ref{THM-INTRO-1} 
and~\ref{THM-INTRO-2}, one needs the base field~$K$ to be algebraically closed and maximally complete. To be able to 
obtain results that hold over any field, we had to be able to control extensions of scalars. In this direction, we proved the 
following result.

\begin{thm-intro}[cf. Corollary~\ref{cor:descent}]\label{THM-INTRO-5}
Let~$X$ be a quasi-smooth $K$-analytic curve. Let~$\Fs$ be a differential equation on~$X$. Let~$L$ be a complete non-trivially valued extension of~$K$. Assume that $\Hdr^1(X_{L},\Fs_{L})$ is finite-dimensional over~$L$. Then, we have natural isomorphisms 
\begin{equation}\label{eq:H^0H^1}
\Hdr^0(X,\Fs)\otimes_{K} L \;\xrightarrow{\;\sim\;}\; \Hdr^0(X_{L},\Fs_{L}) \quad\textrm{ and } \quad
\Hdr^1(X,\Fs)\otimes_{K} L \;\xrightarrow{\;\sim\;}\; \Hdr^1(X_{L},\Fs_{L}).
\end{equation}
In particular, $\Hdr^1(X,\Fs)$ is finite-dimensional over~$K$ 

Conversely, assume that~$K$ is not trivially valued and that $\Hdr^1(X,\Fs)$ is finite-dimensional over~$K$. Then, 
equation~\eqref{eq:H^0H^1} holds. In particular, $\Hdr^1(X_{L},\Fs_{L})$ is finite-dimensional over~$L$. 
\end{thm-intro}

\subsection*{Structure of the paper.}

In Section \ref{Secn1}, we provide basic definitions about curves and radii of convergence.

In Section \ref{Measure of the irregularity at a point}, 
we deal with the local cohomology of a differential 
equation at a Berkovich point. 
We generalize several local results to the 
non-solvable case. The finiteness of $\Gamma_S(\Fs)$ is 
systematically employed to reduce to the case of classical rigid 
cohomology over a tube $V_S(x,\Fs)$ canonically attached to $(x,\Fs)$, 
which is roughly the union of all the disks in $X$, with boundary point 
$x$, on which the radii of $\Fs$ are all constant. This section also contains Theorem~\ref{THM-INTRO-4} about super-harmonicity of the partial heights of the convergence Newton polygon.

In Section \ref{section : Global cohomology.}, we use the topological structure of the curve in order to globalize the results of Section~\ref{Measure of the irregularity at a point}. We prove Theorems~\ref{THM-INTRO-1}, \ref{THM-INTRO-2} and ~\ref{THM-INTRO-3} mentioned above.

Finally, Appendix~\ref{section:NF} is devoted to what we call ``\emph{Banachoid spaces}". They are spaces endowed with family of seminorms (which are really part of the data, unlike the case of Fr\'echet spaces). The theory is well-adapted to handle rings of global sections of sheaves on open sets. We spend some time defining and studying completed tensor products in this setting and prove results concerning extensions of scalars, like Theorem~\ref{THM-INTRO-5}.


\if{-----

-----

-----

-----

In this paper we give an exact description of the class of $p$-adic 
differential equations over Berkovich curves with \emph{finite 
dimensional} de Rham cohomology. 
Moreover, we obtain a \emph{global index formula} 
relating the index of the differential equation to its \emph{global 
irregularity}, that we define in term of the slopes of the global radii of 
convergence of the equation as defined in 
\cite{Balda-Inventiones}, \cite{Kedlaya-draft}, \cite{NP-I}, \cite{NP-II}, 
\cite{NP-III}. Surprisingly enough these results was unknown even in 
the basic case of differential equations over disks or annuli.

Major contribution to the finiteness of the de Rham cohomology in this 
context are 
\cite{Adolphson}, \cite{Dw}, \cite{Balda-Comparaison},
\cite{Ch-Me-I}, \cite{Ch-Me-II}, \cite{Ch-Me-III}, \cite{Ch-Me-IV}, 
\cite{Ro-I}, \cite{Ro-II}, \cite{RoIII}, \cite{RoIV}. 
For references concerning rigid cohomology related to the present 
paper we mention \cite{Crew-Finiteness}, and we refer to the introduction of 
\cite{Kedlaya-Finiteness}, and related bibliography. 

As firstly observed by 
Bernard Dwork (cf. \cite{Dw}), and largely exploited by Philippe Robba 
(cf. \cite{RoIII}), the major tool 
for the study of $p$-adic differential equations is the radius of 
convergence of the differential equation. 
One of the crucial contributions of Robba has been to relate 
the index of a $p$-adic differential operator with rational coefficients 
to the slopes of the radius of convergence, as a function, 
by means of a 
Grothendieck-Ogg-Shafarevich formula 
(cf. \cite{RoIII} and \cite{RoIV}). 
The program indicated by Robba 
has been subsequently completed 
by Gilles Christol and Zoghman Mebkhout in the framework of rigid 
cohomology
(cf. \cite{Ch-Me-I}, \cite{Ch-Me-II}, \cite{Ch-Me-III}, \cite{Ch-Me-IV}). 

From the point of view of rigid cohomology, differential 
equations are a category of coefficients for a good cohomological 
theory of an algebraic variety of characteristic $p>0$.

The approach of this paper (and also \cite{NP-I}, \cite{NP-II}, 
\cite{NP-III}) is different.   
We deal with the de Rham cohomology of \emph{any} locally free 
$\O_X$-module $\Fs$ endowed with a connection $\nabla$, where $X$ 
is a quasi-smooth $K$-analytic Berkovich curve. 
We call the couple $(\Fs,\nabla)$ a differential equation over $X$. 

This category of differential equations is abelian, 
and it covers the class of differential equations 
studied in the rigid cohomology. 
Namely the equations coming from rigid 
cohomology are subjected to some conditions: 
\begin{enumerate}
\item They always have \emph{overconvergent coefficients}. 
In the sense of Berkovich this means that $X$ is a compact curve
embedded into a 
projective curve $\overline{X}$, and that $\Fs$ is defined over an 
unspecified open neighborhood $U$ of $X$ in $\overline{X}$.

\item By the fact that they have a Frobenius action, 
they are subjected to certain \emph{solvability} conditions 
about their radii of convergence at the boundary of $X$. This means 
that the radii of convergence of their solutions are all maximal at these 
points, and hence on the whole curve $X$. 
The only region where these radii 
are possibly not maximal is along  $U-X$ i.e. along 
the ``overconvergent boundary'' of $X$.
\end{enumerate}
We remove  both these assumptions from the picture, 
and we work with 
general differential equations over $X$. 

These equations have a further geometrical 
datum which is (almost) trivial for equations coming from rigid 
cohomology: a controlling graph 
\begin{equation}
\Gamma_S(\Fs)\;\subset\; X
\end{equation}
inside the Berkovich curve $X$. Roughly speaking this is defined as the 
locus of points that do not belong to any open disk on which 
the radii of convergence are all constant functions.\footnote{In this 
introduction we remove from the picture the complications arising from 
the choice of the triangulation $S$. We only give the general image.}
In \cite{NP-I} and \cite{NP-II}
we have proved that $\Gamma_S(\Fs)$ is a locally finite connected 
graph such that $X-\Gamma_S(\Fs)$ is a disjoint union of open disks, 
and that the radii of convergence are all 
continuous functions on $X$ having the property that they are  
constant on the connected components of 
$X-\Gamma_S(\Fs)$.

Recently, Francesco Baldassarri established a link between the 
controlling graph and the de Rham cohomology, 
\smallcomment{conjecturing an equivalence between the finite 
dimensionality of the de} \phantom{ }%
\smallcomment{Rham cohomology and the finiteness of the 
controlling graph}. 
Over the last years, he gave several 
talks on the subject and outlined a natural strategy to obtain a proof: 
cutting the curve into simpler pieces according to the controlling graph, 
computing their cohomology by Christol-Mebkhout's theorems and 
putting everything back together thanks to a spectral sequence. 
He also explained that index formulas could be derived in this way, and 
that the local irregularities at a Berkovich point were to be interpreted 
as Laplacians of radii of convergence.

Several steps towards the completion of this program have been obtained recently in  
\cite{NP-I}, \cite{NP-II}, \cite{NP-III}, \cite{Kedlaya-draft}. 
The major technical difficulty was to prove the local finiteness of the 
controlling graph, obtained firstly in \cite{NP-I} and \cite{NP-II}, and 
then reproved in \cite{Kedlaya-draft} with a 
shorter proof based on similar methods.\footnote{We recall that we 
use the finiteness of controlling graph to prove the continuity of the 
radii (cf. \cite{NP-I}, \cite{NP-II}), and that the individual continuity 
of the first radius has been previously obtained by Lucia Di Vizio and 
Baldassarri \cite{DV-Balda}, and completed by Baldassarri 
\cite{Balda-Inventiones}.}
Another point is represented by the local decomposition theorems 
\cite{Dw-Robba}, \cite{Ch-Me-III}, 
\cite{Kedlaya-book}, \cite{Kedlaya-draft}, \cite{NP-III}.
A further important development is the correct 
definition of $\Gamma_S(\Fs)$ 
together with its fundamental properties 
\cite{NP-I}, \cite{NP-III}. Indeed the structure of $\Gamma_S(\Fs)$ 
carries with it several informations about the structure of $X$. 

\comment{
The conjecture of Baldassarri is false in general: there are differential 
equations with infinite skeleton having finite dimensional cohomology.}

In this paper we prove the precise relation between the finiteness of 
the controlling graph and the finiteness of the de Rham cohomology 
for arbitrary curves without boundary (under a mild finiteness 
assumption), and even with boundary under a non-solvability 
assumption. 
Moreover we establish a Grothendieck-Ogg-Shafarevich formula of 
\emph{global nature} (cf. Theorems \ref{THM-INTRO-1}, 
\ref{Intro -Thm.2}, \ref{intro-Thm : reciprocal} below), expressing the 
index of $\Fs$ in terms of a certain  
Euler-Characteristic $\chi(X)$ of $X$ of a \emph{topological 
nature}\footnote{Of course
it coincides with the index of the trivial equation over $X$, under 
appropriate conditions (e.g. if $X$ has no boundary, finite genus).}, 
and the \emph{global irregularity} of $\Fs$. This last 
involves the slopes of the height of the convergence Newton polygon 
along some segments at the boundary of 
$X$ (cf. \eqref{itro-04}), together with some further data related to 
the boundary of $X$.

\comment{POUR JÃÂÃÂ©rÃÂÃÅme : Ci plus haut j'ai ajoutÃÂÃÂ© une phrase en rouge 
sur Baldassarri. On a longuement discutÃÂÃÂ© sur ce pÃÂÃÂ©riode, donc je n'ai 
pas osÃÂÃÂ© le changer. Toutefois je sens le besoin de 
marquer le fait que Baldassarri a conjecturÃÂÃÂ© le faux. 
Ensuite j'explique que nous on corrige sa 
conjecture et on dÃÂÃÂ©montre le bon ÃÂÃÂ©noncÃÂÃÂ©.
Je sais qu'il va rÃÂÃÂ¢ler. Qu'en penses tu ?

Note que dans ce qu'on a ÃÂÃÂ©crit originairement on a ÃÂÃÂ©crit qu'il donne 
une idÃÂÃÂ©e de preuve, mais on a oubliÃÂÃÂ© de dire qu'est-ce qu'il voulait 
dÃÂÃÂ©montrer. Personnellement j'aimerais maintenant dire qu'il a 
conjecturÃÂÃÂ© finitude du graphe ÃÂÃÂ©quivaut ÃÂ  finitude dimensionnelle. 
Mais si tu vois une maniÃÂÃÂ¡re plus gentil de poser la chose, c'est mieux 
...
Je crois quand mÃÂÃÂªme que ÃÂ  ce point il faut dire les choses comme elle 
sont, sans trop supposer que les gens sachent ce qui s'est passÃÂÃÂ© ou ce 
que B. a conjecturÃÂÃÂ©. Dans 30 ans plus personne sera la pour raconter 
cette histoire et seulement les lignes qu'on va ÃÂÃÂ©crire la vont dÃÂÃÂ©finir la 
rÃÂÃÂ©alitÃÂÃÂ© ...}
We now enter more specifically in the context of this paper.

\if{As showed by Robba the variation of the radii (i.e. their slopes) 
along $\Gamma_S(\Fs)$ is highly related to the dimension of the local 
and global de Rham cohomology of $\Fs$. 
}\fi
In the case of differential 
equations coming from rigid cohomology 
the information is all contained 
at the ``overconvergent boundary'' of $X$. 
In the general case we have an extremely richer situation, because the 
graph $\Gamma_S(X)$, containing the information, is much more 
complex.

As firstly observed by Robba \cite{Ro-I} and Dwork-Robba 
\cite{Dw-Robba}, if we are in a neighborhood of 
a Berkovich point $x\in X$, then $\Fs$ splits locally at $x$, 
into a decomposition separating its solvable radii from the smaller 
one. In this decomposition  the 
local de Rham cohomology of the non solvable part  is zero: 
``\emph{non solvable differential equation locally do not have 
cohomology}'' (cf. \cite{Ro-I}). 
\if{This is, in fact, one of the motivations of the solvability assumption 
in rigid cohomology: ``\emph{non solvable differential equation locally 
do not have cohomology}''. }\fi

On the other hand overconvergence is unavoidable because  
solvable equations over a curve with boundary 
usually have infinite dimensional de Rham 
cohomology. As an example 
the derivation acting on the ring of functions 
over a closed disk has infinite dimensional cokernel, 
since the formal primitive of a function fails to converge on the closed 
disk.
But if one restricts  the study to differential equations with no solvable 
radii at the boundary of $X$, 
then overconvergence is unnecessary to have finite dimensionality. 
So we allow the boundary in our setting under a non solvability 
assumption on it. 

Now consider the equation $\Fs$ from a global point of view over the 
Berkovich curve $X$. 
Even though its radii are not solvable at the boundary of $X$, its 
cohomology is highly non trivial.
We prove in fact that its global index takes into account the local 
indexes of $\Fs$ at the boundary points of $\Gamma_S(\Fs)$, and at  
the points $x$ of $\Gamma_S(\Fs)$ where some of the radii are 
solvable and have some break at $x$. 
We are hence induced to give the following 
\begin{def-intro}[cf. Def. 
\ref{Def. Essentially finite graph}]
\label{intro-def ess fin}
We say that $\Gamma_S(\Fs)$ is \emph{essentially 
finite}  if it contains 
only finitely many points $x$ such that at least one of the following 
condition is realized:
\begin{enumerate}
\item some of the radii have a break 
at $x$,
\item $x\in \partial X$,
\item $x$ is a point of positive genus,
\item  is a bifurcation point of $\Gamma_S(X)$.
\end{enumerate} 
\end{def-intro}
We then obtain the following result relating 
the essential finiteness of $\Gamma_S(\Fs)$ 
to the finiteness of the dimension of the de Rham cohomology:
\begin{thm-intro}[cf. 
Thm. \ref{Thm : GLOBAL FINITENESS OF COHOMOLOGY}]
\label{THM-INTRO-1}
Le $X$ be  a quasi-smooth $K$-analytic Berkovich curve, and
let $\Fs$ be a differential equation over $X$. Assume that 
\begin{enumerate}
\item $\Fs$ is free of Liouville numbers along $\Gamma_S$,\smallcomment{dire chi ÃÂÃÂ¡ $\Gamma_S$?}
\item $\Gamma_S(\Fs)$ is essentially finite,
\item The radii of $\Fs$ are spectral non solvable at the points 
of the boundary $\partial X$ of $X$.
\end{enumerate}
Then the de Rham cohomology of $\Fs$ is finite dimensional. 
\end{thm-intro}
The Liouville condition means that the restriction of $\Fs$ to all 
germ of open annuli in $X$ (i.e. Robba ring) 
is free of Liouville numbers in the sense of 
Christol-Mebkhout (cf. \cite{Ch-Me-I}, \cite{Ch-Me-II}, \ldots). 
We prove that it is enough to test the Liouville condition 
on a locally finite family of annuli with skeletons in $\Gamma_S(\Fs)$ 
(cf. Lemma \ref{Lemma : NL is loc fin}).   

We notice that if $\Gamma_S(\Fs)$ is essentially finite, then it is 
\emph{topologically finite} (i.e. $\Gamma_S(\Fs)$ is a finite union of 
intervals), but not necessarily \emph{finite as a graph} (i.e. 
having a finite number of edges). 

This assumption moreover implies that $X$ is a curve with finite genus 
in the sense of Q. Liu \cite{Liu}, so that $X$ is either projective or 
\QS\!\!. The projective case is well known, so we focus on the \QS 
case.

\begin{thm-intro}[cf. 
Thm. \ref{Thm : GOS global}]\label{Intro -Thm.2}
Under the assumptions of Theorem \ref{THM-INTRO-1}, 
if $X$ is not projective, then the index of $\Fs$ is expressed 
by the following formula of Grothendieck-Ogg-Shafarevich type:
\begin{equation}\label{intro GOS}
\chidr(X,\Fs)\;=\;\mathrm{rank}(\Fs)\cdot\chi(X) - 
\mathrm{Irr}_X(\Fs)\;.
\end{equation}
\end{thm-intro}
In this formula we have
\begin{equation}\label{eq : calcul de l'indice -INTRO}
\chi(X)\;:=\;2-2g(X)-N(X)\;.
\end{equation}
where $g(X)$ is the genus of $X$ in the sense of \cite{Liu}, 
and $N(X)$ is a topological invariant of $X$ which roughly 
represents the maximal number of germs of open segments in $X$ 
that are not relatively compact in $X$. 
We call it the ``\emph{open boundary of $X$}'' 
(see section \ref{Global measure of the irregularity for equations 
with finite controlling graphs}).

The quantity $\mathrm{Irr}_X(\Fs)$ represents the global irregularity 
of $\Fs$. It is given by a sum of local terms: part of these terms are 
the slopes of the radii at the open an closed boundaries of $X$, here 
noted by  $\mathrm{seg}(\SF)$, in 
analogy with the index formula of Christol-Mebkhout. 
The other local terms are related to the number of segments of 
$\Gamma_S(\Fs)$ that are incident upon the (closed) boundary of $X$ 
(cf. Definition \ref{Def : Global irreg}):
\begin{equation}\label{itro-04}
\mathrm{Irr}_X(\Fs)\;:=\;
\Bigl(\sum_{x\in\partial X}
\chi(x,\SF)\Bigr)\cdot \mathrm{rank}(\Fs)-
\sum_{b\in \mathrm{seg}(\SF)}
\partial_bH_{\emptyset,r}(-,\Fs_{|\mathfrak{R}_b})\;.
\end{equation}
The proof of Theorem \ref{THM-INTRO-1} goes as follows. 
From the essential finiteness of $\Gamma_S(\Fs)$ 
we construct an \emph{finite} open covering of $X$, 
where the cohomology if finite dimensional, 
then we inductively apply a Mayer-Vietoris lemma to deduce the finite 
dimensionality of the global de Rham cohomology. 

The proof of Theorem \ref{Intro -Thm.2} 
results from an analysis of the 
$\check{\mathrm{C}}$ech resolution of $\Fs$, of such a covering.   
The only terms of the covering that contribute to the index are those 
open subsets that are small neighborhoods of the points having at least 
one of the properties listed in Definition \ref{intro-def ess fin}.\\

If $X$ is compact, or more generally if $X$ is relatively compact into a 
larger curve on which $\Fs$ is defined, the essential finiteness of 
$\Gamma_S(\Fs)$ is automatic (as a consequence of \cite{NP-I}, and 
\cite{NP-II}). In particular this is the case of rigid cohomology, by the 
overconvergence assumption. 
So we have a large class of equations that fulfill the theorem. 

In the general case, there are several differential equations for which 
$\Gamma_S(\Fs)$ is not essentially finite. 
The radii of these equations presents infinitely many breaks as one 
approaches the ``\emph{open boundary}'' of $X$. 
These equations do not admits a \emph{finite} covering on which 
the cohomology is finite, so a straightforward application 
of Mayer-Vietoris Lemma is not possible. 

\comment{However their cohomology can be finite dimensional. 
We obtain the following general statements providing the exact 
condition to have finite dimensionality: in the case of curves with 
topologically finite skeleton Theorem 
\ref{THM-INTRO-1} admits 
Theorem \ref{intro-Thm : reciprocal} as a reciprocal; 
while Theorem 
\ref{THM-INTRO : ???} if the general statement for general 
curves with arbitrary skeleton.}

As a converse of Theorem \ref{THM-INTRO-1} 
we provide criteria to prove that their 
cohomology is actually \emph{infinite dimensional}.

\begin{thm-intro}[cf. Thm. \ref{Thm : reciprocal}]
\label{intro-Thm : reciprocal}
Assume that $X$ is not 
projective, that it has finite genus $g(X)$, and that it admits a weak 
triangulation $S$ whose skeleton $\Gamma_S$ is topologically finite 
(cf. Def. \ref{Def top fin fin as gr}).
Let $\Fs$ be a differential equation free of Liouville numbers along $\Gamma_S$, \smallcomment{rÃÂÃÂ©-ÃÂÃÂ©crire? dire chi est $\Gamma_S$?}
with no solvable radii at the boundary $\partial X$ of $X$.
The following conditions are equivalent:
\begin{enumerate}
\item $\Gamma_S(\Fs)$ is essentially finite;
\item the de Rham cohomology of $\Fs$ is finite dimensional;
\item for all germ of segment $b$ at the open boundary of $X$, the 
radii of $\Fs$ have a finite number of breaks along $b$.
\end{enumerate}
\end{thm-intro}
\comment{Ajouter un thÃÂÃÂ©orÃÂÃÂ¡me gÃÂÃÂ©nÃÂÃÂ©ral ici ?}

The proof is a limit process coming from \cite{Ch-Me-III}.
We approach $X$ by a countable sequence 
$X_1\subseteq X_2\subseteq\cdots \subseteq X$ of affinoid domains in 
$X$, and we prove a limit formula
\begin{equation}
\chidr(X,\Fs)\;=\;\lim_n\chidr(X_n,\Fs_{|X_n})\;.
\end{equation}
More precisely we have 
\begin{equation}
\Hdr^i(X,\Fs)\;=\;
\varprojlim_n\Hdr^i(X_n,\Fs_{|X_n})
\end{equation}	
and the maps $\Hdr^i(X,\Fs)\to\Hdr^i(X_n,\Fs_{|X_n})$ are all 
surjective for all $n$ large enough. 
So the de Rham cohomology of 
$X$ is finite dimensional if and only if the sequence of dimensions of 
$\Hdr^i(X_n,\Fs_{|X_n})$ stabilizes for all $n$ large enough. \\

The proof of these results (even locally around a point) are not 
straightforward consequence of the finite dimensionality 
of rigid cohomology. Indeed essential ingredients are 
the finiteness results of \cite{NP-I}, \cite{NP-II}, that provide the 
existence of affinoid with prescribed properties, and the decomposition 
of \cite{NP-III}. 

\subsection*{Structure of the paper.}
In section \ref{Secn1}, 
we provide basic definitions about curves, 
and radii of convergence.

In section \ref{Measure of the irregularity at a point} 
we deal with the local cohomology of a differential 
equation at a Berkovich point. 
We generalize several local results to the 
non solvable case. The finiteness of $\Gamma_S(\Fs)$ is 
systematically employed to reduce to the case of classical rigid 
cohomology over a tube $V_S(x,\Fs)$ canonically attached to $(x,\Fs)$, 
which is roughly the union of all the disks in $X$, with boundary point 
$x$, on which the radii of $\Fs$ are all constant. 

In section \ref{Measure of the irregularity at a point} 
we also deal with the technical problem of 
\emph{super-harmonicity} of the partial heights of the 
convergence Newton polygon. 
We know since \cite{NP-I} and 
\cite{NP-III}, that there are potentially a 
locally finite set $\C_{S,r}(\Fs)$ of pathological points in $X$ such that 
\emph{super-harmonicity} fails. 
The super-harmonicity at these points is important because 
it improves the \emph{global bound} on the size of $\Gamma_S(\Fs)$, 
obtained in \cite[Cor. 7.2.5]{NP-III} 
(cf. Cor. \ref{COR: BOUND}). We here obtain such a super-harmonicity 
at the pathological points under some technical assumptions. More 
precisely we have the following result, whose central point of the proof 
is Dwork's dual theory :

\begin{thm-intro}[cf. Thm. \ref{Thm : ddkgfuyg i separates ttthn ejl}]
Assume that the residual field of $K$ has characteristic $p>0$.
Let $\Fs$ be a differential equation over $X$ or rank $r$.
Let $x\in\C_{S,r}(\Fs)$, let $D_x$ be the closed disk in 
$X-\Gamma_S$ with boundary $x$, and let $V=V_S(x,\Fs)$. 
Assume that  
\begin{enumerate}
\item the canonical inclusion 
$\Hdr^0(D_x^\dag,\Fs)\subseteq \Hdr^0(V^\dag,\Fs)$ is an 
equality;
\item the radii of $\Fs$ are compatible with duals;
\item $\Fs$ is free of Liouville numbers at  $x$ (cf. 
Def. \ref{def. eqfree of LN}). 
\end{enumerate}
Then for all $i=1,\ldots,r$ the partial height 
$H_{S,i}(-,\Fs)$ is super-harmonic at $x$. 
\end{thm-intro}

In section \ref{section : Global cohomology.} 
we deal with global cohomology. \\
}\fi
\medskip

\if{
Following an original idea of Dwork, if a differential equation has two 
solutions with different radii of convergence, 
then it should correspond to a decomposition of the equation.
Decomposition theorems are the central tool in the classification of 
$p$-adic differential equations.  
As an example they are the first step to 
obtain the $p$-adic local monodromy theorem \cite{An}, 
\cite{Ked}, \cite{Me}. 

The main contributions to the decomposition results 
are due to Robba \cite{Ro-I}, \cite{Robba-Hensel-original},
\cite{Robba-Hensel},
Dwork-Robba \cite{Dw-Robba}, 
Christol-Mebkhout \cite{Ch-Me-III}, \cite{Ch-Me-IV}, Kedlaya 
\cite{Kedlaya-book}, \cite{Kedlaya-draft}. 
We also recall \cite[p. 97-107]{Correspondance-Malgrange-Ramis}, 
\cite{Levelt}, and \cite{Ramis-Devissage-Gevrey} for the 
decomposition by the 
\emph{formal} slopes of a differential equation 
over the field of power series $K((T))$ (cf. Section \ref{Formal differential equations}).

There are very few examples of global nature 
of such decomposition theorems by the radii 
(mainly on annuli or disks), and all with restrictive 
assumptions (as an example see \cite[Ch.12]{Kedlaya-book}, 
\cite[5.4.2]{Kedlaya-draft}). As a matter of fact, a large part of the 
literature is devoted to the following two cases: 
differential equations defined over a germ of punctured disk, or 
over the Robba ring. In the language of Berkovich curves this 
corresponds respectively to a germ of segment out 
of a rational point, or a germ of segment out of a point of type $2$ or 
$3$. Except in those two situations, there is a lack of results.

We present here a general 
decomposition theorem of a global nature in the framework of 
Berkovich smooth curves, that works without any technical assumption 
(solvability, exponents, Frobenius, harmonicity, ...).

A theoretical point of a crucial importance is the definition itself 
of the radii of convergence. 
The former definition of radii relates them to 
the spectral norm of the connection. 
This (partially) fails in Berkovich geometry 
because there are solutions converging more than the natural bound 
prescribed by the spectral norm.
%
Here we deal with a more geometrical definition of the radii due to F. Baldassarri in 
\cite{Balda-Inventiones}, following the ideas of \cite{Dv-Balda}. 
He improves the former definition by introducing 
\emph{over-solvable radii}, 
i.e. radii that are larger than the spectral bound (cf. 
\eqref{eq : spectral - solvable - oversolvable}).  
These radii are not intelligible in terms of spectral norm even if the 
point is of type $2$, $3$, or $4$. 
Moreover he ``\emph{normalizes}'' the usual spectral radii of 
convergence, with respect to a semi-stable formal model of the curve. 
He is also able to prove the continuity of the 
smallest radius. 
In \cite{NP-II} we introduce in that 
picture the notion of \emph{weak triangulation} as a substitute of 
Baldassarri's semi-stable model. 
In fact 
such a semi-stable 
model produces a (non weak) triangulation 
(see \cite{Duc} for instance). 
This permits to generalize the definition to a larger class of curves, and 
it has the advantage for us of being completely within the framework of 
Berkovich curves. 
Exploiting this point of view, we have proved in 
\cite{NP-I} and \cite{NP-II} that there exists a 
locally finite graph outside which the radii are all locally constant, 
as firstly conjectured by Baldassarri.  
This proves that there are 
relatively  few numerical invariants of the equation encoded in the 
radii of convergence. The finiteness theorem have been 
proved in \cite{NP-II} by recreating the notion of \emph{generic disk} 
in the framework of Berkovich curves, as in the very original point of 
view of Bernard Dwork and Philippe Robba. 
The global decomposition theorem presented here
enlarges the picture, it makes evident that Baldassarri's idea for the 
definition of the radii is the good one, and gives to it a more operative 
meaning.\\
\if{
Here we add another result to the picture by providing a global 
decomposition theorem that makes evident that Baldassarri's 
setting is the good one, and gives to it a more operative meaning.
}\fi

\if{
With this in mind in this paper we obtain our global 
decomposition by \emph{augmenting} and then \emph{gluing} a local 
decomposition over the local ring $\O_{X,x}$ by the radii. 
Such a decomposition generalizes that obtained by 
Dwork and Robba in a Berkovich neighborhood of a point of the affine 
line \cite{Dw-Robba}. \\
}\fi

We come now come more specifically into the content of the paper. Let 
$(K,|.|)$ be a complete ultrametric valued field of characteristic $0$. 
Let $X$ be a quasi-smooth $K$-analytic curve, in the sense of 
Berkovich theory\footnote{Quasi-smooth means that 
$\Omega_{X}$ is locally free, see \cite[2.1.8]{Duc}. This 
corresponds to the notion called ``rig-smooth'' in the rigid analytic 
setting.}. We assume without loss of generality that $X$ is connected. 
Let $\Fs$ be a locally free $\O_X$-module of finite rank $r$ 
endowed with an integrable connection $\nabla$. 

In \cite{NP-II} we  
explained how to associate to each point $x\in X$ the so called  
\emph{convergence Newton polygon of $\Fs$ at $x$}. Its 
slopes are the logarithms of the radii of 
convergence $\R_{S,1}(x,\Fs)\leq\cdots\leq\R_{S,r}(x,\Fs)$ of a 
conveniently chosen basis of solutions of $\Fs$ at $x$. 
Here $S$ is a weak triangulation.

Following \cite{NP-I}, we then define for all 
$i\in\{1,\ldots,r=\mathrm{rank}(\Fs)\}$ a 
\emph{locally finite} graph $\Gamma_{S,i}(\Fs)$, as the locus of 
points that do not admit a virtual open disk in $X-S$ 
as a neighborhood on which 
$\R_{S,i}(-,\Fs)$ is constant. 

We say that the index $i$ separates 
the radii (globally over $X$) if for all $x\in X$ one has 
\begin{equation}
\R_{S,i-1}(x,\Fs)\;<\;\R_{S,i}(x,\Fs)\;.
\end{equation}
\begin{thm-intro}[cf. Theorem \ref{MAIN Theorem} and Proposition 
\ref{Prop. : independence on S}]
\label{Intro : Thm : Main}
If the index $i$ separates the radii of $\Fs$, 
then there exists a sub-differential 
equation
$(\Fs_{\geq i},\nabla_{\geq i})\subseteq(\Fs,\nabla)$ of rank 
$r-i+1$ together with an exact sequence
\begin{equation}\label{eq : sequence intro}
0\to\Fs_{\geq i}\to\Fs\to\Fs_{<i}\to 0
\end{equation}
such that for all $x\in X$ one has 
\begin{equation}
\R_{S,j}(x,\Fs)\;=\;\left\{\begin{array}{lcl}
\R_{S,j}(x,\Fs_{<i})&\textrm{ if }&j=1,\ldots,i-1\\
\R_{S,j-i+1}(x,\Fs_{\geq i})&\textrm{ if }&j=i,\ldots,r\;.\\
\end{array} \right.
\end{equation}
Moreover $\Fs_{\geq i}$ is independent on $S$, in the sense that
if $i$ separates the radii of $\Fs$ with respect to another weak 
triangulation $S'$, then the resulting sub-object $\Fs_{\geq i}$ is the 
same.
\end{thm-intro}
In section \ref{An explicit counterexample.} 
we provide an explicit example where \eqref{eq : sequence intro} does 
not split. 
In section \ref{Conditions to have a direct sum decomposition} 
we provide criteria to guarantee that $\Fs_{\geq i}$ 
is a direct summand of $\Fs$. More precisely we have the following
\begin{thm-intro}[cf. Theorems 
\ref{Thm : 5.7 deco good direct siummand} and 
\ref{Thm : criterion self direct sum}]
\label{Intro : Thm : direct sum}
In each one of the following two situations $\Fs_{\geq i}$ is a direct 
summand of $\Fs$:
\begin{enumerate}
\item The index $i$ separates the radii of $\Fs^*$ and of $\Fs$, and  
$(X,S)$ is either different from a virtual open disk with empty 
triangulation, or, if $X$ is a virtual open disk $D$ with empty 
triangulation, there exists a point $x\in D$ such that 
one of $\R_{\emptyset,i-1}(x,\Fs)$ or $\R_{\emptyset,i-1}(x,\Fs^*)$ 
is spectral at $x$.

\item One has $\Bigl(\Gamma_{S,1}(\Fs)\cup\cdots\cup
\Gamma_{S,i-1}(\Fs)\Bigr)\subseteq\Gamma_{S,i}(\Fs)$.
\end{enumerate}
In both cases $(\Fs^*)_{\geq i}$ is isomorphic to 
$(\Fs_{\geq i})^*$, and it is 
also a direct summand of $\Fs^*$.
\end{thm-intro}
In section \ref{An operative description of the controlling graphs} 
we provide conditions to describe the controlling graphs, and in 
particular to fulfill the assumptions of 
Thm. \ref{Intro : Thm : direct sum}. 

Condition ii) of Thm. \ref{Intro : Thm : direct sum} implies i), 
and it has the advantage that it involves 
only the radii of $\Fs$. Nevertheless i) is more natural, and general, 
by the following reason. If one is allowed to choose 
arbitrarily the weak triangulation, then to guarantee the existence of 
$\Fs_{\geq i}$ one has to choose it as small as possible, and 
the same is true for condition i). 
On the other hand to fulfill ii) one is induced to choose it quite large. 
For more precise statements see Remarks 
\ref{Remark : changing tr F_S,I} and \ref{Remark : bad condition}.
%
%

As a corollary we obtain the following:

\begin{cor-intro}[cf. \protect{Cor. \ref{Prop : Crossing points shsh}}]\label{cor-intro : filtration intro}
Let $\Fs$ be a differential equation over $X$. 
There exists a locally finite subset $\mathfrak{F}$ of $X$ such that, if 
$Y$ is a connected component of $X-\mathfrak{F}$, then:
\begin{enumerate}
\item For all $i< j$ one has either 
$\R_{S,i}(y,\Fs)=\R_{S,j}(y,\Fs)$ for all $y\in Y$, or 
$\R_{S,i}(y,\Fs)<\R_{S,j}(y,\Fs)$ for all $y\in Y$. 
\item Let $1=i_1<i_2<\ldots<i_h$ be the indexes separating the 
global radii $\{\R_{S,i}(-,\Fs)\}_i$ over $Y$. Then one has a filtration
\begin{equation}\label{eq: intro: - filtration}
0\;\neq\;(\Fs_{|Y})_{\geq i_h}\;\subset\;
(\Fs_{|Y})_{\geq i_{h-1}}\;\subset\;
\cdots\;\subset\;(\Fs_{|Y})_{\geq i_1}\;=\;\Fs_{|Y}\;,
\end{equation}
such that the rank of $(\Fs_{|Y})_{\geq i_k}$ is $r-i_k+1$ and its 
solutions at each point of $Y$ are the solutions of $\Fs$ with radius 
larger than $\R_{S,i_k}(-,\Fs)$.
\end{enumerate}
\end{cor-intro}
Note that we did not endow $Y$ with a weak triangulation, 
and that the radii of the Corollary 
\ref{cor-intro : filtration intro} are 
those of $\Fs$ viewed as an equation over $X$. 
In section \ref{Crossing points and global decomposition.} 
we also provide conditions making \eqref{eq: intro: - filtration} 
a graduation.

A direct corollary of the above results is the Christol-Mebkhout 
decomposition over the Robba ring \cite{Ch-Me-III} 
(cf. Section \ref{Some interesting particular cases (annuli).}). 
Another corollary is the following 
classification result:

\begin{thm-intro}[cf. Cor. \ref{Cor Ell curves}]
Assume that $X$ is either 
a Tate curve, 
or that $p\neq 2$ and $X$ is an elliptic curve with good reduction. 
Consider a triangulation $S$ of $X$ formed by an individual point. 
Let $\Fs$ be a differential equation over $X$ of rank $r$. Then
\begin{enumerate}
\item For all $i=1,\ldots,r$ the radius $\R_{S,i}(-,\Fs)$ 
is a constant function on $X$, and $\Gamma_{S,i}(\Fs)=\Gamma_S$;
\item One has a direct sum decomposition as 
\begin{equation}
\Fs\;=\;\bigoplus_{0<\rho\leq 1}\Fs^{\rho}\;,
\end{equation}
where $\R_{S,j}(-,\Fs^{\rho})=\rho$ for all $j=1,\ldots,
\mathrm{rank}\,\Fs^{\rho}$.
\end{enumerate}
\end{thm-intro}

The proof of the existence of $\Fs_{\geq i}$ 
(cf. Theorem \ref{MAIN Theorem}) is obtained as follows. 
Firstly, in section \ref{Robba-deco}, we prove a local decomposition
theorem by the spectral radii for differential modules over 
the field $\H(x)$, of a point $x$ of type $2$, $3$, or $4$ of a 
Berkovich curve. This generalizes to curves the classical  
Robba's classical decomposition theorem \cite{Ro-I}, originally proved 
for points of type $2$ or $3$ of the affine line.
Then, in section \ref{Dwork-Robba (local) decomposition}, 
we descends that decomposition to $\O_{X,x}\subseteq \H(x)$ 
(i.e. to a neighborhood of $x$ in $X$). 
This is a generalization to curves of  Dwork-Robba's 
decomposition result \cite{Dw-Robba}, originally proved for points of 
type $2$ or $3$ of the affine line. 
The language of generic disks introduced in \cite{NP-II} 
permits to extend smoothly these proofs to curves, 
up to minor implementations.

Such local decompositions are also present in \cite{Kedlaya-draft}, 
where one makes everywhere a systematic use of the spectral norm 
of the connection, as in \cite{Kedlaya-book}. 
Methods involving spectral norms work thank to the Hensel 
factorization theorem of \cite{Robba-Hensel}, 
and \cite[Lemme 1.4]{Ch-Dw}. 
We presents here the same local decomposition results as a 
consequence of the original, and more geometric, techniques of 
\cite{Ro-I} and \cite{Dw-Robba}.

Robba's and Dwork-Robba's local decompositions take in account 
only spectral radii, because over-solvable radii are not invariant by 
localization. 
For this reason Dwork-Robba's local decompositions 
(at the points of type $2$, $3$, and $4$) do not glue, 
and they do not give the global decomposition.
We then \emph{augment} the Dwork-Robba decomposition 
of the stalk $\Fs_x$ by taking in account the decomposition 
of the trivial submodule of $\Fs_x$ 
coming from the existence of solutions converging in a disk 
containing $x$, i.e. taking in account over-solvable radii. 
This augmented decomposition of $\Fs_x$ 
glues without any obstructions, 
and it provides the existence of $\Fs_{\geq i}$. 

In Theorem \ref{Intro : Thm : Main} we actually 
use the continuity of the radii $\R_{S,i}(-,\Fs)$ (cf. proof of Proposition 
\ref{Prop : extended by continuity}). 
Implicitly we also use the 
local finiteness of $\Gamma_{S,i}(\Fs)$, 
since for $i\geq 2$ the continuity is an indirect 
consequence of the finiteness (cf. \cite{NP-I} and \cite{NP-II}).\\

In the second part of the paper, we provide an operative description of
the controlling graphs, that is essential, for instance, to fulfill the
assumptions of the above theorems.
 
As a consequence we obtain, in section
\ref{section : explicit bound},
a bound on the number of edges of $\Gamma_{S,i}(\Fs)$.
We prove that this number is controlled by the knowledge of the slopes
of the radii at a certain \emph{locally finite} family of points, that are
roughly speaking those where the super-harmonicty fails.

If $X$ is a smooth geometrically
connected projective curve, then we find \emph{unconditional} 
bounds, in the sense that they depend only on the geometry 
of the curve and the rank of $\Fs$, not on the equation itself.

In particular, when $i=1$, we prove the following neat result:
\begin{thm-intro}[cf. Cor. \ref{cor:RS1edge}]\label{Thm-intro: key point}
Let $X$ be a smooth geometrically connected projective curve
of genus~$g\ge 1$. Let~$\NE_S$ be the number of edges
of the skeleton $\Gamma_{S}$ of the weak triangulation.
Then the number of edges of $\Gamma_{S,1}(\Fs)$ is at most
\begin{equation}
\NE_S + 4r(g-1)\;.
\end{equation}
\end{thm-intro}
Similar bounds are derived if $i\geq 2$.
The key point in the proof of Theorem \ref{Thm-intro: key point} is
the control of the \emph{locus of failure}
of the super-harmonicity property of
the partial heights of the convergence Newton polygon, as in
\cite{NP-I}.
In particular, in section \ref{Weak super-harmonicity of partial heights},
we use the local part of the decomposition theorem to reprove a formula of \cite[Thm. 5.3.6]{Kedlaya-book} describing
the Laplacian of the partial heights of the convergence
Newton polygon (cf. Thm. \ref{Prop : Laplacian}).
We extend it by taking into account solvable and
over-solvable radii.

We are also able to relate the failure of the locus of super-harmonicity 
to the existence of Liouville numbers. We obtain this by interpreting 
the index theorems of Christol Mebkhout \cite{Ch-Me-IV}, as a 
control about the default of harmonicity. 

In that picture, the solvable part of the equation at $x$ coincides with 
an overconvergent isocristal over a certain neighborhood of $x$. 
And its over convergent cohomology results related to the slopes of 
the radii along the germs of segments out of $x$, as in very original 
spirit of \cite{RoIV}.




\if{
Other results of this paper: 
\begin{itemize}
\item In section \ref{An operative description of the controlling graphs} 
we provide an operative description of 
the  controlling graphs. In particular we give explicit conditions to fulfill 
point ii) of Theorem \ref{Intro : Thm : direct sum}.

\item In section \ref{Weak super-harmonicity of partial heights}
we use the local part of the decomposition theorem to give another 
proof of a formula of \cite[Thm. 5.3.6]{Kedlaya-book} describing 
the Laplacian of the partial heights of the convergence 
Newton polygon (cf. Thm. \ref{Prop : Laplacian}). 
Our formula is a generalization since it takes in account solvable and 
over-solvable radii. 
This is a key point for Theorem \ref{Thm-intro: key point} below.

\item In section 
\ref{section : explicit bound}
we obtain a bound on the number of edges of $\Gamma_{S,i}(\Fs)$. 
This is controlled by the knowledge of the slopes of the radii at a 
locally finite family of points. 
In particular if $i=1$, and if $X$ is a smooth geometrically 
connected projective curve, then we have the following \emph{unconditional} 
bound, which is independent on the equation, in terms 
of the geometry of the curve and the rank of $\Fs$:
\end{itemize}
\begin{thm-intro}[cf. Cor. \ref{cor:RS1edge}]\label{Thm-intro: key point}
Let $X$ be a smooth geometrically connected projective curve 
of genus~$g\ge 1$. Let~$\NE$ be the number of edges 
of the skeleton $\Gamma_{S}$ of the weak triangulation. 
Then the number of edges of $\Gamma_{S,1}(\Fs)$ is at most 
\begin{equation}
\NE + 4r(g-1)\;.
\end{equation}
More precisely, the graph $\Gamma_{S,1}(\Fs)$ is build by attaching at 
most $r(2g-2)$ trees to~$\Gamma_{S}$, with at most $r(2g-2)$ edges 
in total.
\end{thm-intro}
\begin{itemize}

\item In section \ref{An explicit counterexample.} 
we give explicit examples of the incompatibility 
of over-solvable radii to exact sequence and duals.

\item We discuss the link with Grothendieck-Ogg-Shafarevich formula 
in the setting of Christol-Mebkhout, for solvable modules over a dagger disk.
We showing in \eqref{GOS-changed} that, the intrinsic radii describe directly 
$h^1(\M)$, while usual Christol-Mebkhout radii describe an index 
$h^0(\M)-h^1(\M)$.  

\item In the appendix \ref{Comparison with the general definition of radii} 
we discuss a more general definition of the radii 
and its relation with possibly non semi-steble models of $X$. 
We essentially show that the framework of weak triangulation is the 
more convenient one.
\end{itemize}
}\fi

\subsubsection*{NOTE.} 
This is a first draft, containing a maximum number of details. 
We plan to reduce its volume in a next version. 
We shall also improve the bounds of section 
\ref{section : explicit bound}, 
with further developments.


\subsection*{Structure of the paper.}
In Section \ref{Secn1} we recall some notations and basic results. 
In Section \ref{Radii and filtrations by the radii} 
we define the radii and their graphs together with 
their elementary properties.
In Section \ref{Robba-deco} we give Robba's local decomposition 
by the spectral radii over $\H(x)$, for a point of type $2$, $3$, or $4$. 
In Section \ref{Dwork-Robba (local) decomposition} 
we descend that decomposition to the local ring 
$\O_{X,x}\subseteq\H(x)$ following Dwork-Robba's original 
techniques.
In Section \ref{Proof of Main Th} we obtain the global decomposition 
Theorem \ref{MAIN Theorem}, and the criteria 
to have a direct sum decomposition (cf. Theorems 
\ref{Thm : 5.7 deco good direct siummand} and 
\ref{Thm : criterion self direct sum}).
In Section \ref{An operative description of the controlling graphs} 
we provide an operative description of the 
graphs $\Gamma_{S,i}(\Fs)$, together with the control of the failure 
of super-harmonicity using Christol-Mebkhout index theorems. 
In section \ref{section : explicit bound}
 we obtain the bound on the number of their edges, and 
the classification results for the elliptic curves (cf. Corollary 
\ref{Cor Ell curves}).
In Section \ref{An explicit counterexample.} we provide explicit 
counterexamples of the basic pathologies of over-solvable radii 
(incompatibility with duality, and exact sequence, a link between 
super-harmonicity property and presence of Liouville numbers, by 
means of the Grothendieck-Ogg-Shafarevich formula). 
In Appendix \ref{Comparison with the general definition of radii} we
discuss the definition of the radius.

}\fi

\subsubsection*{Acknowledgments.}
We thank Yves AndrÃÂÃÂ©, Francesco Baldassarri, Gilles 
Christol, Richard Crew, Kiran S. Kedlaya, Adriano 
Marmora, Nicola Mazzari, Zoghman Mebkhout, Bertrand 
ToÃÂÃÂ«n, and Nobuo Tsuzuki for helpful discussions.

\comm{Dans l'introduction j'aimerais expliquer plus prÃÂÃÂ©cisement les choses suivantes :

1) Si l'ÃÂÃÂ©quation $\Fs$ est finiment contrÃÂÃÅlÃÂÃÂ©e, alors 
$g(X)$ et $N(X)$ sont finis. Du coup on peut considerer 
$\chi_c(X)=2-2g(X)-N(X)$.\\

Par contre ce qu'on fait dans la section aprÃÂÃÂ¡s c'est de 
donner une condition nÃÂÃÂ©cÃÂÃÂ©ssaire et suffisante pour la 
finitude de la cohomologie, quand $g(X)$ est fini, mais 
$N(X)$ n'est pas forcement fini.\\

Dire que pour ces courbes (genre fini, mais squelette 
pas forcement fini) on a quand mÃÂÃÂªme une formule 
d'indice de la maniÃÂÃÂ¡re suivante : il existe un certain 
ouvert connexe de $X$ (qu'on peut calculer 
explicitement si l'on connait 
la fonction hauteur totale) tel que $g(U)=g(X)$ et 
$N(U)<+\infty$, tel que $\chi(U,\Fs)=\chi(X,\Fs)$.
(Notamment dans la preuve $U$ contient tous les points 
"critiques", et son squelette est contenu dans celui de 
$X$. Comme $U$ est ouvert, il contient toute l'ÃÂÃÂ©toile 
des germes de segments sortants de chacun de ses 
points, et cela devrait montrer que en prenant 
l'intersection avec $U$ on ne modifie pas les quantitÃÂÃÂ©s 
$\chi(S,x,\Fs)$).\\

J'aimerais ensuite dire juste un mot sur le fait que le 
passage ÃÂ  la limite pour les courbes de genre infini ne 
marche pas en gÃÂÃÂ©nÃÂÃÂ©ral essentiellement ÃÂ  cause de 
certaines conditions de densitÃÂÃÂ© ...\\

2) J'aimerais prÃÂÃÂ©ciser que en rang un on a ÃÂÃÂ©quivalence 
entre graphe contrÃÂÃÅlant fini et cohomologie finie. Mais 
qu'en rang supÃÂÃÂ©rieur ce n'est plus vrai.\\

3) J'aimerais changer NP-III en changeant tout de 
triangulation faible ÃÂ  pseudo-triangulation.

Et ensuite mieux dÃÂÃÂ©finir ce que c'est qu'un graphe ... \\

4) Dans la section des ÃÂÃÂ©quations mÃÂÃÂ©romorphes, je dois 
amÃÂÃÂ©liorer la remarque \ref{Rk : Irr-form=Irr-x} en 
ajoutant la preuve du fait que tout le polygone formel 
coincide avec le polygone derivÃÂÃÂ© du polygÃÂÃÅne de 
convergence. Pas seulement l'irrÃÂÃÂ©gularitÃÂÃÂ©. La preuve 
coincide avec celle de NP-III ... on decompose par les 
pentes formelles et par les pentes du rayon en $T=0$, 
et on constante qu'elles sont les mÃÂÃÂªmes quand le rayon 
est petit, et quand c'est pas petit on fait push-forward 
... comme d'hab ... pour la partie de Robba on dÃÂÃÂ©duit 
par contrapposÃÂÃÂ©e. C'est ÃÂ  dire que d'abord on montre 
que si une des deux pentes (celle formelle ou celle du 
polygone des converges) est positive, alors l'autre l'est 
aussi et elles coincident. Donc si un module a une des 
deux pentes qui est nulle au voisinage de  $|T|=0$, il 
ne peut pas avoir l'autre pente positive... \\

5) Parler de l'expression de l'indice comme un integral,  
et expliquer les hypothÃÂÃÂ¡ses techniques comme dans l'e-
mail de Kedlaya ...}

}\fi

\section{Definitions and notations}\label{Secn1}

Here and for the rest of the text, we fix an ultrametric complete valued field $(K,\va)$ of characteristic~0. We denote by~$\wti K$ its residue field and by~$p$ be the characteristic exponent of the latter (either~1 or a prime number). We fix an algebraic closure~$K^\alg$ of~$K$. The absolute value~$\va$ on~$K$ extends uniquely to it and we denote it identically. We denote by $(\wKa,\va)$ its completion.

We also 
set  $\omega:=\liminf_n|n!|^{1/n}$ 
(the radius of convergence of the exponential series). One has
\begin{equation}\label{eq : OMEGA}
\omega\;=\;\left\{
\begin{array}{ll}
1&\textrm{ if the valuation of $K$ is trivial on 
$\mathbb{Q}$}, \\
|p|^{\frac{1}{p-1}}&\textrm{ if the valuation of $K$ is 
$p$-adic on $\mathbb{Q}$.}
\end{array}
\right.
\end{equation}

In the whole paper, we will use the definitions and 
notation from~\cite{NP-IV}. 
In particular for the terminologies concerning Berkovich 
curves and differential equations. 

From now on, we fix a quasi-smooth $K$-analytic curve~$X$ and a 
differential equation $(\Fs,\nabla)$ over~$X$, that is a 
coherent $\O_X$ module $\Fs$ together with an 
(integrable) connection $\nabla:
\Fs\to\Fs\otimes\Omega^1_X$. 
By \cite[Proposition 1.0.2]{NP-III}, $\Fs$ is automatically a locally free 
$\O_X$-module of finite rank.

\subsection{Radii of convergence}

We recall in this section the main definitions about the 
radii of convergence of $\Fs$.

Recall that a non-empty connected $K$-analytic space is called a \emph{virtual open disk} (resp. \emph{annulus}) if it becomes isomorphic to a disjoint 
union of open disks over~$\wKa$ (resp. if it becomes a 
disjoint union of open annuli whose orientations are 
preserved by $Gal(\wKa/K)$, 
cf. \cite[3.6.32 and 3.6.35]{Duc}).
Following~\cite[5.1.8]{Duc}, we may now define the analytic skeleton of an analytic curve. 

\begin{definition}[Analytic skeleton]\label{def:analyticskeleton}
We call \emph{analytic skeleton} of an analytic curve 
$X$ the set of points that have no neighborhoods 
isomorphic to a virtual open disk. We usually denote it by $\Gamma_X$.
\end{definition}

Notice that the analytic skeleton~$\Gamma_{D}$ of a 
virtual open disk~$D$ is empty.

\begin{definition}[Open pseudo-annulus]
\label{Def : Pseudo-annulus}
Assume that~$K$ is algebraically closed. We say that a connected quasi-smooth $K$-analytic curve~$C$ is an 
\emph{open pseudo-annulus} if
\begin{enumerate}
\item it has no boundary;
\item it contains no points of positive genus;
\item its analytic skeleton~$\Gamma_{C}$ is an open segment.
\end{enumerate}  
We call~$\Gamma_{C}$ the skeleton of~$C$.

If~$K$ is arbitrary, we say that a connected quasi-smooth $K$-analytic curve~$C$ is an 
\emph{open pseudo-annulus} if $C \ho_{K} \widehat{K^\alg}$ is a disjoint union of open pseudo-annuli and if Gal($K^\alg/K$) preserves the orientation of their skeletons. In this case, one can check that~$C$ has no boundary, contains no points of positive genus and that its analytic skeleton~$\Gamma_{C}$ is an open segment. We call $\Gamma_{C}$ the skeleton of~$C$.
\end{definition}

\begin{remark}
Obviously, open annuli and virtual open annuli are 
open pseudo-annuli. Moreover, if $K$ is spherically 
complete, it can be shown that an open pseudo-annulus 
is always an increasing union of open virtual-annuli 
\cite[Lemme 1.1.34]{NP-IV}. 
\label{Rk : pseudo-annuli}
We also observe that, by \cite[Prop. 3.2]{Liu}, 
if~$K$ is non-trivially valued, 
algebraically closed and maximally complete, then any 
open pseudo-annulus~$C$ may be embedded into the 
affine line. We deduce that, in this case, either~$C$ is an 
open annulus or it may be written in the form 
$C=Y-\{y\}$, where $Y$ is either the affine line or an 
open disk and $y\in Y$ is a rational point.
\end{remark}

\begin{definition}[Pseudo-triangulation]
A pseudo-triangulation of~$X$ is a locally finite subset $S \subset X$, formed by points of type 2 or 3, such that every connected component of $X - S$ is a virtual open disk or an open pseudo-annulus. 

The skeleton $\Gamma_{S}$ of a pseudo-triangulation~$S$ is the union of~$S$ with the skeletons of the connected components of $X-S$ that are open pseudo-annuli.
\end{definition}

\begin{remark}
It follow from the definition that $X-\Gamma_S$ is a 
disjoint union of virtual open annuli.
\end{remark}

From now on, we fix a pseudo-triangulation~$S$ 
on~$X$. 
\if{Several notions  and results of this paper 
will be of global 
nature. However, we will mostly be 
interested in the case where~$X$ is an open pseudo-
annulus endowed with the empty pseudo-triangulation 
(cf. Section~\ref{sec:indexpseudo-annuli}) and in the 
case where~$X$ is a punctured open disk (hence an open pseudo-annulus again) endowed with the empty pseudo-triangulation (cf. Section~\ref{Disk-merom}). 
}\fi

The pseudo-triangulation~$S$ on~$X$ is the datum we will use to normalize the radii of convergence of~$(\Fs,\nabla)$ at the points of~$X$. Let us recall quickly how to do it.

Let~$x \in X$ and let~$r$ be the rank of~$\Fs$ at~$x$. 
Let~$L$ be a complete algebraically closed valued field 
containing isometrically the field of the point $\Hs(x)$ (cf. \cite{Ber}). 
In this case, there 
exists an $L$-rational point~$x'$ of~$X_{L}$ whose image in $X$ is $x$. 
By \cite{NP-II} the pseudo-triangulation~$S$ of~$X$ 
induces canonically a pseudo-triangulation~$S_{L}$ 
of~$X_{L}$. Since $x'$ is $L$-rational, it is not contained 
in $\Gamma_{S_{L}}$ and we call~$D$ the connected 
component of~$X_{L} - \Gamma_{S_{L}}$ 
containing~$x'$. It is an open disk and we identify it to 
some standard open disk~$D(0,R)^-=\{|T|<R\}\subset 
\mathbb{A}^{1,\mathrm{an}}_L$ by an unspecified 
isomorphism sending~$x'$ to~$0$. Denote by $r(x)$ 
the radius of the point $x$ with respect to the coordinate $T$ (cf. \cite[p.78]{Ber}), 
it can be shown that $r(x)$ is the 
radius of the largest open disk $D(0,r(x))^-$ whose 
image in $X$ via the above morphisms $D(0,r(x))^-\to D\subset X_L\to X$ is reduced to the 
individual point $\{x\}$. 

The pull-back $(\Fs_{L},\nabla_{L})$ of $(\Fs,\nabla)$ 
to~$X_{L}$ is still a module with connection of 
rank~$r$. For $i\in \{1,\dotsc,r\}$, denote by~$\Rc_{i}'\leq R$ 
the radius of the largest open sub-disk of $D^-(0,R)$
centered at~$0$ on which~$(\Fs_L,\nabla_L)$ admits at 
least $r-i+1$ linearly independent solutions. 
By the Cauchy existence theorem of solutions, $\Rc_i'$ is 
strictly positive for all $i$.

\begin{definition}
For $i\in \{1,\dotsc,r\}$, the $i^\textrm{th}$ radius of convergence of~$(\Fs,\nabla)$ at~$x$ is
\begin{equation}
\Rc_{S,i}(x,(\Fs,\nabla)) \;:=\; 
\frac{\Rc'_{i}}{R} \;\in\; ]0,1]\;.
\end{equation}
It is independent of the choices made.

The total height of the Newton polygon of~$(\Fs,\nabla)$ at~$x$ is
\begin{equation}
H_{S,r}(x,(\Fs,\nabla)) \;=\; 
\prod_{i=1}^r \Rc_{S,i}(x,(\Fs,\nabla))\;.
\end{equation}

The \emph{convergence Newton polygon} of $(\Fs,\nabla)$ at 
$x\in X$ is the polygon whose $i$-th slope is 
$\ln(\R_{S,i}(x,(\Fs,\nabla)))$ (cf. Definition \ref{Def: Conv NP} and \cite{NP-I,NP-II,NP-III}).

We say that $\Rc_{S,i}(x,(\Fs,\nabla))$ is 
\begin{itemize}
\item[$\bullet$] a \emph{spectral} radius if $\Rc_{S,i}(x,(\Fs,\nabla)) \leq r(x)/R$;
\item[$\bullet$] a \emph{solvable}  radius if $\Rc_{S,i}(x,(\Fs,\nabla))=r(x)/R$;
\item[$\bullet$] an \emph{over-solvable} radius if 
$\Rc_{S,i}(x,(\Fs,\nabla))>r(x)/R$.\footnote{Notice that 
the points $x$ belonging to $\Gamma_S$ satisfy by 
definition $r(x)=R$. Hence, at those points all the radii 
are spectral. Over-solvable radii exist only at points $x$ 
outside $\Gamma_S$.}
\end{itemize}
In the rest of the text, we will often write~$\Fs$ instead 
of~$(\Fs,\nabla)$. This should lead to no confusion.

If $X$ is a disjoint union of pseudo-annuli 
and virtual open disks endowed 
with the empty pseudo-triangulation $S=\emptyset$, then we sometimes write 
\begin{equation}
\R_{i}(x,\Fs)\;:=\;\R_{\emptyset,i}(x,\Fs)\;\quad\textrm{ and }\quad 
H_{i}(x,\Fs):=H_{\emptyset,i}(x,\Fs)\;.
\end{equation}
\end{definition}

The main result of \cite{NP-I,NP-II} may be stated as follows. For each $i\in \{1,\dotsc,r\}$, the map $\Rc_{S,i}(-,\Fs)$ is continuous and piecewise 
log-affine with rational slopes on every germ of segment 
out of a point $x\in X$. Moreover, 
$\Rc_{S,i}(-,\Fs)$ is locally constant outside a 
locally finite subgraph of~$X$. We denote by
\begin{equation}\label{eq : controlling graph}
\Gamma_{S}(\Fs)
\end{equation}
the smallest subgraph of~$X$ containing~$\Gamma_{S}$ outside which all the maps $\Rc_{S,i}(-,\Fs)$, for $i\in\{1,\dotsc,r\}$, are all locally constant. We call $\Gamma_{S}(\Fs)
$ the \emph{total controlling graph of $(\Fs,\nabla)$}.

\subsection{Irregularity at a good germ of 
segment}\label{Section : Local irreg}

In this section, we introduce the crucial notion of local irregularity of the differential equation $\Fs$. The definition involves the slope of the total height of the convergence Newton polygon along certain germs of segments in $X$. 

The importance of this notion relies on the fact that, 
under appropriate conditions, it controls the finite 
dimensionality of the de Rham cohomology groups of 
$\Fs$. 


Recall that we have define a notion of germ of segment in~$X$ in~\cite{NP-IV}. 
Note that, according to our conventions, a 
germ of segment out of a point is always oriented out 
of that point, while a germ of segment at the open 
boundary of~$X$ is always oriented towards the interior 
of~$X$.

\begin{definition}\label{def:goodgerm}
A germ of segment in $X$ is said to be good if it may 
be represented by the skeleton of an open 
pseudo-annulus contained in $X$.
\end{definition}

\begin{remark}
\label{Remark : good properties}
\begin{enumerate}
\item Every good germ has finite degree (in the sense 
of \cite[Definition 1.1.20]{NP-IV}).
\item Every germ of segment out of a point is good.
\item A germ of segment it good if, and only if, it admits 
a representative on which the map $x\mapsto \deg(x)$ 
is constant (see \cite[Lemma 1.1.28]{NP-IV}).
\item A relatively compact germ of segment is good 
(see \cite[Lemma 1.1.35]{NP-IV}).
\end{enumerate}
\end{remark}

Christol and Mebkhout gave a definition of 
irregularity for solvable differential modules 
over a germ of open annulus (see 
\cite[D\'efinition~8.3-8]{Ch-Me-III} and 
\cite[Section~2.1]{Ch-Me-IV}). We here extend this 
definition to the case of a 
germ of pseudo-annulus and to 
possibly non-solvable differential equations whose radii 
are all log-affine. 



\begin{definition}[Log-affine total height over a germ of segment]\label{def:logaffinetotalheight}
Let~$b$ be a good germ of segment in~$X$. Let~$r$ be the rank of~$\Fs$ around~$b$. Let $i\in \{1,\dotsc,r\}$. We say that~$\Fs$ has log-affine $i^\textrm{th}$ radius (resp. total height) along~$b$ if there exists an open pseudo-annulus~$C$ in~$X$ whose skeleton (suitably oriented) represents~$b$ such that the $i^\textrm{th}$ radius function $\Rc_{\emptyset,i}(-,\Fs_{|C})$ (resp. the total height function $H_{\emptyset,r}(-,\Fs_{|C})$) is log-affine on~$\Gamma_{C}$ (where $C$ is endowed with 
the trivial pseudo-triangulation).
\end{definition}


\begin{remark}\label{rem:logafftrivval}
If~$K$ is trivially valued, then all the radii of~$\Fs$ are log-affine along any good germ of segment (see \cite{NP-IV}). In particular, $\Fs$ has log-affine total height along any good germ of segment.
\end{remark}

\begin{definition}[Irregularity over a germ of 
segment]
\label{def:generalizedirregularity}
\label{def:Irrgermpseudoannulus}
Let~$b$ be a good germ of segment in~$X$ on which~$\Fs$ has log-affine total height. Let~$C$ 
be an open pseudo-annulus whose 
skeleton~$\Gamma_C$ (suitably oriented) 
represents~$b$ and such that the total 
height function $H_{\emptyset,r}(-,\Fs_{|C})$ 
is log-affine on~$\Gamma_{C}$, where 
$r = \rk(\Fs_{|C})$. We define the irregularity of~$\Fs$ 
along~$b$ as
\begin{equation}\label{eq : irr_b def}
\Irr_{b}(\Fs)\;:=\;-\deg(b)\cdot \partial_b 
H_{\emptyset,r}(-,\Fs_{|C}) \;\in\; \mathbb{Z}\;.
\end{equation}
\end{definition}

\begin{remark}\label{rk : Irr_b is indep on S}
In the above definition, $C$ is considered with the \emph{empty 
pseudo-triangulation} and the slope 
$\partial_b H_{\emptyset,r}(-,\Fs_{|C})$ is 
considered \emph{after} localization to~$C$. 
In particular, it need not be equal to 
$\partial_b H_{S,r}(-,\Fs)$, 
where $S$ is a pseudo-triangulation of $X$ (cf. 
\cite[Lemma 2.8.4]{NP-IV}). Notice also that the 
definition is intrinsically associated with the restriction of 
$\Fs$ to $b$, in the sense that it  
does not depend on the choice of the 
triangulation $S$ of $X$, nor on the choice of 
a specific pseudo-annulus~$C$ representing $b$ (cf. 
\cite[Proposition 2.8.2]{NP-IV}).
\end{remark}


The fact that the irregularity is an 
integer comes from 
\cite[Proposition 2.3.6]{NP-IV}. It can 
be \emph{negative}.

\begin{lemma}
Let~$b$ be a good germ of segment in~$X$ on which~$\Fs$ has log-affine total height. Let~$L$ be 
a complete valued extension of~$K$. 
Let $c_{1},\dotsc,c_{t}$ be the preimages of~$b$ 
in~$X_{L}$. Then, the~$c_{i}$'s are good, $\Fs_{L}$ has log-affine total height on them and we have
\begin{equation}
\Irr_{b}(\Fs) \;=\; \sum_{i=1}^t \Irr_{c_{i}}(\Fs_L)\;.
\end{equation}
\hfill$\Box$
\end{lemma}

\begin{remark}
We will see that, 
if $\Fs$ has a meromorphic singularity at a 
rational point $x$ and $b_x$ is the germ of segment 
out of $x$ (oriented out of $x$), then $\Fs$ has 
$\log$-affine radii on~$b_{x}$ (cf. 
Lemma \ref{lem:merosinglinear-1}). 
We will also prove that, in this case, 
the irregularity $\Irr_{b_x}(\Fs)$ just defined 
coincides with \emph{the opposite} of the formal 
irregularity of $\Fs$ at $x$ viewed as a differential 
equation over the field of power series $K((T-x))$ 
as defined in \cite{Ramis-Devissage-Gevrey}, 
\cite[p.110]{Deligne-Reg-Sing}, 
\cite{Malgrange-Irreg} and 
\cite{Correspondance-Malgrange-Ramis} (cf. 
Proposition \ref{Prop : Irr-form=Irr-x-1}). 
See Section \ref{Disk-merom} for more details.

In the context of \cite{Ch-Me-IV}, our irregularity  
coincides with that defined therein. It is the Robba's 
notion of irregularity \cite{RoIV}. 
It is useful to clarify here that there is a discrepancy of 
signs between the irregularity 
in the formal case around a rational point of 
\cite{Malgrange-Irreg} and that of \cite{Ch-Me-IV}. 
Indeed, in both papers, the authors made the choice of 
signs which makes non negative their local irregularities. 
However, in general the irregularity can be negative and 
there is no way to establish uniform 
orientations on the germs of segments in $X$ 
in order to make the two conventions of sign agree. We 
eventually chose the sign convention of \cite{Ch-Me-IV}.
\end{remark}

There are explicit examples of differential equations over 
an open annulus $C$ whose all radii have infinitely many 
breaks along the skeleton $\Gamma_C$  
(and hence the controlling graph $\Gamma_S(\Fs)$ has 
infinitely many bifurcations along $\Gamma_C$) and 
whose total height is however log-affine 
(cf. \cite[Section 15.4]{HDR}). The following 
proposition shows that a similar phenomena cannot 
happens for the total heights of the sub-quotients of a 
given equation.

\begin{proposition}\label{Prop : Irr_b ex seq}
Let $0\to\Fs_1\to\Fs_2\to\Fs_3\to 0$ be an exact 
sequence of differential equations on a quasi-smooth $K$-analytic curve~$X$. 
Let $b$ be a good germ of segment in $X$. 
Then $\Fs_2$ has log-affine total height 
along $b$ if and only 
if so have $\Fs_1$ and $\Fs_3$. In this case, one has
\begin{equation}
\Irr_b(\Fs_2)\;=\;\Irr_b(\Fs_1)+\Irr_b(\Fs_3)\;.
\end{equation}
\end{proposition}
\begin{proof}
Definition \ref{def:generalizedirregularity} of 
irregularity at a germ of segment involves only spectral 
radii because in \eqref{eq : irr_b def} 
one localizes before computing the slope. 
In particular, it follows from the spectral definition of 
the radii (cf. \cite[Definition 9.8.1]{Kedlaya-book} 
or \cite[Section 4.2]{NP-I}), 
that the localized radii of $\Fs_2$ are the 
union with multiplicity 
of those of $\Fs_1$ and $\Fs_3$ . In particular the localized 
total height of $\Fs_2$ is the logarithmic sum of 
those of $\Fs_1$ and $\Fs_3$.
Moreover, by \cite[Theorem 3.9, item iii)]{NP-I}, 
the localized total heights \eqref{eq : irr_b def} 
are log-concave along $b$, and hence 
the total height of $\Fs_2$ is log-affine 
along $b$ if, and only if, so are the total heights of 
$\Fs_1$ and $\Fs_3$. 
\end{proof}

\subsection{Polygons and their derivatives on curves.}
\label{Section polygons and derivatives}

Let $n\le m \in \Z$. Let $v_{n},v_{m} \in \ERRE$ and, 
for each $i\in \{n+1,\dotsc,m-1\}$, let $v_{i} \in \ERRE 
\cup\{\pm\infty\}$. If all the $v_i$'s are different from 
$-\infty$, we define the Newton polygon 
of the set 
\begin{equation}
V\; =\; \{(i,v_{i}) \mid n\le i\le m\}\;
\end{equation}
 to be the biggest convex function 
\begin{equation}\label{eq: NPV def-1}
N\!P_{V}\; \colon \;[n,m] \;\xrightarrow{\quad}\; \ERRE
\end{equation}
such that, for each  $i\in \{n,\dotsc,m\}$, we have
\begin{equation}
v_{i} = +\infty \;\textrm{ or } \; 
v_{i} \ge N\!P_{V}(i)\;.
\end{equation}

If all the $v_i$'s are different from $+\infty$, we define 
analogously the inverted Newton polygon as the smallest 
concave function $I\!N\!P_{V} \colon [n,m] \to \ERRE$ 
such that for each $i\in \{n,\dotsc,m\}$, we have
\begin{equation}\label{eq : INP def}
v_{i} = -\infty \;\textrm{ or } \; 
v_{i} \le I\!N\!P_{V}(i)\;.
\end{equation}

The functions~$N\!P_{V}$ and $I\!N\!P_{V}$ are continuous on~$[n,m]$ and 
affine on each $[i,i+1]$. 
\begin{definition}\label{Def : slope, height, break-1}
Let $N$ denote either the polygon $N\!P_V$ or 
$I\!N\!P_V$. For each $i\in \{1,\dotsc,m-n\}$, 
we call $i^\textrm{th}$ slope of the polygon the 
slope of the function $N$ on $[n+i-1,n+i]$. We usually 
denote it by
\begin{equation}
s_{i}\;:=\;\frac{d}{du}N(u)\;,\quad u\in[n+i-1,n+i]\;.
\end{equation}

The total height of the polygon is 
defined as $\sum_{i=1}^{m-n}s_i$. 

The polygon is said to have a break at $k\in\{1,\ldots,m-n-1\}$ if 
$s_k\neq s_{k+1}$.
\end{definition}

Note that, conversely, given a non-decreasing (resp. non-increasing) sequence of real numbers $s_1\leq s_2\leq\ldots\leq s_{m-n}$ (resp. $s_1\geq s_2\geq\ldots\geq s_{m-n}$) and a real number~$v_n$, we can define a Newton 
polygon (resp. inverted Newton polygon) as the unique continuous function on $[n,m]$ 
that takes the value~$v_n$ at~$n$ 
and that is affine of slope~$s_{i}$ over 
$[n+i-1,n+i]$, for each $i\in \{1,\dotsc,m-n\}$. 

\begin{notation}
In the sequel, if no specific data are mentioned, we 
assume that $n=0$ and $v_0=0$. In this case the 
polygon and the inverted polygon are determined by the 
sequence of their slopes.
\end{notation}



%
%

Let $x$ be a point of~$X$ and denote by~$r$ the rank of~$\Fs$ at~$x$.



\begin{definition}[Convergence Newton polygon]
\label{Def: Conv NP}
The \emph{convergence Newton polygon 
$N\!P_S(x,\Fs):[0,r]\to\mathbb{R}$ of~$\Fs$ 
at~$x$} is the polygon on $[0,r]$ satisfying
\begin{enumerate}
\item $N\!P_S(x,\Fs)(0)=0$;
\item for each $i\in \{1,\dotsc,r\}$, the $i^\textrm{th}$ slope of the poygon $N\!P_S(x,\Fs)$ is 
\begin{equation}\label{eq : s_i = log(R_i) newton pol-1}
s_i(x)\;:=\;\ln(\R_{S,i}(x,\Fs))\leq 0\;.
\end{equation}
\end{enumerate}
In order to simplify the notation, we sometimes set 
\begin{equation}\label{eq : v_i = NP(i) newton pol-1}
v_i(x)\;:=\;NP_S(x,\Fs)(i) \in \ERRE\;.
\end{equation}
\end{definition}
For expository reasons, we anticipate here the  
Definition of the Convergence Newton polygon for 
differential equations with meromorphic singularities 
(cf. Section \ref{Section : mero-alg})
\begin{definition}
Let $\Fc$ be a differential equation on $X$ 
with meromorphic singularities on a locally finite set of 
rigid points $Z\subset X$ (cf. Definition \ref{def:meromorphicconnection}). 
We define the convergence Newton polygon of $\Fc$ as that of $\Fs:=
\Fc_{X-Z}$. More precisely, the curve $X-Z$ admits a 
minimal pseudo-triangulation $S'$ containing 
$S$ \footnote{Indeed, notice 
that for every $z\in Z$ the connected component of 
$X-\Gamma_S$ containing $z$ is a virtual open disk 
$D_z$ whose relative boundary  in $X$ is a point $x_z$ 
that lies in $\Gamma_S$. 
For all $z\in Z$ let $I_z$ be the segment joining $z$ to 
$x_z\in\Gamma_S$. 
Then, one has 
$\Gamma_{S'}=\Gamma_S\cup(\cup_{z\in Z}I_z)$. 
Moreover $S'$ is obtained from $S$ by adding the points 
$\{x_z\}_{z\in Z}$ and the bifurcation points of 
$\Gamma_{S'}$ that are not in $\Gamma_S$.} 
and for all $x\in X-Z$ and all $i\in\{1,\ldots,r\}$ we set
\begin{eqnarray}
\R_{S,i}(x,\Fc)&\;:=\;&
\R_{S',i}(x,\Fs)\;,\\
NP_S(x,\Fc)&\;:=\;&
NP_{S'}(x,\Fs)\;.
\end{eqnarray}
\end{definition}
Let $b$ be a good germ of segment in $X$ on which the 
radii \eqref{eq : s_i = log(R_i) newton pol-1}
are all $\log$-affine.\footnote{This means that  the 
functions $s_i$ are affine. 
Notice that this is equivalent to the affinity 
of all the functions $v_i(x):=\sum_{j=1}^is_{j}(x)$.} 
There are essentially two different ways to construct a 
polygon as a derivative of the convergence Newton 
polygon along~$b$. The first takes the derivative along $b$ of the functions $v_i(x)$ (cf. \eqref{eq : v_i = NP(i) newton pol-1}), while the second takes the derivatives 
along $b$ of the slopes $s_i(x)$ (cf. 
\eqref{eq : s_i = log(R_i) newton pol-1}):
\begin{enumerate}
\item We consider both the \emph{polygon} 
and the \emph{inverted polygon}
associated with the set 
\begin{equation}
\label{eq : inverted polygon derivative at b-1}
\{(i,\partial_bv_i)\;,\;0\leq i\leq r\}\;.
\end{equation}
\item We consider the family 
$(\partial_b(s_i))_{i=1,\ldots,r}$ of derivatives 
along $b$ of the slopes \eqref{eq : s_i = log(R_i) 
newton pol-1} of the convergence Newton polygon. In general 
the sequence $(\partial_b(s_1),\ldots, \partial_b(s_r))$ is 
not monotonous. Therefore, 
we consider a permutation 
of the indexes $\sigma$ (resp. $\sigma'$) that turns 
it into non-decreasing (resp. non-increasing) order 
\begin{eqnarray}
\partial_b(s_{\sigma(1)})\;\leq\;
\partial_b(s_{\sigma(2)})\;\leq\;
\ldots\;\leq\; \partial_b(s_{\sigma(r)})\quad\;
\label{eq : partial_b s_sigma i-incr-1}\\
\textrm{(resp. }\partial_b(s_{\sigma'(1)})\;\geq\;
\partial_b(s_{\sigma'(2)})\;\geq\;
\ldots\;\geq\; \partial_b(s_{\sigma'(r)})\textrm{)}\;,
\label{eq : partial_b s_sigma i-1}
\end{eqnarray} 
and we consider the polygon on $[0,r]$ that takes the value 
$0$ at $0$ and that has 
\eqref{eq : partial_b s_sigma i-incr-1} 
(resp. \eqref{eq : partial_b s_sigma i-1}) as slopes.
\end{enumerate}

\begin{remark}\label{rk : total height derived = -Irr-1}
The four polygons defined above all have
$-\Irr_b(\Fs)$ as total height. Indeed, 
their total height is the derivative of 
$v_r=\sum_{i=1}^rs_i$ which is the 
total height of the convergence Newton polygon.
\end{remark}
\begin{remark}
We here illustrate  
with an example the necessity of  
considering both the polygon and the inverted polygon.  

Let $x\in X$. The finiteness of the radii 
(cf. \cite{NP-I, NP-II})
shows that for almost all directions $b$ out of $x$ the 
radii are all constant, therefore both the polygon and the 
inverted polygon associated with the set 
$\{(i,\partial_bv_i)\;,\;0\leq i\leq r\}$ are the zero functions on 
$[0,r]$. There are a finite number of directions out of 
$x$ on which the polygons may be non zero.

Assume now that $x$ lies inside an open 
segment  $]z,y[\subseteq X$ where the radii are 
log-affine. Let $b_z$ and $b_y$ be the 
germs of segments out of $x$ directed towards 
$z$ and $y$ respectively. Then 
\begin{equation}
\partial_{b_z}v_i\;=\;-\partial_{b_y}v_i\;.
\end{equation}

This shows that in order to have the same polygon on 
$b_z$ and $b_y$ we have to consider the polygon 
associated with the set 
$\{(i,\partial_{b_z}v_i)\;,\;0\leq i\leq r\}$ and the inverted 
polygon associated with the set 
$\{(i,\partial_{b_y}v_i)\;,\;0\leq i\leq r\}$.
The inversion of the orientation turns the polygon into 
the inverted polygon, and it seems unnatural to fix a 
choice.
\end{remark}

\subsection{de Rham cohomology}

Consider the complex of sheaves
\begin{equation}
\mathcal{E}(\Fs)^\bullet \colon (\cdots\to0\to\Fs\stackrel{\nabla}{\to}\Omega^1_X\otimes\Fs\to0\to\cdots)\;,
\end{equation}
where $\Fs$ is placed in degree~$0$ and $\Omega^1_X\otimes\Fs$ in degree $1$. The cohomology of~$\Fs$ (resp. the hypercohomology of 
$\mathcal{E}(\Fs)^{\bullet}$) will be denoted by $H^i(X, \Fs)$ (resp. 
$\mathbb{H}^i(X, \mathcal{E}(\Fs)^{\bullet})$).


\begin{remark}
In our situation, $X$ is a $K$-analytic curve, hence has topological dimension~1. It follows that $H^i(X,\Fs)=0$ for $i\ge 2$ and that $\mathbb{H}^i(X, \mathcal{E}(\Fs)^{\bullet}) = 0$ for $i\ge 3$ (by a spectral sequence argument). 
\end{remark}


\begin{definition}\label{Def. coh de de Rham}
The de Rham cohomology groups $\Hdr^i(X,\Fs)$ 
of~$\Fs$ are the hypercohomology groups 
$\mathbb{H}^i(X,\mathcal{E}(\Fs)^{\bullet})$ of the 
complex
$\mathcal{E}(\Fs)^{\bullet}$:
\begin{equation}
\Hdr^i(X,\Fs)\;:=\;
\mathbb{H}^i(X,\mathcal{E}(\Fs)^{\bullet})\;.
\end{equation}

We say that $\Fs$ has \emph{finite index} 
if $\Hdr^i(X,\Fs)$ is finite-dimensional for all degrees $i \in \Z$. 
In this case we denote by $\hdr^i(X,\Fs)$ the dimension 
of the $K$-vector space~$\Hdr^i(X,\Fs)$ and set 
\begin{align}\label{eq : chidr def}
\chidr(X,\Fs) &\;:= \;\sum_{i}(-1)^i\cdot \hdr^i(X,\Fs)\\
& \;= \;\  \hdr^0(X,\Fs) - \hdr^1(X,\Fs) + \hdr^2(X,\Fs)\;.
\end{align}
We call $\chidr(X,\Fs)$ the index of $\Fs$.
\end{definition}

\begin{lemma}[Mayer-Vietoris]
\label{Lemma : frfreign}
Let~$U$ and~$V$ be two open subsets of~$X$ such 
that $X=U\cup V$. Let $\mathcal{E}^\bullet$ be a 
complex of sheaves of groups over~$X$. We have the 
Mayer-Vietoris long exact sequence
\begin{equation}\label{eq :MVseq}
\cdots\to 
\mathbb{H}^{i-1}(U\cap V,\mathcal{E}^\bullet) \to 
\mathbb{H}^i(X,\mathcal{E}^\bullet) \to
\mathbb{H}^i(U,\mathcal{E}^\bullet)\oplus 
\mathbb{H}^i(V,\mathcal{E}^\bullet) \to 
\mathbb{H}^i(U\cap V, \mathcal{E}^\bullet) \to 
\mathbb{H}^{i+1}(X,\mathcal{E}^\bullet) \to \cdots
\end{equation}

In particular, if, for all $i\in\Z$,  the spaces $\mathbb{H}^i(U,\mathcal{E}^\bullet)$, $\mathbb{H}^i(V,\mathcal{E}^\bullet)$ and $\mathbb{H}^i(U\cap V,\mathcal{E}^\bullet)$ are finite-dimensional, then, for all $i\in\Z$, the space $\mathbb{H}^i(X,\mathcal{E}^\bullet)$ is finite-dimensional too. \hfill $\Box$
\end{lemma}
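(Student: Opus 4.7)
The plan is to produce the Mayer--Vietoris sequence from a short exact sequence of complexes arising from an injective resolution, and then to deduce the finiteness statement from a three-term extract of the resulting long exact sequence.

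First I would choose a Cartan--Eilenberg resolution $\mathcal{E}^\bullet \to \mathcal{I}^{\bullet,\bullet}$ by injective sheaves of abelian groups on~$X$, and denote by $I^\bullet$ the associated total complex. By definition of hypercohomology, for every open $W \subseteq X$, the group $\mathbb{H}^i(W, \mathcal{E}^\bullet)$ is the $i$-th cohomology of the complex of sections $\Gamma(W, I^\bullet)$. Since every injective sheaf of abelian groups is flabby and $X = U \cup V$, one has for each~$n$ a short exact sequence of abelian groups
\begin{equation*}
0 \to \Gamma(X, I^n) \to \Gamma(U, I^n) \oplus \Gamma(V, I^n) \to \Gamma(U \cap V, I^n) \to 0,
\end{equation*}
with the usual maps $s \mapsto (s|_U, s|_V)$ and $(a,b) \mapsto a|_{U \cap V} - b|_{U \cap V}$. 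Left exactness is the sheaf axiom; surjectivity on the right follows from flabbiness, which allows one to extend any section from $U \cap V$ to~$U$ (or~$V$) and then take a difference. Assembling these sequences in~$n$ produces a short exact sequence of complexes of abelian groups, whose associated long exact sequence in cohomology is precisely the claimed Mayer--Vietoris sequence.

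For the finiteness statement, I would extract from the long exact sequence the three-term piece
\begin{equation*}
\mathbb{H}^{i-1}(U \cap V, \mathcal{E}^\bullet) \xrightarrow{\alpha} \mathbb{H}^i(X, \mathcal{E}^\bullet) \xrightarrow{\beta} \mathbb{H}^i(U, \mathcal{E}^\bullet) \oplus \mathbb{H}^i(V, \mathcal{E}^\bullet).
\end{equation*}
The target of $\beta$ is finite-dimensional by hypothesis, so $\mathrm{Im}(\beta)$ is too; and $\ker(\beta) = \mathrm{Im}(\alpha)$ is a quotient of the finite-dimensional space $\mathbb{H}^{i-1}(U \cap V, \mathcal{E}^\bullet)$, hence is also finite-dimensional. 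Therefore $\mathbb{H}^i(X, \mathcal{E}^\bullet)$ is an extension of two finite-dimensional spaces, hence itself finite-dimensional.

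There is no substantial obstacle here; this is the standard Mayer--Vietoris argument for hypercohomology. The only point requiring care is that the hypercohomology groups of $X$, $U$, $V$ and $U \cap V$ must all be computed using one and the same resolution on~$X$ restricted to each open subset, which is precisely why one uses a Cartan--Eilenberg resolution on~$X$ rather than separate injective resolutions on each open.
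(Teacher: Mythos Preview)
Your argument is correct and is the standard derivation of the Mayer--Vietoris sequence for hypercohomology. The paper itself provides no proof of this lemma: the $\Box$ immediately follows the statement, indicating that the authors regard it as well known and omit the proof entirely.
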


\subsubsection{Properties.}
In some cases, the de Rham cohomology may be computed in a simple way. Let us first recall a definition.


\begin{definition}[Cohomologically Stein]\label{def:cohomStein}
The curve~$X$ is said to be \emph{cohomologically Stein} if, for every coherent sheaf~$\Fs$ on~$X$ and every $q\ge 1$, we have
\begin{equation}
H^q(X,\Fs)=0.
\end{equation}
\end{definition}

Classical examples of cohomologically Stein curves include disks, pseudo-disks, annuli, pseudo-annuli, etc. More generally, by \cite[Corollary 4.6]{Banachoid}, every quasi-smooth curve with no proper connected components is cohomologically Stein. We also recall that, on quasi-Stein curves, coherent sheaves are generated by their global sections (see \cite[Corollary 4.8]{Banachoid}) and that the global sections functor induces an equivalence between the category of locally free sheaves of bounded rank and the category of projective $\Os(X)$-modules of finite type (see \cite[Corollary 4.11]{Banachoid}).

\begin{lemma}\label{Lemme : bon quasi-Stein}
If~$X$ is cohomologically Stein, then we have
\begin{equation}
\Hdr^0(X,\Fs) \;=\; \Ker (\nabla \colon \Fs(X) \to \Omega^1(X)\otimes_{\O(X)}\Fs(X))
\end{equation}
and
\begin{equation}
\Hdr^1(X,\Fs) \;=\; \Coker (\nabla \colon \Fs(X) \to  \Omega^1(X)\otimes_{\O(X)}\Fs(X))\;,
\end{equation}
and $\Hdr^i(X,\Fs)=0$ for all $i\neq 0,1$. \hfill$\Box$
\end{lemma}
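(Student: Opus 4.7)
The plan is to compute the hypercohomology of the two-term complex $\mathcal{E}(\Fs)^\bullet$ directly, using the vanishing theorem for coherent cohomology on quasi-Stein spaces. Since the complex is concentrated in degrees $0$ and $1$, the vanishing $\Hdr^i(X,\Fs)=0$ for $i\notin\{0,1\}$ will follow at once from the fact that hypercohomology of a bounded complex lives in the range of degrees in which the complex is nonzero, once we control the individual sheaf cohomologies.

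First I would recall that, since $X$ is quasi-smooth, $\Omega^1_X$ is locally free of finite rank, so $\Fs \otimes_{\Os_X} \Omega^1_X$ is again a coherent $\Os_X$-module. Hence both terms $\mathcal{E}(\Fs)^0=\Fs$ and $\mathcal{E}(\Fs)^1=\Fs\otimes\Omega^1_X$ of the complex are coherent. By Kiehl's theorem (Theorem~\ref{thm:AB}, part i), on the quasi-Stein space $X$ we have $H^q(X,\mathcal{E}(\Fs)^p)=0$ for every $q\ge 1$ and $p\in\{0,1\}$.

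Next I would run the first hypercohomology spectral sequence
\begin{equation}
E_1^{p,q} \;=\; H^q(X,\mathcal{E}(\Fs)^p) \;\Longrightarrow\; \mathbb{H}^{p+q}(X,\mathcal{E}(\Fs)^\bullet).
\end{equation}
By the vanishing above, this $E_1$-page is concentrated on the row $q=0$, in columns $p=0,1$, with entries $\Fs(X)$ and $(\Fs\otimes\Omega^1_X)(X)$ joined by the differential induced by $\nabla$. The spectral sequence therefore degenerates at $E_2$, and one reads off
\begin{equation}
\Hdr^0(X,\Fs) = \Ker\bigl(\nabla\colon \Fs(X) \to (\Fs\otimes\Omega^1_X)(X)\bigr), \quad \Hdr^1(X,\Fs) = \Coker\bigl(\nabla\colon \Fs(X) \to (\Fs\otimes\Omega^1_X)(X)\bigr),
\end{equation}
together with $\Hdr^i(X,\Fs)=0$ for $i\notin\{0,1\}$.

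The only remaining point, which I expect to be the mildest obstacle, is the identification $(\Fs\otimes_{\Os_X}\Omega^1_X)(X)=\Fs(X)\otimes_{\Os(X)}\Omega^1(X)$. This follows because $\Omega^1_X$ is locally free of finite rank, so locally this tensor product is a finite direct sum, and the identification of global sections with the tensor product of global sections then holds on each member $X_n$ of the quasi-Stein exhaustion (since $X_n$ is affinoid and coherent sheaves correspond to finite modules there), and passes to the limit using that the transition maps $\Os(X_{n+1})\to\Os(X_n)$ have dense image. Substituting this identification into the displayed formulas yields the statement of the lemma.
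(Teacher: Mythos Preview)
Your argument is correct and is exactly the standard route one expects here; the paper itself omits the proof entirely (it is marked with a $\Box$), so there is nothing to compare against beyond noting that your spectral-sequence computation using Kiehl's vanishing (Theorem~\ref{thm:AB}) is the intended justification.

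One small comment: the hypercohomology spectral sequence already gives you $\Hdr^1(X,\Fs)=\Coker\bigl(\nabla\colon\Fs(X)\to(\Fs\otimes_{\Os_X}\Omega^1_X)(X)\bigr)$, and the remaining identification $(\Fs\otimes_{\Os_X}\Omega^1_X)(X)\cong\Fs(X)\otimes_{\Os(X)}\Omega^1(X)$ is the only place requiring a further word. Your sketch via the affinoid exhaustion is the right idea, but the step ``passes to the limit using that the transition maps have dense image'' is not quite a proof, since tensor products do not commute with projective limits in general. A cleaner way is to note that on a quasi-smooth curve $\Omega^1_X$ is locally free of rank~$1$, hence on each affinoid $X_n$ one has $(\Fs\otimes\Omega^1)(X_n)=\Fs(X_n)\otimes_{\Os(X_n)}\Omega^1(X_n)$, and the compatible system of such isomorphisms gives the identification on the limit; alternatively, one may simply regard the target of $\nabla$ as $(\Fs\otimes\Omega^1_X)(X)$ throughout, which is how the paper effectively uses the lemma (cf.\ Lemma~\ref{lem:HdrquasiSteinOmegafree}, where freeness of $\Omega^1_X$ is added precisely to make the module-theoretic description in Definition~\ref{Def: index t} match).
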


\begin{lemma}
If~$X$ has finitely many connected components, then $\Hdr^0(X,\Fs)$ is finite-dimensional.
\end{lemma}
\begin{proof}
If~$X$ is cohomologically Stein, then the result follows 
from Lemma~\ref{Lemme : bon quasi-Stein} 
and~\cite[Lemma~1.2.10 i)]{NP-IV}.

If~$X$ is not cohomologically Stein, then we may cover it by two open subsets~$U$ and~$V$ that are cohomologically Stein. The result then follows from the previous case together with the Mayer-Vietoris exact sequence (see Lemma~\ref{Lemma : frfreign}).
\end{proof}

Let us recall a descent statement for de Rham cohomology that will be used several times in the paper.

\begin{theorem}[\protect{\cite[Corollary~4.14]{Banachoid}}]\label{thm:descent}
Let~$L$ be a complete valued extension of~$K$. Assume that there exists $M\in \{K,L\}$ such that~$M$ is not trivially valued and $\Hdr^1(X_{M},\Fs_{M})$ is finite-dimensional. 
Then, $\Hdr^1(X,\Fs)$ and $\Hdr^1(X_{L},\Fs_{L})$ are both finite-dimensional and we have natural 
isomorphisms 
\begin{equation}\label{eq:H^0H^1}
\Hdr^0(X,\Fs)\otimes_{K} L \simto 
\Hdr^0(X_{L},\Fs_{L}) 
\quad\textrm{ and }\quad 
\Hdr^1(X,\Fs)\otimes_{K} L \simto 
\Hdr^1(X_{L},\Fs_{L}).
\end{equation}
\hfill$\Box$
\end{theorem}

For later use, we record here some surjectivity result in de Rham cohomology.

\begin{lemma}[\protect{\cite[Lemma~4.15]{Banachoid}}]\label{Lemma : H^1 surjectif}
Assume that~$X$ has no proper connected component. Let~$W$ be an analytic domain of~$X$ such that the restriction map $\Os(X) \to \Os(W)$ has dense image. Assume that there exists a complete non-trivially valued extension~$L$ of~$K$ such that $\Hdr^1(W_{L},(\Fs_{L})_{|W_{L}})$ is finite-dimensional. 
Then, the map
\begin{equation}\label{eq : surjectivity of H^1}
\Hdr^1(X,\Fs)\;\xrightarrow{\quad}\;
\Hdr^1(W,\Fs_{|W})
\end{equation}
is surjective.
\hfill$\Box$
\end{lemma}

To finish this section, we provide some conditions to ensure that we 
have trivial cohomology. 

\begin{proposition}
\label{Prop : H^i=0 if not solvablegfz}
Assume that we are in one of the following two situations.

Situation~1:
\begin{enumerate}
\item $X$ is an open pseudo-annulus; 
\item all the radii of $\Fs$ are log-affine along~$\Gamma_{X}$ and strictly smaller than~1.
\end{enumerate}

Situation~2:
\begin{enumerate}
\item $X$ is a virtual open disk;
\item all the radii of~$\Fs$ are constant on~$X$ and strictly smaller than~1.
\end{enumerate}
Then, for all $i$, we have
\begin{equation}
\Hdr^i(X,\Fs)\;=\;0\;.
\end{equation}
\end{proposition}
\begin{proof}
Let $r$ be the rank of~$\Fs$. First note that, in situation~1, all the radii of $\Fs$ are locally constant on $X\setminus \Gamma_{X}$ by \cite[Corollary~6.2.28]{NP-III}.

We proceed in two steps.

\smallbreak

\emph{Step 1: } Assume that~$\Omega^1_{X}$ is free.

If~$\Fs$ admits a global non-zero solution on~$X$, then 
for all $x\in X$ its restriction converges on the whole 
maximal disk associated with $x$. Therefore 
$\Rc_{r}(x,\Fs) = 1$, which contradicts the assumptions. 
We deduce that $\Hdr^0(X,\Fs)=0$.


Since~$X$ is cohomologically Stein and~$\Omega^1_{X}$ is free, 
by Lemma~\ref{lem:HdrquasiSteinOmegafree}, we 
have $\Hdr^1(X,\Fs) = \Hdr^1(\Fs(X),\nabla)$. By 
\cite[Lemma~5.3.3 and Remark~5.3.4]{Kedlaya-book}, 
we deduce that 
$\Hdr^1(X,\Fs)=\mathrm{Ext}^1(\Fs^\ast(X),\O(X))$. 
We will now prove that this last group is~0 by proving 
that every exact sequence of differential modules
\begin{equation}\label{eq : seq yon splpot}
0\to\O(X)\to E\to\Fs^*(X)\to 0
\end{equation}
splits. Since~$X$ is cohomologically Stein, the coherent sheaves on~$X$ are 
generated by their global sections, hence such a 
sequence induces an exact sequence of differential 
equations 
\begin{equation}\label{eq:sheaves}
0 \to \Os_{X} \to \Es \to \Fs^\ast \to 0\;.
\end{equation} 
It is enough to prove that this last sequence splits.

Let us first prove that all the radii of~$\Fs^\ast$ are 
everywhere strictly smaller than~1. We recall that 
spectral non-solvable radii are invariant under duality. 
In situation~1, the radii of~$\Fs$ are alll spectral non-
solvable on~$\Gamma_{X}$ by assumption, hence so 
are the radii of~$\Fs^\ast$. By continuity of the radii, 
and transfer principle on disks (cf. \cite[Proposition 4.2]{NP-I}), we deduce that the radii of $\Fs^*$ are strictly smaller than~1 on every point of $X$. In situation~2, by assumption, there exists a point of~$X$ on which the radii are all spectral non-solvable and the same reasoning applies.





We have now proven that all the radii of~$\Fs^\ast$ are everywhere strictly smaller than~1. We also know that all the radii 
of~$\O$ are constant and equal to~1. By~\cite[Proposition~2.10.5]{NP-III}, the family of radii of~$\Es$ is the union (with 
multiplicities) of those of~$\Fs^*$ and those of~$\O_X$. By~\cite[Theorem~5.4.11]{NP-III}, we deduce that~$\Es$ splits 
into~$\Es_{<r+1}$ and~$\Es_{\geq r+1}$. By uniqueness, we have $\Es_{\ge r+1} = \Os_{X}$ and $\Es_{< r+1} = 
\Fs^\ast$. The result follows.

\smallbreak

\emph{Step 2:} The general case.

By Theorem~\ref{thm:descent}, it is enough to prove the result after extending the scalars. Hence, we may assume that~$K$ is algebraically closed and maximally complete and that $|K| = \ERRE_{\ge0}$.

If we are in situation~2, then~$X$ is a disk 
and~$\Omega^1_{X}$ is free by \cite{Lazard}, so we 
can conclude by step 1.

Let us now assume that we are in situation~1. Recall that there exists a deformation retraction $r \colon X \to \Gamma_{X}$ and that, for each compact interval~$I$ in~$\Gamma_{X}$, the inverse image $r^{-1}(I)$ of~$I$ is a closed annulus with analytic skeleton~$I$. Moreover, the ring of global functions of a closed annulus being principal, the restriction of~$\Omega^1_{X}$ to such a space is free. We deduce that, for each open relatively compact interval~$J$, the inverse image $r^{-1}(J)$ of~$J$ is an open annulus and that the restriction of~$\Omega^1_{X}$ to it is free. 

Let us now write $\Gamma_{X} = J_{1} \cup J_{2}$, where $J_{1}$ and $J_{2}$ are disjoint unions of open relatively compact intervals. Note that $J_{1} \cap J_{2}$ is also a disjoint union of open relatively compact intervals. Set $U_{1} := r^{-1}(J_{1})$ and $U_{2} := r^{-1}(J_{2})$.  It follows from the previous discussion and from step~1 that, for $W \in \{U,V,U\cap V\}$, we have $\Hdr^0(W,\Fs) = \Hdr^1(W,\Fs) = 0$. We conclude by the Mayer-Vietoris long exact sequence. 
\end{proof}

\begin{corollary}\label{Cor : trivial over disk with constant radii}
Assume that $X$ is a virtual open disk and that the radii of~$\Fs$ are all constant over~$D$. Then $\Fs$ has finite index on~$D$ and we have $\Hdr^1(D,\Fs)=0$.

\end{corollary}
\begin{proof}
By Theorem~\ref{thm:descent}, we may extend the scalars and assume that~$K$ is algebraically closed. The radii of~$\Fs$ are all constant on~$D$. If they are all strictly smaller than~1, then we are in situation~2 of Proposition \ref{Prop : H^i=0 if not solvablegfz} and the result holds.

Otherwise, let $j\in \{1,\dotsc,r\}$ be the smallest index such that $\Rc_{j}(-,\Fs) = 1$ on~$D$. By \cite[Theorem~5.3.1]{NP-III}, there exists a sub-object $\Fs_{\ge j}$ of~$\Fs$ of rank~$r-j+1$ such that, for each $x\in D$, we have
\[\begin{cases}
\forall i\in \{1,\dotsc,r-j+1\},\ \Rc_{i}(x,\Fs_{\ge j}) =1;\\
\forall i\in \{1,\dotsc,j-1\},\ \Rc_{i}(x,\Fs/\Fs_{\ge j}) = \Rc_{i}(x,\Fs_{|D}) < 1.
\end{cases}\]
By Proposition \ref{Prop : H^i=0 if not solvablegfz}, we have $\Hdr^1(D,\Fs/\Fs_{\ge j})=0$. Since all the radii of the module $\Fs_{\ge j}$ are equal to~1, it is a finite sum of trivial modules. Since the usual derivation is surjective on~$\Os(D)$, we have $\Hdr^1(D,\Os) = 0$, hence $\Hdr^1(D,\Fs_{\ge j})=0$. The result now follows by writing the cohomology long exact sequence associated to the short exact sequence $0 \to \Fs_{\ge j} \to \Fs \to \Fs/\Fs_{\ge j} \to 0$.
\end{proof}

\subsubsection{Differential modules}

We can also define Rham cohomology in the setting of differential modules.

\begin{definition}[General definition of index]
\label{Def: index general u}
Let $V$ be a $K$-vector space and let $u:V\to V$ be a $K$-linear map. We say that~$u$ has finite index if $\Ker(u)$ and $\Coker(u)$ are finite-dimensional 
$K$-vector spaces. In this case, we define the 
\emph{index} of~$u$ as 
\begin{equation}
\chi(V,u)\;:=\;
\dim_K \Ker(u) - \dim_K \Coker(u)\;.
\end{equation}
\end{definition}

\begin{lemma}[Additivity of index]
\label{Lemma : additivity of Index}
Let 
\begin{equation}
\xymatrix{
0\ar[r]&V_1\ar[d]^{u_1}\ar[r]&
V_2\ar[d]^{u_2}\ar[r]&
V_3\ar[d]^{u_3}\ar[r]&0\\
0\ar[r]&V_1\ar[r]&V_2\ar[r]&V_3\ar[r]&0}
\end{equation}
be a commutative diagram of $K$-vector spaces in 
which the horizontal sequences are both exact. If two among $u_1$, $u_2$, $u_3$ have finite index, 
then so has the third. In this case, we have
\begin{equation}
\chi(V_2,u_2)\;=\;
\chi(V_1,u_1)+\chi(V_3,u_3)\;.
\end{equation}
\end{lemma}
\begin{proof}
This follows from the exact sequence 
$0\to\Ker(u_1)\to\Ker(u_2)\to\Ker(u_3)\to
\mathrm{Coker}(u_1)\to
\mathrm{Coker}(u_2)\to
\mathrm{Coker}(u_3)\to 0$ given by the snake lemma.
\end{proof}

\begin{definition}[de Rham cohomology of a differential module]\label{Def: index t}
Let $A$ be a $K$-algebra together with a derivation $d \colon A\to A$ 
such that $K=\Ker(d)$. Let $(\M,\nabla)$ be a differential module over~$(A,d)$, \textit{i.e.} an $A$-module~$\M$ 
equipped with a connection $\nabla \colon \M\to\M$ satisfying the Leibniz rule:
\begin{equation}
\forall a\in A, m\in M,\ \nabla(am)=d(a)m+a\nabla(m).
\end{equation} 
If~$\nabla$ has finite index, we set 
\begin{eqnarray}
\Hdr^0(\M,\nabla)&:=&\Ker(\nabla),\\
\Hdr^1(\M,\nabla)&:=&\Coker(\nabla),\\
\chidr(\M,\nabla)&\;:=\;& \chi(\M,\nabla)\;.
\end{eqnarray}

\end{definition}


\begin{lemma}\label{lem:HdrquasiSteinOmegafree}
If~$X$ is cohomologically Stein and if the sheaf~$\Omega_{X}^1$ is free of rank~1, 
then Definitions~\ref{Def. coh de de Rham} and~\ref{Def: index t} can be made to agree. Namely, choose a global differential form on~$X$ that is a basis of $\Omega_{X}^1(X)$ and consider the associated derivation~$d$. Then, the differential equation~$(\Fs,\nabla)$ induces a differential 
module $(\Fs(X),\nabla)$ over $(\Os(X),d)$ and, for $i=0,1$, we have
\begin{eqnarray}
\Hdr^i(X,\Fs)&\;=\;&\Hdr^i(\Fs(X),\nabla)\;,\\
\qquad\qquad\qquad\chidr(X,\Fs)&=&\chidr(\Fs(X),\nabla)\;.
\end{eqnarray}
\hfill$\Box$
\end{lemma}



\begin{remark}
\label{rk : gen index depends on the derivation}
The index $\chi(\Fs(X),\nabla)$ 
\emph{depends on the chosen derivation of 
$\O(X)$}. Namely if $fd$, 
is another derivation, with $f\in \O(X)$, then 
$(\Fs(X),f\nabla)$ 
is a differential module over $(\O(X),fd)$. Hence 
with the notations of Definition 
\ref{Def: index general u} we have
\begin{equation}
\chi(\Fs(X),f\nabla)\;=\;
\chi(\Fs(X),f)+
\chi(\Fs(X),\nabla)\;.
\end{equation}
The equality $\chi(\Fs(X),\nabla)=\chidr(X,\Fs)$ holds if 
and only if $d$ generates $\Omega^1_X$.
\end{remark}

\subsection{Meromorphic de Rham cohomology.}\label{Section : mero-alg}
In this section, we introduce definitions and 
basic results on meromorphic 
differential equations.

Let~$P$ be a quasi-smooth $K$-analytic curve. Let~$Z$ be a locally finite subset of rigid points of~$P$. Set  
\begin{equation}
Y \;:=\; P - Z\;.
\end{equation}
We denote by $j \colon Y \hookrightarrow P$ the 
associated open immersion. We denote by $\Os_{P}[*Z]$ the sheaf 
of meromorphic functions on~$P$ that are holomorphic 
on~$Y$ (hence have poles at worst on~$Z$). Recall that it is the sheaf on~$P$ associated to the presheaf whose ring of sections on an analytic domain~$U$ of~$P$ is the localization of~$\Os_{P}(U)$ by the subset of its elements that do not vanish outside~$Z$.



We now define meromorphic connections following \cite[Chapter~5]{HTT-Dmodules} (which itself borrows from~\cite{Deligne-Reg-Sing}).

\begin{definition}\label{def:meromorphicconnection}
Let~$\Fc$ be a locally free $\Os_{P}[*Z]$-module  of finite rank on~$P$. A \emph{meromorphic connection on~$\Fc$ with poles on~$Z$}  is a $K$-linear map
\begin{equation}
\nabla \colon \mathcal{F} \to  \Omega^1_{P}
\otimes_{\Os_{P}}\mathcal{F} 
\end{equation}
that satisfies the Leibniz rule: for every open 
subset~$U$ of~$P$ and every $f\in \Os_{P}[*Z](U)$ 
and $s\in \mathcal{F}(U)$, we have\footnote{Remark that it is enough to require that~\eqref{eq:Leibniz} holds for $f\in \Os_{P}(U)$.}
\begin{equation}\label{eq:Leibniz}
\nabla(fs) = df \otimes s + f\nabla s.
\end{equation}
We also say that the pair $(\Fc,\nabla)$ is a 
\emph{differential equation on $P(*Z)$} or a \emph{(meromorphic) differential equation on $P$ with poles on~$Z$}. 
As usual, morphisms of differential equations 
$\varphi \colon 
(\mathcal{F},\nabla) \to (\mathcal{F}',\nabla')$ are 
morphisms of $\Os_{P}[*Z]$-modules that are compatible 
with the connections.

%
%
\end{definition}

\begin{definition}
\label{definition  : merocoh}
Let~$(\Fc,\nabla)$ be a differential equation 
on~$P(*Z)$. 
The de Rham cohomology groups 
\begin{equation}
\Hdr^i(P(*Z),(\Fc,\nabla))
\end{equation}
of~$(\Fc,\nabla)$ are the hypercohomology groups of 
the complex
\begin{equation}
\cdots\to0\to\Fc \xrightarrow[]{\nabla} 
\Omega^1_P\otimes_{\Os_{P}}\Fc\to0\to\cdots,
\end{equation}
where $\Fc$ is placed in degree~0 and 
$\Omega_P^1\otimes_{\Os_{P}}\Fc$ in degree~1. 

As usual, we will often suppress~$\nabla$ 
from the notation when it is clear from the context.
\end{definition}


The notation $\Fs$ will be used to indicate the 
restriction of $\Fc$ to $Y$:
%
\begin{equation}
\Fs\;:=\;\Fc_{|Y}\;.
\end{equation}
This operation gives rise to a functor
\begin{equation}\label{eq : res-mero-to-an section 1}
\xymatrix{
\ar[d]^{\Fc\;\mapsto\;\Fc_{|Y}\;=\;\Fs}
\{\textrm{Differential equations 
on }P(*Z)\}\\
\{\textrm{Analytic differential equations on }Y\}}
\end{equation}
and a canonical morphism between the 
cohomology groups
\begin{eqnarray}\label{eq : H^i kjed}
\Hdr^i(P(*Z),\Fc)&\;\to\;&\Hdr^i(Y,\Fs)\;. 
\end{eqnarray}

When we do not mention poles, we understand that the 
connection is holomorphic: $Z=\emptyset$ 
and~$\mathcal{F}=\Fs$ is a genuine analytic  
differential equation over $Y=P$. 

\begin{remark}\label{rk : mero=anal over U}
If $U$ is an open subset of $P$ such that 
$U\cap Z=\emptyset$, the restriction of the sheaf 
$\O_P[*Z]$ to $U$ coincides by definition with $\O_U$. 
Hence, over $U$, we have 
the usual \emph{analytic} cohomology:
\begin{equation}
\Hdr^i(U(*Z),\Fc_{|U})\;=\;\Hdr^i(U,\Fs_{|U})\;.
\end{equation}
\end{remark}

\begin{lemma}
\label{Lemma : val tri == equal}
If the valuation of $K$ is trivial, for each open subset~$U$ of~$P$ 
one has 
\begin{equation}
\O_P[*Z](U)\;=\;\O_{P-Z}(U-Z)\;.
\end{equation}
In particular, the restriction functor 
\eqref{eq : res-mero-to-an section 1} is an 
equivalence of categories.
\end{lemma}
\begin{proof}
The equality can be tested on a basis of open subsets of 
$P$. 
By Remark \ref{rk : mero=anal over U}, it is enough to 
test the equality on a basis of open neighborhoods of a 
point $z\in Z$, which is given by the set of 
virtual open disks containing $z$. 
Therefore, we can assume that $P$ is a virtual open disk
and that $Z$ is reduced to an individual rigid point~$z$. 
Up to replacing $K$ by a finite extension, we can assume 
that $P$ is an open disk and that $z$ is a $K$-rational 
point. 
In this case the equality $\O_{P-\{z\}}=\O_P[*z]$ 
follows from an explicit computation 
(cf. Section \ref{Remark : index annulus trivial valuation} 
for more details). 
\end{proof}

\if{

.....

.....

.....

.....

.....

\subsection{Meromorphic de Rham cohomology.}\label{Section : mero-alg}
In this section, we introduce definitions and 
basic results on meromorphic 
differential equations.

Let~$P$ be a quasi-smooth $K$-analytic curve. Let~$Z$ be a locally finite subset of rigid points of~$P$.

%
%
%
%

Set  
\begin{equation}
Y \;:=\; P - Z\;.
\end{equation}
We denote by $j \colon Y \hookrightarrow P$ the 
associated open immersion. We denote by $\Os_{P}[*Z]$ the sheaf 
of meromorphic functions on~$P$ that are holomorphic 
on~$Y$ (hence have poles at worst on~$Z$). Recall that it is the sheaf on~$P$ associated to the presheaf whose ring of sections on an analytic domain~$U$ of~$P$ is the localization of~$\Os_{P}(U)$ by the subset of its elements that do not vanish outside~$Z$.



We now define meromorphic connections following \cite[Chapter~5]{HTT-Dmodules} (which itself borrows from~\cite{Deligne-Reg-Sing}).

\begin{definition}\label{def:meromorphicconnection}
Let~$\Fc$ be a locally free $\Os_{P}[*Z]$-module  of finite rank on~$P$. A \emph{meromorphic connection on~$\Fc$ with poles on~$Z$}  is a $K$-linear map
\begin{equation}
\nabla \colon \mathcal{F} \to  \Omega^1_{P}
\otimes_{\Os_{P}}\mathcal{F} 
\end{equation}
that satisfies the Leibniz rule: for every open 
subset~$U$ of~$P$ and every $f\in \Os_{P}[*Z](U)$ 
and $s\in \mathcal{F}(U)$, we have\footnote{Remark that it is enough to require that~\eqref{eq:Leibniz} holds for $f\in \Os_{P}(U)$.}
\begin{equation}\label{eq:Leibniz}
\nabla(fs) = df \otimes s + f\nabla s.
\end{equation}
We also say that the pair $(\Fc,\nabla)$ is a 
\emph{differential equation on $P(*Z)$} or a \emph{(meromorphic) differential equation on $P$ with poles on~$Z$}. 
As usual, morphisms of differential equations 
$\varphi \colon 
(\mathcal{F},\nabla) \to (\mathcal{F}',\nabla')$ are 
morphisms of $\Os_{P}[*Z]$-modules that are compatible 
with the connections.

%
%
\end{definition}

\begin{definition}
\label{definition  : merocoh}
Let~$(\Fc,\nabla)$ be a differential equation 
on~$P(*Z)$. 
The de Rham cohomology groups 
\begin{equation}
\Hdr^i(P(*Z),(\Fc,\nabla))
\end{equation}
of~$(\Fc,\nabla)$ are the hypercohomology groups of 
the complex
\begin{equation}
\cdots\to0\to\Fc \xrightarrow[]{\nabla} 
\Omega^1_P\otimes_{\Os_{P}}\Fc\to0\to\cdots,
\end{equation}
where $\Fc$ is placed in degree~0 and 
$\Omega_P^1\otimes_{\Os_{P}}\Fc$ in degree~1. 

As usual, we will often suppress~$\nabla$ 
from the notation when it is clear from the context.
\end{definition}


The notation $\Fs$ will be used to indicate the 
restriction of $\Fc$ to $Y$:
%
\begin{equation}
\Fs\;:=\;\Fc_{|Y}\;.
\end{equation}
This operation gives rise to a functor
\begin{equation}\label{eq : res-mero-to-an section 1}
\xymatrix{
\ar[d]^{\Fc\;\mapsto\;\Fc_{|Y}\;=\;\Fs}
\{\textrm{Differential equations 
on }P(*Z)\}\\
\{\textrm{Analytic differential equations on }Y\}}
\end{equation}
and a canonical morphism between the 
cohomology groups
\begin{eqnarray}\label{eq : H^i kjed}
\Hdr^i(P(*Z),\Fc)&\;\to\;&\Hdr^i(Y,\Fs)\;. 
\end{eqnarray}

When we do not mention poles, we understand that the 
connection is holomorphic: $Z=\emptyset$ 
and~$\mathcal{F}=\Fs$ is a genuine analytic  
differential equation over $Y=P$. 

\begin{remark}\label{rk : mero=anal over U}
If $U$ is an open subset of $P$ such that 
$U\cap Z=\emptyset$, the restriction of the sheaf 
$\O_P[*Z]$ to $U$ coincides by definition with $\O_U$. 
Hence, over $U$, we have 
the usual \emph{analytic} cohomology:
\begin{equation}
\Hdr^i(U(*Z),\Fc_{|U})\;=\;\Hdr^i(U,\Fs_{|U})\;.
\end{equation}
\end{remark}

\begin{lemma}
\label{Lemma : val tri == equal}
If the valuation of $K$ is trivial, for each open subset~$U$ of~$P$ 
one has 
\begin{equation}
\O_P[*Z](U)\;=\;\O_{P-Z}(U-Z)\;.
\end{equation}
In particular, the restriction functor 
\eqref{eq : res-mero-to-an section 1} is an 
equivalence of categories.
\end{lemma}
\begin{proof}
The equality can be tested on a basis of open subsets of 
$P$. 
By Remark \ref{rk : mero=anal over U}, it is enough to 
test the equality on a basis of open neighborhoods of a 
point $z\in Z$, which is given by the set of 
virtual open disks containing $z$. 
Therefore, we can assume that $P$ is a virtual open disk
and that $Z$ is reduced to an individual rigid point~$z$. 
Up to replacing $K$ by a finite extension, we can assume 
that $P$ is an open disk and that $z$ is a $K$-rational 
point. 
In this case the equality $\O_{P-\{z\}}=\O_P[*z]$ 
follows from an explicit computation 
(cf. Section \ref{Remark : index annulus trivial valuation} 
for more details). 
\end{proof}

Over a cohomologically Stein space, it is natural to expect that one can read off the properties of a sheaf on its global sections. We prove comparison results in this direction. 

Using the fact that cohomology and tensor products commute with filtered direct limits, the classical results on coherent sheaves may be generalized this way.

\begin{proposition}\label{prop:ABqc}
Let~$P$ be a quasi-smooth $K$-analytic curve with no proper connected component. Let~$\Fc$ be a sheaf of $\Os_{P}$-modules that is a filtered direct limit of coherent sheaves. Then, for every $g\ge 1$, we have 
$H^q(P,\Fc)=0$
and~$\Fc$ is generated by its global sections. 
\hfill$\Box$ 
\end{proposition}

\begin{corollary}\label{cor:eqcatqc}
In the setting of Proposition~\ref{prop:ABqc}, the global section functor $\Fc \mapsto \Fc(P)$ sets up an equivalence between the category of $\Os_{P}$-modules that are filtered direct limits of coherent sheaves and the category of $\Os(P)$-modules.
\hfill$\Box$
\end{corollary}

It is now straightforward to prove that the global sections functor induces various an equivalence between the category of locally free $\Os_{P}[\ast Z]$-modules of bounded rank and the category of projective $(\Os_{P}[\ast Z])(P)$-modules of finite type by following the classical arguments, as in the proof of \REF{} for instance. One may also include the datum of a connection.


\begin{corollary}\label{cor:eqcatStein}
Let~$P$ be a quasi-smooth $K$-analytic curve with no 
proper connected component. Let~$Z$ be a 
locally finite subset of rigid points of~$P$. 

The global section functor $\Fc \mapsto \Fc(P)$ sets up an equivalence between the category of locally free $\Os_{P}[\ast Z]$-modules of bounded rank endowed with a meromorphic connection and the category of projective $(\Os_{P}[\ast Z])(P)$-modules of finite type endowed with a connection.
\hfill$\Box$
\end{corollary}

\if{\subsubsection{Localization of the partial heights.}

Let~$X$ be a quasi-smooth $K$-analytic curve. Let $(\Fs,\nabla)$ be a differential equation on~$X$. The slopes of the partial heights are related to the local 
irregularities (cf. Section \ref{Section : Local irreg}).
We quickly recall a result of \cite{NP-III} explaining 
how the Laplacian modifies 
when we localize to an elementary tube. 

\begin{notation}\label{Def : D_b -2}
Let~$x\in X$ be a point of type~2 or~3. If~$b$ is a germ of segment out of~$x$ that does not belong to~$\Gamma_{S}$, then the connected component of~$X-\{x\}$ that corresponds to~$b$ is a virtual open disk. We denote it by~$D_{x,b}$, or~$D_{b}$ if no confusion may arise. 

If~$x\notin\Gamma_{S}$, since $X-\Gamma_S$ is a disjoint union of open pseudo-disks, there exists a unique virtual closed disk inside $X-\Gamma_{S}$ with boundary~$x$. We denote it by~$D_{x}$. There exists a unique germ of segment out of~$x$ that does not belong to~$D_{x}$. We denote it by~$b_{x,\infty}$, or by~$b_{\infty}$ if no confusion may arise.
\end{notation}

We use $h^0(D_{b},\Fs)$ (resp. $h^0(D_{x}^\dag,\Fs)$) to denote the dimension of the $K$-vector spaces of solutions of~$\Fs$ on~$D$ (resp. overconvergent solutions of~$\Fs$ on~$D_{x}$). 

Recall that a germ of segment out of a point is oriented away from that point.

\begin{lemma}[\smallcomment{\cite[.????.]{NP-III}}]\label{Lemma : Irr local sopp}
Let $x\in X$ be a point of type~2 or~3. Set $r := \rk(\Fs_{x})$. Let $b$ be a germ of segment out of $x$ and let $C_b$ 
be a virtual open annulus that is a section of $b$. Then, the following equalities hold.
\begin{enumerate}
\item If $x\in\Gamma_{S}$ and $b\subseteq \Gamma_{S}$, then we have 
$\partial_bH_{\emptyset,r}(x,\Fs_{|C_b})=
\partial_bH_{S,r}(x,\Fs)$;
\item If $x\in\Gamma_{S}$ and $b\nsubseteq\Gamma_S$ or if $x\notin\Gamma_{S}$ and $b\ne b_{\infty}$, then we have
\begin{equation}\label{eq : changing radii sptbq99}
\partial_bH_{\emptyset,r}(x,\Fs_{|C_b})\;=\;
\partial_bH_{S,r}(x,\Fs)-h^0(D_b,\Fs)+r\;;
\end{equation}
\item If $x\notin \Gamma_{S}$ and $b = b_{\infty}$, then, we have
\begin{equation}
\qquad\qquad\qquad\qquad\partial_{b}H_{\emptyset,r}(x,\Fs_{|C_b})=
\partial_{b}H_{S,r}(x,\Fs)+h^0(D_x^\dag,\Fs)-r\;.\qquad\qquad\Box
\end{equation}
\end{enumerate}
\end{lemma}
\if{\begin{proof}
All the statements are deduced from 
Proposition \ref{Prop : immersion}.

\comment{Ajouter une r\'ef\'erence \`a NP 3.}

i) It is immediate. 

ii) Restricting from~$X$ to~$D_b$ leaves the slopes of the radii along~$b$ unchanged. Restricting to~$C_b$ causes the slopes of the radii $\R_{S,i}(-,\Fs)$ that are spectral on~$b$ to increase by~1, whereas it leaves unchanged (and equal to~0) the slopes of the radii that are oversolvable on~$b$. Since the oversolvable radii correspond to the solutions of~$\Fs$ on~$D_{b}$, the result follows.

iii) As above, restricting from~$X$ to~$D_x^\dag$ leaves the slopes of the radii along~$b$ unchanged. The argument continues as before except that the slopes of the spectral radii decrease by~1 this time.
\end{proof}
}\fi

\begin{corollary}[\smallcomment{\cite[.????.]{NP-III}}]\label{Prop: localization of ddc H to a tube}
Let $x$ be a point of type $2$ or $3$ and let~$V$ be an elementary tube centered at~$x$ that is adapted to~$\Fs$. Set $r := \rk(\Fs_{x})$. Then the following results hold.
\begin{enumerate}
\item If $x\notin\Gamma_S$, then we have
\begin{equation}
dd^cH_{\{x\},r}(x,\Fs_{|V^\dag})\;=\;
dd^cH_{S,r}(x,\Fs) 
- r\cdot \chi_{c}(V^\dag)
+ h^0(D_x^\dag,\Fs) 
- \sum_{
\sm{b\in\sing{x,V}\\
b\neq b_\infty}} h^0(D_b,\Fs)\;.  
\end{equation}

\item If $x\in\Gamma_S$, then we have
\begin{equation}
dd^cH_{\{x\},r}(x,\Fs_{|V^\dag})\;=\;
dd^cH_{S,r}(x,\Fs) 
+ r\cdot (N_V(x)-N_S(x))
- \sum_{
\sm{b\in\sing{x,V}\\
b\nsubseteq\Gamma_S}} h^0(D_b,\Fs)\;.
\end{equation}
\end{enumerate}
\hfill$\Box$
\end{corollary}

}\fi

}\fi

\section{The analytic index theorem over open 
pseudo-annuli}\label{sec:indexpseudo-annuli}

In this section, we consider a differential equation over an 
open pseudo-annulus with log-affine radii and we 
provide a necessary and sufficient condition for the finite 
dimensionality of its de Rham cohomolgy. 
This condition is expressed by the finiteness of 
certain \emph{absolute indexes} at the open boundary of 
the pseudo-annulus. 
Absolute indexes are normalized versions of the 
\emph{generalized indexes} introduced by Robba 
\cite{RoIII,RoIV} and also 
exploited by Christol and Mebkhout 
\cite{Ch-Me-III}. 
\if{Robba's generalized indexes are the indexes of certain 
truncations of the connection that are no 
more differential operators. }\fi
We improve their approach by the introduction of an 
intrinsic definition of absolute index which is independent 
of the coordinate, of the choice of the derivation and of 
the basis of the differential module. %
\if{The usual finiteness condition in literature is the so-called 
\emph{Liouville condition} on the exponents (and 
their differences) introduced by G.~Christol and 
Z.~Mebkhout in \cite{Ch-Me-II}, and refined in 
\cite{Dwork-Exponents},
\cite{Kedlaya-book}, \cite{Kedlaya-draft}. 
The Liouville condition guarantees 
that a differential equation with the Robba property over 
the annulus has zero index. 
More precisely, the Liouville condition implies that a 
Robba equation splits into rank one differential modules 
for which the computations are quite explicit. 

However, Liouville conditions 
do not seem minimal for the finite 
dimensionality of the cohomology. 
In practice, the minimal property that one really needs is 
the vanishing of the generalized indexes. This is the 
property $\Fin$ that we introduce in this section, and 
that will be used in the sequel of the paper. It is 
conjecturally equivalent to the Liouville condition, 
but it is much less technical.
}\fi
More specifically, this section generalizes the analogous 
results of \cite{Ch-Me-III} where similar results 
were obtained with some restrictions, in particular about 
the ground field 
$K$. As an example, the investigation of the case 
where $K$ is trivially valued permits to establish the link 
with the classical theory of differential equations  over a 
field of power series and leads to new proofs of
some major well known results in that context (cf. 
Section \ref{Remark : index annulus trivial valuation}).

We use as a reference the book of 
P.~Schneider~\cite{Schneider}. There, the base field is 
always assumed to be non-trivially valued and spherically 
complete, which explains why this hypothesis appears in 
several technical statements. Nevertheless, we have been 
able to remove it from the major statements thanks to a 
descent result from~\cite{Banachoid} (see Theorem~\ref{thm:descent}).

A complete and self-contained exposition turned out to 
be necessary in this section 
because several technical parts do not 
admit analogous accurate statements in \cite{Ch-Me-IV} 
and will be generalized or used as central tools in Section 
\ref{Disk-merom}, where we will obtain similar results 
for differential equations with some meromorphic 
singularities over an open disk. 
Indeed, the case of a disk with some meromorphic 
singularities is a central tool in the proof of our main 
index results in \cite{NP-V}. 
\if{For this reason, we though that a 
complete and self contained exposition was necessary in 
this section.
}\fi

%
%

\subsection{Generalized indexes of vector spaces}
\label{Ind-genind}



Let $\mu \in \NN$ and let $V_0, \dotsc, V_\mu$ be vector spaces over~$K$. Denote by~$V$ their direct sum. For every $k\in\{0,\dotsc,\mu\}$, we denote the canonical injections and projections by 
\begin{equation}\label{eq:injproj}
i_k\;:\;V_k\hookrightarrow V\;\qquad\textrm{ and }\;\qquad 
p_k\;:\;V\to V_k
\end{equation}
and the corresponding projector by 
\begin{equation}\label{eq : u_k (def)}
\pi_k\;:=\;i_k\circ p_k\;:\;V\to V\;.
\end{equation}

Recall the notion of operator of finite index from Definition~\ref{Def: index general u}.

\begin{definition}\label{Def: genindgeneralclassical}
Let~$f$ be an endomorphism of~$V$. For $k\in\{0,\dotsc,\mu\}$, set $f_k:=p_k\circ f\circ i_k$:
\begin{equation}\label{eq : truncated op u_k}
f_k\;:\;V_k\xrightarrow{\;i_k\;} V\xrightarrow{\;f\;}
V\xrightarrow{\;p_k\;} V_k\;.
\end{equation}
We say that~$f$ has finite $k$-generalized index if $f_{k}$ has finite index. In this case, the index of the 
map~$f_{k}$ is denoted by 
\begin{equation}
\chi_k^{\mathrm{gen}}(V,f)\;:=\;
\chi(V_k,f_k)\;.
\end{equation}
and called the $k$-generalized index of~$f$.
\end{definition}


We will also use a similar notion when each of the~$V_{i}$'s carries a family of seminorms~$v_{i}$ that makes it a normoid Fr\'echet space (see \cite{Banachoid}). By \cite{Banachoid}, there exists a family of seminorms~$v$ on their direct sum~$V$ that makes it a Banachoid space, hence a normoid Fr\'echet space, and we will consider bounded endomorphisms of~$(V,v)$. In the following, we will always use this setting implicitly when writing that~$f$ is a bounded endomorphism of~$V$.

\begin{definition}\label{Def: genindgeneral}
Let~$f$ be a bounded endomorphism of~$V$. It is said to be Fredholm if it is topologically strict and has finite index.


For $k\in\{0,\dotsc,\mu\}$, we say that~$f$ is $k$-Fredholm if $f_{k}$ is 
Fredholm.

\end{definition}

\begin{remark}\label{rem:Fredholmnontriv}
Assume that~$K$ is not trivially valued. Let~$V$ be a Fr\'echet space and let~$f$ be a continuous endomorphism of~$V$. Let~$v$ be a family of seminorms on~$V$ that induces its topology. Then, by \cite{Banachoid}, $f$ induces a bounded endomorphism of $(V,v)$. Moreover, by \cite{Banachoid}, if~$f$ has finite cokernel, then it is topologically strict.

In other words, in the case where~$K$ is not trivially valued, Fredholm operators are nothing but operators of finite indexes.

\end{remark}
%


\begin{lemma}[Descent]
\label{Lemma : descent of chigen}
Let~$f$ be an endomorphism of~$V$ (resp. a bounded endomorphism of~$V$). Let~$L$ be an extension of~$K$ (resp. a complete valued extension of~$K$). Denote by $f_{L}$ the endomorphism $f\otimes \mathrm{Id}_L$ of $V_{L} := V\otimes_{K}L$ (resp. $V_{L} := V\ho_{K}L$). 

Then, $f$ has finite index (resp. is Fredholm) if, and only if, $f_{L}$ has finite index (resp. is Fredholm) and, in this case, we have 
\begin{equation}\label{eq:chiVuL}
\chi(V,f)\;=\;
\chi(V_L,
f_{L})\;.
\end{equation}

For $k\in\{1,\dotsc,\mu\}$, set $V_{k,L} := V_k\otimes_{K}L$ (resp. $V_{k,L} := V_k\ho_{K}L$). The space $V_{L}$ is naturally isomorphic to the direct sum of the $V_{k,L}$'s (resp. in the category of normoid Fr\'echet spaces). For $k\in\{1,\dotsc,\mu\}$, $f$ has finite generalized $k$-index (resp. is $k$-Fredholm) with respect to $V = \bigoplus_{i} V_{i}$ if, and only if, $f_{L}$ has finite generalized $k$-index (resp. is $k$-Fredholm) with respect 
to $V_{L}=\bigoplus_{i} V_{L,i}$ and, in this case, we have 
\begin{equation}\label{eq:chigenVuL}
\chi^{\mathrm{gen}}_k(V,f)\;=\;
\chi^{\mathrm{gen}}_k(V_L,
f_{L})\;.
\end{equation}
\end{lemma} 
\begin{proof}
In the classical case, the first part is a consequence of the exactness of the tensor product $\cdot \otimes_{K} L$. In the normoid Fr\'echet case, it follows from \cite{Banachoid}. 

In the classical case, it is well-known that tensor products commutes with direct sums, hence $V\otimes_{K}L = \bigoplus_{i}V_i\otimes_{K}L$. In the normoid Fr\'echet case, by \cite{Banachoid}, 
completed tensor products commute with colimits, hence direct sums. It follows that we also have a direct sum decomposition $V\ho_{K}L = \bigoplus_{i}V_i\ho_{K}L$. The rest of the result is a consequence of the first part.
\end{proof}

Using Remark~\ref{rem:Fredholmnontriv}, we can rephrase the Fredholm property in terms of finite dimensionality of the cokernel after extension of scalars.

\begin{corollary}\label{cor:Fredholmcokernel}
Let~$f$ be a bounded endomorphism of~$V$. The following conditions are equivalent.
\begin{enumerate}
\item $f$ is Fredholm;
\item for each  complete valued extension~$L$ of~$K$, $f_{L}$ has finite-dimensional kernel and cokernel; 
\item there exists a complete valued extension~$L$ of~$K$ with non-trivial valuation such that $f_{L}$ has finite-dimensional kernel and cokernel. 
\end{enumerate}
Similar results hold for $k$-Fredholm.
\hfill$\Box$
\end{corollary}

Fredholm morphisms satisfy a two out of three principle 
(see \cite[Lemma~22.1]{Schneider}).


\begin{lemma}[Composition]\label{lem:composition}
Let~$f$ and~$g$ be endomorphisms of~$V$ (resp. bounded endomorphisms of~$V$). If two among $f,g,f\circ g$ are of finite index (resp. are Fredholm), then so is the third. In this case, we have
\begin{equation}\label{eq:composition}
\chi(f\circ g) = \chi(f) + \chi(g).
\end{equation}
\end{lemma}
\begin{proof}
In the classical case, the claim follows from the two exact sequences 
\begin{equation}
0 \to \Ker(g) \to \Ker(f\circ g) \xrightarrow[]{g} \Im(g)\cap \Ker(f) \to 0
\end{equation}
and
\begin{equation}
0 \to \Ker(f)/(\Ker(f)\cap\Im(g)) \to \Coker(g) \xrightarrow[]{f} \Im(f)/\Im(f\circ g) \to 0\;.
\end{equation}

In the normoid Fr\'echet case, by Lemma~\ref{Lemma : descent of chigen}, we may extend the scalars in order to assume that~$K$ is not trivially valued. By Remark~\ref{rem:Fredholmnontriv}, this allows us to forget about topological strictness in the definition of Fredholm operators and we are reduced to the classical case.
\end{proof}

A general notion of \emph{compact} operator is defined in \cite[Section 16]{Schneider}, with the restriction that the base field is required to be non-trivially valued and maximally complete. At the moment, no definition of compact operator is available outside this case and we propose to get around this difficulty by extending the 
scalars. Thanks to Lemma~\ref{Lemma : descent of chigen}, this is harmless for proving index theorems.



\begin{definition}[Potentially compact operator]
We say that a bounded endomorphism~$c$ of~$V$ is potentially compact if there exists a complete non-trivially 
valued maximally complete extension~$L$ of~$K$ such that the endomorphism of $V\ho_{K} L$ induced by~$c$ is compact.
\end{definition}

\begin{remark}\label{rem:compositioncompact}
Let~$c,f,g$ be bounded endomorphisms of~$V$ with~$c$ potentially compact. Then $f\circ c\circ g$ is potentially compact. Actually, this holds with ``potentially compact'' replaced by ``compact'' when the base field is not trivially valued and maximally complete by \cite[Remark~16.7]{Schneider}. Our statement is a straightforward consequence.
\end{remark}

%
%
%

\begin{proposition}[Compact perturbation]
\label{Prop : Fred --> compact perturbation}
Let~$f$ and~$c$ be bounded endomorphisms of~$V$. Assume that~$c$ is potentially compact.

Then $f$ is Fredholm if, and only if, $f+c$ is Fredholm and, in this case, we have
\begin{equation}
\chi(V,f)=\chi(V,f+c)\;.
\end{equation}
\end{proposition}
\begin{proof}
Assume that~$f$ is Fredholm. By Lemma~\ref{Lemma : descent of chigen}, we can 
extend the scalars in order to assume that~$K$ is not trivially valued and maximally complete and that~$c$ is compact.

%

By \cite[Corollary 22.11]{Schneider}, 
$f$ is invertible modulo compact operators: there 
exists continuous endomorphisms $g,c',c''$ of~$V$ with~$c'$ and~$c''$ compact such that $f\circ g=1+c'$ and 
$g\circ f=1+c''$. 

Now write $(f+c)\circ g=f\circ g+c\circ g=1+c'+c \circ g$. 
By \cite[Remark 16.7]{Schneider}, $c'+c\circ g$ is compact 
and, by \cite[Corollary 22.9]{Schneider}, 
$1+c'+c\circ g$ is Fredholm with index equal to $0$. Since 
$\Coker((f+c)\circ g)$ maps surjectively onto 
$\Coker(f+c)$, the latter is finite-dimensional.


Similarly, $g\circ (f+c)$ is Fredholm with 
zero index. Since 
$\Ker(f+c)$ is contained in  $\Ker(g\circ(f+c))$, we 
deduce that $\Ker(f+c)$ is finite-dimensional and, finally, that $f+c$ is Fredholm (see Remark~\ref{rem:Fredholmnontriv}).

By \cite[Corollary 22.9]{Schneider}, $1+c' = f\circ g$ is Fredholm with index~0. By Lemma~\ref{lem:composition}, we deduce that~$g$ is Fredholm and that $\chi(V,f)=-\chi(V,g)$. 
Using the same argument with $1+c'+c\circ g = (f+c)\circ g$, we find
\begin{equation}
\chi(V,f+c)=-\chi(V,g) = \chi(V,f).
\end{equation}

To prove the converse, we can add the compact 
operator $-c$ to $f+c$ and deduce that $f$ is Fredholm 
from the fact that $f+c$ is Fredholm.
%
%
\end{proof}


%


We now want to establish an analogue of Lemma~\ref{lem:composition} for generalized indexes. As the reader may expect, this property fails for 
general endomorphisms, hence we will focus on a class of endomorphisms satisfying the following compactness property. It will be automatically satisfied by connections
(see Proposition \ref{Proposition : compass}).



\begin{definition}\label{Def. Compactness property}
We say that a bounded endomorphism~$f$ of~$V$ 
satisfies the \emph{compactness property} if, for every $k,s \in \{0,\dotsc,\mu\}$ with $s\neq k$, the operator $\pi_s \circ u \circ\pi_k$ is potentially compact.
\end{definition}


\begin{lemma}\label{Lemma : additivity gen ind}
Let~$f$ and~$g$ be bounded endomorphisms of~$V$. 
Assume that at least one among~$f$ and~$g$ satisfies the compactness property of 
Definition~\ref{Def. Compactness property}. 

Let $k\in\{0,\dotsc,\mu\}$. If two 
among $f,g,f\circ g$ are $k$-Fredholm, then so is the third.
In this case, we have
\begin{equation}\label{eq: additivity index gen }
\chi_{k}^{\mathrm{gen}}(V, f\circ g)\;=\;
\chi_{k}^{\mathrm{gen}}(V, f)\;+\;
\chi_{k}^{\mathrm{gen}}(V, g)\;.
\end{equation}
\end{lemma}
\begin{proof}
By Proposition~\ref{Prop : Fred --> compact perturbation}, it is enough to prove that $(f\circ g)_k-f_k\circ g_k$ is a potentially compact 
endomorphism of~$V_k$.

Consider the bounded endomorphism of~$V$ defined by 
$\alpha:=\pi_k\circ f\circ g\circ\pi_k - \pi_k\circ f\circ\pi_k\circ g\circ\pi_k.$
Since $(f\circ g)_k-f_k\circ g_k = p_k\circ\alpha\circ i_k$, by Remark~\ref{rem:compositioncompact}, it is enough to prove that~$\alpha$ is potentially compact. Observing that $g=\sum_k\pi_k\circ g$, we can write 
$\alpha = \sum_{s\neq k}\pi_k\circ f\circ \pi_s\circ g\circ\pi_k$ and potential compactness now follows from Remark~\ref{rem:compositioncompact} again.
\end{proof}

\begin{proposition}\label{Prop : chi=sumchigen}
Let~$f$ be a bounded endomorphism of~$V$ satisfying the compactness property 
of Definition~\ref{Def. Compactness property}. 

Then, $f$ is Fredholm if, and only if, for every $k\in\{0,\dotsc,\mu\}$, $f$ is $k$-Fredholm. In this case, we have 
\begin{equation}
\chi(V,f)\;=\;
\sum_{k=0}^\mu\chi_k^{\mathrm{gen}}(V,f)\;.
\end{equation}
\end{proposition}
\begin{proof}
We have $f=
\sum_{k,s}\pi_k \circ f\circ\pi_s=
\sum_{k}\pi_k \circ f\circ\pi_k+
\sum_{k\neq s}\pi_k \circ f\circ \pi_s$. 
By assumption, for all $k\neq s$, the operator 
$\pi_k \circ f\circ \pi_s$ is potentially compact. By Proposition \ref{Prop : Fred --> compact 
perturbation}, $f$ is Fredholm if, and only if, $\sum_{k}\pi_k \circ f\circ\pi_k$ is and, in this case, they have the same index. The result now follows from the equality $\sum_{k}\pi_k \circ f\circ\pi_k=\bigoplus_k f_k$.
\end{proof}

\subsection{Generalized indexes for connections over
open pseudo-annuli.}
\label{Generalized indexes at the open boundaries}


\subsubsection{Standard pseudo-annuli.}
\label{section : Standard pseudo-annuli}
We fix a coordinate~$T$ on $\mathbb{A}^{1,\an}_{K}$. The definitions that follow will depend on it.

\begin{definition}\label{Def : standard pseudo-annulus}
A $K$-analytic space~$C$ is said to be a standard open 
pseudo-annulus over~$K$ if it is isomorphic to an open 
subset of~$\AfK$ of the form $\{r_1<|T|<r_2\}$, where 
$r_1,r_2$ are elements of $[0,\infty]$ such that 
$r_1<r_2$.
%
\end{definition}

Let~$C=\{r_1<|T|<r_2\}$, with $r_1,r_2\in[0,\infty]$ and $r_{1}<r_{2}$, be a standard open
pseudo-annulus over~$K$. 
We consider the open pseudo-disks (cf. \cite[Definition 1.1.8]{NP-IV})
\begin{equation}
D_0:=\{|T|<r_2\}\;,
\end{equation}
and
\begin{equation}
D_1:=\{|T|>r_1\}\cup\{+\infty\}\;,
\end{equation}
that is the complement of $\{|T| \le r_{1}\}$ 
in~$\mathbb{P}^{1,\an}_{K}$. 
Remark that $C = D_{0}\cap D_{1}$. 
For $k=0,1$, we denote by~
\begin{equation}
b_k
\end{equation}
the germ of segment at the open boundary of~$D_{k}$.

%
We have
\begin{eqnarray}
\O(D_0)&\;=\;&\{\sum_{n\geq 0}a_nT^n\;,\;
a_n\in K\;,\;
\lim_{n\to+\infty}|a_n|\rho^n=0\;,\;\forall\rho<r_2\}\\
\O(D_1)&\;=\;&\{\sum_{n\leq 0}a_nT^n\;,\;
a_n\in K\;,\;
\lim_{n\to-\infty}|a_n|\rho^n=0\;,\;\forall\rho>r_1\}
\end{eqnarray}
and
\begin{equation}
\O(C)\;=\;\{\sum_{n\in\Z}a_nT^n\;,\;
a_n\in K\;,\;\lim_{n\to\pm\infty}|a_n|\rho^n=0\;,\;\forall\rho \in ]r_{1},r_2[\}\;.
\end{equation}

Let us endow~$\O(C)$ with an admissible normoid Fr\'echet structure, for instance the structure associated to an affinoid covering of~$C$ (see \cite{Banachoid}). 



Let $m,n\in\mathbb{Z}$ with $m<n$. The spaces $T^{m}\O(D_1)$, $K\cdot T^{m+1}$, \dots, $K\cdot T^{n-1}$ and $ T^n\O(D_0)$ of~$\Os(C)$ are all closed subspaces of~$\Os(C)$, hence naturally inherit structures of normoid Fr\'echet spaces. We have a direct sum decomposition in the category of 
normoid Fréchet spaces
\begin{equation}\label{eq: O(C) deco O(D_0)+O(D_1)}
\O(C)\;=\;T^{m}\O(D_1)\oplus 
\Bigl(\bigoplus_{s=m+1}^{n-1}K\cdot T^s\Bigr)
\oplus T^n\O(D_0)\;.
\end{equation}



Let~$V$ be a free $\O(C)$-module of finite rank~$r$ and let~$j$ be an $\O(C)$-linear isomorphism
\begin{equation}
\label{eq : def j}
j\;:\;V\xrightarrow{\;\sim\;} \O(C)^r\;.
\end{equation}
The normoid Fr\'echet structure on~$\O(C)$ induces one on~$\O(C)^r$, hence also one on~$V$ \textit{via}~$j$. 

\begin{remark}
The choice of~$j$ does not really affect the normoid structure on~$V$ in the sense that another choice would produce an equivalent structure. In particular, the bounded endomorphisms of~$V$ remain the same. 

\end{remark}






The direct sum decomposition~\eqref{eq: O(C) deco O(D_0)+O(D_1)} induces a direct sum decomposition of~$V$ in the category of normoid Fr\'echet spaces. We denote by 
\begin{equation} \label{eq : V_0 tt V_1}
V_1 \;:=\; j^{-1}(T^m\O(D_1)^r) \qquad 
\textrm{ and } \qquad V_0 \;:=\; 
j^{-1}(T^{n}\O(D_0)^r)
\end{equation}
the extremal summands of this decomposition.

\begin{definition}
\label{Def : }
Let~$f$ be a bounded endomorphism of~$V$. For $k\in\{0,1\}$, we say that~$f$ is Fredholm at~$b_{k}$ if~$f_{k}$ is Fredholm (see Definition~\ref{Def: genindgeneral}). In this case, we set
\begin{equation}
\chi^{\mathrm{gen}}_{b_k}(V,f)\;:=\;
\chi^{\mathrm{gen}}_k(V,f)\;=\;\chi(V_k,f_k)
\end{equation}
and call this quantity the 
generalized index of~$f$ at~$b_k$. 
\end{definition}

%

%

The notation $\chi^{\mathrm{gen}}_{b_k}$ is justified by the following lemma, which shows that this index is stable by restriction to a sub-annulus of $C$ containing $b_k$. 

\begin{lemma}\label{Lemma : local index --t-}
Let~$k \in \{0,1\}$. Let $C'\subseteq C$ be an open sub-pseudo-annulus such that
$b_{k}\subseteq\Gamma_{C'}$. Endow~$\O(C')$ with the normoid Fr\'echet structure induced by that on~$\Os(C)$ and $V':=V\otimes_{\O(C)}\O(C')$ with the one given by the tensor product.


Let $f$ and $f'$ be bounded endomorphisms of $V$ 
and $V'$ respectively, 
commuting with the natural restriction 
$\rho_k:x \in V\mapsto x\otimes 1\in  V'$. Then, $f$ is Fredholm at~$b_{k}$ if, and only if, $f'$ is and, in this case, we have
\begin{equation}
\chi^{\mathrm{gen}}_{b_{k}}(V,f)\;=\;
\chi^{\mathrm{gen}}_{b_{k}}(V',f')\;.
\end{equation}
\end{lemma}
\begin{proof}
The result follows immediately form the fact that both $i_k$ 
and $p_k$ commute with $\rho_k$.
\end{proof}


The definition of 
$\chi^{\mathrm{gen}}_{b_k}(V,f)$ seems to
depend on the choices of~$n$, $m$, $j$ and the embedding of~$C$ 
into~$\AfK$ (\textit{i.e.} a coordinate on~$C$). We will now show that it does not depend on~$n$, $m$ and~$j$. In the case of connections, we will show later that it is also independent of the choice of a coordinate on~$C$ (see Proposition~\ref{Prop : independence on the coordinate chiabs}).


\begin{lemma}\label{lem:b0b1}
Let~$f$ be a bounded endomorphism of~$V$. For $b\in\{b_{0},b_{1}\}$, the definition of $\chi^{\mathrm{gen}}_b(V,f)$ is 
independent of the choices of $n$ and $m$. 

Moreover, if~$f$ satisfies the compactness property of Definition~\ref{Def. Compactness property}, 
then $f$~is Fredholm if, and only if, it is Fredholm at~$b_0$ and~$b_1$ and, in 
this case, one has
\begin{equation}
\chi(V,f)\;=\;
\chi^{\mathrm{gen}}_{b_0}(V,f) 
+\chi^{\mathrm{gen}}_{b_1}(V,f)\;.
\end{equation}
\end{lemma}
\begin{proof}
Let $m',n'\in\mathbb{Z}$ with $m'<n'$. We may assume that $m\le m'$. In this case, $T^m \O(D_{1})$ is a subspace of $T^{m'}\O(D_{1})$ of finite codimension and the projection of~$\O(C)$ onto~$T^m \O(D_{1})$ factors through $T^{m'}\O(D_{1})$. This injection and this projection are both Fredholm and their indexes are opposite. It now follows from Lemma~\ref{lem:composition} that~$f$ is Fredholm at~$b_{1}$ with respect to the decomposition associated to~$n$ and~$m$ if, and only if, it is with respect to the decomposition associated to~$n'$ and~$m'$ and that, in this case, the generalized indexes at~$b_{1}$ coincide. The result for~$b_{0}$ is proved the same way.

The second part of the claim follows from Proposition~\ref{Prop : chi=sumchigen} and the fact that any operator on a 
finite-dimensional vector space is Fredholm with zero index.
%
\end{proof}


\begin{lemma}
\label{Lem : gen ind of a constant}
Let~$f$ be a bounded 
endomorphism of~$V$ that is $\O(C)$-linear. 
Identify $\det(f)$ with an element of $\O(C)$. 
Let $b\in \{b_{0},b_{1}\}$. Then, $f$ is Fredholm at~$b$ if, and only if, 
$\det(f)$ has no zeros on some 
sub-pseudo-annulus of~$C$ containing~$b$. 
Moreover, in this case, 
the generalized index of~$f$ at~$b$ 
is equal to 
\begin{equation}\label{eq : chirel = -partial_b}
\chi^{\mathrm{gen}}_{b}(V,f)\;=\;
-\partial_{b}(\det(f))\;.
\end{equation}
In particular, $\chi^{\mathrm{gen}}_{b}(V,f)$ does not 
depend on the coordinate $T$ of $C$.
\end{lemma}
\begin{proof}
By Lemma~\ref{Lemma : descent of chigen}, we may extend the scalars and assume that~$K$ is non-trivially valued and maximally complete. In this case, the result follows from 
\cite[Proposition 8.2-6]{Ch-Me-III} (see also Remark~8.9-10 at the end of Section~8.2 of~\textit{ibid.}) or \cite[Section 6]{Tsu-swan}. 
\end{proof}

We now prove that the generalized indexes are 
independent of the choice of~$j$ and that they are 
compatible with exact sequences of $\O(C)$-modules
(cf. Lemmas \ref{Lemma : independence on j} and 
\ref{Lemma : additivity on exact sequence}). Since we use Lemma~\ref{Lemma : additivity gen ind}, we assume that~$f$ satisfies the compactness 
property of Definition~\ref{Def. Compactness property}.

\begin{lemma}\label{Lemma : independence on j}
Let~$f$ be a bounded endomorphism of~$V$ that satisfies the compactness 
property of Definition \ref{Def. Compactness property}. Let $b\in \{b_{0},b_{1}\}$. Then, the fact that~$f$ is Fredholm at~$b$ does not depend on the choice of~$j$ and, when it is, the value of the generalized index 
$\chi_{b}^{\mathrm{gen}}(V,f)$ does not depend on the choice of~$j$ either.

\end{lemma}
\begin{proof}
%
Let~$g$ be an $\Os(C)$-linear bounded automorphism of~$V$. By Lemma~\ref{Lem : gen ind of a constant}, it is Fredholm at~$b$. By Lemma~\ref{Lemma : additivity gen ind}, $f$ is Fredholm at~$b$ if, and only if, $g \circ f\circ g^{-1}$ is and, in this case, we have
\begin{equation}
\chi_{b}^{\mathrm{gen}}(V,g \circ f\circ g^{-1})
=
\chi_{b}^{\mathrm{gen}}(V,g)+
\chi_{b}^{\mathrm{gen}}(V, f)-
\chi_{b}^{\mathrm{gen}}(V,g)=
\chi_{b}^{\mathrm{gen}}(V,f).
\end{equation} 
The result follows.
%
\end{proof}


\begin{lemma}
\label{Lemma : additivity on exact sequence}
Let $0\to U\xrightarrow{\varphi} V\xrightarrow{\psi}
W\to0$ be an exact 
sequence of finite free $\O(C)$-modules. 
Let $f_U,f_V,f_W$ be   
endomorphisms of $U,V,W$ respectively 
commuting with the maps~$\varphi$ and~$\psi$. Assume 
that $f_U,f_V,f_W$ satisfy the compactness property 
of Definition \ref{Def. Compactness property}.

Let $k\in\{0,1\}$. If two operators among $f_V,f_U,f_W$ are Fredholm at $b_{k}$, then so is the third. 
In this case, we have
\begin{equation}
\chi^{\mathrm{gen}}_{b_{k}}(V,f_V)\;=\;
\chi^{\mathrm{gen}}_{b_{k}}(U,f_U)+
\chi^{\mathrm{gen}}_{b_{k}}(W,f_W)\;.
\end{equation}
\end{lemma}
\begin{proof}
Choose a basis of~$V$ that is obtained by putting together a basis of~$U$ and a lift of a basis of~$W$ and consider the corresponding $\O(C)$-linear isomorphisms $j_{U},j_{V},j_{W}$. Recall that the result does not depend on thoses choices by Lemma \ref{Lemma : independence on j}. 

The exact sequence $0\to U\to V\to W\to0$ now induces an exact sequence $0\to U_{k} \to V_{k}\to W_{k}\to0$ that commutes with the truncated 
operators~$f_{k}$ and~$g_{k}$ (see \eqref{eq : truncated op u_k}). The claim then follows from Lemma~\ref{Lemma : additivity of Index}.
%
%
%
%
\end{proof}

\subsubsection{Absolute generalized indexes for connections over standard pseudo-annuli.}


We now want to modify the definition of generalized index 
in order to obtain a notion that is intrinsic in the case of connections. 



As before, we consider a standard open pseudo-annulus $C=\{r_1<|T|<r_2\}$ with $0 \le r_1 < r_2 \le + \infty$. We retain the notations of the previous section.

%
%
%
%
%
%

\begin{remark}

Let  $b\in\{b_0,b_1\}$. Then $d/dT$ and $T\,d/dT$ are Fredholm at~$b$ and we have
\begin{equation}
\chi^{\mathrm{gen}}_b(\O(C),d/dT)\;=\;\left\{
\begin{array}{rcl}
1&\textrm{ if }b=b_0,\medskip\\
-1&\textrm{ if }b=b_1.
\end{array}\right.
\qquad\quad
\chi^{\mathrm{gen}}_b(\O(C),Td/dT)\;=\;0\;.
\end{equation}
If $d:\O(C)\to\O(C)$ is a continuous derivation, there 
exists $h\in\O(C)$ such that $d = h \, d/dT$. 
By Lemma \ref{Lemma : additivity gen ind}, $d$ is Fredholm at $b$ if, and only if, so is the multiplication by~$h$ (see Lemma~\ref{Lem : gen ind of a constant}). 
\end{remark}


%


Let $\nabla:\Fs\to\Omega_C^{1}\otimes\Fs$ 
be a differential equation over $C$ of rank $r$. The 
$\Os(C)$-module~$\Omega^1(C)$ is free of rank~1 
with basis~$dT$. This induces an isomorphism 
$\ell \colon \Omega^1(C) \simto \Os(C)$ given by
$\ell(f(T)dT)=f(T)$.


Let $d:\O(C)\to\O(C)$ be a continuous derivation. There exists $h\in\O(C)$ such that $d = h \, d/dT$. Denote by $\ell_d:\Omega^1(C)\to\O(C)$ the $\O(C)$-linear map obtained by composing~$\ell$ with the multiplication by~$h$. We set
\begin{equation}\label{eq : nabla(d)}
\nabla(d)\;:=\;(\ell_d\otimes 1)\circ\nabla\;.
\end{equation}

The map $\nabla(d):\Fs(C)\to\Fs(C)$ provides $\Fs(C)$ 
with a 
structure of $(\O(C),d)$-differential module. 
Note that, for every $g\in\O(C)$, we have $\nabla(gd)=g\nabla(d)$.
If the derivation 
$d$ is clear, we often drop the symbol $d$ and write  
$\nabla$ instead of $\nabla(d)$.


\medbreak

From now on, we assume that~$\Fs$ is free of finite rank~$r$. We identify~$\Fs(C)$ with~$\Os(C)^r$ and endow~$\Fs(C)$ with a structure of normoid Fr\'echet space accordingly.

There exists a matrix 
$G(T)\in M_r(\O(C))$ such that the map $\nabla(d)$ is 
given by 
\begin{equation}
d-G(T)\;:\;\O(C)^r\to\O(C)^r\;.
\end{equation}

We denote by $\O(C)\langle d/dT\rangle$ the ring of 
differential polynomials with coefficients in $\O(C)$: 
the elements of $\O(C)\langle d/dT\rangle$ are abstract
sums $\sum_{i=0}^n f_i\circ (d/dT)^i$ (so that we have an 
isomorphism of abelian  groups 
$\O(C)\langle d/dT\rangle = \bigoplus_{i\geq 0}\O(C)\cdot(d/dT)^i $) and the multiplication law 
$\circ$ is the unique one
satisfying the properties 
$f\circ (d/dT)=(d/dT)\circ f +df/dT$, for all $f\in\O(C)$.

\begin{proposition}\label{Proposition : compass}
Let~$f$ be an endomorphism of~$\O(C)^r$ given by the multiplication by 
an $r\times r$ matrix with coefficients in 
$\O(C)\langle d/dT\rangle$. Then, $f$ is a bounded endomorphism of~$\O(C)^r$ that satisfies the 
compactness property of Definition~\ref{Def. Compactness property}. 

In particular, for every bounded derivation~$d$, the endomorphism~$\nabla(d)$ of~$\Fs(C)$ defined above satisfies the 
compactness property of Definition~\ref{Def. Compactness property}. 
\end{proposition}
\begin{proof}
We may extend the scalars in order to assume that~$K$ is not trivially valued and maximally complete.

The result now follows from~\cite[Proposition 
8.2-2]{Ch-Me-III} (see also Remark~8.9-10 at the end of Section~8.2 of~\textit{ibid.}). 
\end{proof}


%
%
%
%
%


\begin{definition}\label{def:Fredholmnabla}
Let  $b\in\{b_0,b_1\}$. We say that $(\Fs,\nabla)$ is Fredholm at~$b$ if $\nabla(d/dT) : \Fs(C) \to\Fs(C)$ is. In this case, we define the \emph{absolute generalized index} of $(\Fs,\nabla)$ at $b$ as
\begin{equation}
\chiabs_{b}(\Fs,\nabla)\;:=\;
\chi^{\mathrm{gen}}_b(\Fs(C),\nabla(d/dT))-
\chi^{\mathrm{gen}}_b(\O(C)^r,d/dT)\;.
\end{equation}
We often remove~$\nabla$ from the notation when it is clear from the context.


\end{definition}

The claim below shows that, in this definition, we could have chosen any other bounded derivation instead of~$d/dT$, as soon as it is Fredholm at the boundary of the annulus.

%


\begin{lemma}
\label{Lemma: Def of chiabs chigen-chigen d}
Let  $b\in\{b_0,b_1\}$. Let $d:\O(C)\to\O(C)$ be a bounded derivation that is Fredholm at~$b$. Then $(\Fs,\nabla)$ is Fredholm at~$b$ if, and only if, $\nabla(d): \Fs(C) \to\Fs(C)$ is. Moreover, in this case, we have 
\begin{equation}
\chiabs_{b}(\Fs,\nabla)\;:=\;
\chi^{\mathrm{gen}}_b(\Fs(C),\nabla(d))-
\chi^{\mathrm{gen}}_b(\O(C)^r,d)\;.
\end{equation}

\end{lemma}
\begin{proof}
Let $d:\O(C)\to\O(C)$ be a bounded derivation. There exists $h\in \O(C)$ such that $d=h\,d/dT$. By 
Lemma~\ref{Lemma : additivity gen ind} and Proposition 
\ref{Proposition : compass}, since~$d$ is Fredholm at~$b$, so is~$h$. Moreover, we have
\begin{equation}
\chi^{\mathrm{gen}}_b(\O(C)^r,hd/dT) \;=\; 
\chi^{\mathrm{gen}}_b(\O(C)^r,h) + 
\chi^{\mathrm{gen}}_b(\O(C)^r,d/dT)\;.
\end{equation}

Similarly, $\nabla(hd/dT) = h\nabla(d/dT)$ is Fredholm at~$b$ if, and only if, $\nabla(d/dT)$ is and, in this case, we have
\begin{equation}
\chi^{\mathrm{gen}}_b(\Fs(C),\nabla(hd/dT)) \;=\; \chi^{\mathrm{gen}}_b(\Fs(C),h) + \chi^{\mathrm{gen}}_b(\Fs(C),\nabla(d/dT))\;.
\end{equation}
Finally, by Lemma \ref{Lemma : independence on j}, one has 
$\chi^{\mathrm{gen}}_b(\Fs(C),h)=
\chi^{\mathrm{gen}}_b(\O(C)^r,h)$ and the result follows.
%
%
\if{
The independence on $j$ follows from 
Lemma \ref{Lemma : independence on j} too.

Now let $t$ be another coordinate on $C$ respecting the 
orientation of $C$, then $t = T(1+a)$, where $a\in\O(C)$ 
verifies $|a|(x_\rho)<\rho$ 
for all $r_1<\rho<r_2$. It follows that $d/dT = d/dt$

\comm{Terminer la preuve ...}
}\fi
\end{proof}

\begin{remark}\label{Remark : chiabs=chigen}
Let $b\in\{b_0,b_1\}$. If~$(\Fs,\nabla)$ is Fredholm at~$b$, then, since $\chi^{\mathrm{gen}}_b(\O(C),T\frac{d}{dT})=0$, one 
has
\begin{equation}
\chiabs_b(\Fs,\nabla)\;=\;
\chi^{\mathrm{gen}}_b(\Fs(C),\nabla(T\frac{d}{dT}))\;.
\end{equation}
In particular, the absolute generalized index has all 
the properties of the usual generalized index. 
This is in fact the choice of derivation of \cite{Ch-Me-III}.
\end{remark}

\subsubsection{Push-forward by standard ramification 
of irregularities and absolute indexes.}
\label{Frobenius push-forward.}

In this section, we study the behavior of 
\emph{generalized indexes} and 
\emph{irregularities} of a connection
under push-forward by \emph{standard ramification}, 
\emph{i.e.} by a finite étale morphism of the form by $T\mapsto T^n$. 

Let $C=\{r_1<|T|<r_2\}$, with $0 \le r_1 < r_2 \le + \infty$, be a standard open pseudo-annulus.
Let $n\geq 1$. We denote by $\varphi_n$ 
the endomorphism of 
$\mathbb{A}^{1,\mathrm{an}}_K$ that raises to the 
$n^\textrm{th}$ power. It induces a finite étale morphism 
between~$C$ and the standard open pseudo-annuli
\begin{equation}
C^n:=\varphi_n(C) = \{r_1^n<|T|<r_2^n\}\;.
\end{equation} 
Let 
\begin{equation}
\varphi_n^\sharp\; \colon \;\O(C^n)\xrightarrow{\quad}\O(C)
\end{equation}
be the induced map on the rings of functions. For clarity, we will denote by 
$\wti{T}$ and $T$ the coordinate functions on~$C$ 
and $C^n$ respectively. For every $f(T) \in 
\O(C^n)$, we have $\varphi^\sharp_n 
(f(T))=f(\widetilde{T}^{n})$. 
\if{We denote by $b_{0}^n$ and $b_{1}^n$ the 
germs of segments at the open boundary of 
$D_0(C^n)=\{|T|<r_2^n\}$ and $D_1(C^n)=\{|T|>r_1^n\}\cup\{\infty\}$ respectively. }\fi

\begin{proposition}[\protect{\cite[Section 5.3]{NP-I}}]
\label{Prop : slope of push-f-1}
Let $b$ be a germ of segment in $\Gamma_C$. Denote by~$b^n$ its image in $\Gamma_{C^n}$. 
Let $\Fs$ be a differential equation over 
$C$. Then $\Fs$ has log affine radii along $b$ if, and 
only if, $(\varphi_n)_*\Fs$ has log-affine radii 
along $b^n$. In this case, one has
\begin{equation}\label{eq : temp hhh}
\Irr_{b^n}((\varphi_n)_*\Fs)\;=\;
\Irr_{b}(\Fs)\;.
\end{equation}
\if{
Let $p$ denote the characteristic of the residual 
field $\widetilde{K}$ of $K$, and set
\begin{equation}
a(d,K)\;=\;\left\{
\begin{array}{ll}
1&\textrm{ if $p=0$}\\
d/p^{v_p(d)}&\textrm{ if $p>0$}
\end{array}\right.
\end{equation}
where $v_p(d)$ denotes the 
exponent of $p$ in the factorization of $d$ into prime 
numbers.

Let $b$ be a germ of segment in $\Gamma_C$. 
Let $\Gs$ (resp. $\Fs$) be a differential equation over 
$C^d$ (resp. $C$) with log-affine radii along 
$b^d$ (resp. $b$), then $\varphi_d^*(\Gs)$ 
(resp. $(\varphi_d)_*(\Fs)$) has log-affine radii 
along $b$ (resp. $b^d$) and one has
\begin{equation}\label{eq : temp hhh}
\Irr_{b}(\varphi_d^*(\Gs))\;=\;a(d,K)\cdot
\Irr_{b^d}(\Gs)\;,
\qquad
\Irr_{b^d}((\varphi_d)_*(\Fs))\;=\;
\Irr_{b}(\Fs)\;.
\end{equation}
}\fi
\end{proposition}
\begin{proof}
A factorization of $n$ into prime numbers 
corresponds to a factorization of 
the corresponding morphisms $\varphi_n$, therefore 
we can assume that $n$ is a prime number.
Let $p$ be the characteristic of the residual field 
$\widetilde{K}$ of $K$. If $p=0$, or 
if $n$ is prime to $p$, the claim follows from 
\cite[Lemmas 3.22 and 3.23]{NP-II}. 
If $n=p>0$, the claim 
follows from \cite[Chapter 10]{Kedlaya-book} (cf. also
\cite[Section 3]{NP-I}).
\end{proof}

We now focus on the absolute index. 
In the following, we use notation \eqref{eq : nabla(d)}.

\begin{corollary}\label{Cor : push-f of inde gen-1}
Let $b$ be a germ of segment in $\Gamma_C$. Denote by~$b^n$ its image in $\Gamma_{C^n}$. 
Let $\Fs$ be a differential equation over~$C$. Then $\Fs$ has finite generalized index 
at $b$ if, and only if, its push-forward $(\varphi_n)_*\Fs$ 
has finite generalized index at $b^n$. In this case, we have
\begin{align}
\chiabs_{b^n}
((\varphi_n)_*\Fs)
\;=\;
\chiabs_{b}(\Fs)\;.
\end{align}
\end{corollary}
\begin{proof}
A simple computation shows that, for every 
$h(T) \in \O(C^n)$, we have
\begin{equation}\label{eq:hnabla}
\Bigl(\frac{h(\widetilde{T}^n)}{n\widetilde{T}^{n-1}}\cdot 
\frac{d}{d\widetilde{T}}\Bigr)\circ\varphi_n^\sharp
\;=\;
\varphi_n^\sharp \circ \Bigl(h(T)\frac{d}{dT}\Bigr)\;.
\end{equation}
We set $d:=T\frac{d}{dT}$ and 
$\widetilde{d}:=
\frac{\widetilde{T}}{n}
\cdot 
\frac{d}{d\widetilde{T}}$. It follows from 
\eqref{eq:hnabla} that the push-forward 
$(\varphi_n)_*\Fs$ is nothing but $\Fs$ seen as an 
$\O(C^n)$-module via $\varphi_n^\sharp$ and endowed 
with the connection
\begin{equation}
((\varphi_n)_*\nabla)(d)\;=\;\nabla(\widetilde{d})\;.
\end{equation}
By Remark \ref{Remark : chiabs=chigen}, we have 
$\chiabs_b(\Fs)=
\chi^{\mathrm{gen}}_{b}(\nabla(\widetilde{d}))$
and 
$\chiabs_{b^n}((\varphi_n)_*(\Fs))=
\chi^{\mathrm{gen}}_{b^n}(\nabla(\widetilde{d}))$. 
We now compute 
$\chi^{\mathrm{gen}}_{b}(\nabla(\widetilde{d}))$ and 
$\chi^{\mathrm{gen}}_{b^n}(\nabla(\widetilde{d}))$ 
as in Definition \ref{Def : }.
We have 
\begin{equation}
\O(C)\;=\;
\bigoplus_{i=0}^{n-1}
\widetilde{T}^i\varphi_n^\sharp(\O(C^n))\;.
\end{equation}
The same is true for the decompositions
\begin{equation}\label{eq : dededert}
\widetilde{T}^k\O(D_0(C))\;=\;
\bigoplus_{i=0}^{n-1}
\widetilde{T}^{i+k}\varphi_n^\sharp(\O(D_0(C^n)))
\quad 
\textrm{ and }
\quad
\widetilde{T}^m\O(D_1(C))\;=\;
\bigoplus_{i=0}^{n-1}\widetilde{T}^{m+i} 
\varphi_n^\sharp(\O(D_1(C^n)))\;.
\end{equation}
It follows that the truncations 
\eqref{eq : truncated op u_k} that we consider in 
Definition \ref{Def : } before and after 
push-forward actually are the same. 
In particular, the generalized indexes, at 
$b$ and $b^n$, are equal before and after push-forward.
\end{proof}
\if{
In the next section we also need the following 
\begin{proposition}
Let $\Gs$ be a differential equation over 
$C^n$ with log-affine radii along $\Gamma_{C^n}$. 
Then, $\varphi_n^*\Gs$ also has log-affine radii along  
$\Gamma_C$. 
Moreover, for all $i=1,\ldots,r=\mathrm{rank}(\Gs)$, 
for all  $x\in\Gamma_C$ one has
\begin{equation}
\R_{\emptyset,i}(x,\varphi_n^*\Gs)\;=\;
\left\{\begin{array}{lll}
\R_{\emptyset,i}(\varphi_n(x),\Gs)&\textrm{ if }&\;;\\
\R_{\emptyset,i}(\varphi_n(x),\Gs)&\textrm{ if }&\;.\\
\end{array}\right.
\end{equation}
\end{proposition}
\begin{proof}
\end{proof}
}\fi
\subsubsection{Absolute generalized indexes for connections over general open pseudo-annuli.}
\label{section : GENIND}

Let $C$ be an open pseudo-annulus. Denote by~$b_{0}$ and~$b_{1}$ the germs of segments at the open boundary of~$C$.

Let~$\Omega$ be a spherically complete and algebraically 
closed field extension of~$K$ such that $|\Omega|=
\mathbb{R}_{\geq 0}$. By~\cite[Proposition~3.2]{Liu}), $C_\Omega$ may 
be identified with an analytic domain of 
$\mathbb{P}^{1,\mathrm{an}}_\Omega$. 
It is hence a finite disjoint union 
$C_\Omega=C_1\sqcup\ldots\sqcup C_n$ of standard 
open pseudo-annuli over~$\Omega$ (see Definition 
\ref{Def : standard pseudo-annulus}). 

Let $i\in\{1,\ldots,n\}$. For every $k\in\{0,1\}$, there 
exists a unique germ of segment in~$C_{i}$ 
over~$b_{k}$. We denote it by~$b_{k,i}$. The open 
boundary of~$C_{i}$ contains exactly the two germs of 
segments~$b_{0,i}$ and~$b_{1,i}$.

Let $(\Fs,\nabla)$ be a differential equation on~$C$. The space~$C_{\Omega}$ being the disjoint union of the~$C_{i}$'s, we have a direct sum decomposition of $\Omega$-vector spaces
\begin{equation}
(\Fs_{\Omega})_{|C}\;=\;\bigoplus_{i=1}^n \Fs_{i}\;,
\end{equation}
where $\Fs_{i} := (\Fs_\Omega)_{|C_{i}}$. For every 
$i\in\{1,\dotsc,n\}$, denote by~$\nabla_{i}$ the 
connection on~$\Fs_{i}$ induced by~$\nabla$. 
Since~$K$ is maximally complete, by~\cite{Lazard}, 
$\Fs_i$ is a free $\O_{C_i}$-module. 


We fix a coordinate~$T_{1}$ on~$C_1$ with respect to which we will compute the generalized indexes at~$b_{0,1}$ and~$b_{1,1}$ (see Section~\ref{section : Standard pseudo-annuli}). By~\cite[Corollary~2.20]{NP-II}, for every $i\in \{2,\dotsc,n\}$, there exists a continuous $K$-linear automorphism of~$\Omega$ that sends~$C_{1}$ to~$C_{i}$. We fix such an automorphism~$\sigma_{i}$.The image of~$T_{1}$ by~$\sigma_{i}$ is a coordinate~$T_{i}$ on~$C_{i}$ with respect to which we will compute the generalized indexes at~$b_{0,i}$ and~$b_{1,i}$. 

With these choices, given $i,j\in \{1,\dotsc,n\}$ and $k\in\{0,1\}$, $\Fs_i$ is Fredholm at~$b_{k,i}$ if, and only if,  
$\Fs_{j}$ is Fredholm at $b_{k,j}$ and, in this case, we have 
\begin{equation}
\chiabs_{b_{k,i}}(\Fs_i)\;=\;
\chiabs_{b_{k,j}}(\Fs_j)\;.
\end{equation}

%




\begin{definition}\label{def:Fredholmnablageneral}

Let $k\in\{0,1\}$. We say that $(\Fs,\nabla)$ is Fredholm at~$b_{k}$ if, for 
some (or equivalently every) $i \in \{1,\dotsc,n\}$, $\Fs_i$ is Fredholm at $b_{k,i}$.  
In this case, we define the absolute generalized index 
of~$(\Fs,\nabla)$ at~$b_{k}$ as
\begin{equation}\label{eq : hpanwts}
\chiabs_{b_{k}}(\Fs)\;:=\;
\sum_{i=1}^n\chiabs_{b_{k,i}}(\Fs_i)\;.
\end{equation}
\end{definition}

%
%


\begin{remark}\label{rem:Omega}
By Lemma~\ref{Lemma : descent of chigen}, the 
property of being Fredholm at~$b_{k}$ and, in this case, the value of 
$\chiabs_{b_{k}}$ remain unchanged if we 
replace~$\Omega$ by a bigger extension and use the 
coordinates induced by the $T_{i}$'s. In particular, 
if~$C$ is a standard open pseudo-annulus and if~$\Fs$ is 
free, then Definition~\ref{def:Fredholmnablageneral} 
agrees with Definition~\ref{def:Fredholmnabla} (for suitable coordinates). 
\end{remark}


\begin{lemma}\label{lem:b0b1chidr}
The following assertions are equivalent.
\begin{enumerate}
\item $\Fs$ is Fredholm at~$b_0$ and~$b_1$;
\item there exists a complete valued extension~$L$ of~$K$ with non-trivial valuation such that~$\Fs_{L}$ has finite-dimensional de Rham cohomology over~$C_{L}$;
\item for every complete valued extension~$L$ of~$K$, $\Fs_{L}$ has finite-dimensional de Rham cohomology over~$C_{L}$.
\end{enumerate}
Moreover, when these conditions are satisfied, for every complete valued extension~$L$ of~$K$, we have
\begin{equation}\label{eq:Fredholmnabla}
\chidr(C_{L},\Fs_{L})\;=\;
\chiabs_{b_0}(\Fs)+\chiabs_{b_1}(\Fs)\;.
\end{equation}
\end{lemma}
\begin{proof}
By Lemma \ref{lem:b0b1}, Remark~\ref{Remark : chiabs=chigen} and Proposition \ref{Proposition : compass}, $\Fs$ is Fredholm at~$b_{0}$ and~$b_{1}$ if, and only if, for some (or equivalently all) $i\in \{1,\dotsc,n\}$, $\Fs_{i}$ has finite-dimensional de Rham cohomology over~$C_{i}$ if, and only if, $\Fs_{\Omega}$ has finite-dimensional de Rham cohomology over~$C_{\Omega}$. Moreover, in this case, we have 
\begin{align}
\chidr(C_{\Omega},\Fs_{\Omega}) &= \sum_{i=1}^n \chidr(C_{i},\Fs_{i})\\
&= \sum_{i=1}^n \chiabs_{b_{0,i}}(\Fs_{i})+\chiabs_{b_{1,i}}(\Fs_{i})\\
&= \chiabs_{b_0}(\Fs)+\chiabs_{b_1}(\Fs).
\end{align}

It follows that i) implies ii), that iii) implies i) and that~\eqref{eq:Fredholmnabla} holds for $L=\Omega$.

By Theorem~\ref{thm:descent}, ii) implies iii) and the value of~$\chidr(C_{L},\Fs_{L})$ is independent of~$L$. The result follows.
\end{proof}

\begin{remark}
Having developed the theory, we will actually be able to prove that, under natural assumptions on~$(\Fs,\nabla)$, $\chiabs_{b_{k,i}}(\Fs,\nabla)$ is independent of the coordinate chosen on~$C_{i}$ (see Proposition~\ref{Prop : independence on the coordinate chiabs}).
\end{remark}


From now on, we will not mention the field~$\Omega$ nor the coordinates~$T_{i}$ when speaking about being Fredholm at a germ of segment. Still, different choices may lead to different definitions and we advise the reader to use those results with care. We often use implicitly the natural choices: for instance, when we pass from a pseudo-annulus to a smaller one, we use the same coordinates.

\if{\begin{lemma}
Let $C'\subseteq C$ be an inclusion of open 
pseudo-annuli such that 
$\Gamma_{C'}\subset\Gamma_C$. 
Let $(b_0,b_1)$ and $(b_0',b_1')$ be the germs of 
segments at the open boundary of $C$ and $C'$ 
respectively, and we assume that, for $i=0,1$, $b_i$ 
and $b_i'$ have the same orientation.
 
Let $\Fs$ be a differential equation over $C$. Consider 
the following assertions
\begin{enumerate}
\item $\Fs$ is Fredholm at $b_0,b_1,b_0',b_1'$ and one 
has 
\begin{equation}
\chiabs_{b_0}(\Fs)\;=\;
-\chiabs_{b_1}(\Fs)\;,
\qquad
\chiabs_{b_0'}(\Fs_{|C'})\;=\;
-\chiabs_{b_1'}(\Fs_{|C'})\;;
\end{equation}
\item $\Fs$ has finite dimensional de Rham cohomology 
over $C$ and $C'$, and 
\begin{equation}
\chidr(C,\Fs)\;=\;\chidr(C',\Fs_{|C'})\;=\;0\;.
\end{equation}
\end{enumerate}
Then i) implies ii). 

If now $K$ is non trivially valued, then ii) also implies i), 
and moreover, for $i=0,1$, the restriction maps
\begin{equation}
\Hdr^i(C,\Fs)\;\xrightarrow{\;\;\sim\;\;}\;
\Hdr^i(C',\Fs_{|C'})\;
\end{equation}
are isomorphisms.
\end{lemma}
\begin{proof}
The first part follows from Remark 
\ref{rem:chiabsindependence}. 
For $i=0$ the restriction map 
\eqref{eq : restriction isomorphic} is injective, 
while for $i=1$ it is surjective by 
Lemma~\ref{Lemma : H^1 surjectif}. 
The result now follows from the assumptions 
$\chidr(C,\Fs)=0$ and $\chidr(C',\Fs_{|C'})=0$.
\end{proof}
}\fi

\subsection{Index formula over an open pseudo-annulus.}

\label{Section : DE LA on annuli}
Let~$C$ be an open pseudo-annulus. Let~$\Fs$ be a differential 
equation of rank~$r$ on~$C$. Recall that, by \cite[Corollary 
6.2.28]{NP-III}, if all the radii of~$\Fs$ are log-affine radii along $\Gamma_C$, then all those radii are locally constant outside $\Gamma_C$.
In particular, the radii are separated over~$C$ if, and only if, they 
are separated along $\Gamma_C$.

\begin{definition}[Robba property]
\label{Def. index i satisfies the robba property}
We say that an index $i\in\{1,\dotsc,r\}$ satisfies the 
\emph{Robba property} if 
$\R_{i}(x,\Fs)=1$ for all $x$ 
in the skeleton~$\Gamma_{C}$ of~$C$.
\end{definition}


\begin{definition}\label{Def.: Rrobba part}
Assume that all the radii of~$\Fs$ are log-affine along~$\Gamma_C$. If some index satisfies the Robba property, we denote by~$i_{R}$ the smallest that does. By \cite[Theorem~5.3.1]{NP-III}, there exists a unique sub-object $\Fs^{\mathrm{Robba}}$ of~$\Fs$ of rank~$r-i_{R}+1$ all of whose indexes satisfy the Robba property.

If none of the indexes satisfy the Robba property, we set $i_{R}:=r+1$ and $\Fs^{\mathrm{Robba}}:= 0$.

\end{definition}

\begin{remark}
In the setting above the quotient $\Fs/\Fs^{\mathrm{Robba}}$ is a differential equation of rank~$i_{R}-1$ and, for each $x\in C$ and $j\in\{1,\dotsc,i_{R}-1\}$, we have
\begin{equation}  
\R_{j}(x,\Fs/\Fs^{\mathrm{Robba}})=\R_{j}(x,\Fs) < 1\;.
\end{equation}
This follows from \cite[Theorem~5.3.1]{NP-III} in the first case and it is obvious in the second.
\end{remark}

%
%

\begin{remark}
If the radii of $\Fs$ are not log-affine 
along~$\Gamma_{C}$, it is not clear whether an analogue of~$\Fs^{\mathrm{Robba}}$ exists. 
%
\end{remark}

\begin{proposition}
\label{Cor : Coh M = Coh MRobbaanna}
Assume that all the radii of~$\Fs$ are log-affine along~$\Gamma_C$. Then $\Fs$ has finite index if, and only if, 
$\Fs^{\mathrm{Robba}}$ has. Moreover, in this case, for $i=0,1$, we have
\begin{equation}
\Hdr^i(C,\Fs^{\mathrm{Robba}})=
\Hdr^i(C,\Fs)\;,\quad
\chidr(C,\Fs^{\mathrm{Robba}})=
\chidr(C,\Fs)\;.
\end{equation}
\end{proposition}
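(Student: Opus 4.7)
The plan is to split $\Fs$ as a direct sum of a ``non-Robba'' summand and $\Fs^{\mathrm{Robba}}$, and to show that the non-Robba summand has vanishing de Rham cohomology via Proposition~\ref{Prop : H^i=0 if not solvablegfz}. The statement will then follow from Corollary~\ref{Corollary : cohomology of F=cohofo Fgeq i}.

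First I would record a dichotomy on each radius. Since $\Rc_{\emptyset,j}(-,\Fs)$ is log-affine on the open segment $\Gamma_C$ and bounded above by~$1$, if it equals~$1$ at a single interior point of $\Gamma_C$ it must be identically equal to~$1$ there. Hence the set of indices $j$ for which $\Rc_{\emptyset,j}(-,\Fs)\equiv 1$ on $\Gamma_C$ (\emph{i.e.}, that satisfy the Robba property) is of the form $\{i_R,\ldots,r\}$, possibly empty. If $i_R=1$, then $\Fs^{\mathrm{Robba}}=\Fs$ and the conclusion is trivial. If no index is Robba, then $\Fs^{\mathrm{Robba}}=0$ and every radius of $\Fs$ is strictly less than~$1$ on~$\Gamma_C$; by Lemma~\ref{Lem : existence of Robba part} the radii are moreover locally constant outside $\Gamma_C$, hence $\Gamma_\emptyset(\Fs)=\Gamma_\emptyset=\Gamma_C\neq\emptyset$, and all the radii are spectral non-solvable on $\Gamma_C$. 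Proposition~\ref{Prop : H^i=0 if not solvablegfz} in situation~$1$ then gives $\Hdr^i(C,\Fs)=0=\Hdr^i(C,\Fs^{\mathrm{Robba}})$ for every~$i$, and the index statement is immediate.

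Now assume $2\le i_R\le r$. At any interior point of $\Gamma_C$ one has $\Rc_{\emptyset,i_R-1}(-,\Fs)<1=\Rc_{\emptyset,i_R}(-,\Fs)$, so the indices are separated there. By the dichotomy in Lemma~\ref{Lem : existence of Robba part} (the two radii are either everywhere equal or everywhere separated), the index $i_R$ in fact separates the radii at every point of~$C$. Theorem~\ref{Thm : deco} therefore produces the subobject $\Fs_{\ge i_R}=\Fs^{\mathrm{Robba}}$, and the quotient $\Fs_{<i_R}$ has for radii exactly $\Rc_{\emptyset,1}(-,\Fs),\ldots,\Rc_{\emptyset,i_R-1}(-,\Fs)$.

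It remains to check that $\Fs_{<i_R}$ fulfills the hypotheses of situation~$1$ of Proposition~\ref{Prop : H^i=0 if not solvablegfz}: the radii of $\Fs_{<i_R}$ are log-affine on $\Gamma_C$ (inherited from $\Fs$), strictly less than $1$ on $\Gamma_C$ by minimality of $i_R$ (hence spectral non-solvable), and locally constant on $C-\Gamma_C$ by Lemma~\ref{Lem : existence of Robba part}; so $\Gamma_\emptyset(\Fs_{<i_R})=\Gamma_\emptyset=\Gamma_C$. Corollary~\ref{Corollary : cohomology of F=cohofo Fgeq i} applied with $i=i_R$ then yields the desired isomorphism $\Hdr^i(C,\Fs)\simeq\Hdr^i(C,\Fs^{\mathrm{Robba}})$ for $i=0,1$ together with the equality of indices. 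The only potentially delicate point is the transition from ``$<1$ on $\Gamma_C$'' to ``spectral non-solvable at every point of $\Gamma_\emptyset(\Fs_{<i_R})$'', but this is nothing more than unwinding the definitions at a skeleton point of a pseudo-annulus.
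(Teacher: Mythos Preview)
Your proof is correct and follows essentially the same approach as the paper's: split into the cases where some index has the Robba property (apply Corollary~\ref{Corollary : cohomology of F=cohofo Fgeq i}) or none does (apply situation~1 of Proposition~\ref{Prop : H^i=0 if not solvablegfz}). The paper's proof is a terse two-line citation of these results, whereas you have carefully spelled out why the hypotheses of Corollary~\ref{Corollary : cohomology of F=cohofo Fgeq i} are satisfied; your added justifications (the log-affine dichotomy, the use of Lemma~\ref{Lem : existence of Robba part} for separation and for controlling the skeleton of $\Fs_{<i_R}$) are exactly what is implicit in the paper's one-line reference.
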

\begin{proof}
By Proposition~\ref{Prop : H^i=0 if not solvablegfz}, Situation~1, for all~$i$, we have $\Hdr^i(X,\Fs/\Fs^{\mathrm{Robba}})=0$. The result now follows by writing the cohomology long exact sequence associated to the short exact sequence $0 \to \Fs^{\mathrm{Robba}} \to \Fs \to \Fs/\Fs^{\mathrm{Robba}} \to 0$.

\end{proof}

%
%

\subsubsection{Absolute generalized index and 
irregularity.}
\if{\begin{definition}[Condition $\Fingen$]
\label{Def : Condition FIN}
Let $C$ be an open pseudo-annulus, let $b$ be a germ of 
segment at the open boundary of $C$, 
and let~$\Fs$ be a 
differential equation on~$C$. We say that $\Fs$ satisfies 
the condition $\Fingen_b$ if
\begin{enumerate}
\item $\Fs$ has log-affine radii along $b$; 
\item $\Fs^{\mathrm{Robba}}$ has finite generalized index at $b$, and
\begin{equation}\label{eq : chi_b defi FIN}
\chiabs_{b}(
\Fs^{\mathrm{Robba}})\;=\;0\;.
\end{equation}
\if{
\item for all sub-pseudo-annulus $C'\subseteq C$ such 
that $\Gamma_{C'}\subseteq\Gamma_C$ one has
\begin{eqnarray}
\chi^{\mathrm{gen}}_{b'_0}(
\Fs^{\mathrm{Robba}}(C'),\nabla)&\;=\;&+\mathrm{rank}(\Fs^{\mathrm{Robba}})\;,\medskip\\
\chi^{\mathrm{gen}}_{b'_1}(\Fs^{\mathrm{Robba}}(C'),\nabla)&\;=\;&-\mathrm{rank}(\Fs^{\mathrm{Robba}})\;.
\end{eqnarray}
where $b_0',b_1'$ are the two germs of segments at the 
open boundary of $C'$ respecting the convention 
\ref{Notation : b_0=+infty b_1=0}.
\item for all inclusion $C'\subseteq C$ of open 
pseudo-annuli such that 
$\Gamma_{C'}\subseteq\Gamma_C$, then,
for $i=0,1$, the natural restriction
\begin{equation}\label{eq : res C to C' log-affine}
\Hdr^i(C,\Fs)\;\xrightarrow{\;\sim\;}\;
\Hdr^i(C',\Fs_{|C'})
\end{equation}
is an isomorphism.
}\fi
\end{enumerate}
\end{definition}
}\fi

\begin{lemma}\label{Lemma : restrictions iso}
Let~$C$ be an open pseudo-annulus and let~$\Fs$ be a differential equation on~$C$. Let~$C'$ be an open sub-pseudo-annulus of~$C$ with $\Gamma_{C'}\subseteq\Gamma_C$.  Assume that there exists a complete valued extension~$L$ of~$K$ with non-trivial valuation such that the equations~$\Fs_{L}$ and $(\Fs_{L})_{|C_{L}'}$ have finite-dimensional de Rham cohomology and $\chidr(C_{L},\Fs_{L}) = \chidr(C_{L}',(\Fs_{L})_{|C_{L}'})$.
Then, for each $i\in\{0,1\}$, the restriction map
\begin{equation}\label{eq : restriction isomorphic}
\Hdr^i(C,\Fs)\simto\Hdr^i(C',\Fs_{|C'})
\end{equation}
is an isomorphism.
\end{lemma}
\begin{proof}
For $i=0$ the restriction map 
\eqref{eq : restriction isomorphic} is injective, 
while for $i=1$ it is surjective by Lemma~\ref{Lemma : H^1 surjectif}. By Theorem~\ref{thm:descent}, we have 
$\chidr(C,\Fs) = \chidr(C',\Fs_{|C'})$
and the result follows.
\end{proof}
\begin{remark}
Theorem \ref{Thm : index of finite opens} will 
provide conditions that ensure that the assumptions of 
Lemma \ref{Lemma : restrictions iso} are satisfied.
\end{remark}
We state here a result that is related to 
Lemma \ref{Lemma : restrictions iso} and that 
will be useful later on.
\begin{lemma}
\label{Lemma : H^i(X,F)=H^i(X',F)}
Let~$X$ be a quasi-smooth $K$-analytic curve. Let~$Z$ 
be a locally finite subset of rigid points of~$X$ and let 
$\Fc$ be a differential equation on $X$ with 
meromorphic singularities on $Z$. Let $X'$ be an analytic domain of~$X$ and let $C := \bigsqcup_i C_{i}$ be a 
disjoint union of open pseudo-annuli of~$X$. Assume 
that 
\begin{enumerate}
\item $X'\cup C=X$;
\item $Z\cap C=\emptyset$;
\item for each $i$, $C_i':=X'\cap C_i$ is an open 
pseudo-annulus in $C_i$ such that 
$\Gamma_{C_i'}\subseteq\Gamma_{C_i}$;
\item for each $i$ and each $k=0,1$, the restriction map 
$\Hdr^k(C_i,\Fc_{|C_i})\to\Hdr^k(C_i',\Fc_{|C_i'})$ is 
an isomorphism.
\end{enumerate} 
Then, for each $k\ge 0$, the restriction map 
\begin{equation}
\Hdr^k(X(*Z),\Fc)\;\xrightarrow{\;\sim\;}\;
\Hdr^k(X'(*Z),\Fc_{|X'})\;
\end{equation}
is an isomorphism.
\end{lemma}
\begin{proof}
Let $C':=\bigsqcup_i C_i'$. 
Then $X=X'\cup C$ and $C\cap X'=C'$. By ii) we have 
$\Hdr^k(C(*Z),\Fc_{|C})=\Hdr^k(C,\Fc_{|C})$ and 
$\Hdr^k(C'(*Z),\Fc_{|C'})=\Hdr^k(C',\Fc_{|C'})$. 
The Mayer-Vietoris sequence then gives 
\begin{equation}
\cdots \to\Hdr^{i-1}(C',\Fc_{|C'})
\to
\Hdr^i(X(*Z),\Fc)\to
\Hdr^i(X'(*Z),\Fc_{|X'})
\oplus\Hdr^i(C,\Fc_{|C})\to
\Hdr^i(C',\Fc_{|C'})\to\cdots
\end{equation}
By iv) the restriction map 
$\Hdr^i(C,\Fc_{|C})\to
\Hdr^i(C',\Fc_{|C'})$ is an isomorphism.
The claim follows.
\end{proof}

\begin{lemma}\label{lem:epsbb'}
Let~$C$ be an open pseudo-annulus and let~$\Fs$ be a differential equation on~$C$. Assume that there exists a complete valued extension~$L$ of~$K$ with non-trivial valuation such that, for every open pseudo-annulus $C'\subseteq C$ with $\Gamma_{C'}\subseteq\Gamma_C$, the equation~$(\Fs_{L})_{|C_{L}'}$ has finite-dimensional de Rham cohomology over~$C_{L}'$ and $\chidr(C_{L}',(\Fs_{L})_{|C_{L}'})=0$.

Then, for every germs of segments~$b$ and~$b'$ in~$\Gamma_{C}$, every open sub-pseudo-annuli~$C_{b}$ and~$C_{b'}$ of~$C$ whose open boundaries contains~$b$ and~$b'$ respectively, $\Fs_{|C_{b}}$ and $\Fs_{|C_{b'}}$ are Fredholm at~$b$ and~$b'$ respectively and we have
\begin{equation}
\chiabs_b(\Fs_{|C_{b}})\;=\; \eps(b,b') \, \chiabs_{b'}(\Fs_{|C_{b'}})\;,
\end{equation}
where $\eps(b,b')$ is equal to~1 (resp.~-1) if~$b$ and~$b'$ have the same orientation (resp. opposite orientations).
\end{lemma}
\begin{proof} 
Let~$b$ and~$b'$ be two germs of segment in~$\Gamma_{C}$ and let~$C_{b}$ and~$C_{b'}$ be as in the statement. 

Let~$b_{0}$ (resp.~$b_{1}$) be the germ of segment at the open boundary of~$C$ whose orientation is opposite to (resp. equal to) that of~$b$. There exist an open sub-pseudo-annulus~$C'_b$ of~$C$ whose open boundary is~$\{b_{0},b\}$. Note that we necessarily have $\Gamma_{C'_{b}}\subseteq\Gamma_C$. By Lemma~\ref{lem:b0b1chidr} and Lemma~\ref{Lemma : local index --t-}, $\Fs_{|C'_{b}}$ is Fredholm at~$b$ and we have
\begin{equation}
\chiabs_{b}(\Fs_{|C'_{b}}) = - \chiabs_{b_{0}}(\Fs_{|C'_{b}}) = \chiabs_{b_{0}}(\Fs)\;.
\end{equation}
By Lemma~\ref{Lemma : local index --t-}, the same results hold with~$C'_{b}$ replaced by~$C_{b}$. 

Similarly, we show that $\Fs_{|C_{b'}}$ is Fredholm at~$b'$ and that $\chiabs_{b'}(\Fs_{|C_{b'}})$ is equal to $\chiabs_{b_{0}}(\Fs)$ if~$b'$ has the same orientation as~$b$ or to $\chiabs_{b_{1}}(\Fs)$ if~$b'$ has the opposite orientation. By Lemma~\ref{lem:b0b1chidr} again, we have
\begin{equation}
\chiabs_{b_{0}}(\Fs) + \chiabs_{b_{1}}(\Fs) = 0
\end{equation}
and the result follows. 
\end{proof}

\begin{proposition}\label{Prop : chirel=Irr}
Let $C$ be an open pseudo-annulus, let $b$ be a germ of 
segment at the open boundary of $C$ and let~$\Fs$ be a 
differential equation on~$C$ with log-affine radii along~$\Gamma_{C}$. 



Let 
\begin{equation}\label{eq:decomposition}
\Fs\;=\;
\Fs^{\mathrm{Robba}}\oplus
\Fs^{<\mathrm{sol}}
\end{equation}
be the decomposition of $\Fs$ into its Robba part and its spectral 
non-solvable part. Then $\Fs^{<\mathrm{sol}}$ is Fredholm at $b$ and one has
\begin{equation}\label{eq : chiabs sol = Irr spns}
\chiabs_b(\Fs^{<\mathrm{sol}})\;=\;
\Irr_b(\Fs^{<\mathrm{sol}})\;=\;\Irr_b(\Fs)\;.
\end{equation}
In particular, the following conditions are equivalent:
\begin{enumerate}
\item $\Fs^{\mathrm{Robba}}$ is Fredholm at~$b$ and 
\begin{equation}\label{eq:chiabsRobba0}
\chiabs_{b}(\Fs^{\mathrm{Robba}})\;=\;0\;;
\end{equation}
\item $\Fs$ is Fredholm at~$b$ and 
\begin{equation}\label{eq : chiabs=Irr}
\chiabs_{b}(
\Fs)\;=\;
\Irr_{b}(\Fs)\;.
\end{equation}
\end{enumerate}
\end{proposition}

\begin{proof}

By Definition~\ref{def:Fredholmnablageneral}, we may assume that~$K$ is algebraically closed, spherically complete with $|K|=\mathbb{R}_{\geq 0}$ and that~$C$ is a standard open pseudo-annulus $C=\{r_1<|T|<r_2\}$ with $0 \le r_{1} < r_{2} \le +\infty$. 
 In this case, the equivalence of i) and ii) follows from 
\eqref{eq : chiabs sol = Irr spns}, by additivity of 
generalized index and irregularity.

%
%

We are then reduced to prove 
\eqref{eq : chiabs sol = Irr spns}, therefore 
we can assume that $\Fs = \Fs^{<\sol}$.

Let us first assume that the residual field of $K$ has 
positive characteristic $p>0$.
Thanks to Proposition~\ref{Prop : H^i=0 if not solvablegfz}, Lemma~\ref{lem:epsbb'} applies, so we can shrink $C$ and assume that there exists $r<1$ such 
that all the radii of $\Fs$ are smaller than $r$ at all points of 
$\Gamma_C$ (\textit{i.e.} the radii are not approaching~$1$ at the 
open boundary of $C$). 
This implies that the push-forward by the Frobenius morphism~$\varphi_{p}$ reduces 
the radii uniformly on $\Gamma_C$, 
hence that there exists $n>0$ 
such that  all the radii of $(\varphi_p)_*^n(\Fs)$
are all strictly smaller than the radius of Young's disk \cite{Young} at each point of $\Gamma_C$.
Corollary \ref{Cor : push-f of inde gen-1} and Proposition 
\ref{Prop : slope of push-f-1} show that the irregularity 
and the generalized index behave in the 
same way by Frobenius push-forward.

Replacing $\Fs$ by $(\varphi_p)_*^n(\Fs)$, we can assume that the radii of $\Fs$ are all strictly smaller than the 
radius of Young's disk \cite{Young} at each point 
of~$\Gamma_C$. Shrinking~$C$ again and 
decomposing~$\Fs$ by the radii, 
we can assume that all the radii of 
$\Fs$ are equal to the same 
log-affine function along $\Gamma_C$.



Since the determinant of Katz's base change matrix 
(see~\cite{Katz-cyclic-vect}) has a finite number of 
zeros on every closed annulus (this is independent on 
the residual characteristic), by restricting 
$C$ again, we can assume that $\Fs$ admits a cyclic basis over 
$C$.

For these reasons 
we can assume that 
\begin{enumerate}
\item[a)] $\Fs$ is a cyclic module;
\item[b)] the radii $\R_{i}(-,\Fs)$ of $\Fs$ are 
all equal to the same $\log$-affine function on 
$\Gamma_C$;
\item[c)] the radii $\R_{i}(-,\Fs)$ of $\Fs$ are 
all strictly smaller than Young's bound 
along~$\Gamma_{C}$.
\end{enumerate}

If the residual field of $K$ has characteristic $p=0$, we 
can again assume that a), b), c) hold. The proof is 
essentially the same, with the difference that
assumption c) is automatically verified 
(i.e. Frobenius push-forward in not needed). 
Indeed, if $p=0$, Young's bound is $1$, so the radii are 
automatically all strictly smaller than it because 
$\Fs=\Fs^{<\sol}$. 


Under a), b), c), the irregularity and the generalized 
index are 
explicitly intelligible in a cyclic basis. 
From now on the proof is the same in all residual 
characteristics. 

Set $d:=\frac{d}{dT}$, and denote by $x_\rho$ the 
sup-norm on the annulus $\{|T|=\rho\}$. We recall that 
the norm of $d$ as an operator on $\O(C)$ endowed 
with the norm $x_\rho$ is given by 
\begin{equation}\label{eq : d=rho^-1}
|d|(x_\rho)\;=\;\sup_{f\in\O(C)-\{0\}}
\frac{|d(f)|(x_\rho)}{|f|(x_\rho)}\;=\;\rho^{-1}
\;=\;|T|^{-1}\;.
\end{equation}

Consider a cyclic basis 
$(v,\nabla(v),\nabla^2(v),\ldots,\nabla^{r-1}(v))$ 
of $\Fs$. In such a basis the differential 
equation $\Fs$ is associated to an operator
\begin{equation}
P(T,d)\;=\;\sum_{k=0}^ra_k(T)d^k\;,
\end{equation}
where $a_r=1$. More precisely 
we have $\nabla=d-G(T)$, where $G(T)$ is the 
companion matrix of the polynomial $P(T,d)$:
\begin{equation}
G(T)\;=\;\left(\begin{small}
\begin{array}{c|ccccc}
0&&&&\\
\vdots&&Id&&\\
0&&&&\\
\hline
-a_0&-a_1&\cdots&&-a_{r-1}
\end{array}
\end{small}
\right)\;.
\end{equation}

Since the radii are all small, the convergence Newton 
polygon of $\Fs$ coincides with the spectral Newton 
polygon of $P(T,d)$ (cf. \cite[Proposition 4.11]{NP-I} or 
also \cite[Théorème 6.2]{Astx}).\footnote{Recall that the 
convergence Newton polygon is the polygon whose 
slopes are $\{\log(\R_{i}(x_\rho,\Fs))\}_{i=1,
\ldots,r}$, while the spectral Newton polygon of $P(T,d)$ 
is defined by considering the convex hull of the 
$x_\rho$-values of the coefficients of $P(T,d)$, cf. 
\cite[Proposition 4.11]{NP-I}.}

Since the radii are all equal to the same log-affine 
function, the Young's spectral Newton polygon associated 
to $P(T,d)$ has no breaks, hence the radii are directly related to the norm of the 
constant term $a_0(T)\in\O(C)$ (note that $a_0(T)$ is 
not zero since, by construction, $\Fs$ has no trivial 
submodules). More precisely, Young's theorem ensures 
that for all $k=1,\ldots,r-1$ and all $\rho\in]r_1,r_2[$, 
one has
\begin{equation}\label{eq : YOUNG}
|a_0|(x_\rho)^{1/r} \;\;>\;\; 
\max(\;|d|(x_\rho)\;,
\;|a_k|(x_\rho)^{\frac{1}{r-k}}\;)\;,
\end{equation}
and that for all $i=1,\ldots, r$
\begin{equation}\label{eq : radii small index proof}
\R_{i}(x_\rho,\Fs)\;=\;
\omega\cdot|a_0|(x_\rho)^{-\frac{1}{r}}\rho^{-1}\;,
\footnote{The 
presence of $\rho^{-1}$ in \eqref{eq : radii small index proof} 
is due to the fact that we consider 
normalized radii instead of spectral radii as in 
\cite[Proposition 4.11]{NP-I}.}
\end{equation}
where $\omega$ is either $1$ or $p$ if the characteristic 
of the residual field  $\widetilde{K}$ is $0$ or $p>0$ 
respectively (cf. \eqref{eq : OMEGA}). 
In particular, since the radii are assumed to be log-affine along 
$\Gamma_C$, $a_0$ has no zeros in $C$. We also deduce that 
\begin{equation}\label{eq : Irr_b = partial_b a_0}
\Irr_b(\Fs)\;=\;-r\partial_b|T|-\partial_b\,|a_0(T)|\;.
\end{equation}


%

We now compute the absolute index. 
First of all, by Lemma~\ref{lem:epsbb'}, 
we can replace $b$ by another germ of segment in 
$\Gamma_C$ with the same orientation. 
We then pick $\gamma\in]r_1,r_2[$ and 
assume that $b$ is a germ of segment out of 
$x_{\gamma}$. 

Let $\beta\in K$ such that 
\begin{equation}
|\beta|\;=\;|a_0|(x_{\gamma})^{1/r}\;.
\end{equation}
We now replace the cyclic basis of $\Fs$
by the following more convenient one 
\begin{equation}
(v,\beta^{-1}\nabla(v),\beta^{-2}\nabla^2(v),\ldots,
\beta^{1-r}\nabla^{r-1}(v))\;.
\end{equation} 
In this basis, the connection is given by $d-G_\beta(T)$ 
where
\begin{equation}
G_\beta(T)\;=\;\beta\cdot
\left(\begin{small}
\begin{array}{c|ccccc}
0&&&&\\
\vdots&&Id&&\\
0&&&&\\
\hline
-\beta^{-r}a_0&-\beta^{-r+1}a_1&\cdots&&-\beta^{-1}a_{r-1}
\end{array}
\end{small}
\right)\;,
\end{equation}
and its inverse is given by 
\begin{equation}
G_\beta(T)^{-1}\;=\;\beta^{-1}
\cdot
\left(\begin{small}
\begin{array}{cccc|c}
-\beta\frac{ a_{1}}{a_0} &&\cdots&
-\beta^{r-1}\frac{a_{r-1}}{a_0}&
-\beta^r\frac{1}{a_0}\\
\hline
&&&&0\\
&&Id&&\vdots\\
&&&&0
\end{array}
\end{small}
\right)\;.
\end{equation}
The reason of these choices is that, in this situation, 
we have (cf. \eqref{eq : YOUNG})
\begin{eqnarray}
|G_\beta|(x_\gamma)&\;=\;&
|\beta|\cdot\max(1,|\beta|^{-r}|a_0|(x_\gamma),
|\beta|^{1-r}|a_1|(x_\gamma),\ldots,
\beta^{-1}|a_{r-1}|(x_\gamma))\;=\;|\beta|\;,\medskip\\
|G_\beta^{-1}|(x_\gamma)&\;=\;&
|\beta|^{-1}\cdot
\max(1,|\beta|^{r}\frac{1}{|a_0|(x_\gamma)},
|\beta|^{1}\frac{|a_1|(x_\gamma)}{|a_0|(x_\gamma)},\ldots,
\beta^{r-1}
\frac{|a_{r-1}|(x_\gamma)}{|a_0|(x_\gamma)})\;=\;
|\beta|^{-1}\;.\label{eq : G_beta^-1}
\end{eqnarray}

By Proposition \ref{Prop : H^i=0 if not solvablegfz}, $\Hdr^0(C,\Fs)$ and $\Hdr^1(C,\Fs)$ are finite-dimensional. In other words, the endomorphism $Td-TG_{\beta}$ of~$\Os(C)^r$ is Fredholm, therefore, by Proposition \ref{Prop : chi=sumchigen}, it is Fredholm at~$b$. By Remark \ref{Remark : chiabs=chigen}, we have 
$\chi^{\mathrm{gen}}_b(Td-TG_\beta)= \chiabs_b(\Fs)$.

We now write
\begin{equation}
Td-TG_\beta\;=\;(Td(TG_\beta)^{-1}-1)\circ(TG_\beta).
\end{equation}
By Lemma \ref{Lemma : additivity gen ind}, we have 
\begin{equation}
\chi^{\mathrm{gen}}_b(Td-TG_\beta)\;=\;
\chi^{\mathrm{gen}}_b(Td\circ(TG_\beta)^{-1}-1)+
\chi^{\mathrm{gen}}_b(TG_\beta)\;.
\end{equation}

The generalized index of the last term is obtained 
from Lemma~\ref{Lem : gen ind of a constant} and 
\eqref{eq : Irr_b = partial_b a_0}:
\begin{equation}
\chi^{\mathrm{gen}}_b(TG_\beta)\;=\;
-\partial_b(\mathrm{det}(TG_\beta))\;=\;
-\partial_b|T^r\cdot\beta^{1-r} a_0(T)|\;=\;
-r\partial_b|T|-\partial_b|a_0(T)|\;=\;
\Irr_b(\Fs)\;.
\end{equation}


With the notations of Definition \ref{Def : }, we consider 
a 
sub-annulus of $C_b=\{r_1'<|T|<r_2'\}$ 
$C$ having $b$ at its open boundary and 
we fix $k\in\{0,1\}$ in order that $b=b_{k}$. 
To conclude the proof, we will show that the truncated 
operator 
$p_k\circ (Td\circ(TG_\beta)^{-1}-1)\circ i_k$ (cf. 
\eqref{eq : truncated op u_k}) is invertible. This will imply that 
$\chi^{\mathrm{gen}}_b(Td\circ(TG_\beta)^{-1}-1)=0$.

Let $D_b$ be the disk in 
$\mathbb{P}^{1,\textrm{an}}_K$ having $b$ 
at its open boundary. It contains $C_b$. 

Let $\rho\in]r_1',r_2'[$. Endowing~$\Os(C_{b})$ and~$\Os(D_{b})$ with the norm~$x_{\rho}$, the inclusion $i_{k} \colon \Os(D_{b}) \to \Os(C_{b})$ is isometric and the projection $p_{k} \colon \Os(C_{b}) \to \Os(D_{b})$ is a contraction. We deduce that 
\begin{equation}
|p_k\circ (Td\circ(TG_\beta)^{-1})\circ i_k|(x_{\rho})
\leq
|Td\circ(TG_\beta)^{-1}|(x_{\rho})\;.
\end{equation}
 

On the other hand, it follows from 
\eqref{eq : d=rho^-1}
and \eqref{eq : G_beta^-1} that
\begin{equation}
|Td\circ(TG_\beta)^{-1}|(x_{\gamma})
\;\leq\;
|Td|(x_{\gamma})\cdot
|(TG_\beta)^{-1}|(x_{\gamma})
\;=\;
|(TG_\beta)^{-1}|(x_{\gamma})
\;=\;
|(TG_\beta)|(x_{\gamma})^{-1}\;<\;1\;,
\end{equation}
where the last strict inequality follows from 
\eqref{eq : d=rho^-1} and \eqref{eq : YOUNG} .

By continuity, the strict inequality holds for all $\rho$ 
sufficiently close to $\gamma$. Since the boundary of~$C_b$ in~$C$ contains~$x_\gamma$, up to shrinking~$C_b$, we 
can assume that it holds for all $\rho\in]r_1',r_2'[$. 
It follows that 
$|p_k\circ(Td\circ(TG_\beta)^{-1})\circ i_k|
(x_\rho)<1$ for all $\rho\in]r_1',r_2'[$.
We deduce that 
$p_k\circ(Td\circ(TG_\beta)^{-1}-1)\circ i_k=
p_k\circ(Td\circ(TG_\beta)^{-1})\circ i_k-1$ is invertible.
The claim follows.
\end{proof}


\begin{corollary}
\label{Cor : first index over a virtual disk}
\label{lem:chiD=0}
Let~$D$ be an open pseudo-disk inside~$X$ (cf. \cite[Definition 1.1.8]{NP-IV}). 
Let~$b_{D}$ be the germ of segment at infinity 
on~$D$ (oriented towards the interior of~$D$). 
Let~$\Fs$ be a differential equation over $D$
that is Fredholm at~$b_D$ and whose radii are all 
log-affine along $b_D$. Let $C$ be an 
open pseudo-annulus in $D$ containing~$b_D$ 
such that the radii of $\Fs$ are log-affine on 
$\Gamma_{C}$.

Then $\Fs$ has finite-dimensional de Rham cohomology 
over $D$ and we have
\begin{eqnarray}
\chidr(D,\Fs)
&\;=\;&
\chiabs_{b_D}(\Fs)+r\;,\label{eq : chidr(D,F)=chiabs}\\
&\;=\;&
\chiabs_{b_D}(\Fs_{|C}^{\mathrm{Robba}})+
\Irr_{b_D}(\Fs)+r\;,\label{eq : chi-2---gtfr}\\
&=&
\chiabs_{b_D}(\Fs_{|C}^{\mathrm{Robba}})+
\hdr^0(D,\Fs)-\partial_{b_D}H_{r}(-,\Fs)\footnotemark\;,\label{eq : chi-2---gtfr-2}
\end{eqnarray}
\footnotetext{Note that the 
slope of the total height of the convergence Newton 
polygon $\partial_{b_D}H_{r}(-,\Fs)$ 
is taken before localization to~$C$.}
where $r:=\rk(\Fs_{|D})$.
In particular, if 
$\chiabs_{b_D}(\Fs_{|C}^{\mathrm{Robba}})=0$, we 
obtain
\begin{equation}\label{eq : h^1=partial_bH_r}
\hdr^1(D,\Fs)\;=\;\partial_{b_D}H_{r}(-,
\Fs)\;.
\end{equation} 
If, moreover, the radii of~$\Fs$ are spectral non-solvable on~$b_{D}$, then we have $\chidr(D,\Fs) \le 0$ and $\chidr(D,\Fs)=0$ if, and only if, $\partial_{b_{D}} H_{r}(-,\Fs) = 0$.
%
\end{corollary}
\begin{proof}
All the quantities are stable by scalar extension. 
Therefore we can assume that 
$K$ is spherically complete, algebraically closed, and that 
$|K|=\mathbb{R}_{\geq 0}$,  that $D$ is a disk 
defined by $D=\{|T|<r_2\}$ and that $C_b=\{r_1<|T|<r_2\}$. We are then in the context of Definition 
\ref{Def : }, which we now apply with $D=D_0$, 
$n=0$, and $m=-1$. 
The truncation $p_0\circ\nabla(d/dT)\circ i_0$ 
(cf. \eqref{eq : truncated op u_k}) of the connection 
$\nabla(d/dT):\Fs(C)\to\Fs(C)$ coincides with 
our original connection $\nabla(d/dT):\Fs(D)\to\Fs(D)$. 
Hence $\chi^{\mathrm{gen}}_{b_D}(\Fs(C),
\nabla(d/dT))=\chi(\Fs(D),
\nabla(d/dT))=\chidr(D,\Fs)$. We deduce that
\begin{align}
\chiabs_{b_D}(\Fs) &= \chi^{\mathrm{gen}}_{b_D}(\Fs(C),\nabla(d/dT)) - \chi^{\mathrm{gen}}_{b_D}(\Os(C),d/dT)\\
&=\chidr(D,\Fs) - \chidr(D,\Os)\\
& =\chidr(D,\Fs) -r\;,
\end{align}
hence \eqref{eq : chidr(D,F)=chiabs} holds.
Formulas 
\eqref{eq : chi-2---gtfr}, 
\eqref{eq : chi-2---gtfr-2}
and 
\eqref{eq : h^1=partial_bH_r} now follow 
immediately from Proposition \ref{Prop : chirel=Irr} and 
\cite[Lemma 2.8.4]{NP-III}.

If the radii are all spectral non solvable at $b_D$, we 
have $\Hdr^0(D,\Fs)=0$. 
Indeed any global solution of $\Fs$ on $D$ generates 
an over-solvable radius along $b_D$. 
Hence $\chidr(D,\Fs)=-\hdr^1(D,\Fs)\leq 0$. The last 
assertion then follows from 
\eqref{eq : h^1=partial_bH_r}.
\end{proof}

\subsubsection{Invariance of the absolute index.} We 
now prove that the absolute index is an intrinsic notion.
The invariance actually follows from Proposition 
\ref{Prop : chirel=Irr} in the spectral non solvable case,
and from Proposition \ref{prop:deformation} in the 
solvable case. 

\begin{proposition}[Invariance of the absolute index]
\label{Prop : independence on the coordinate chiabs}
Let $C$ be an open pseudo-annulus and let $b$ be a 
germ of segment at the open boundary of 
$C$. Let $\Fs$ be a finite differential equation over $C$ 
whose radii are all log-affine along $b$. 

Let $\sigma:C\simto C$ be an 
automorphism of $C$. Then
\begin{equation}\label{eq : chiphimenouno}
\chiabs_b(\Fs)\;=\;
\chiabs_{\sigma^{-1}(b)}(\sigma^*\Fs)\;.
\end{equation}
In particular, the absolute generalized index 
$\chiabs_b(\Fs)$ is independent of~$\Omega$ and of the coordinates~$T_{i}$ (see Section 
\ref{section : GENIND}).
\end{proposition}
\begin{proof}
By Definition~\ref{def:Fredholmnablageneral} we may assume that~$K$ is algebraically closed, spherically complete with $|K|=\mathbb{R}_{\geq 0}$ and that~$C$ is a standard open pseudo-annulus $C=\{r_1<|T|<r_2\}$ with $0 \le r_{1} < r_{2} \le +\infty$. 
An inversion of the orientation is possible only if
$(r_1,r_2)=(0,\infty)$ or $r_1,r_2\notin\{0,\infty\}$. If $\sigma(b)\neq b$, we consider the 
automorphism $\psi$ defined by
\begin{equation}
\psi(T):=
\left\{\begin{array}{lcl}
T^{-1}&\textrm{if}& r_1=0,\;r_2=+\infty\\
a_1\cdot a_2\cdot T^{-1}&\textrm{if}&r_1,r_2\in]0,+\infty[
\end{array}\right.
\end{equation}
where $a_i\in K$, $|a_i|=r_i$ for $i=1,2$. Equality \eqref{eq : chiphimenouno} 
is satisfied by $\psi$ and $\psi^{-1}$, who only exchange the roles of $D_0$ and $D_\infty$ in the 
definition of $\chiabs_b$. Now $\sigma':=\sigma\circ\psi^{-1}$ preserves the 
orientation. Since $\sigma=\sigma'\circ\psi$, it is 
enough to prove the assertion for $\sigma'$. In other 
words, we can assume that $\sigma$ preserves the 
orientation of $C$, \textit{i.e.} $\sigma(b)=b$.

Since $\sigma(T)$ is invertible in $\O(C)$, it can be 
written as
\begin{equation}
\sigma(T)\;=\;q(T^d+h(T))
\end{equation}
with $q\in K^\ast$ (and $|q|=1$ if $(r_{1},r_{2}) \ne (0,+\infty)$) and $h(T)\in\O(C)$ satisfying $|h(x_\rho)|<\rho^d$ for all 
$\rho\in]r_1,r_2[$ (where $x_\rho$ denotes as usual the 
sup-norm on the annulus $\{|T|=\rho\}$). Since~$\sigma$ is an automorphism, we have $d=\pm 1$, and even $d=1$, since it preserves the orientation.

As before, equality \eqref{eq : chiphimenouno} is satisfied by the multiplications by~$q$ and~$q^{-1}$, because they induce isomophisms of~$D_{0}$ and~$D_{\infty}$. Replacing~$\sigma$ by $\sigma(q^{-1}\, \wc)$, we can assume that~$q=1$. 
%
%
%

In this case, the endomorphism~$\sigma$ stabilizes the sub-pseudo-annuli of~$C$ whose skeletons are included in~$\Gamma_{C}$. Therefore, by Lemma \ref{Lemma : local index --t-}, we can replace~$C$ by a smaller sub-pseudo-annulus 
containing $b$, and assume that~$\Fs$ 
has log-affine radii along $\Gamma_{C}$.

%
%

By decomposing~$\Fs$ by the radii and using Lemma~\ref{Lemma : additivity on exact sequence}, 
we can assume that the radii of~$\Fs$ are all equal to the same log-affine
affine function. We then distinguish two situations: 
$\Fs = \Fs^{<\sol}$ and 
$\Fs= \Fs^{\mathrm{Robba}}$. 


If $\Fs = \Fs^{<\sol}$, by Proposition~\ref{Prop : chirel=Irr}, we have 
$\chiabs_b(\Fs)=\Irr_b(\Fs)$, hence the claim follows 
from the fact that the radii are preserved by $\sigma$ 
(see \cite[Lemma 3.23]{NP-II}).

Assume now that $\Fs= \Fs^{\mathrm{Robba}}$. 
In this case Proposition \ref{prop:deformation}
shows that we have an isomorphism 
$\sigma_2^*(\Fs)\cong\Fs$. 
Equality \eqref{eq : chiphimenouno} then follows from
Lemma \ref{Lemma : independence on j}.

\medbreak

The last part of the statement follows from the first and from Remark~\ref{rem:Omega}, using the fact that two extensions of~$K$ may always be embedded in a common extension~$\Omega$ that is spherically complete, algebraically closed and with $|\Omega| = \ERRE_{\ge0}$.
\end{proof}


With this result at hand, we can now extend our 
definition of absolute generalized index to a more general 
setting.

\begin{definition}
Let $X$ be a quasi-smooth $K$-analytic curve. Let $b$ be a good germ of segment in 
$X$ (cf. Definition~\ref{def:goodgerm}) and let $C$ be an open pseudo-annulus in $X$ 
having $b$ at its open boundary. Let $\Fs$ be a differential 
equation on $X$ with log-affine radii along~$b$. We say that~$\Fs$ is Fredhlom at~$b$ if~$\Fs_{|C}$ is and, in this case, we define the \emph{absolute 
index of $\Fs$ at $b$} by
\begin{equation}
\chiabs_{b}(\Fs) := \chiabs_{b}(\Fs_{|C}).
\end{equation}
\end{definition}

From what we have done, we know that the definition is 
independent of all the choices involved at the different 
steps of the construction: $C$ (see 
Lemma~\ref{Lemma : local index --t-}), $\Omega$ (see 
Remark~\ref{rem:Omega}), the coordinates~$T_{i}$ 
(see Proposition~\ref{Prop : independence on the coordinate chiabs}) and the isomorphism~$j$ (see 
Lemma~\ref{Lemma : independence on j}).

\subsubsection{Log-affine radii.}
We now are ready to prove the index theorem over an 
annulus.

\begin{theorem}[Index theorem]
\label{Thm : index of finite opens}
Let $C$ be an open pseudo-annulus and let $b_0$ and 
$b_1$ be the germs of segment at its open boundary. 
Let $\Fs$ be a differential equation over $C$ whose 
radii are all log-affine along $b_0$ and $b_1$. 

For $b\in\{b_{0},b_{1}\}$, let~$C_{b}$ be an open 
sub-pseudo-annulus of~$C$ containing $b$ 
such that $\Gamma_{C_{b}} \subseteq \Gamma_{C}$ 
and such that~$\Fs$ has log-affine radii 
along~$\Gamma_{C_{b}}$.  

Then, the following assertions are equivalent:
\begin{enumerate}
\item for each $b\in\{b_{0},b_{1}\}$, $\Fs_{|C_b}^{\mathrm{Robba}}$ is Fredholm at~$b$ and we have
\begin{equation}\label{eq:chiabs01}
\chiabs_{b_0}(\Fs_{|C_{b_0}}^{\mathrm{Robba}}) 
\;=\;
-\chiabs_{b_1}(\Fs_{|C_{b_1}}^{\mathrm{Robba}}) 
\;;
\end{equation}
\item $\Fs$ has finite-dimensional de Rham cohomology over~$C$ and we have
\begin{equation}\label{eq:chidrIrr}
\chidr(C,\Fs)
\;=\;\Irr_{b_0}(\Fs)+\Irr_{b_1}(\Fs)\;.
\end{equation}
\end{enumerate}

%
\end{theorem}
\begin{proof}
By Lemma~\ref{lem:b0b1chidr} and Proposition~\ref{Prop : chirel=Irr}, i) implies~ii) and, if $K$ is not trivially valued, then ii) implies~i).

In the case where $K$ is trivially valued, it follows from Corollary~\ref{Cor. H^i(C)=H^i(C') restr} that~i) is always satisfied, hence~ii) is alway satisfied too. 

\end{proof}

For further reference, let us extract from the preceding proof the statement in the trivially valued case. Note also that, by Remark~\ref{rem:logafftrivval}, the assumption that the radii are log-affine is always satisfied in this case.

\begin{theorem}\label{thm:indexannulustrivialvaluation}
Assume that~$K$ is trivially valued.
Let $C$ be an open pseudo-annulus and let $b_0$ and 
$b_1$ be the germs of segment at its open boundary. 
Let $\Fs$ be a differential equation over $C$. 

Then, the radii of~$\Fs$ are log-affine along~$b_{0}$ and~$b_{1}$, $\Fs$ has finite-dimensional de Rham cohomology over~$C$ and we have
\begin{equation}
\chidr(C,\Fs)
\;=\;\Irr_{b_0}(\Fs)+\Irr_{b_1}(\Fs)\;.
\end{equation}
\hfill$\Box$
\end{theorem}

\subsubsection{Non-log-affine radii.}



In order to generalize Theorem~\ref{Thm : index of finite opens} to the case where the radii are not necessarily log-affine at the boundary, we introduce conditions on germs of segments.

\begin{definition}[Condition $\Fin_b^+$]
\label{Def: Fin+}
Let $X$ be a quasi-smooth $K$-analytic curve and 
let~$\Fs$ be a differential equation on~$X$. Let $b$ be 
a good germ of segment in $X$. 
We say that \emph{$\Fs$ satisfies $\Fin_b^+$} if there exists an open 
pseudo-annulus~$C_{b}$ whose skeleton represents~$b$ such that
\begin{enumerate}[i)]
\item $\Fs$ has log-affine radii along~$C_{b}$; 
\item $\Fs_{|C_{b}}^\mathrm{Robba}$ is Fredholm at~$b$;
\item 
$\chiabs_{b}(\Fs_{|C_{b}}^\mathrm{Robba})=0$.
\end{enumerate}
\end{definition}

\begin{definition}[Condition $\Fin_b$]
\label{Def: Fin_b}
Let $X$ be a quasi-smooth $K$-analytic curve and 
let~$\Fs$ be a differential equation on~$X$. Let $b$ be 
a good germ of segment in $X$. 
We say that \emph{$\Fs$ satisfies $\Fin_b$} if, for each open 
pseudo-annulus~$C$ whose skeleton represents~$b$, 
there exists an open sub-pseudo-annulus $C'\subseteq C$ with 
$\Gamma_{C'}\subseteq\Gamma_C$ (possibly 
not representing $b$) such that~$\Fs$ satisfies $\Fin_{b'}^+$ where~$b'$ denotes the germ of segment at the open boundary of~$C'$ with the same orientation as~$b$.
\end{definition}

\begin{remark}
If~$K$ is trivially valued or, more generally, if~$K$ has residue characteristic~0, then, by Corollary~\ref{Cor. H^i(C)=H^i(C') restr}, each good germ of segment in~$X$ satisfies~$\Fin_{b}$.
\end{remark}

\begin{corollary}\label{cor:cohomologypseudoannulus}
Let $C$ be an open pseudo-annulus and let $b_0$ and 
$b_1$ be the germs of segment at its open boundary. 
Let $\Fs$ be a differential equation over $C$. Assume that $\Fs$ satisfies~$\Fin_{b_{0}}$ and~$\Fin_{b_{1}}$.

The following assertions are equivalent:
\begin{enumerate}[i)]
\item the total height of~$\Fs$ is log-affine along~$b_{0}$ and~$b_{1}$;
\item for each $i\ge 0$, $\Hdr^i(C,\Fs)$ is finite dimensional.
\end{enumerate}

Moreover, when they hold, we have 
\begin{equation}\label{eq:chidrpseudo-annulus}
\chidr(C,\Fs)
\;=\;\Irr_{b_0}(\Fs)+
\Irr_{b_1}(\Fs)\;.
\end{equation}
\end{corollary}

%
%
\begin{proof}
%
%
%

Let us first assume that~$K$ is not trivially valued. By assumption, there exists an increasing sequence $(C_{n})_{n\ge 0}$ of relatively compact open pseudo-annuli of~$C$ with $\Gamma_{C_{n}} \subseteq \Gamma_{C}$ such that
\begin{equation}\bigcup_{n\ge 0} C_{n} = C\end{equation}
and, for each $n\ge 0$, $\Fs$ satisfies $\Fin_{b_{n,0}}$ and~$\Fin_{b_{n,1}}$, where $b_{n,0}$ (resp.~$b_{n,1}$) denotes the germ of segment at the open boundary of~$C_{n}$ with the same orientation as~$b_{0}$ (resp.~$b_{1}$).
 By Theorem~\ref{Thm : index of finite opens},
 for each $n\ge 0$, $\Fs_{|C_{n}}$ has finite dimensional de Rham cohomology and we have 
\begin{equation}
\chidr(C_{n},\Fs_{|C_{n}}) = \Irr_{b_{n,0}}(\Fs) + \Irr_{b_{n,1}}(\Fs).
\end{equation}
By log-concavity of the radii (see \cite[Theorem 3.9, 
iii)]{NP-I}), 
the sequences $(\Irr_{b_{n,0}}(\Fs))_{n\ge 0}$ and $(\Irr_{b_{n,1}}(\Fs))_{n\ge 0}$ are both non-increasing. In particular, the sequence $(\chidr(C_{n},\Fs_{|C_{n}}))_{n\ge 0}$ converges if, and only if, both sequences $(\Irr_{b_{n,0}}(\Fs))_{n\ge 0}$ and $(\Irr_{b_{n,1}}(\Fs))_{n\ge 0}$ converge, which is equivalent to saying that~$\Fs$ has log-affine total height along~$b_{0}$ and~$b_{1}$ or, in other words, that~$\Fs$ has well-defined irregularity. The result now follows from \cite{Banachoid}.

Let us now assume that~$K$ is trivially valued. By Remark~\ref{rem:logafftrivval}, the radii of~$\Fs$ are log-affine along~$b_{0}$ and~$b_{1}$, hence assertion~i) holds. 

Let~$L$ be a complete valued extension of~$K$ with non-trivial valuation. The space~$C_{L}$ is a finite disjoint union of open pseudo-annuli $C_{1},\dotsc,C_{m}$. For each $j\in \{1,\dotsc,m\}$, denote by~$b_{j,0}$ (resp. $b_{j,1}$) the germ of segment of~$C_{i}$ over~$b_{0}$ (resp. $b_{1}$). For each $j\in \{1,\dotsc,m\}$, the radii of~$\Fs_{L}$ are log-affine along~$b_{j,0}$ and~$b_{j,1}$, hence it follows from the result in the non-trivially valued case that, for each $i\ge 0$, $\Hdr^i(C_{j},\Fs_{L})$ is finite dimensional and that we have
\begin{equation}
\chidr(C_{j},\Fs_{L}) = \Irr_{b_{j,0}}(\Fs_{L}) + \Irr_{b_{j,1}}(\Fs_{L}).
\end{equation}
It now follows from Theorem~\ref{thm:descent} that assertion~ii) holds, as well as~\eqref{eq:chidrpseudo-annulus}.
\end{proof}

\subsection{Index formula for Robba rings}


Let~$X$ be a quasi-smooth $K$-analytic curve. Let $b$ be a good germ of segment in $X$.

\begin{definition}\label{Def.: Robba ring at b}
We call \emph{Robba ring at $b$} the ring 
\begin{equation}\label{eq : Def : Rorigb}
\mathfrak{R}_b\;:=\;\varinjlim_{
C}\O(C)\;,
\end{equation} 
where $C$ runs through the family of open pseudo-annuli 
whose open boundary contains~$b$. 
\end{definition}

Let $\Fs$ be a differential equation over $X$. The restriction~$\Fs_{|\mathfrak{R}_b}$ of $\Fs$ to 
$\mathfrak{R}_b$ is a locally 
free $\mathfrak{R}_b$-module 
of finite rank endowed with a connection. 

\begin{lemma}\label{Lemma : H^i Robba=lim}
For $i=0,1$, we have a natural isomorphism of $K$-vector spaces
\begin{equation}
\varinjlim_{C}\Hdr^i(C,\Fs_{|C})
\;\simto\;\Hdr^i(\mathfrak{R}_b,\Fs_{|\mathfrak{R}_b})\;,
\end{equation}
where~$C$ runs through the set of 
pseudo-annuli whose open boundary contains~$b$.

\end{lemma}
\begin{proof}
The result for~$\Hdr^1$ follows from the fact that cokernels and colimits commute. Since the colimit in~\eqref{eq : Def : Rorigb} is filtered and kernels commute with filtered colimits, the result also holds for~$\Hdr^0$.
\end{proof}

\begin{corollary}
\label{Cor : zero index over the robba ring}
Let  $C_1\supseteq C_2\supseteq\cdots$ 
be a decreasing 
sequence of open pseudo-annuli all of them having~$b$ in their open boundary and such that 
$\bigcap_nC_n=\emptyset$. 
Consider the following conditions.
\begin{enumerate}
\item $\Fs$ has log-affine radii along $\Gamma_{C_1}$, for each $n\ge 1$, 
$\Fs_{|C_n}$ has finite-dimensional de Rham cohomology and 
\begin{equation}
\chidr(C_n,\Fs_{|C_n})=0\;.
\end{equation}
\item There exists a complete valued extension~$L$ of~$K$ with non-trivial valuation such that, for each $n\ge 1$, 
$\Fs_{|C_{n}}$ and $(\Fs_{L})_{|(C_n)_{L}}$ have finite-dimensional de Rham cohomologies and 
\begin{equation}
\chidr(C_n,\Fs_{|C_n})\;=\;
\chidr(C_{n+1},\Fs_{|C_{n+1}})\;.
\end{equation}
\end{enumerate}
Then i) implies ii). Moreover, if ii) holds, then for $i=0,1$, 
the cohomology group 
$\Hdr^i(\mathfrak{R}_b,\Fs_{|\mathfrak{R}_b })$ 
is finite-dimensional and, for each $n\geq 1$, the natural maps
\begin{equation}\label{eq: chi_n=chi_n+1 Robba}
\Hdr^i(C_n,\Fs_{|C_n})\;\xrightarrow{\;\sim\;}\; \Hdr^i(C_{n+1},\Fs_{|C_{n+1}})\;\xrightarrow{\;\sim\;}\;
\Hdr^i(\mathfrak{R}_b,\Fs_{|\mathfrak{R}_b})
\end{equation}
are isomorphisms.
\end{corollary}
\begin{proof}
Assume that i) holds. If $K$ is not trivially valued, then
ii) holds with~$L=K$. Assume that~$K$ is trivially valued. Let~$L$ be a complete non-trivially valued extension of~$K$. Let~$n\ge 1$. Then, by Corollary~\ref{cor:cohomologypseudoannulusLiouville}, $(\Fs_{L})_{|(C_n)_{L}}$ has finite-dimensional de Rham cohomology and we have $\chidr((C_n)_{L},\Fs_{|(C_n)_{L}}) = 0$. In particular, ii) holds.


Let us now assume that ii) holds. Let $n\ge 1$. For $i=0,1$, denote by $R_{n}^i:\Hdr^i(C_n,\Fs_{|C_n})\to\Hdr^i(C_{n+1},\Fs_{|C_{n+1}})$ the restriction map. For $i=0$ it is injective, while for $i=1$ it is surjective by Lemma \ref{Lemma : H^1 surjectif}. A dimension argument combined with~\eqref{eq: chi_n=chi_n+1 Robba} implies that $R^0_{n}$ and $R^1_{n}$ are isomorphisms. The claim now follows from Lemma~\ref{Lemma : H^i Robba=lim}.
\end{proof}

\subsection{Formal differential equations.}
\label{Remark : index annulus trivial valuation}
\label{Rk : formal gen indexes}


In this section, we assume that the valuation of~$K$ is trivial and we describe more explicitly the 
consequences of Theorem~\ref{thm:indexannulustrivialvaluation}. We 
remind that in this case the radii are automatically 
log-affine along all good germ of segment in $X$ (cf. Remark \ref{rem:logafftrivval}).  


Let $C:=\{r_1<|T|<r_2\}$, 
with $0 \le r_1 < r_2 \le  +\infty$ be a standard open pseudo-annulus over~$K$. Let~$\Fs$ be a differential equation over~$C$.

%

First of all, we recall that the triviality of the valuation 
implies that
\begin{equation}\label{eq : O(C)=K((T))}
\O(C)\;=\;\left\{
\begin{array}{lll}
K((T))&\textrm{ if }&0\leq r_1<r_2\leq 1\;;\\
K[T,T^{-1}]&\textrm{ if }&0\leq 
r_1<1<r_2\leq+\infty\;;\\
K((T^{-1}))&\textrm{ if }&1\leq r_1<r_2\leq+\infty\;.
\end{array}
\right.
\end{equation}
If $0\leq r_1< r_2\leq 1$, we deduce that 
every analytic function over~$C$ 
naturally extends to the whole punctured disk 
$\{0<|T|<1\}$ and is bounded on all sub-annuli of the 
form $\{r<|T|<1\}$, with $r>0$. It follows that the 
differential equation~$\Fs$ also extends to the whole 
punctured disk. 


A similar phenomenon occurs in the other cases. Therefore, we can assume that $\Fs$ is defined either 
on $\{0<|T|<1\}$, $\{0<|T|<+\infty\}$, or $\{1<|T|<+\infty\}$. 



In these three cases, the functions of $\O(C)$ are 
bounded in the neighborhood of the Gauss point~$x_{0,1}$,
hence it is a classical fact that it has a meaning 
to consider the restriction of functions of $\O(C)$ 
to the generic disk $D(x_{0,1})$, 
and therefore also the restriction of 
any differential equation $\Fs$ over $C$ to $D(x_{0,1})$ 
\cite{Ch}.\footnote{For 
instance if $t$ is an indeterminate over $K$ 
we can consider the field extension $K\to K((t))$, and 
endow $K((t))$ with the absolute value 
$|\sum a_n t^n|=\sup|a_n|$. It follows that $t$ is a 
Dwork generic point for $x_{0,1}$ 
(cf. \cite[Notation 2.1.5]{NP-III}), 
and any bounded function on $C$ converges over the 
generic disk $D(x_{0,1})$. 
More concretely we have a ring homomorphism 
$\O(C)\to \O(\{|T-t|<1\})$ defined by 
$f(T)\mapsto\sum_{n\geq 0}f^{(n)}(t)(T-t)^n/n!$, that 
commutes with $d/dT$.

In more geometric terms, since the absolute value associated to the Gauss point $x_{0,1}$ belongs to the spectrum of the ring of bounded functions, one may define a generic disk over this point by the usual base-change techniques (see \cite[Section~2.2]{NP-II}).} 
In particular it has a 
meaning to consider the radii of~$\Fs$ at~$x_{0,1}$.

\if{
\comment{\`A quoi sert le paragraphe qui suit~? \c Ca devrait d\'ej\`a figurer dans la remarque \ref{rem:logafftrivval} et de toute fa\c con \^etre utilis\'e dans la preuve du th\'eor\`eme \ref{thm:indexannulustrivialvaluation}. Je pense qu'on peut enlever le paragraphe.}
}\fi
It follows easily from the triviality of the absolute value of $K$ that we have $\R_{i}(x_{0,1},\Fs)=1$ for all $i$. Indeed, the solutions converge at least with radius one at the Dwork generic point. This fact is known as \emph{solvability} at $x_{0,1}$. In the case where 
$C=\{0<|T|<\infty\}$ 
it implies that the radii can only have a break at 
$x_{0,1}$. 
More precisely, for every extension $L/K$ and 
every point $x\in C(L)$, one verifies that 
the restriction of $\Fs$ to the maximal disk 
$D(x,S)$ is trivial.
Hence the controlling graph of $\Fs$ is given by 
$\Gamma_{\emptyset}(\Fs)=\Gamma_C$ 
and the radii are all $\log$-affine along the 
segments $]0,x_{0,1}]$ and 
$[x_{0,1},\infty[$. Indeed the radii along 
$\Gamma_C$ can be computed explicitly by the formal 
Newton polygon in a cyclic basis 
(see \cite[Section 5.7]{NP-III}, 
 the 
proof is similar to that of 
Propositions \ref{lem:merosinglinear-1} and 
\ref{Prop : chirel=Irr}). 


Since the residue field of $K$ has characteristic $0$, by Corollary~\ref{Cor. H^i(C)=H^i(C') restr}, the equivalent conditions of Proposition 
\ref{Prop : chirel=Irr} 
and Corollary~\ref{cor:cohomologypseudoannulus} hold. More explicitly, in the case where $C=\{0<|T|<1\}$, 
we can identify $\Fs(C)$ to $K((T))^r$ and 
$\nabla$ to a $K$-linear endomorphism of $K((T))$. 
Denote by $b_0$ and $b_1$ the germs of segment at 
the open boundary of $C$ as in 
Section \ref{section : Standard pseudo-annuli}.  
Then with the notation \eqref{eq : V_0 tt V_1}
we have $V_0:=K[[T]]^r$ and 
$V_1:=T^{-1}K[T^{-1}]^r$, and Proposition 
\ref{Prop : chirel=Irr} gives
\begin{eqnarray}
\label{eq : Irr_b_0^F=chigen}
\Irr_{b_0}^F(\Fs)\;=\;\chiabs_{b_0}(\Fs)
&\;:=\;&\chi(K[[T]]^r,p_0\circ\nabla(T\frac{d}{dT})
\circ i_0)
\;,\\
\Irr_{b_1}^F(\Fs)\;=\;\chiabs_{b_1}(\Fs)
&\;:=\;&\chi(T^{-1}K[T^{-1}]^r,p_1\circ\nabla(T\frac{d}{dT})
\circ i_1)\;,
\end{eqnarray}
where the superscript $^F$ stands 
for ``\emph{Formal}" 
and indicates that we are computing the slopes of the 
radii with respect to the trivial valuation on~$K$. In particular, since the radii are log-affine along 
$]0,x_{0,1}]$, one has
\begin{equation}\label{eq : Irr^F_0=Irr^F_1}
\Irr_{b_0}^F(\Fs)\;=\;-\Irr_{b_1}^F(\Fs)\;.
\end{equation}

In this situation, Theorem~\ref{thm:indexannulustrivialvaluation} gives 
another proof of the classical index theorem over 
$K((T))$ of Deligne-Malgrange \cite{Malgrange-Irreg}: 
if $G(T)\in 
M_r(K((T)))$, then, for the connection $\nabla := d/dT-G(T)$ on $K((T))^r$, we have
\begin{equation}\label{S3eq : chidr=0 formal}
\chi(K((T))^r,\nabla)\;=\;0\;.
\end{equation}

\subsection{Some remarks about the boundary 
conditions}
\label{section : some situations FIn}


We now state some useful remarks. In the following 
$\Fs$ is a differential equation on a quasi-smooth 
$K$-analytic curve $X$, $b$ is a good germ of segment in $X$ and $C$ is an open pseudo-annulus in $X$ representing $b$ such that the radii of $\Fs$ 
are all log-affine along $\Gamma_C$.
 
In this section, we deal with the conditions $\Fin_b$ and $\Fin_b^+$ (cf. Definitions \ref{Def: Fin+} and \ref{Def: Fin_b}), 
the Liouville conditions on the exponents, and 
the absolute indexes. By definition, to study them, it is 
harmless to extend the base-field. As a consequence, we 
will assume that~$K$ is spherically complete, 
algebraically closed 
and that $|K|=\mathbb{R}_{\geq 0}$. 

Over such a field, it follows from~\cite{Liu} (cf. also \cite[Remark 1.1.7]{NP-IV}) 
that any pseudo-annulus is isomorphic to a 
standard one
\begin{equation}\label{eq : isom ... C standard}
C\;\cong\;\{r_1<|T|<r_2\}\;,
\qquad r_1,r_2\in[0,+\infty]\;.
\end{equation}
In this section, we will assume that~$C$ has the latter 
form with the standard orientation.

\if{\comm{Au lieu de faire une longue liste de remarques 
qui paraissent éparpillés, j'ai fait des sous-sections avec 
des titres, ça me semble plus clair.}

\subsubsection{Three cases where $\Fin_C$ is 
automatic.}

It is known that 
condition $\Fin_C$ is automatically fulfilled 
in the following situations :
\begin{enumerate}
\item[(a)] the characteristic of the residual field 
$\widetilde{K}$ of $K$ is $0$;
\smallskip

\item[(b)] the characteristic of $\widetilde{K}$ is 
positive, the Christol-Mebkhout exponent of 
$\Fs^{\mathrm{Robba}}$ 
is non-Liouville and has non-Liouville differences (cf.  
Corollary \ref{Cor. H^i(C)=H^i(C') restr});\smallskip

\item[(c)] $\Fs^{\mathrm{Robba}}$ is an
extension of rank one differential 
modules defined by equations $\frac{d}{dT}-g(T)$ 
whose exponent $\mathrm{res}(g)\in\mathbb{Z}_p$ is 
non-Liouville. \smallskip

\end{enumerate}
Indeed, it is known that if (a) or 
(b) hold, then $\Fs$ satisfies (c) (cf. Theorems 
\ref{Thm : Kedlaya exponent char 0} 
and \ref{Thm : deco in rk 1 Ch-Me Robba}). 

Condition (c) implies condition $\Fin_C$ because 
the absolute indexes are additive on exact sequences, 
therefore we are reduced to working with rank one 
differential equations, in which case the computation of 
the absolute index can be achieved directly 
(see for instance \cite[Théorème 11.3.2]{Ch-Ro} or 
\cite[Section 4.19]{Ro-I}).

\subsubsection{The exact condition for the finiteness 
of the cohomology does not need $\End(\Fs)$.}

\comm{J'ai changé un peu et 
ajouté des précisions importantes au 
commentaire suivant (je dis que la condition Liouville 
n'est pas optimale sur une couronne, mais qu'elle est 
essentielle pour l'algébricité).}

If the characteristic of $\widetilde{K}$ is positive 
and if $\Fs^{\mathrm{Robba}}$ is extension 
of rank one modules $\Gs_i:\frac{d}{dT}-g_i(T)$, 
$i=1,\ldots,r$
the exact condition for the finite dimensionality of the 
cohomology is that for all $i$ the exponent
$\mathrm{res}(g_i)\in\mathbb{Z}_p$ is non-Liouville.
In particular, condition (b) is stronger than condition (c). 

\comm{J'ai modifié un peu le point suivant}

To show this, we firstly notice that 
a rank one equation of type Robba over $C$ is always 
isomorphic to a differential 
equation $\Ns(\lambda)$ defined by an equation 
of the form $T\frac{d}{dT}(Y)=\lambda\cdot Y$, with 
$\lambda\in \mathbb{Z}_p$ (cf. for instance 
\cite[Lemma 1.3, Propositions 1.1, 1.2]{Rk1} and 
Corollary \ref{Cor. H^i(C)=H^i(C') restr}).

Now, let $b$ be a germ of segment in $\Gamma_C$. For 
$\lambda\in\mathbb{Z}_p$ we have 
$\type_b(\lambda)=1$ (cf. Definition \ref{Def : type_b}) 
if, and only if, $\Ns(\lambda)$ is Fredholm at $b$, and in 
this case we have 
$\chiabs_b(\Ns(\lambda))=0$ (cf. Lemma 
\ref{Lemma : Liouville iff index O}). 

If $b'$ is another germ of 
segment in $\Gamma_C$ oriented as $b$, 
then, by definition, $\type_{b'}(\lambda)=
\type_{b}(\lambda)$, therefore 
$\Ns(\lambda)$ is Fredholm at $b$ if, and only if,  
it is Fredholm at $b'$. 
However, if $b'$ has opposite orientation with respect to 
$b$, we may have $\type_{b'}(\lambda)<1$.

This proves the claim.\\

The fact that the exponent of $\Fs^{\mathrm{Robba}}$ 
has non Liouville differences is actually used by 
Christol-Mebkhout to decompose 
$\Fs^{\mathrm{Robba}}$ into rank one 
pieces (cf. Theorem 
\ref{Thm : deco in rk 1 Ch-Me Robba}), 
but it is not a necessary condition 
for the finite dimensionality 
of the cohomology of $\Fs$.

However, if $\Fs^{\mathrm{Robba}}=0$, and if 
the differential module $\End(\Fs)$ has log-affine radii 
along $\Gamma_C$, then $\End(\Fs)$ 
can have a non trivial Robba part 
whose nature seems relatively unrelated to the fact that 
$\Fs$ itself decomposes into rank one pieces. 
In this case we will see in section 
\ref{section : Finite differential equations are 
algebraic} that the condition \medskip

\begin{enumerate}
\item[$(b')$] the exponent of  
$\End(\Fs)^{\mathrm{Robba}}$ is non Liouville and has 
non Liouville differences
\medskip
\end{enumerate}
implies the essential algebraicity of $\Fs$ which is 
one of the key points of our index theorems in Section 
\ref{section : 7}. If $\Fs$ is 
not of type Robba $(b')$ is essentially unrelated to 
condition $(c)$.

....

....

....

....
}\fi
\begin{enumerate}
%
%

\item It is known that 
condition $\Fin_b$ is automatically fulfilled 
in the following situations :

\begin{enumerate}
\item[(a)] the characteristic of the residual field 
$\widetilde{K}$ of $K$ is $0$;
\smallskip

\item[(b)] the characteristic of $\widetilde{K}$ is 
positive, the Christol-Mebkhout exponent of 
$\Fs^{\mathrm{Robba}}$ 
is non-Liouville and has non-Liouville differences (cf.  
Corollary \ref{Cor. H^i(C)=H^i(C') restr});\smallskip

\item[(c)] $\Fs^{\mathrm{Robba}}$ is an
extension of rank one differential 
modules defined by equations $\frac{d}{dT}-g(T)$ 
whose residue $\mathrm{res}(g)\in\mathbb{Z}_p$ is 
non-Liouville.\footnote{Notice 
that a rank one equation of type 
Robba over $C$ is always isomorphic to a differential 
equation $\Ns(\lambda)$ defined by an equation 
of the form $T\frac{d}{dT}(Y)=\lambda\cdot Y$, with 
$\lambda\in \mathbb{Z}_p$ (cf. for instance \cite[Lemma 1.3, Propositions 1.1, 1.2]{Rk1} and Corollary \ref{Cor. H^i(C)=H^i(C') restr}).} \smallskip

\end{enumerate}
Indeed, it is known that if (a) or 
(b) hold, then $\Fs$ satisfies (c) (cf. Theorems 
\ref{Thm : Kedlaya exponent char 0} 
and \ref{Thm : deco in rk 1 Ch-Me Robba}). 
Moreover, condition (c) implies condition $\Fin_b$ because 
the absolute indexes are additive on exact sequences, 
therefore we are reduced to working with rank one 
differential equations, in which case the computation of 
the absolute index can be achieved directly 
(see for instance \cite[Théorème 11.3.2]{Ch-Ro} or 
\cite[Section 4.19]{Ro-I}).

\item If the characteristic of $\widetilde{K}$ is positive 
and if $\Fs^{\mathrm{Robba}}$ is extension 
of rank one modules $\{\Gs_i:\frac{d}{dT}-g_i(T)\}_{i=1,\ldots,r}$, 
the exact condition for the finite dimensionality of the 
cohomology is that, for each $i$, the exponent
$\mathrm{res}(g_i)\in\mathbb{Z}_p$ is non-Liouville.
In particular, condition (b) is stronger than condition (c). 
The fact that the exponent of $\Fs^{\mathrm{Robba}}$ 
has non Liouville differences is actually used by 
Christol-Mebkhout to decompose 
$\Fs^{\mathrm{Robba}}$ into rank one 
pieces, but it is not a necessary condition 
for the finite dimensionality 
of the cohomology of $\Fs$.

However, such strong conditions on the exponent 
have an interest for the following 
reason. If $\Fs^{\mathrm{Robba}}=0$, and if 
the differential module $\End(\Fs)$ has log-affine radii 
along $\Gamma_C$, then $\End(\Fs)$ 
can have a non trivial Robba part 
whose nature seems relatively unrelated to the fact that 
$\Fs$ itself decomposes into rank one pieces. 
In this case we will see in \cite{NP-V} that the condition \medskip

\begin{enumerate}
\item[$(b')$] the exponent of  
$\End(\Fs)^{\mathrm{Robba}}$ is non-Liouville and has 
non-Liouville differences
\medskip
\end{enumerate}

implies the essential algebraicity of $\Fs$ which is 
one of the key points of our index theorems. 
If $\Fs$ is 
not of type Robba $(b')$ is essentially unrelated to 
condition $(c)$.


\item Denote by $\Ns(e)$ the differential module 
associated with the equation 
$T\frac{d}{dT}(Y)=e\cdot Y$, $e\in K$ 
(cf. Section \ref{Section : N(e)}).
For 
$\lambda\in\mathbb{Z}_p$ we have 
$\type_b(\lambda)=1$ (cf. Definition \ref{Def : type_b}) 
if, and only if, $\Ns(\lambda)$ is Fredholm at $b$, and in 
this case we have 
$\chiabs_b(\Ns(\lambda))=0$ (cf. Lemma 
\ref{Lemma : Liouville iff index O}). 

If $b'$ is another germ of 
segment in $\Gamma_C$ oriented as $b$, 
then, by definition, $\type_{b'}(\lambda)=
\type_{b}(\lambda)$, therefore 
$\Ns(\lambda)$ is Fredholm at $b$ if, and only if,  
it is Fredholm at $b'$. 
However, if $b'$ has opposite orientation with respect to 
$b$, we may have $\type_{b'}(\lambda)<1$.

\item There are no examples where 
$\chiabs_{b}(\Fs^{\mathrm{Robba}})$ 
(resp. $\chi(C,\Fs^{\mathrm{Robba}})$) 
is non-zero but finite. 
%
To our knowledge, the only example of irreducible Robba 
module seems to be \cite{Ch-Irrobba} where a 
Robba module of rank two was considered and it might be interesting to compute the absolute index in that example.

\item If (a) (resp. (b), (c)) holds, then it still  holds 
for the sub-quotients of  $\Fs$. In this 
case, $\Fin_b$ and $\Fin_b^+$ pass to sub-quotients 
as well.
In the general case it is unknown whether 
conditions $\Fin_b$ and $\Fin_b^+$ 
pass to sub-quotients.\footnote{To prove this fact a 
natural strategy would be the following.
Using Lemma 
\ref{Lemma : additivity on exact sequence}, we may 
argue in a similar way as the proof of 
\cite[Lemma 1.4.10]{NP-V}. 
But this fails because the proof of 
\cite[Lemma 1.4.10]{NP-V} uses in an essential 
way the fact that the space of solutions of a differential 
equation (i.e. the kernel of the connection) 
is always finite dimensional,
while in our context the kernels of the corresponding 
truncated operators \eqref{eq : truncated op u_k} 
are not necessarily finite dimensional. 
A further argument is needed.} 
However, there are no examples where this 
fails.

We also recall that the condition $\chidr(C,\Fs)=0$ 
passes to the sub-quotients of $\Fs$ 
(cf. \cite[Lemma 1.4.10]{NP-V}).

%
%

\item Condition $\Fin_b$ is satisfied by $\Fs$ 
if the radii of $\Fs$ are all spectral non-solvable along 
$\Gamma_C$, because in this case 
$\Fs^{\mathrm{Robba}}=0$. 

Condition~$\Fin_b$ is also satisfied in the following situations:
\begin{enumerate}
\item[(A)] Let $D$ be a disk in $X$ let $b$ be the germ 
of segment at the open boundary of $D$. If the radii of 
$\Fs$ are all constant along $b$, 
then they are constant on the whole $D$ by 
\cite[Lemma 4.3.12]{Kedlaya-draft}. 
Therefore, $\Fs^{\mathrm{Robba}}$ is trivial and (c) is 
satisfied. It follows that $\Fs$ satisfies $\Fin_b^+$ and $\Fin_b$.

\item[(B)] Let $x\in X$. Condition~$\Fin_b^+$ holds at 
all germs of segment $b$ out of $x$ that are not in the 
controlling graph $\Gamma_S(\Fs)$. 
Indeed, the connected component of 
$X-\Gamma_S(\Fs)$ containing $b$ is an open disk 
where (A) applies.
In particular, $\Fin_b^+$ and $\Fin_b$ may possibly fail only 
if $b\in\Gamma_S(\Fs)$.

\item[(C)] With the same notations as in (B), if all the 
radii are spectral non-solvable at $x$, then condition 
$\Fin_b$ and $\Fin_b^+$ holds for all germs of 
segment $b$ out of $x$. Indeed, if $C_b$ is an open pseudo-annulus in $X$ having $b$ in its open boundary and such that the radii of $\Fs$ are all log-affine along  
$\Gamma_{C_b}$, then the Robba part of 
$\Fs_{|C_b}$ is $0$. 
\end{enumerate}

\item The conditions of being Fredholm at $b$, the 
conditions $\Fin_b$ and $\Fin_b^+$, 
and the Liouville conditions on the 
exponents of $\Fs$ and $\End(\Fs)$, 
are not stable by tensor product nor by internal $\Hom$. For instance, with the notations of 
i)-(c) one has $\Ns(\lambda)\otimes\Ns(\lambda')
=\Ns(\lambda+\lambda')$ and 
$\Hom(\Ns(\lambda),\Ns(\lambda'))
=\Ns(\lambda'-\lambda)$, but, if $\lambda$ and 
$\lambda'$ are non-Liouville, then 
$\lambda+\lambda'$ and $\lambda'-\lambda$ are 
not necessarily non-Liouville. 
\item It follows from Lemma 
\ref{Lemma : additivity on exact sequence} that if 
$0\to\M\to\mathrm{E}\to\N\to0$ is an exact sequence
where $\M$ and $\N$ are both Fredholm at $b$ 
(resp. both satisfy $\Fin_b^+$), then so does 
$\mathrm{E}$. This seems not true in general for the 
condition $\Fin_b$, because the choice of the annulus 
$C'$ appearing in Definition 
\ref{Def: Fin_b} may be different for $\M$ and $\N$. 
If we can chose the same annuli~$C'$ for~$\M$ 
and~$\N$ in Definition \ref{Def: Fin_b} then $\mathrm{E}$ also satisfies $\Fin_b$.
\end{enumerate}


Let us now turn back to the case of an arbitrary field~$K$, but assume that it has residue characteristic~0. Note that this covers the case where~$K$ is trivially valued. Then i)-(a) has many important consequences such as the fact that a differential equation over an open pseudo-annulus whose radii are log-linear radii at the boundary always has finite-dimensional de Rham cohomology (see Theorem~\ref{Thm : index of finite opens}). Similarly, many statements that we had in the previous sections can be simplified because their assumptions are automatically satisfied. As an example, let us rewrite Lemma~\ref{Lemma : restrictions iso} in this setting.

\begin{lemma}\label{lem:restrictionisotrivialvaluation}
Assume that~$K$ has residue characteristic~0 (which holds for instance if~$K$ is trivially valued). Let~$C$ be an open pseudo-annulus and let~$\Fs$ be a differential equation on~$C$ with log-affine radii at the open boundary of $C$. Let~$C'$ be an open sub-pseudo-annulus of~$C$ with $\Gamma_{C'}\subseteq\Gamma_C$ such that $\chidr(C,\Fs) = \chidr(C',\Fs_{|C'})$. Then, for each $i\in\{0,1\}$, the restriction map
\begin{equation}
\Hdr^i(C,\Fs)\simto\Hdr^i(C',\Fs_{|C'})
\end{equation}
is an isomorphism.\hfill$\Box$
\end{lemma}

\section{Differential equations over an open disk 
with a meromorphic singularity}
\label{Disk-merom}

In this section, we provide an index formula 
for differential equations over open  
\emph{disks} with some meromorphic singularities. 

Such differential equations arise naturally in several contexts, but the study of their 
\emph{meromorphic cohomology} from a global point of view had not been carried out so far. Some important classical developments are due to 
Clark \cite{Clark} and Baldassarri \cite{Balda-Turritin} 
and concern differential equations over a 
\emph{germ of punctured disk}, i.e. differential 
equations over the field of convergent Laurent power 
series $K(\{T\})$ (cf. Definition 
\ref{eq : K(T) def union}). In this context, their 
results are stated under the crucial assumption that the 
formal exponents at $0$ are non-Liouville and/or have 
non-Liouville differences (the link with their 
results is given in Appendix 
\ref{App : Some comparison results}).

The novelty of this section is precisely the fact that the 
exact necessary and sufficient condition for the finite 
dimensionality of the \emph{meromorphic} 
de Rham cohomology over the disk is not a Liouville 
condition and does not arise at $0$ (contrary to 
what the results of Clark might suggest). 
The exact condition arises at the 
open boundary of the disk, and it consists precisely in the fact that 
the equation is Fredholm at the open boundary of the 
disk (not at the meromorphic singularities).

\subsection{Setting.}
\label{section : meromorphic Settings punctured disk}
For the whole Section~\ref{Disk-merom}, we fix a 
positive real number~$0<r_{D}\leq+\infty$ and set 
\begin{eqnarray}
D&\;:=\;&\{|T|<r_D\}\;;\\
C&\;:=\;&\{0<|T|<r_D\}\label{eq : C mero}
\end{eqnarray}
We set
\begin{eqnarray}
b_D&\;:=\;&\textrm{the germ of segment at the open 
boundary of }D\;,\label{eq : b_1}\\
b_0&:=&\textrm{the germ of segment out of }0\;;
\label{eq : b_2}
\end{eqnarray}

Denote by $K(\{T\})$ the field of convergent Laurent series. Every element of $K(\{T\})$ can be 
written as 
$\sum_{n\geq n_{0}} a_n\,T^n$,
with $n_{0} \in \Z$, for every $n\ge n_{0}$, $a_n\in K$ 
and the power series $\sum_{n\geq 0} |a_n|\, T^n$ 
has a positive radius of convergence. 
In other words, if $D'$ runs in the set of open disks 
centered at~$0$, we have
\begin{equation}\label{eq : K(T) def union}
K(\{T\})\;=\;\varinjlim_{D'}\O(D')[T^{-1}]
\;=\;\bigcup_{D'}\O(D')[T^{-1}]\;.
\end{equation}

We also consider the Robba ring at $0$
\begin{equation}\label{eq : Robba_0}
\mathfrak{R}_0\;:=\;
\mathfrak{R}_{b_0}\;=\;
\varinjlim_{C'}\O(C')\;=\;
\bigcup_{C'}\O(C')\;,
\end{equation} 
where $C'$ runs in the set of pseudo-annuli of the form 
$\{0<|T|<r\}$ with $r>0$.

The intersection of $K(\{T\})$ with $\O(C)$ in 
$\mathfrak{R}_0$ is the ring $\O(D)[T^{-1}]$ obtained 
from the ring $\O(D)$ of analytic functions on $D$ by 
inverting $T$.

We have a diagram of inclusions of rings
\begin{equation}\label{eq: rings at 0}
\xymatrix{
\O(D)[T^{-1}]\ar@{}[r]|{\subset} 
\ar@{}[d]|{\cap}& K(\{T\}) \ar@{}[r]|{\subset}\ar@{}[d]|{\cap}& K((T))\\
\O(C)\ar@{}[r]|{\subset}&\mathfrak{R}_0&
}
\end{equation}
where all the inclusions commute with $d/dT$.

\begin{remark}
\label{Section 4, Rk : valuation trivial then all ring are equal}
Notice that if the valuation of $K$ is trivial we have 
$\O(D)[T^{-1}]=\O(C)$ and 
$\mathfrak{R}_0=K(\{T\})=K((T))$; if moreover 
$r_D\leq 1$ then all the above rings coincide.\medskip
\end{remark}

In accordance with 
Section~\ref{Section : mero-alg}, 
the notation $D(*0)$ indicates the 
disk $D$ with structure sheaf $\O_{D}[*0]$ 
formed by analytic functions on $D-\{0\}$ with a 
meromorphic pole at~$0$.
The ring of global sections  
of $\O_{D}[*0]$ is $\O(D)[T^{-1}]$.
%

According to Definition 
\ref{def:meromorphicconnection}, a (meromorphic) 
differential equation $\Fc$ over $D(*0)$ 
is a sheaf of locally free $\Os_{D}[*0]$-modules 
of finite rank on~$D$ endowed with a connection. 
By abuse of notation 
we often denote by $\Fc$ the 
$\O(D)[T^{-1}]$-module of its global sections. This does not cause any trouble because of the following result.

\begin{proposition}\label{prop:eqcatStein}
The global section functor sets up an equivalence between the category of locally free $\Os_{D}[\ast 0]$-modules of finite rank (resp. endowed with a meromorphic connection) and the category of projective $\O(D)[T^{-1}]$-modules of finite type (resp. endowed with a connection).
\hfill$\Box$
\end{proposition}
\begin{proof}
Such a result is classical if one replaces $\Os_{D}[\ast 0]$ and $\O(D)[T^{-1}]$ by $\Os_{D}$ and $\O(D)$ respectively (see \cite[Corollary~4.11]{Banachoid} for instance). The proof relies on Kiehl's Theorem: for every coherent sheaf~$\Fs$ of $\O_{D}$-modules, one has $H^1(D,\Fs) = 0$ and $\Fs$ is generated by its global sections. Note that Kiehl's Theorem immediately extends to $\O_{D}$-modules that are filtered direct limits of coherent sheaves. Arguing as in the classical case then gives the result we want. The version with connections follows immediately. We refer to \cite[Corollary~1.7.15]{NP-V} for more details.
\end{proof} 

For a  differential equation $\Fc$ over 
$D(*0)$ we set
\begin{eqnarray}\label{eq : setting merom}
\Fs&\;:=\;&\Fc\otimes_{\O_D[*0]}\O_C\;=\;
\Fc_{|C}\;,\\
\Fc^\dag_0&\;:=\;&
\Fc\otimes_{\O_D[*0]} K(\{T\})\;,
\label{eq : Fcdag0=calFotimes K(T)}\\
\M&\;:=\;&
\Fc\otimes_{\O_D[*0]}K((T))\;,
\label{eq : M=calFotimes K((T))}\\
\Fs^\dag_0&\;:=\;&
\Fc\otimes_{\O_D[*0]}\mathfrak{R}_0\;.
\label{eq : Fsdag0=calFotimes Robba_0}
\end{eqnarray}

\if{
We now deal with meromorphic and analytic de Rham 
cohomologies. We first state some elementary lemmas.
\begin{lemma}
Let $C'$ be any open annulus 
in $C$ such that $\Gamma_{C'}\subset\Gamma_C$. 
Assume that $\Fs_{|C'}$ verifies the Liouville condition at the 

Then the natural map
\begin{eqnarray}
\Hdr^i(D(*0),\Fc)&\;\xrightarrow{\quad}\;&
\Hdr^i(C',\Fs_{|C'})
\end{eqnarray}
is injective for $i=0$ and surjective for $i=1$.
\end{lemma}
\begin{proof}
The injectivity for $i=0$ 
follows immediately from the fact that 
the map $\O(D)[T^{-1}]\to\O(C')$ is injective (cf. 
\cite[Lemma 1.2.8]{NP-III}).

The surjectivity for $i=1$
follows  $\O(D)[T^{-1}]$ is dense in $\O(C')$, together 
with the fact that $\Hdr^1(C',\Fs_{C'})$ is finite 
dimensional, hence separated (cf. \ref{}).
\end{proof}
}\fi

\subsection{Log-affinity of the radii at $0$.} 
\label{section : Log-affinity of the radii at 0}
We begin by showing that 
the radii of $\Fs$ are all log-affine around $0$. 

For 
$\rho \in ]0,r_{D}[$, denote by~$x_{\rho}$ as usual the point unique point in the Shilov boundary
of the annulus $\{|T| = \rho\}$.

If, in a cyclic basis, $\Fs$ is given by a 
differential operator $\Ls$, there is a (spectral Newton) 
polygon associated with $\Ls$ 
whose slopes are related to the radii of the solutions by 
Young's theorem (cf. \cite{Young} and 
\cite[Proposition 4.11]{NP-I} for a setting more closely 
to ours). 
The notation of \cite[Proposition 4.11]{NP-I} 
is adapted to spectral radii \cite[(4.4)]{NP-I}, therefore 
we here reformulate the 
statement with respect to the 
normalized radii $\R_{i}(x_\rho,\Fs)$ 
(cf. \cite[Definition 2.3.1]{NP-III}
).

\begin{definition} 
\label{Def : NP of an operator}
Let 
$\Ls := \sum_{i=0}^r g_{r-i}(T)\cdot (d/dT)^i$ with 
$g_{0}=1$ and, for all $j\in \{1,\dotsc,r\}$, 
$g_{j} \in \Os(C)$. Denote by $\Fs$ the differential 
equation associated with~$\Ls$.
Define the spectral Newton polygon $NP(x_\rho,\Ls)$ 
of~$\Ls$ at~$x_{\rho}$ as the Newton polygon of the 
set (cf. \eqref{eq: NPV def-1})
\begin{equation}\label{eq : spectral NP}
\Bigl\{\Bigl(k, -\ln\left((\rho/\omega)^{k}\cdot
|g_{k}(x_{\rho})|\right)\Bigr) \mid 
0\le k \le r\Bigr\}\;.
\end{equation}
Denote by $s_i^{\Ls}(x_\rho)$ the $i$-th slope of this 
polygon (cf. Definition \ref{Def : slope, height, break-1}). 
Set, as usual (cf. \eqref{eq : s_i = log(R_i) newton pol-1})
\begin{equation}
s_i(x_\rho)\;:=\;
\ln(\R_{i}(x_\rho,\Fs))\;.
\end{equation}
\end{definition}

It follows from the definition that 
\begin{equation}
s_1^{\Ls}(x_\rho)\;=\;\min_{1\le k\leq r}
\frac{-\ln\Bigl((\rho/\omega)^k|g_k(x_\rho)|\Bigr)}{k}\;.
\end{equation}

Recall the definition of $\omega$ (cf. 
\eqref{eq : OMEGA}).
\begin{proposition}[\protect{\cite{Young}, \cite[Proposition 
4.11]{NP-I}}]\label{Prop : Young normalized}
We maintain the notations of Definition 
\ref{Def : NP of an operator}. One has  
$s_i(x_\rho)<\ln(\omega)$ if, 
and only if, $s_i^{\Ls}(x_\rho)<\ln(\omega)$ 
and, in this case,
\begin{equation}
\qquad\qquad
s_i^{\Ls}(x_\rho)\;=\;s_i(x_\rho)\;.
\qquad\qquad\Box
\end{equation}
\end{proposition}

\begin{lemma}\label{lem:merosinglinear-1}
There exists $\eps\in \mathopen{]}0,1\mathclose{[}$ 
such that the radii 
$\R_{i}(-,\Fs)$ are all 
$\log$-affine on the segment 
$\{x_{\rho} \mid 0 < \rho < \eps\}$.
\end{lemma}
\begin{proof}
By assumption~$\Fc$ is a locally free $\Os_{D}[*0]$-module, hence, by shrinking~$C$ around~0, we may assume that $\Fc$ is free. By shrinking again, we may assume that it has a cyclic basis where it is given by a differential operator~$\Ls$. This operation does not affect the radii of~$\Fs$. The advantage is that we can now read the radii in terms of the coefficients of~$\Ls$. 

Set $r:=\rk(\Fs)$. For all $i \in \{1,\dotsc,r\}$, the function $\log(\rho)\mapsto \log(H_{i}(x_{\rho},\Fs))$ is concave along $\of{]}{-\infty,\log(r_{D})}{[}$ and bounded by~0. We deduce that either $H_{i}(-,\Fs)$ 
is constant on $\of{]}{0,x_{\varepsilon'}}{[}$, for some $\eps' \in\of{]}{0,r_{D}}{]}$, or we have 
$\lim_{\log(\rho)\to -\infty}\log(H_{i}(x_{\rho},\Fs))=-\infty$.

We now proceed by induction on $i$ to show that the sequence of slopes of $\log(\rho)\mapsto \log(H_{i}(x_{\rho},\Fs))$ is constant around $0$. For $i=1$, if we have $\lim_{\log(\rho)\to -\infty}\log(H_{1}(x_{\rho},\Fs))=-\infty$, then for $\rho$ close enough to~0, 
the first radius $\R_{1}(x_{\rho},\Fs) = H_{1}(x_{\rho},\Fs)$ is smaller than~$\ln(\omega)$, hence explicitly intelligible in terms of the coefficients of the operator~$\Ls$, by Proposition~\ref{Prop : Young normalized}.
Since the coefficients lie in $K(\{T\})$, they have only finitely many slopes along $\of{]}{0,x_{r_{D}}}{]}$ and the result follows.

Now, assume inductively that for all $j \in \{1,
\dotsc,i-1\}$ the radii $\R_{j}(-,\Fs)$ are 
log-affine on some $\of{]}{0,x_{\varepsilon'}}{[}$. 
Since $\log(H_{i}(-,\Fs))$ is 
concave, then so is 
\begin{equation}
\log(\Rc_{i}(-,\Fs)) = \log(H_{i}(-,\Fs)) - \sum_{j=1}^{i-1} \log(\Rc_{j}(-,\Fs))
\end{equation} 
and we can use the same argument as above. 
\end{proof}

\begin{corollary}\label{cor:meromorphiclogaffine}
Let~$P$ be a quasi-smooth $K$-analytic curve, let~$z$ be a rigid point in~$P$ and let~$\Gc$ be a meromorphic differential equation on~$P$ with poles at~$z$. Set $Y := P -\{z\}$ and denote by~$b_{z}$ the germ of segment out of the point~$z$, seen as a germ of segment at the open boundary of~$Y$. Then all the radii of convergence of $\Gs := \Gc_{|Y}$ are log-affine along~$b_{z}$ (in the sense of Definition~\ref{def:logaffinetotalheight}).
\hfill$\Box$
\end{corollary}

\begin{definition}\label{def:Irrmeromorphic-1}
In the setting of Corollary~\ref{cor:meromorphiclogaffine}, we define the irregularity of~$\Gc$ 
at~$z$ to be 
\begin{equation}\label{eq : formal irreg-1}
\Irr_{z}(\Gc) \;:=\; \Irr_{b_{z}}(\Gs) \;\in \;\Z
\end{equation}
(see Definition~\ref{def:Irrgermpseudoannulus}).
\end{definition}

\begin{corollary}\label{Cor : algebraic implies finite}
Let $\Yk$ be a smooth connected algebraic curve over~$K$ and let $\Fk$ be an 
algebraic differential equation on $\Yk$. Then, the 
analytified equation $\Fk^{\an}$ has log-affine radii 
at the open boundary of $\Yk^{\an}$.\hfill$\Box$
\end{corollary}
\begin{proof}
Consider a compactification~$\Yk'$ of~$\Yk$, 
\textit{i.e.} a smooth connected projective curve 
over~$K$ containing~$\Yk$ as an open subset. Then, 
$\Fk$ extends to a differential equation~$\Fk'$ 
on~$\Yk'$ with meromorphic poles on $\Zk := \Yk'-\Yk$. 
The germs of segment at infinity of~$\Yk^\an$ 
correspond bijectively to the germs of segment 
in~${\Yk'}^\an$ out of the points of~$\Zk^\an$. By 
Corollary~\ref{cor:meromorphiclogaffine},
the differential 
equation~$\Fk^\an$ has log-affine radii along those 
germs. 
\end{proof}

\subsection{Derived Newton polygon at $b_0$.}
\label{Derived newton polygon at b0}
In Section~\ref{Section polygons and derivatives}, we explained that there is no natural way 
to define a derivative of the convergence Newton polygon 
on a germ of segment. We here show that there is a 
natural definition at the germ of segment out of a point 
of type $1$ or out of a meromorphic singularity.


\begin{lemma}\label{lem:c+pt-1}
Let $n\le m \in \Z$. For each $i\in\{n,\dotsc,m\}$, let 
$c_{i} \in \ERRE\cup\{+\infty\}$ and $p_{i} \in \ERRE$. 
Assume that $c_{n},c_{m}<+\infty$. Let $t\in \ERRE$. 
For each $i\in\{n,\dotsc,m\}$, set 
$v_{i} := c_{i} + p_{i} t$ and denote by $N(t)$ the 
Newton polygon of the set 
$\{(i,v_{i}(t)) \mid n\le i\le m\}$. Denote its slopes by 
\begin{equation}
s_{1}(t) \;\le\; \dotsb \;\le\; s_{m-n}(t)\;.
\end{equation}

For each $i\in\{n,\dotsc,m\}$, set 
$\partial_{b_0}v_{i} := -\infty$
 if 
$c_{i}=+\infty$ and 
\begin{equation}
\partial_{b_0}v_{i}\; :=\;\frac{d}{dt}(v_{i}(t))\;=\; p_{i}
\end{equation}
otherwise. 
Denote by 
\begin{equation}
\partial_{b_0}N\;:\;[n,m]\;\xrightarrow{\quad}\;\mathbb{R}\;
\end{equation}
the inverted Newton 
polygon of the set $\{(i,\partial_{b_0}v_{i}) 
\mid n\le i\le m\}$. Denote its slopes by 
\begin{equation}
s^{b_0}_{1} \;\ge\; \dotsb \;\ge\; s^{b_0}_{m-n}\;.
\end{equation}

Then, there exists $t_{0}\in \ERRE$, such that, for each 
$j\in\{1,\dotsc,m-n\}$, the map 
$t \in \mathopen{]}-\infty, t_{0} \mathclose{[} 
\mapsto s_{j}(t)$ is affine with slope~$s^{b_0}_{j}$:
\begin{equation}
\frac{d}{dt}(s_j(t))\;=\;s_j^{b_0}\;.
\end{equation}
\end{lemma}
\begin{proof}
Let us prove the result by induction on~$m-n$. 
If $m-n = 0$, there is no slope and the result is trivial.

Assume that $m>n$. 
Set $I := \{i\in\{n+1,\dotsc,m\} \mid c_{i} < +\infty\}$. 
Let $t\in \ERRE$. The first slope of~$N(t)$ is 
\begin{equation}
s_{1}(t) = \min_{i \in I} \left(\frac{(c_{i}-c_{n}) + (p_{i}-p_{n})t}{i-n} \right).
\end{equation}

Set $P := \max_{i \in I} \left(\frac{p_{i}-p_{n}}{i-n} \right)$ and $I_{p} := \{i\in I \mid (p_{i}-p_{n})/(i-n) = P\}$. Set $C := \min_{i\in I_{p}} \left(\frac{c_{i}-c_{n}}{i-n}\right)$. It is easy to check that there exists $t_{0}\in \ERRE$ such that, for each $t<t_{0}$, we have 
\begin{equation}
s_{1}(t) \;=\; C+Pt\;.
\end{equation}

Similarly, the first slope of~$\partial_{b_0}N$ is 
\begin{equation}
s^{b_0}_{1} = \max_{i \in I} \left(\frac{p_{i}-p_{n}}{i-n} \right) = P,
\end{equation}
hence the result holds for the first slope.

\medbreak

Let $t<t_{0}$. The polygon~$N(t)$ passes through the point $(n+1,c_{n}+p_{n}t + s_{1}(t))$. Set $\bar v_{n+1} := c_{n}+p_{n}t + s_{1}(t) = (c_{n}+C)+(p_{n}+P)t$. For $i\in\{n+2,\dotsc,b\}$, set $\bar v_{i}=v_{i}$. Denote by $\bar N(t)$ the Newton polygon of the set $\{(i,\bar v_{i}(t)) \mid n+1\le i\le m\}$. By construction, its slopes $\bar s_{1}(t) \le \dotsb \le \bar s_{m-n-1}(t)$ are exactly $s_{2}(t) \le \dotsb \le s_{m-n}(t)$.

Similarly, the polygon~$\partial_{b_0}N$ passes through the point $(n+1,p_{n} + s^{b_0}_{1})$. 
Set $\bar v^{b_0}_{n+1} := p_{n} + s^{b_0}_{1} = p_{n}+P$. For $i\in\{n+2,\dotsc,m\}$, set $\bar v^{b_0}_{i}=\partial_{b_0}v_{i}$. Denote by $\bar N^{b_0}$ the inverted Newton polygon of the set $\{(i,\bar v^{b_0}_{i}) \mid n+1\le i\le m\}$. By construction, its slopes $\bar s^{b_0}_{1} \ge \dotsb \ge \bar s^{b_0}_{m-n-1}$ are exactly $s^{b_0}_{2}\ge \dotsb \ge s^{b_0}_{m-n}$.

Moreover, the polygon~$\bar N^{b_0}$ is exactly the inverted Newton polygon associated with the Newton polygon~$\bar N(t)$ by the construction of the statement. By induction, there exists $t_{1} \le t_{0}$ such that, for each $j\in\{1,\dotsc,m-n-1\}$, the map $t \in \mathopen{]}-\infty, t_{1} \mathclose{[} \mapsto \bar s_{j}(t)$ is affine with slope~$\bar s^{b_0}_{j}$. This concludes the proof.
\end{proof}

The following result is a direct consequence of Lemmas \ref{lem:merosinglinear-1} 
and \ref{lem:c+pt-1}.

\begin{proposition}
\label{Prop : Two derivatives coincide at b_0}
Let $Z$ be a locally finite set of rigid points in $X$. 
Let $\Fc$ be a meromorphic differential equation on 
$X$ with poles in $Z\subset X$.
Let $x\in X$ be a point of type $1$ and let $b_x$ be the 
germ of segment out of $x$ (as usual 
oriented away from $x$). 

Let $C_{x}$ be an open pseudo-annulus whose skeleton represents~$b_{x}$. In the following, we compute the radii of convergence on~$C_{x}$. Let $r$ be the rank of~$\Fs$ on~$C_{x}$.


Denote by 
\begin{equation}
s_1^{b_x}\; \ge\; \cdots \;\ge\; s_{r}^{b_{x}}
\end{equation} 
the slopes of 
the \emph{inverted} polygon 
associated to the set
\begin{equation}
\{(i,\partial_{b_{x}} N\! P(-,\Fs)(i))\;,\;0\leq i\leq r\}\;,
\end{equation}
where $N\! P(y,\Fs)$ denotes the convergence Newton polygon of~$\Fs$ at~$y$ (see Definition~\ref{Def: Conv NP}).


Then, for each $i\in\{1,\dotsc,r\}$, we have
\begin{equation}
\partial_{b_x} \ln(\R_{i}(-,\Fs)) \;=\;s_i^{b_x}\;.
\end{equation}
In particular, the sequence 
$(\partial_{b_x}(\ln(\R_{1}(-,\Fs))),\ldots,\partial_{b_x}(\ln(\R_{r}(-,\Fs))))$ is 
non-increasing and non-negative.


\hfill$\Box$
\end{proposition}


Proposition \ref{Prop : Two derivatives coincide at b_0}
shows that the two ways to construct a derived 
polygon from the convergence Newton polygon that we 
have evoked in Section~\ref{Section polygons and derivatives} 
are actually the same. We are then 
allowed to formulate without ambiguity 
the following definition.

\begin{definition}[Derivative of $N\!P(-,\Fs)$]
\label{Def : derivative of the polygon at b_0}
Let $Z$ be a locally finite set of rigid points in $X$. 
Let~$\Fc$ be a meromorphic differential equation on 
$X$ with poles in $Z\subset X$.
Let $x\in X$ be a point of type~$1$ and let $b_x$ be the 
germ of segment out of $x$ (as usual 
oriented away from $x$). 
We call \emph{derivative of the convergence Newton 
polygon of $\Fc$ at $b_x$} the inverted polygon 
associated with the family 
\eqref{eq : inverted polygon derivative at b-1}. 
\end{definition}

\begin{remark}\label{Remark : affine partial< implies <}
We maintain the notation of 
Definition \ref{Def : derivative of the polygon at b_0}.
Assume that the derivative of the convergence Newton 
polygon of $\Fc$ at $b_x$ has a break at 
$k\in\{1,\ldots,r-1\}$. Note that the point $x$ must then belong to~$Z$ because the radii are 
constant around a type~1 point that is not a singularity of the differential equation.

By Proposition~\ref{Prop : Two derivatives coincide at b_0}, 
we $\partial_{b_x}(s_k) > \partial_{b_x}(s_{k+1})$. We 
also have $s_{k}<s_{k+1}$ over $b_x$, because the 
functions $s_k=\ln(\R_{S,k}(-,\Fs))$ 
are log-affine functions in the neighborhood of~$x$ (which corresponds to~$-\infty$ in the logarithmic coordinate). Note that, for each~$y$ in some segment 
representing~$b_{x}$, the convergence Newton 
polygon $N\!P_S(y,\Fs)$ has a break at $k$ too.
\end{remark}


\subsection{Analytic vs. formal 
irregularities.}
\label{Section : ANVSFORMAL irr}
We now compare formal and analytic irregularities at 
$0$. 

%

We maintain the notations of Section 
\ref{section : meromorphic Settings punctured disk}. 
In particular $\M=\Fc\otimes K((T))$.

We have seen in Section  
\ref{Remark : index annulus trivial valuation} that, if 
$K$ is endowed with the trivial valuation, then 
$K((T))$ coincides with the ring $\O(C')$ 
of analytic functions over an annulus 
$C'=\{0<|T|<r'\}$, with $r'\in ]0,1[$. 
By Theorem~\ref{thm:indexannulustrivialvaluation}, the 
index of $\M$ on $K((T))$ exists and it is always zero.
On the other hand, Proposition \ref{Prop : chirel=Irr} 
computes the absolute index of $\M$ at a germ of 
segment $b$ in the skeleton of the formal annulus 
$\{0<|T|<1\}$ 
(cf. Section \ref{Rk : formal gen indexes}):
\begin{equation}
\chiabs_{b}(\M)\;=\;
\Irr_{b}^{F}(\M)\;,
\end{equation}
where the superscript~$F$ (which stands 
for ``Formal'') indicates that we are computing 
the slope with respect to the trivial valuation on $K$.


It is natural to ask whether the 
the irregularity $\Irr_0(\Fs)$ from Definition~\ref{def:Irrmeromorphic-1} coincides with the formal 
irregularity $\Irr_0^F(\M)$ (from Definition~\ref{def:Irrmeromorphic-1} again but applied to~$K$ endowed with the trivial valuation).
We prove that this is indeed the case.

\bigbreak

More specifically, if $K$ is trivially valued, we denote by $b_0^F$ the germ 
of segment out of $0$ (as usual the superscript $^F$ 
stands for ``formal'').

Propositions \ref{Prop : Two derivatives coincide at b_0} 
and  \ref{Prop : Young normalized} applies to both $\Fc$ 
and $\M$. The latter is a $K((T))$-differential 
module (cf. Remark 
\ref{Section 4, Rk : valuation trivial then all ring 
are equal}). Now, there is another 
polygon classically associated with $\M$: the 
\emph{formal Newton polygon} that is defined in 
\cite[Section 5.7]{NP-III}. 
The following result provides a 
link between all the polygons.
\if{Thanks to Proposition 
\ref{Prop : Two derivatives coincide at b_0} 
we may speak without ambiguity about the inverted 
Newton polygons of $\Fs$ and $\M$ along $b_0$ and 
$b_0^F$ respectively (cf. Definition 
\ref{Def : derivative of the polygon at b_0}). 
The following proposition shows that they coincide.}\fi
\begin{proposition}\label{Corollary : Irr^F=Irr}
\label{Prop : Irr-form=Irr-x-1}
We maintain the notations of Section 
\ref{section : meromorphic Settings punctured disk}. 
The following polygons coincide:
\begin{enumerate}
\item the derivative of the convergence Newton 
polygon of $\Fc$ at $b_0$ (cf. Definition \ref{Def : derivative of the polygon at b_0});
\item the derivative of the convergence Newton 
polygon of $\M$ at $b_0^F$ (cf. Definition 
\ref{Def : derivative of the polygon at b_0} with $K$ 
trivially valued).
\end{enumerate}
Moreover 
\begin{enumerate}
\item[iii)] if $p_1\geq p_2\geq \cdots\geq p_r$ are the 
slopes of the above polygons, then the formal Newton 
polygon of $\M$ 
coincides with the polygon $N:[0,r]\to \mathbb{R}$ 
having slopes $p_1'\leq p_2'\leq\cdots\leq p_r'$, where 
$p_i':=p_{r-i+1}$ (i.e. reordered in increasing order) 
and such that $v_r=0$.
\end{enumerate}

We summarize these properties 
by saying that the derivative of the convergence Newton 
polygon along $b_0$ is independent of the valuation of 
$K$. In particular, 
\begin{enumerate}
\item[iv)] we have the equality
\begin{equation}\label{eq : Irr_0=Irr_0^F}
\Irr_{b_0}(\Fs)\;=\;\Irr_{b_0^F}(\M)\;.
\end{equation}
Moreover, $\Irr_{b_0^F}(\M)$ coincides 
with the opposite of the formal irregularity 
$i_0(\M)$ of $\M$ 
defined in
\cite{Ramis-Devissage-Gevrey} and 
\cite{Correspondance-Malgrange-Ramis}
(cf. \cite[Section 5.7]{NP-III}).
\item[v)] If the formal Newton polygon of $\M$ has a 
break at $k$, then so has the convergence Newton 
polygon $NP(y,\Fs)$, for all $y$ in some 
segment representing $b_0$.
\end{enumerate}
\end{proposition}
\begin{proof}
The claim is invariant by restriction of the radius of $D$.  
Therefore, we may assume 
that~$\Fc$ is associated with a differential operator 
$\Ls := \sum_{i=0}^r g_{r-i}(T) (d/dT)^i$ with 
coefficients in $\O(*D)$ as in Proposition 
\ref{Prop : Young normalized}. 
Denote by $s_i^{\Ls}(x_\rho)$ 
(resp. $s_i^{L}(x_\rho)$) the $i$-th slope of this 
polygon at~$x_{\rho}$.
\if{ and set $p_i^{\Ls}:=\partial_{b_0}(s_i^{\Ls})$ 
(resp. $p_i^{L}:=\partial_{b_0^F}(s_i^{L})$). Define 
$s_i^{\Ls,b_0}$ (resp. $s_i^{L,b_0^F}$) as in 
Proposition \ref{Prop : Two derivatives coincide at b_0}.
}\fi
%
By Proposition \ref{Prop : Young normalized}, 
$s_i(x_\rho):=
\ln(\R_{i}(x_\rho,\Fs))<\ln(\omega)$ if 
and only if $s_i^{\Ls}(x_\rho)<\ln(\omega)$ 
and in this case
\begin{equation}
s_i^{\Ls}(x_\rho)\;=\;
\ln(\R_{i}(x_\rho,\Fs))\;=\;s_i(x_\rho)\;.
\end{equation}
The same holds for $L$ and $\M$. Namely, if we set 
$s_i^F(x_\rho):=\ln(\R_{i}(x_\rho,\M))$, 
then $s_i^F(x_\rho)<\ln(\omega)=0$ if and only if 
$s_i^L(x_\rho)<\ln(\omega)=0$ and 
in this case one has $s_i^{\Ls}(x_\rho)\;=\;
\ln(\R_{i}(x_\rho,\M))\;=\;s_i^F(x_\rho)$. 

This correspondence of slopes holds only for slopes that 
are less than $\ln(\omega)$. In our case this is enough 
because the slopes $s_i$ (resp. $s_i^F$) 
that are larger than $\ln(\omega)$ along $b_0$ 
(resp. $b_0^F$) are constant functions in a 
neighborhood of $-\infty$. Indeed, by Lemma 
\ref{lem:merosinglinear-1} the functions $s_i$ and 
$s_i^F$ are affine along in a neighborhood of $-\infty$ 
and, by definition of the radii, they are also 
less than or equal to~$0$. It follows that if $s_i$ (resp. $s_i^F$) is not constant over 
$b_0$ (resp. $b_0^F$), then it has to tend to $-\infty$ 
as $\rho$ approaches $0$. In particular, it is less than 
$\ln(\omega)$ in a neighborhood of $-\infty$ 
(see the proof of Lemma \ref{lem:merosinglinear-1} for 
more details).

Now, denote by $v_i^\Ls$ (resp. $v_i^{L}$) the 
$i$-th partial height of $NP(x_\rho,\Ls)$ 
(resp. $NP(x_\rho,L)$) as in Definition 
\ref{Def : NP of an operator}. For $i=0,\ldots,r$, 
we denote by  $s_i^{\Ls,b_0}$ (resp. 
$s_i^{L,b_0^F}$) the $i$-th slope of the Newton 
polygon associated with the set 
$\{(i,\partial_{b_0}v_i^\Ls\;,\; 0\leq i\leq r)\}$ 
(resp. $\{(i,\partial_{b_0}v_i^L)\;,\; 0\leq i\leq r\}$).

With this notation, to prove the equality of polygons 
as in i) and ii) it is enough to show that for all $i$ one has $\partial_{b_0}s_i^{\Ls}=\partial_{b_0}s_i^{L}$. 
By Proposition \ref{lem:c+pt-1}, 
this is equivalent to showing that for all $i$ one has
\begin{equation} \label{eq : s_i =s_i^F for L}
s_i^{\Ls,b_0}\;=\;
s_i^{L,b_0^F}\;.
\end{equation}

Now, this follows from the following remark. For each non-zero
function $f(T)=\sum_{i} a_i T^i\in\O(D)[T^{-1}]$ and each $\rho>0$ close enough to $0$, one has 
\begin{equation}
|f|(x_\rho)\;=\; |a_{v_T(f)}| \, \rho^{v_T(f)}\;,
\end{equation}
where $v_T(f)=\min(i,a_i\neq 0)$ is the $T$-adic 
valuation of $f$. 
This equality is true for any valuation of~$K$. 
In particular, the derivative $\partial_{b_0}(f)$ is 
independent of the valuation of $K$.

It follows that for all $i=1,\ldots,r$ one has 
$\partial_{b_0}(g_i)=\partial_{b_0^F}(g_i)$ and 
therefore \eqref{eq : s_i =s_i^F for L} holds.
This proves the coincidence of the two polygons in i) and 
ii).

Item iii) follows from \cite[Section 5.7]{NP-III}. 
Now, equality \eqref{eq : Irr_0=Irr_0^F} follows readily 
from the coincidence of the polygons in i) and ii) because 
the irregularity is nothing but the opposite of their total 
height. Analogously, the 
equality between $\Irr_{b_0^F}(\M)$ and $-i_0(\M)$ 
follows from iii). Indeed, since  
the total heights of the polygons in ii) and of the formal 
Newton polygon in iii) coincide, 
the claim follows from 
Remark \ref{rk : total height derived = -Irr-1} and the 
fact that the total height of the formal 
Newton polygon is by definition $i_0(\M)$.

Finally, v) follows from Remark 
\ref{Remark : affine partial< implies <}.
\end{proof}

\begin{remark}
\label{Remark : no relations between the radii}
In general, there is no direct relationship between 
$NP(-,\Fs)$ and $NP(-,\M)$. 
For instance, let $a\in K$ and let $\Fc$ be the equation 
$T\frac{d}{dT}(y)=ay$. If the valuation of $K$ is 
trivial, then the formal radii of $\M$ are uniformly equal to 
$1$ along the (formal) segment $]0,+\infty[$. Now, if the 
valuation of $K$ is not trivial on $\mathbb{Z}$, and if the residue characteristic of~$K$ is~$p$, then the radii of $\Fs$ 
along the segment $]0,+\infty[$ are constant and equal 
to $|p|^{\frac{1}{p-1}}\cdot
\liminf_s|a(a-1)(a-2)\cdots(a-s+1)|^{-1/s}$ 
(cf. \cite[Lemma 1.4]{NP-I}). This radius is not equal to 
$1$, i.e. non-maximal, 
if and only if $a\notin\mathbb{Z}_p$, 
(cf. \cite[Proposition 7.3, Chapter IV]{DGS}). 
\end{remark}

\subsection{Index of a differential equation with meromorphic singularities
on an open pseudo-disk.}
\label{Section 4.4: index D(*Z)}
\if{\comment{Plus bas, je traite le cas de plusieurs singularit\'es. Je garde le cas d'une seule ci-dessous tant que tu n'as pas v\'erifi\'e. Dans la version avec une singularit\'e, il y a des commentaires par rapport \`a la tienne ; je ne les ai pas gard\'es dans l'autre.}

We maintain the notations of Section 
\ref{section : meromorphic Settings punctured disk}. 

In order to deal with index theory, 
rather than Definition \ref{def:meromorphicconnection}, 
we consider differential modules over
the differential ring $(\O(D)[T^{-1}], d/dT)$. By 
Proposition 
\ref{prop:eqcatStein} this is not restrictive. 
\if{
The following Proposition gives more precisions.
\begin{proposition}
\label{PROP : proj implies presque loc free}
Let $A$ be a finitely generated 
$\O(D)[T^{-1}]$-module together with a 
connection, and let $r$ be the dimension of 
$A\otimes K((T))$. Then 
\begin{enumerate}
\item for all (closed or open) strict sub-disk 
$E\subset D$ containing $0$ the restriction 
$A_{|E(*0)}:=
A\otimes_{\O(D)[T^{-1}]}\O(E)[T^{-1}]$ 
is a free $\O(E)[T^{-1}]$-module of rank $r$.
\item for all open pseudo-annulus 
(resp. closed annulus) 
$C'\subseteq C=D-\{0\}$ such that 
$\Gamma_{C'}\subseteq\Gamma_C$
the restriction 
$A_{|C'}:=A\otimes_{\O(D)[T^{-1}]}\O(C')$ 
is a locally free $\O(C')$-module of rank $r$.
\end{enumerate}
In particular $A$ defines a (meromorphic) differential 
equation $\Fc$ on $D(*0)$. Moreover, the following 
conditions are equivalent : 
\begin{enumerate}
\item[iii)] $\Fc$ is free as a sheaf of $\O_D[*0]$-modules; 
\item[iv)] $A$ is a free $\O(D)[T^{-1}]$-module; 
\item[v)] there exists an open annulus $C'\subseteq D$ 
containing the open boundary $b_D$ such that $A_{|C'}$ 
is free.
\end{enumerate}
In this case $A=\Fc(D)$. 
\end{proposition}
\begin{proof}
If $E$ is a closed disk, $\O(E)$ 
is a principal ideal ring (PID), whose ideals 
are generated by polynomials. The same is true for 
$\O(E)[T^{-1}]$. Therefore, it has no ideals 
stable by the action of $d/dT$, and the proof of
\cite[Proposition 9.1.2]{Kedlaya-book} shows that 
$\Fc_{|E(*0)}=\Fc\otimes\O(E)[T^{-1}]$ has no 
torsion. It is hence free. Its rank is $r$ because 
$\Fc_{|E(*0)}\otimes K((T)) = 
\Fc\otimes K((T))$.

If $E$ is a strict open sub-disk of $D$, we can first 
localize to a closed sub-disk of $D$ containing $E$, and 
the claim follows.

Claim ii) follows similarly from 
\cite[Proposition 9.1.2]{Kedlaya-book}. 
The local rank is again $r$ because the restriction to an 
open sub-pseudo-annulus (resp. closed sub-annulus) of 
$C'$ preserves the local rank, therefore we can assume 
that $C'$ is contained in some closed sub-disk $E$ 
of $D$ containing $0$ and we have
$\Fc\otimes\O(C')=\Fc_{|E(*0)}\otimes\O(C')$.
\end{proof}

%
%
%
%
\begin{remark}
In the following Theorem we denote by $\Fc$ a free
$\O(D)[T^{-1}]$-module together with a connection. 
The reason is that Proposition 
\ref{PROP : proj implies presque loc free} furnishes a 
one to one correspondence between differential 
equations over $D(*0)$ that are free 
as $\O_D[*0]$-modules and free 
$\O(D)[T^{-1}]$-modules with connection. 
\end{remark}
}\fi

The main goal of this section is the following result. 

\
\comm{J'ai ajouté l'équation 
\eqref{eq : index form meronjhbnh} dans le Théorème.}


\begin{theorem}
\label{Thm : Index meromorphic disk}
Assume that 
\begin{enumerate}
\item $\mathcal{F}$ is a free 
$\O(D)[T^{-1}]$-module;
\item the radii of $\Fs$ are all $\log$-affine 
along the germ of segment $b_D$ at the open 
boundary of $D$.
\end{enumerate}
Let $C_{b_D}$ be an open pseudo-annulus of $C := D-\{0\}$ 
containing $b_D$ such that $\Fs$ has log-affine radii 
along $\Gamma_{C_{b_D}}$.\smallskip

Then, the following conditions are equivalent:
\begin{enumerate}
\item[(a)] $\Fc$ has finite-dimensional meromorphic 
cohomology groups $\Hdr^i(D(*0),\Fc)$;
\item[(b)] $\Fs_{|C_{b_D}}^{\mathrm{Robba}}$ is Fredholm at $b_D$.
\end{enumerate}
In this case we have
\begin{equation}\label{eq : index form meronjhbnh}
\chidr(D(*0),\Fc)\;=\;
\chiabs_{b_{D}}(\Fs_{|C_{b_D}}^{\mathrm{Robba}}) 
+\Irr_{b_D}(\Fs) + \Irr_0(\Fs)\;.
\end{equation}

In particular, the following assertions are equivalent:
\begin{enumerate}
\item[(c)] one has 
(cf. Proposition \ref{Prop : chirel=Irr})
\begin{equation}
\label{eq : assumption iii) Liouville punctured disk}
\chiabs_{b_D}(
\Fs_{|C_{b_D}}^{\mathrm{Robba}})\;=\;0\;;
\end{equation}
\item[(d)] the index formula holds:
\begin{equation}
\label{eq : meromorphic index formula punctured disk}
\chidr(D(*0),\Fc)\;=\;\Irr_0(\Fs)+\Irr_{b_D}(\Fs)\;=\;-\mathrm{Irr}_C(\Fs)\;.
\end{equation}
\end{enumerate}
%
%
\end{theorem}
\begin{proof}
Since $\Omega^1_{D}$ and $\mathcal{F}$ are both 
free, we can identify the connection
\begin{equation}\label{eq:nablaan}
\nabla : \Fc(D) \;\to\; \Fc(D) \otimes_{\Os(D)} 
\Omega^1_{D}(D)
\end{equation} 
with an endomorphism
\begin{equation}\label{eq:nablaglobal}
\nabla:\O(D)[T^{-1}]^r\;\to\;
\O(D)[T^{-1}]^r
\end{equation}
of the form $Td/dT-G(T)$, with 
$G(T)\in M_r(\O(D)[T^{-1}])$.

Since~$D$ is quasi-Stein, 
coherent sheaves have no higher 
cohomology on it. This holds in particular for~$\Omega^1_{D}$. Since~$\mathcal{F}$ may be written as a direct limit of coherent 
sheaves, the same result holds for it. A spectral 
sequence argument now shows that 
$\Hdr^0(D(*0),\Fc)$ and $\Hdr^1(D(*0),\Fc)$ coincide 
respectively with the kernel and the cokernel of the 
morphism~\eqref{eq:nablaglobal}. We now focus on the 
latter.

In analogy with the analytic case (cf. Theorem 
\ref{Thm : index of finite opens}), 
we shall compare the generalized indexes of~$\nabla$ 
with respect to the decomposition 
$\O(D)[T^{-1}]=(T^{-1}K[T^{-1}])\oplus\O(D)$ (see Definition~\ref{Def: genindgeneralclassical}) and 
the index $\chidr(D(*0),\Fc)$. We will call generalized 
index of~$\Fc$ at~$b_{0}$ (resp.~$b_{D}$) the 
generalized index corresponding to the factor 
$T^{-1}K[T^{-1}]$ (resp. $\O(D)$) and denote it by 
$\chi^{\mathrm{gen}}_{b_{0}}(\Fc)$ (resp. 
$\chi^{\mathrm{gen}}_{b_{D}}(\Fc)$). We explain 
below 
that we have already studied those generalized indexes.

\comm{Je ne suis pas d'accord qu'on ait besoin de 
nouvelles définitions d'indices généralisés ici ... 

tout peut être dit en parlant des 
indices absolus qui existent déjà ...

En plus, pour être cohérent avec la notation 
$\chi^{\mathrm{gen}}$
il faut faire paraitre la connexion et le choix d'une 
dérivation dans la notation. 

NOTE : Comme on a choisi $Td/dT$ comme dérivation, 
on doit avoir que $\chi^{\mathrm{gen}}=\chiabs$, mais 
si tu définit les choses comme tu l'a fait, et comme le 
$\chiabs$ vit sur un corps sphériquement complet, 
algebriquement clos, tel que 
$|K|=\mathbb{R}_{\geq 0}$ la question se pose de 
savoir si les deux indices 
$\chi^{\mathrm{gen}}$ et $\chiabs$ sont 
encore égaux dans ce contexte ... plutôt que de rentrer la 
dedans je propose de parler uniquement de ce qui existe 
déjà... et de mettre en commentaire le philosophie 
générale, sans donner des véritables définitions...

Si tu est d'accord donc, je supprimerai à partir de la 
phrase "We will call generalized index index of $\Fc$ ..." 
ok ?

Tu peux parler sans ambiguité de l'indice généralisé 
de $\nabla(Td/dT)$ par rapport à cette décomposition. 
Ca ça rentre dans la définition 
\ref{Def: genindgeneralclassical}.}

%

First, remember that we have investigated the 
generalized indexes of~$\nabla$ acting on~$\Fs(C)$ 
with respect to a decomposition of the 
form~\eqref{eq: O(C) deco O(D_0)+O(D_1)}. Let us 
consider this decomposition with~$m=-1$ and~$n=0$. 
We have $D_{0} = D$ and the generalized indexes 
of~$\Fc$ and~$\Fs$ at~$b_{D}$ coincide, in the sense 
that one exists if, and only if, the other does and that, in 
this case, they have the same value. Indeed, the two 
endomorphisms of~$\Os(D)^r$ induced by~$\nabla$ by 
formula~\eqref{eq : truncated op u_k} already coincide. 

\comm{J'ai remplacé $\O(D)$ par $\O(D)^r$.}

In particular, by Remark~\ref{rem:Fredholmnontriv} and 
Proposition~\ref{Prop : chirel=Irr}, if~$K$ is not trivially 
valued, then~$\Fc$ has finite generalized index 
at~$b_{D}$ if, and only if, 
$\Fs_{|C_{b_D}}^{\mathrm{Robba}}$ is Fredholm at 
$b_D$ and, in this case, by 
Remark~\ref{Remark : chiabs=chigen}, we have
\begin{equation}\label{eq:chigenbDIrr}
\chiabs_{b_{D}}(\Fc) = \chiabs_{b_{D}}(\Fs_{|C_{b_D}}^{\mathrm{Robba}}) + \Irr_{b_{D}}(\Fs).
\end{equation}
If~$K$ is trivially valued, then item i)-(a) of Section~\ref{section : some situations FIn} ensures that $\Fs_{|C_{b_D}}^{\mathrm{Robba}}$ is Fredholm at $b_D$ (and that its generalized index is~0), hence that~$\Fc$ has finite generalized index at~$b_{D}$. By Proposition~\ref{Prop : chirel=Irr}, \eqref{eq:chigenbDIrr} still holds.

%
%

Second, recall that we denote by~$M$ the differential equation induced by~$\Fc$ on~$K((T))$. We see it as a differential equation on the punctured open unit disk over~$K$ endowed with the trivial valuation. The direct sum~\eqref{eq: O(C) deco O(D_0)+O(D_1)} then reads $K((T)) = T^{-1} K[T^{-1}] \oplus K[[T]]$. By arguments similar to those above, we show that the generalized index of~$\Fc$ and~$M$ at~$b_{0}$ coincide. 

\comm{Ici il y a 2 $b_0$, car on a 2 espaces. Il faudrait 
reformuler car on ne comprends pas comment $\Fc$ 
pourrait exister  en $b_0$ en valuation triviale, car il vit sur un autre espace  ... ou comment 
$\M$ pourrait se restreindre à $b_0$ en valuation non 
triviale alors qu'il existe seulement formellement.}

In particular,  Section~\ref{Rk : formal gen indexes} ensures that this generalized index always exist and is equal to $\Irr_0^F(\M)$ (see~\eqref{eq : Irr_b_0^F=chigen} and Remark~\ref{Remark : chiabs=chigen}). 
By Proposition~\ref{Prop : Irr-form=Irr-x-1}, we have
\begin{equation}\label{eq:chigen0DIrr}
\chi^\mathrm{gen}_{b_{0}}(\Fc) = \Irr_0(\Fs).
\end{equation}
\comm{Il n'y a pas d'erreurs, mais comme je le disais, la 
notation de $\chi^\mathrm{gen}$ a besoin de faire 
paraitre la connexion et la dérivation.

Si tu veux un simbôle compact, il faut mettre le $\chiabs$ ici.}
%
%
%

The problem is that, in order to apply 
Propositions \ref{Prop : chi=sumchigen} and 
\ref{Prop : Fred --> compact perturbation}, we need to define a Fr\'echet topology on 
$\O(D)[T^{-1}]$ for which it is the topological sum of 
$\O(D)$ and $T^{-1}K[T^{-1}]$, where the latter is 
considered as a Fréchet with respect to the trivial 
valuation of $K$. A topology with these properties 
seems inexistent. 
Therefore we need an \textit{ad hoc} argument for this 
situation.

\comment{En fait, je viens de me rendre compte que \c{c}a ne sert \`a rien de d\'evelopper la th\'eorie pour tout~$q$. On pourrait tout \'ecrire avec $q=0$, non~?}
\comm{Oui, c'est vrai... le $q>0$ arrive de ma première 
redaction où je ne savait pas encore completement ce 
que j'allais démontrer}
For $q\geq 1$, we consider the following 
spaces:
\begin{eqnarray}
U_q&:=&KT^{-q}\oplus\cdots\oplus K T^{-1}\;;\\
K((T))_q&\;:=\;&U_q\oplus K[[T]] \;=\; T^{-q} K[[T]]\;;\\
\O(D)[T^{-1}]_q&\;:=\;&U_q\oplus\O(D) \;=\; T^{-q} \Os(D)\;.
\end{eqnarray}
We have $T^{-1}K[T^{-1}]=\bigcup_qU_q$,  
$K((T))=\bigcup_qK((T))_q$ and 
$\O(D)[T^{-1}]=\bigcup_q\O(D)[T^{-1}]_q$. 

The map $\nabla=Td/dT-G(T)$ acts naturally on 
$(\O(D)[T^{-1}])^r$ and on $K((T))^r$ but in general 
it does not stabilize $(\O(D)[T^{-1}]_q)^r$ nor $(K((T))_q)^r$.

Let $q\ge 1$. Let $N$ be a positive integer such that the map 
$T^N\nabla$ stabilizes $(\O(D)[T^{-1}]_q)^r$ and 
$(K((T))_q)^r$. Since the multiplication by $T$ is 
invertible on both $\O(D)[T^{-1}]$ and $K((T))$, $\nabla$ has finite index exactly when $T^N \nabla$ has and, in this case, we 
have
\begin{eqnarray}
\chi(\O(D)[T^{-1}]^r,T^N\nabla)&\;=\;&
\chi(\O(D)[T^{-1}]^r,\nabla)\;; \label{eq:ODTNnabla}\\
\chi(K((T))^r,T^N\nabla)&\;=\;&
\chi(K((T))^r,\nabla)\;. \label{eq:K((T))TNnabla}
\end{eqnarray}

We now consider the following commutative diagram in 
which the arrows are exact:
\begin{equation}\label{eq : diagram key point D(*0)}
\xymatrix{0\ar[r]&(\O(D)[T^{-1}]_q)^r\ar[r]
\ar@{}[d]|{\cap}&
\O(D)[T^{-1}]^r\ar[r]\ar@{}[d]|{\cap}&
\frac{\O(D)[T^{-1}]^r}{(\O(D)[T^{-1}]_q)^r}\ar[r]\ar@{=}[d]&0\\
0\ar[r]&(K((T))_q)^r\ar[r]&
K((T))^r\ar[r]&\frac{K((T))^r}{(K((T))_q)^r}\ar[r]&0
}.
\end{equation}
The map $T^N\nabla$ acts on all the terms and 
commutes with the maps of the diagram. 
We now compute its indexes on 
$(\O(D)[T^{-1}]_q)^r$ and $(K((T))_q)^r$. Recall that 
$\nabla$ is a connection with respect to $Td/dT$, 
therefore its generalized index coincides with its 
absolute index 
(cf. Remark \ref{Remark : chiabs=chigen}).

\comm{J'ai ajouté le commentaire sur le choix de 
$d/dT$.}
\begin{lemma}
\label{Lemma : computation chigen O(D)[T^-1]_n}
The differential equation $\Fs_{|C_{b_D}}^{\mathrm{Robba}}$ is Fredholm at $b_D$ if, and only if, the operator $T^N\nabla$ has finite index on $\O(D)[T^{-1}]_q^r$.  
In this case, we have
\begin{equation}
\label{eq : equality chi=chigen grqpleb}
\chi(\O(D)[T^{-1}]_q^r,T^N\nabla)
\;=\; N\cdot r+\chiabs_{b_{D}}(\Fs_{|C_{b_D}}^{\mathrm{Robba}}) + \Irr_{b_D}(\Fs)\;.
\end{equation}
\end{lemma}
\begin{proof}
By Proposition~\ref{Prop : chirel=Irr} and Remark~\ref{Remark : chiabs=chigen}, $\Fs_{|C_{b_D}}^{\mathrm{Robba}}$ is Fredholm at $b_D$ if, and only if, $\nabla$ is and, in this case, we have 
\begin{equation}
\chi_{b_D}^{\mathrm{gen}}(\O(C)^r,\nabla)
\;=\;\chiabs_{b_D}(\Fs)
\;=\; \chiabs_{b_{D}}(\Fs_{|C_{b_D}}^{\mathrm{Robba}})  + \Irr_{b_D}(\Fs)\;.
\end{equation}
By Lemma~\ref{Lemma : additivity gen ind}, Proposition~\ref{Proposition : compass} and Lemma~\ref{Lem : gen ind of a constant}, $\nabla$ is Fredholm at~$b_{D}$ if, and only if, $T^N \nabla$ is and that, in this case, we have  
\begin{equation}
\chi_{b_D}^{\mathrm{gen}}(\O(C)^r,T^N\nabla)\;=\; 
N\cdot r + 
\chiabs_{b_{D}}(\Fs_{|C_{b_D}}^{\mathrm{Robba}})  
+\Irr_{b_D}(\Fs) \;.
\end{equation}

Let us now compute this generalized index in another way. To do so, we consider the operator $T^N \nabla$ acting on~$\Os(C)^r$ and use the decomposition~\eqref{eq: O(C) deco O(D_0)+O(D_1)} with $m=-q-1$ and $n=-q$ (see Lemma~\ref{lem:b0b1}). From Definition \ref{Def : } and the stability of $(\O(D)[T^{-1}]_q)^r$ under $T^N\nabla$, we deduce that $T^N\nabla$ has finite generalized index at~$b_{D}$ if, and only if, $T^N\nabla$ has finite index on $\O(D)[T^{-1}]_q^r$ and that 
\begin{equation}\label{eq : chi_n = chigen}
\chi_{b_D}^{\mathrm{gen}}(\O(C)^r,T^N\nabla)\;=\;\chi((\O(D)[T^{-1}]_q)^r,T^N\nabla)\;.
\end{equation}
Indeed the truncation \eqref{eq : truncated op u_k} 
applied to $T^N\nabla$ equals $T^N\nabla$ itself.

\comm{J'ai ajouté ce court commentaire sur la troncation}

If~$K$ is not trivially valued, then, by Remark~\ref{rem:Fredholmnontriv}, $T^N\nabla$ has finite generalized index at~$b_{D}$ if, and only if, it is Fredholm at~$b_{D}$, and the result follows. 

If~$K$ is trivially valued, then, by item i)-(a) of Section~\ref{section : some situations FIn}, $\Fs_{|C_{b_D}}^{\mathrm{Robba}}$ is Fredholm at~$b_D$. The previous arguments show that this implies that $T^N\nabla$ has finite index on $\O(D)[T^{-1}]_q^r$ and that~\eqref{eq : equality chi=chigen grqpleb} holds. The result follows.
\end{proof}

%

\begin{lemma}
\label{Lemma : computation chigen K((T))_n}
The operator $T^N\nabla$ has finite index on $K((T))_q^r$ and we have
\begin{equation}
\label{eq : equality chi=chigen grqpleb-formal}
\chi(K((T))_q^r,T^N\nabla)
\;=\;
N\cdot r-\Irr_{0}(\Fs)\;.
\end{equation}
\end{lemma}
\begin{proof}
We now consider the operator $\nabla$ as a connection of $\M$, \textit{i.e.} as a differential equation on the punctured open unit disk~$C=\{0<|T|<1\}$ over~$K$ endowed with the trivial valuation. We denote by~$b_{0}$ (resp.~$b_{1}$) the germ of segment at the open boundary out of~0 (resp. out of the Gauss point). By Section~\ref{Remark : index annulus trivial valuation}, we know that the radii of~$\M$ are log-affine along the skeleton of~$C$, that $\M^\textrm{Robba}$ is Fredholm at~$b_{0}$ and~$b_{1}$ and that, for $i=0,1$, $\chiabs_{b_{i}}(\M^\textrm{Robba}) = 0$.

We may now apply Lemma~\ref{Lemma : computation chigen O(D)[T^-1]_n} to~$\M$. Since $K$ is trivially valued, $\O(D)[T^{-1}]_q=K((T))_q$. It follows that $T^N\nabla$ has finite index on $K((T))_q^r$ and that

\comm{J'ai ajouté que $\O(D)[T^{-1}]_q=K((T))_q$}
\begin{equation}
\label{eq : equality chi=chigen grqpleb-formal}
\chi(K((T))_q^r,T^N\nabla)
\;=\;
N\cdot r + \Irr_{b_{1}}^F(\M) \;=\; N\cdot r - \Irr_{b_{0}}^F(\M)\;.
\end{equation}
Where $\Irr_{b_{0}}^F(\M)$ is as usual the irregularity 
on $\M$, and the superscript $^F$ merely indicates that 
we are considering the trivial valuation on $K$. 
The result now follows from Proposition~\ref{Prop : Irr-form=Irr-x-1}.

\comm{J'ai ajouté l'indice $^F$ et quelque mot pour 
l'expliquer, de sorte à être cohérent avec les notations de la Proposition~\ref{Prop : Irr-form=Irr-x-1}.}
%
%
\end{proof}

We now continue the proof of Theorem 
\ref{Thm : Index meromorphic disk}. 
By~\eqref{eq:K((T))TNnabla} and Section~\ref{Remark : index annulus trivial valuation}, 
we know that $T^N\nabla$ has finite index on $K((T))^r$ and that 
\begin{equation}
\chi(K((T))^r,T^N\nabla)\;=\;0\;.
\end{equation}
Therefore, we know that $T^N\nabla$ has finite index on 
two terms of the exact sequence at the bottom of the 
diagram \eqref{eq : diagram key point D(*0)} and the value of the corresponding indexes. 
It follows from Lemma \ref{Lemma : additivity of Index} that $T^N\nabla$ has finite index on 
$K((T))^r/K((T))_q^r$ and that 
\begin{equation}\label{eq : chi-microsol}
\chi\Bigl(\frac{K((T))^r}{K((T))_q^r},T^N\nabla\Bigr)\;=\;
-N\cdot r+\Irr_{0}(\Fs)\;.
\end{equation}

Now, we consider the  exact sequence at the top of the 
diagram \eqref{eq : diagram key point D(*0)}. 
We have the equality $\frac{K((T))^r}{K((T))_q^r}=
\frac{\O(D)[T^{-1}]^r}{\O(D)[T^{-1}]_q^r}$. 
Therefore \eqref{eq : chi-microsol} computes also the 
index of $T^N\nabla$ on the right hand term of the 
sequence at the top of the diagram.
\comm{J'ai ajouté quelques mots ci plus haut ...}

We deduce, by 
Lemma \ref{Lemma : additivity of Index} again,
that $T^N\nabla$ has finite index 
on $\O(D)[T^{-1}]^r$ if, and only if, it has finite index 
on $\O(D)[T^{-1}]^r_q$. By Lemma~\ref{Lemma : computation chigen O(D)[T^-1]_n}, this happens exactly when $\Fs_{|C_{b_D}}^{\mathrm{Robba}}$ is Fredholm at~$b_{D}$. It follows that (a) and (b) are equivalent.

%


\medskip

Let us now assume that (a) and (b) hold. By~\eqref{eq:ODTNnabla}, Lemmas 
\ref{Lemma : additivity of Index}, 
\ref{Lemma : computation chigen O(D)[T^-1]_n} and~\eqref{eq : chi-microsol},
we have
\begin{eqnarray}
\chidr(D(*0),\Fc)&\;=\;&\chi(\O(D)[T^{-1}]^r,T^N\nabla)\\
&\;=\;&
\chi(\O(D)[T^{-1}]_q^r,T^N\nabla) 
+\chi\Bigl(\frac{\O(D)[T^{-1}]^r}{(\O(D)
[t^{-1}]_q)^r},T^N\nabla\Bigr)
\label{eq : chi-split 0 infty mero}\\
&=&\chiabs_{b_{D}}(\Fs_{|C_{b_D}}^{\mathrm{Robba}}) +\Irr_{b_D}(\Fs) + \Irr_0(\Fs)\;.
\end{eqnarray}
The equivalence between (c) and (d) follows.
\end{proof}


}\fi

In this section, we temporarily substitute the notation of 
Section~\ref{section : meromorphic Settings punctured 
disk} and we use the following setting instead. 


\begin{setting}
\label{Setting : section 4 pseudo disk L/K finite}
Let~$D$ be an open pseudo-disk 
(cf. \cite[Definition 1.1.8]{NP-IV})
and assume that there exists a finite 
extension~$L$ of~$K$ such that 
$D\otimes_{K} L$ is a (finite) disjoint union of open disks and affine lines over~$L$.\footnote{Recall that an open pseudo-disk is a connected $K$-analytic curve that is not 
compact and has empty analytic skeleton. In particular $D$ has genus~0 and $\chi_c(D)=1$. 
By~\cite[Lemma 1.1.26]{NP-III}, for every non-trivially 
valued algebraically closed maximally complete extension~$M$ of~$K$, $D \otimes_{K} M$ is a disjoint union of open disks and affine lines but it could happen that this decomposition does not descend to any finite extension of~$K$.} 
Denote by~$b_D$ the germ of segment at the open boundary of~$D$. Let~$Z$ be a finite set of rigid points 
of~$D$ and let~$\Fc$ be a differential equation on~$D(*Z)$. 

We set $Y:= D - Z$, $\Fs := \Fc_{|Y}$ and
\begin{equation}\label{eq : global irr}
\Irr_{Y}(\Fs) \;=\;
-\sum_{z\in Z}\Irr_z(\Fs)-\Irr_{b_D}(\Fs)\;.
\end{equation}
\end{setting}

%
%
%
%


\begin{theorem}
\label{Thm : Index meromorphic disk}
We set notations as in Setting 
\ref{Setting : section 4 pseudo disk L/K finite} and we 
assume moreover that 
\begin{enumerate}
\item $\Fc$ is a free $\O(D)[\ast Z]$-module;
\item the radii of $\Fs$ are all $\log$-affine 
along the germ of segment $b_D$ at the open 
boundary of $D$.
\end{enumerate}
Let $C_{b_D}$ be an open pseudo-annulus in~$Y$ 
containing $b_D$ such that $\Fs$ has log-affine radii 
along $\Gamma_{C_{b_D}}$.

Then, the following conditions are equivalent:
\begin{enumerate}
\item[(a)] $\Fc$ has finite-dimensional meromorphic 
cohomology groups $\Hdr^i(D(*Z),\Fc)$;
\item[(b)] $\Fs_{|C_{b_D}}^{\mathrm{Robba}}$ is Fredholm at $b_D$.
\end{enumerate}
Moreover, in this case we have
\begin{equation}
\label{eq : meromorphic index-lot of poles}
\chidr(D(*Z),\Fc)\;=\; 
\chi_c(Y)\cdot\mathrm{rank}(\Fc)-
\Irr_{Y}(\Fs)+
\chiabs_{b_{D}}(\Fs_{|C_{b_D}}^{\mathrm{Robba}})\;.
\end{equation}
In particular, the following assertions are equivalent:
\begin{enumerate}
\item[(c)] one has 
\begin{equation}
\label{eq : assumption iii) Liouville punctured disk-bis}
\chiabs_{b_D}(
\Fs_{|C_{b_D}}^{\mathrm{Robba}})\;=\;0\;;
\end{equation}
\item[(d)] the index formula holds:
\begin{equation}
\label{eq : meromorphic index formula punctured disk-bis}
\chidr(D(*Z),\Fc)\;=\; 
\chi_c(Y)\cdot\mathrm{rank}(\Fc)-
\Irr_{Y}(\Fs)\;.
\end{equation}
\end{enumerate}
\end{theorem}
\begin{proof}
The open pseudo-disk $D$ is cohomologically Stein, which means that 
coherent sheaves have no higher 
cohomology on it. This holds in particular for~$\Omega^1_{D}$. Since~$\Fc$ may be written as a direct limit of coherent 
sheaves, the same result holds for it. A spectral 
sequence argument now shows that 
$\Hdr^0(D(*Z),\Fc)$ and $\Hdr^1(D(*Z),\Fc)$ coincide 
respectively with the kernel and the cokernel of the 
morphism
\begin{equation}\label{eq:nablaan-bis}
\nabla : \Fc(D) \;\to\; \Fc(D) \otimes_{\Os(D)} 
\Omega^1_{D}(D)\;.
\end{equation} 

Let~$K'$ be a finite extension of~$K$. Set $D' := D \otimes_{K} K'$ and denote by~$\Fc'$ the pull-back of~$\Fc$ on~$D'$ and by~$Z'$ the preimage of~$Z$ in~$D'$. The same argument as above shows that $\Hdr^0(D'(*Z'),\Fc')$ and $\Hdr^1(D'(*Z'),\Fc')$ coincide 
respectively with the kernel and the cokernel of the 
morphism
\begin{equation}\label{eq:nabla'an-bis}
\nabla' : \Fc'(D') \;\to\; \Fc'(D') \otimes_{\Os(D')} 
\Omega^1_{D'}(D')\;,
\end{equation} 
which is nothing but the morphism~\eqref{eq:nablaan-bis} tensored by~$K'$ over~$K$. By exactness of the tensor product, we deduce that, for $i=0,1$, we have a canonical isomorphism
\begin{equation}
\Hdr^i(D(\ast Z),\Fc)\otimes_{K}K' \simto \Hdr^i(D'(\ast Z'),\Fc')\;.
\end{equation}
Since the behavior of the other invariants of the statement with respect to scalar extension is known, it is easy to check that it is enough to prove the statement with~$D$ replaced by~$D'$, or even a connected component of~$D'$. By choosing~$K'$ appropriately, we may ensure that all those connected components are standard open pseudo-disks and that all the points of~$Z'$ are $K'$-rational. Consequently, from now on, we assume that $D := \{|T| < r_{D}\}$ with $0<r_{D}\le+\infty$ and that all the point of~$Z$ are $K$-rational. 

Since $\Omega^1_{D}$ and $\mathcal{F}$ are both 
free, we can identify the connection~\eqref{eq:nablaan-bis} 
with an endomorphism
\begin{equation}\label{eq:nablaglobal-bis}
\nabla:(\O_{D}[\ast Z])(D)^r\;\to\;
(\O_{D}[\ast Z])(D)^r
\end{equation}
of the form $d/dT-G(T)$, with 
$G(T)\in M_r((\O_{D}[\ast Z])(D))$. We will now study 
this endomorphism.

Denote by $z_{1},\dotsc,z_{n}$ the points of~$Z$. We 
then have
\begin{equation}
(\O_{D}[\ast Z])(D) = \Os(D)[(T-z_{1})^{-1},\dotsc,(T-z_{n})^{-1}]
\end{equation} 
and a direct sum decomposition
\begin{equation}
(\O_{D}[\ast Z])(D) = \Os(D) \oplus \bigoplus_{i=1}^n (T-z_{i})^{-1} K[(T-z_{i})^{-1}]\;.
\end{equation}

We will now compute the generalized indexes 
of~$\nabla$ with respect to this decomposition (see 
Definition~\ref{Def: genindgeneralclassical}). 
\begin{remark} 
We observe that one cannot deduce immediately 
that the index over $\O(D)[T^{-1}]$ is the sum of the 
generalized indexes because \emph{the spaces are 
not Fr\'echet} and we cannot apply Proposition 
\ref{Prop : chi=sumchigen}.\footnote{More precisely, the 
problem is that, in order to apply 
Propositions \ref{Prop : chi=sumchigen} and 
\ref{Prop : Fred --> compact perturbation}, we 
would need to define a Fr\'echet topology on 
$\O(D)[\{(T-z_i)^{-1}\}_i]$ for which it is the 
topological sum of 
$\O(D)$ and $\oplus_i (T-z_i)^{-1}K[(T-z_i)^{-1}]$, 
where the latter is considered as a Fr\'echet space with respect 
to the trivial valuation on $K$. 
We were unable to find a topology with these 
properties.} 
Therefore, we will provide along this proof 
an ad hoc argument of 
\emph{comparison between the formal and 
convergent situations} to prove that the global index over $\O(D)$ is 
the sum of the local indexes.
\end{remark}

We will call generalized 
index of~$\Fc$ at~$b_{D}$ the generalized index 
corresponding to the factor~$\O(D)$ and denote it 
(with an abuse) by 
$\chi^\mathrm{gen}_{b_{D}}(\Fc)$. 
Similarly, for every 
$i\in\{1,\dotsc,n\}$, we denote by~$b_{i}$ the germ of 
segment out of the point~$z_{i}$, call generalized index 
of~$\Fc$ at~$b_{i}$ the generalized index 
corresponding to the factor 
$(T-z_{i})^{-1}K[(T-z_{i})^{-1}]$ and denote it by 
$\chi^\mathrm{gen}_{b_{i}}(\Fc)$. We explain below 
that we have actually already studied those generalized 
indexes.


First, remember that we have investigated the generalized indexes of~$\nabla$ acting on~$\Fs(C_{b_{D}})$ with respect to a decomposition of the form~\eqref{eq: O(C) deco O(D_0)+O(D_1)}. Let us consider this decomposition with~$m=-1$ and~$n=0$. We have $D_{0} = D$ and the generalized indexes of~$\Fc$ and~$\Fs$ at~$b_{D}$ coincide, in the sense that one exists if, and only if, the other does and that, in this case, they have the same value. Indeed, the two endomorphisms of~$\Os(D)$ induced by~$\nabla$ by formula~\eqref{eq : truncated op u_k} already coincide. In particular, by Lemma 
\ref{Lemma: Def of chiabs chigen-chigen d} and Proposition~\ref{Prop : chirel=Irr}, if~$K$ is not trivially valued, then~$\Fc$ has finite generalized index at~$b_{D}$ if, and only if, $\Fs_{|C_{b_D}}^{\mathrm{Robba}}$ is Fredholm at $b_D$ and, in this case, 
we have
\begin{equation}\label{eq:chigenbDIrr-bis}
\chi^\mathrm{gen}_{b_{D}}(\Fc) \;=\; 
r+\chiabs_{b_D}(\Fs)
\;=\; r+\chiabs_{b_{D}}(\Fs_{|C_{b_D}}^{\mathrm{Robba}}) + \Irr_{b_{D}}(\Fs),
\end{equation}
where $r$ is the rank of $\Fc$.
 
If~$K$ is trivially valued, then item i)-(a) of Section~\ref{section : some situations FIn} ensures that $\Fs_{|C_{b_D}}^{\mathrm{Robba}}$ is Fredholm at $b_D$ (and that its generalized index is~0), hence that~$\Fc$ has finite generalized index at~$b_{D}$. By Proposition~\ref{Prop : chirel=Irr}, \eqref{eq:chigenbDIrr-bis} still holds.


Second, let $i\in\{1,\dotsc,n\}$ and denote by~$\M_{i}$ the differential equation over~$K((T-z_{i}))$ induced by~$\Fc$  by tensoring~\eqref{eq:nablaglobal-bis} with~$K((T-z_{i}))$. We see it as a differential equation on the punctured open unit disk over~$K$ endowed with the trivial valuation. The direct sum~\eqref{eq: O(C) deco O(D_0)+O(D_1)} then reads
\begin{equation}
K((T-z_{i})) = (T-z_{i})^{-1} K[(T-z_{i})^{-1}] \oplus K[[T-z_{i}]]\;.
\end{equation} 
We call $b_i^F$ the germ of segment out of $z_i$ in the 
trivially valued case.
By arguments similar to those above, we show that the 
generalized index of~$\Fc$ at $b_i$ coincides with that of~$\M_{i}$ at~$b_{i}^F$. In particular,  Section~\ref{Rk : formal gen indexes} ensures that this generalized index always exist and is equal to $\Irr_{z_{i}}^F(\M_i)-r$ (see~\eqref{eq : Irr_b_0^F=chigen} and Lemma~\ref{Lemma: Def of chiabs chigen-chigen d}). By Proposition~\ref{Prop : Irr-form=Irr-x-1}, we have
\begin{equation}\label{eq:chigen0DIrr-bis}
\chi^\mathrm{gen}_{b_{i}}(\Fc) \;=\; 
\chiabs_{b_i^F}(\M_i)-r\;=\;
\Irr_{z_i}^F(\M_i)-r\;=\;
\Irr_{z_{i}}(\Fs)-r\;.
\end{equation}

%
%
%

The map $\nabla=d/dT-G(T)$ acts naturally on 
$(\O_D[\ast Z])(D)^r$ and on $K((T-z_{i}))^r$ for 
every $i\in\{1,\dotsc,n\}$. Let $N\ge 0$ such that the 
map $\prod_{i=1}^n (T-z_{i})^N\cdot\nabla$ stabilizes 
$\O(D)^r$ and $K[[T-z_{i}]]^r$ for every 
$i\in\{1,\dotsc,n\}$. 
Set $P(T) := \prod_{i=1}^n (T-z_{i})^N$.

Since the multiplication by~$P(T)$ is invertible on both 
$(\O_D[\ast Z])(D)^r$ and $K((T-z_{i}))^r $ for every 
$i\in\{1,\dotsc,n\}$, $\nabla$ has finite index exactly 
when $P(T) \nabla$ has and, in this case, we 
have
\begin{equation}\label{eq:ODZTNnabla-bis}
\chi((\O_D[\ast Z])(D)^r,P(T)\nabla)\;=\;
\chi((\O_D[\ast Z])(D)^r,\nabla)
\end{equation}
and, for each $i\in\{1,\dotsc,n\}$,
\begin{equation} \label{eq:K((T-zi))TNnabla-bis}
\chi(K((T-z_{i}))^r,P(T)\nabla)\;=\;
\chi(K((T-z_{i}))^r,\nabla)\;.
\end{equation}


We now consider the exact sequences 
\begin{equation}\label{eq:exseqOD-bis}
0 \to \O(D)^r \to (\O_D[\ast Z])(D)^r \to \frac{(\O_D[\ast Z])(D)^r}{\O(D)^r} \to 0
\end{equation}
and, for each $z\in Z$, 
\begin{equation}\label{eq:exseqKTz-bis}
0 \to K[[T-z]]^r \to K((T-z))^r \to \frac{K((T-z))^r}{K[[T-z]]^r} \to 0\;.
\end{equation}
Note that the operator $P(T)\nabla$ acts on all the terms of those exact sequences and that we have an isomorphism 
\begin{equation}\label{eq:isoquotients-bis}
\frac{(\O_D[\ast Z])(D)^r}{\O(D)^r} \simeq  \bigoplus_{z\in Z} \frac{K((T-z))^r}{K[[T-z]]^r}
\end{equation}
that is compatible with this action.

\begin{lemma}
\label{Lemma : computation chigen O(D)-bis}
The differential equation 
$\Fs_{|C_{b_D}}^{\mathrm{Robba}}$ is Fredholm at 
$b_D$ if, and only if, the operator $P(T)\nabla$ has finite 
index on $\O(D)^r$.  
In this case, we have
\begin{align}
\label{eq : equality chi=chigen grqpleb-bis}
\chi(\O(D)^r,P(T)\nabla) 
& 
\;=\; (1- \partial_{b_{D}}(P(T))) \cdot r+\chiabs_{b_{D}}(\Fs_{|C_{b_D}}^{\mathrm{Robba}}) + \Irr_{b_D}(\Fs)\\
& \;=\; (1+ Nn) r+\chiabs_{b_{D}}(\Fs_{|C_{b_D}}^{\mathrm{Robba}}) + \Irr_{b_D}(\Fs)\;.
\end{align}
\end{lemma}
\begin{proof}
By Proposition~\ref{Prop : chirel=Irr} and 
Lemma~\ref{Lemma: Def of chiabs chigen-chigen d}, 
$\Fs_{|C_{b_D}}^{\mathrm{Robba}}$ is Fredholm at 
$b_D$ if, and only if, $\nabla$ is and, in this case, we 
have 
\begin{equation}
\chi_{b_D}^{\mathrm{gen}}(\O(C_{b_{D}})^r,\nabla)
\;=\;r+\chiabs_{b_D}(\Fs)
\;=\; r+\chiabs_{b_{D}}(\Fs_{|C_{b_D}}^{\mathrm{Robba}})  + \Irr_{b_D}(\Fs)\;.
\end{equation}
By Lemma~\ref{Lem : gen ind of a constant}, $P(T)$ is Fredholm at~$b_{D}$ and we have
\begin{equation}
\chi_{b_D}^{\mathrm{gen}}(\O(C_{b_{D}})^r,P(T))\;=\;  - \partial_{b_{D}}(P(T)) \cdot r\;.
\end{equation}
By assumption, $C_{b_{D}}$ is an open pseudo-annulus in $Y = D-Z$ containing~$b_{D}$. We deduce that, for each $z\in Z$, $|T-z|$ is log-affine of slope~1 along its skeleton when computed in the direction where~$|T|$ increases, that is to say opposite to~$b_{D}$. It follows that $\partial_{b_{D}}(P(T)) = -Nn$ and 
$\chi_{b_D}^{\mathrm{gen}}(\O(C_{b_{D}})^r,P(T))=Nnr$.

By Lemma~\ref{Lemma : additivity gen ind} and Proposition~\ref{Proposition : compass}, $\nabla$ is Fredholm at~$b_{D}$ if, and only if, $P(T) \nabla$ is and, in this case, we have  
\begin{equation}
\chi_{b_D}^{\mathrm{gen}}(\O(C_{b_{D}})^r,P(T)\nabla)\;=\; (1+Nn)r + \chiabs_{b_{D}}(\Fs_{|C_{b_D}}^{\mathrm{Robba}})  +\Irr_{b_D}(\Fs) \;.
\end{equation}

Let us now compute this generalized index in another way. To do so, we consider the operator $P(T) \nabla$ acting on~$\Os(C_{b_{D}})^r$ and use the decomposition~\eqref{eq: O(C) deco O(D_0)+O(D_1)} with $m=-1$ and $n=0$. 
Notice that $\O(D)^r$ is stable under $P(T)\nabla$, 
therefore the truncation \eqref{eq : truncated op u_k} 
applied to $P(T)\nabla:
\O(C_{b_D})^r\to\O(C_{b_D})^r$ is equal 
to $P(T)\nabla$ itself 
acting on $\O(D)^r$. It follows, by Definition \ref{Def : }, that $P(T)\nabla$ has finite generalized index at~$b_{D}$ if, and only if, $P(T)\nabla$ has finite index on $\O(D)^r$ and that 
\begin{equation}\label{eq : chi_n = chigen-bis}
\chi_{b_D}^{\mathrm{gen}}(\O(C_{b_{D}})^r,P(T)\nabla)\;=\;\chi(\O(D)^r,P(T)\nabla)\;.
\end{equation}

If~$K$ is not trivially valued, then, by 
Lemma~\ref{Lemma: Def of chiabs chigen-chigen d}, 
$P(T)\nabla$ has finite generalized index at~$b_{D}$ if, 
and only if, it is Fredholm at~$b_{D}$, and the result 
follows. 

If~$K$ is trivially valued, then, by Corollary~\ref{Cor. H^i(C)=H^i(C') restr}, $\Fs_{|C_{b_D}}^{\mathrm{Robba}}$ is Fredholm at~$b_D$. The previous arguments show that this implies that $P(T)\nabla$ has finite index on $\O(D)^r$ and that~\eqref{eq : equality chi=chigen grqpleb-bis} holds. The result follows.
\if{By assumption, $C_{b_{D}}$ is an open pseudo-annulus in $Y = D-Z$ containing~$b_{D}$. In particular, for each $i\in\{1,\dotsc,n\}$, $|T-z_{i}|$ is log-affine of slope~1 along its skeleton. 
}\fi
\end{proof}

\begin{lemma}
\label{Lemma : computation chigen K[[T]]}
Let $z\in Z$. The operator $P(T)\nabla$ has finite index on $K[[T-z]]^r$ and we have
\begin{equation}
\label{eq : equality chi=chigen grqpleb-formal-bis}
\chi(K[[T-z]]^r,P(T)\nabla)
\;=\;
(1+N)r-\Irr_{z}(\Fs)\;.
\end{equation}
\end{lemma}
\begin{proof}
Let $i\in \{1,\dotsc,n\}$ such that $z=z_{i}$. We consider the operator $\nabla$ as a connection on $\M_{i}$, \textit{i.e.} as a differential equation on the punctured open unit disk~$C_i=\{0<|T-z_i|<1\}$ 
over~$K$ endowed with the trivial valuation. We denote by~$b_{0}^F$ (resp.~$b_{1}^F$) the germ of segment at the open boundary out of~0 (resp. out of the Gauss point). By Section~\ref{Remark : index annulus trivial valuation}, we know that the radii of~$\M_{i}$ are log-affine along the skeleton of~$C_i$, that $\M_i^\textrm{Robba}$ is Fredholm at~$b_{0}^F$ and~$b_{1}^F$ and that, for $j=0,1$, $\chiabs_{b_{j}^F}(\M_i^\textrm{Robba}) = 0$.

Set $D_i=\{|T-z_i|<1\}$ over~$K$ endowed with the trivial valuation. 
We have $\O(C_i)=\O(D_i)[(T-z_i)^{-1}]=K((T-z_i))$ 
and $\O(D_i)=K[[T-z_i]]$. Hence we may now apply 
Lemma~\ref{Lemma : computation chigen O(D)-bis} 
to~$\M_{i}$, seen as a differential module over 
$D_i(*\{z_i\})$. It follows that $P(T)\nabla$ has finite 
index on $K[[T-z_i]]^r$ and that
\begin{equation}
\label{eq : equality chi=chigen grqpleb-formal}
\chi(K[[T-z_i]]^r,P(T)\nabla)
\;=\;
(1- \partial_{b_{1}^F}(P(T)) )\cdot r + \Irr_{b_{1}^F}(\M_{i}) \;=\; (1- \partial_{b_{1}^F}(P(T))) \cdot r - \Irr_{b_{0}^F}(\M_{i})\;.
\end{equation}
Note that, on the skeleton of~$C$, $|T-z_{i}|$ is log-affine of slope~1 when computed in the direction where $|T-z_{i}|$ increases and $|T-z_{j}|$ is constant, for each $j\ne i$ (i.e. $P$ has no zeros in $C$). We deduce that $\partial_{b_{1}^F}(P(T)) = - N$. Finally, by Proposition~\ref{Prop : Irr-form=Irr-x-1}, we have $\Irr_{b_{0}^F}(\M_{i}) = \Irr_{z_i}(\Fs)$ and the 
result follows.
%
%
%
%
\end{proof}

We now continue the proof of Theorem 
\ref{Thm : Index meromorphic disk}. 
By~\eqref{eq:K((T-zi))TNnabla-bis} and Section~\ref{Remark : index annulus trivial valuation}, for each $z\in Z$, 
we know that $P(T)\nabla$ has finite index on $K((T-z))^r$ and that 
\begin{equation}
\chi(K((T-z))^r,P(T)\nabla)\;=\;0\;.
\end{equation}
Therefore, we know that $P(T)\nabla$ has finite index on 
two terms of the exact sequence~\eqref{eq:exseqKTz-bis} and the value of the corresponding indexes. 
It follows from Lemma \ref{Lemma : additivity of Index} that $P(T)\nabla$ has finite index on 
$K((T-z))^r/K[[T-z]]^r$ and that 
\begin{equation}\label{eq:chiquotientTz}
\chi\Bigl(\frac{K((T-z))^r}{K[[T-z]]^r},P(T)\nabla\Bigr)\;=\;
\Irr_{z}(\Fs)-(1+N)r\;.
\end{equation}

Now, looking at the exact sequence \eqref{eq:exseqOD-bis} and using the isomorphism~\eqref{eq:isoquotients-bis}, 
we deduce, by Lemma~\ref{Lemma : additivity of Index} again,
that $P(T)\nabla$ has finite index 
on $(\O_D[\ast Z])(D)^r$ if, and only if, it has finite index 
on $\O(D)^r$. By Lemma~\ref{Lemma : computation chigen O(D)-bis}, this happens exactly when $\Fs_{|C_{b_D}}^{\mathrm{Robba}}$ is Fredholm at~$b_{D}$. It follows that (a) and (b) are equivalent.

\medskip

Let us now assume that (a) and (b) hold. By~\eqref{eq:ODZTNnabla-bis}, Lemmas 
\ref{Lemma : additivity of Index}, 
\ref{Lemma : computation chigen O(D)-bis} 
and~\eqref{eq:chiquotientTz}, and by the fact that 
$\chi_c(D-Z)=1-n$, we have
\begin{eqnarray}
\chidr(D(*Z),\Fc)&\;=\;&\chi((\O_D[\ast Z])(D)^r,P(T)\nabla)\\
&\;=\;&
\chi(\O(D)^r,P(T)\nabla) 
+\chi\Bigl(\frac{(\O_D[\ast Z])(D)^r}{\O(D)^r},P(T)\nabla\Bigr)
\label{eq : chi-split 0 infty mero}\\
&=&\Bigl[(1+Nn)r+\chiabs_{b_{D}}(\Fs_{|C_{b_D}}^{\mathrm{Robba}}) +\Irr_{b_D}(\Fs) \Bigr]+ \Bigl[\Bigl(\sum_{z\in Z} \Irr_z(\Fs)\Bigr)-(1+N)nr\Bigr]
\qquad\qquad
\\
&=& (1-n)r+
\chiabs_{b_{D}}(\Fs_{|C_{b_D}}^{\mathrm{Robba}})-\Irr_{D-Z}(\Fs)\;.\end{eqnarray}
This proves \eqref{eq : meromorphic index-lot of poles}. The equivalence between (c) and (d) follows.
\end{proof}


\if{\begin{corollary}
\label{Cor : Index meromorphic disk}
Denote by~$b_{D}$ the germ of segment at the open boundary of~$D$. Assume that there exists an open pseudo-annulus~$C_{b_{D}}$ of $C := D-\{0\}$ 
containing $b_D$ such that 
\begin{enumerate}
\item $\Fs$ has log-affine radii along $\Gamma_{C_{b_D}}$;
\item for every germ of segment~$b$ in~$\Gamma_{C_{b_D}}$, $\Fs_{|C_{b_D}}^{\mathrm{Robba}}$ is Fredholm at $b$.
\end{enumerate}

\comment{Ajouter une remarque faisant le lien entre cette hypoth\`ese et la condition Liouville~?}
\comm{Je l'ai ajouté après le corollaire}
Then, $\Fc$ has finite-dimensional meromorphic cohomology groups $\Hdr^i(D(*0),\Fc)$.

Moreover, if, for every germ of segment~$b$ in~$\Gamma_{C_{b_D}}$, we have 
\begin{equation}
\chiabs_{b}(\Fs) = \Irr_{b}(\Fs)\;,
\end{equation} 
then the index formula holds:
\begin{equation}
\label{eq : meromorphic index formula punctured disk}
\chidr(D(*0),\Fc)\;=\;\Irr_0(\Fs)+\Irr_{b_D}(\Fs)\;=\;-\mathrm{Irr}_C(\Fs)\;.
\end{equation}
\end{corollary}
\begin{proof}
Let~$D'$ be a relatively compact sub-disk of~$D$ containing~0 whose germ of segment at the open boundary~$b_{D'}$ is contained in~$\Gamma_{C_{b_D}}$. Since~$D'$ is contained in a closed disk and since the ring of functions on such a disk is a PID, $\Fc_{|D'}$ is a free $\Os(D')[T^{-1}]$-module. It then follows from Theorem~\ref{Thm : Index meromorphic disk} that $\Fc'$ has finite-dimensional meromorphic cohomology groups $\Hdr^i(D'(*0),\Fc_{|D'})$. Moreover, under the additional assumption of the statement, we have 
\begin{equation}
\chidr(D'(*0),\Fc)\;=\;\Irr_0(\Fs)+\Irr_{b_{D'}}(\Fs) \;=\; \Irr_0(\Fs)+\Irr_{b_{D}}(\Fs)
\end{equation}
since the radii of~$\Fs$ are log-affine on $\Gamma_{C_{b_D}}$.

Set $C' := D' \cap C_{b_{D}}$. Denote by~$c'$ the germ of segment at the open boundary of~$C'$ that is different from~$b_{D'}$. By Lemma~\ref{lem:b0b1chidr}, $\Fs_{|C'}$ has finite-dimensional de Rham cohomology. Moreover, under the additional assumption of the statement, we have 
\begin{equation}
\chidr(C',\Fc)\;=\;\Irr_{c'}(\Fs)+\Irr_{b_{D'}}(\Fs)\;=\;0
\end{equation}
since the radii of~$\Fs$ are log-affine on $\Gamma_{C_{b_D}}$.

The result now follows from the Mayer-Vietoris exact 
sequence.
\end{proof}
}\fi
%
%
%
%
%

The presence of the freeness assumption on~$\Fc$ in the statement of Theorem~\ref{Thm : Index meromorphic disk} is due to the fact that 
we cannot extend the scalars to a large field where $\Fc$ becomes free, as we did in most of the proofs until now, due to the lack of descent results in the context of differential equation with meromorphic singularities. To 
remove this assumption, we will need to impose 
conditions at more than one germ. 




\begin{corollary}
\label{Cor : Index meromorphic disk}

We maintain the notation of Setting 
\ref{Setting : section 4 pseudo disk L/K finite}.
Let~$C_{0}$ be a relatively compact open 
pseudo-annulus in~$D$ such that $Z$ is contained in the 
connected component of $D-C_0$ that does not contain 
$b_D$. In particular, $C_0$ does not meet~$Z$. 
Let~$\{x,y\}$ be the relative boundary of $C_0$ in $D$ 
and assume that~$y$ lies between~$x$ and the open 
boundary of~$D$. Denote by~$\{b_{x},b_y\}$ the 
germs of segment at the open boundary of $C_0$ 
(directed as usual towards the interior of $C_0$).

%

Assume that, for each $b \in \{b_{D},b_{x},b_{y}\}$, 
there exists an open sub-pseudo-annulus~$C_{b}$ in 
$D-Z$ representing $b$ such that
\begin{enumerate}
\item $\Fs$ has log-affine radii along $\Gamma_{C_{b}}$;\footnote{This is automatically true if $b\in\{b_x,b_y\}$ by relative compactness.}
\item $\Fs_{|C_{b}}^{\mathrm{Robba}}$ is Fredholm at~$b$.
\end{enumerate}
Then, $\Fc$ has finite-dimensional meromorphic 
cohomology groups $\Hdr^i(D(*Z),\Fc)$. 
In this case 
\if{
if we set
\begin{equation}
I\;:=\;\chiabs_{b_0}(\Fs_{|C_{b}}^{\mathrm{Robba}})
+
\chiabs_{b_1}(\Fs_{|C_{b_{D'}}}^{\mathrm{Robba}})
+
\chiabs_{b_D}(\Fs_{|C_{b_D}}^{\mathrm{Robba}})\;,
\end{equation}
then}\fi 
we have
\begin{equation}
\chidr(D(*Z),\Fc)\;=\;
\chi_c(Y)\cdot\mathrm{rank}(\Fc)-\Irr_{Y}(\Fs)
+\chiabs_{b_D}(\Fs_{|C_{b_D}}^{\mathrm{Robba}})\;.
\end{equation}
In particular, if $\chiabs_{b_D}(\Fs_{|C_{b_D}}^{\mathrm{Robba}})=0$, the index formula holds:
\begin{equation}
\label{eq : meromorphic index formula punctured disk}
\chidr(D(*Z),\Fc)\;=\;
\chi_c(Y)\cdot\mathrm{rank}(\Fc)-\Irr_{Y}(\Fs)\;.
\end{equation}
\end{corollary}
\begin{proof}
By the same argument as in the first part of the proof of Theorem~\ref{Thm : Index meromorphic disk}, we may reduce to the case where~$D$ is an open disk or the affine line. 

Denote by~$C$ the connected component of~$D-\{x\}$ 
containing~$b_{x}$. It is an open pseudo-annulus with 
open boundary~$\{b_{x},b_{D}\}$. Note that it is 
disjoint from~$Z$.

Denote by~$D'$ the connected component of~$D-\{y\}$ containing~$b_{y}$. It is an open disk containing~$Z$, with open boundary $b_{D'}=b_y$. Moreover, since~$D'$ is contained in a closed disk and since the ring of functions on such a disk is a PID, $\Fc_{|D'}$ is a 
free $\Os(D')[\ast Z]$-module. 
Therefore, we can apply 
Theorem~\ref{Thm : Index meromorphic disk}: the 
cohomology of $\Fc_{|D'}$ is finite-dimensional and 
one has (cf. \eqref{eq : meromorphic index-lot of poles})
\begin{equation}
\chidr(D'(*Z),\Fc_{|D'})\;=\;
\chi_c(D'-Z)\cdot r -
\Irr_{D'-Z}(\Fs)+
\chiabs_{b_{D'}}(\Fc_{|C_{b_{D'}}}^{\mathrm{Robba}})\;.
\end{equation}
Let us now consider the open pseudo-annulus $C'=C\cap D'$. 
We have $\Fc_{|C}=\Fs_{|C}$ and the assumptions of 
Lemma \ref{lem:b0b1chidr} are fulfilled 
over $C$ and over $C'$. 
Hence the cohomologies of $\Fc_{|C'}$ and of 
$\Fc_{|C}$ are finite-dimensional and one has
\begin{eqnarray}
\chidr(C(*Z),\Fc_{|C})&\;=\;&
\chidr(C,\Fs_{|C})\;=\;
\chiabs_{b_x}(\Fs)+\chiabs_{b_D}(\Fs)\;,\\
\chidr(C'(*Z),\Fc_{|C'})&\;=\;&
\chidr(C',\Fs_{|C'})\;=\;
\chiabs_{b_x}(\Fs)+\chiabs_{b_{D'}}(\Fs)\;.
\end{eqnarray}
By Mayer-Vietoris, we find
\begin{eqnarray}
\chi(D(*Z),\Fc)&\;=\;&
\chi(D'(*Z),\Fc_{|D'})+\chi(C(*Z),\Fc_{|C})-
\chi(C'(*Z),\Fc_{|C'})\\
&=&\chi_c(D'-Z)\cdot r -
\Irr_{D'-Z}(\Fs)+
\chiabs_{b_{D'}}(\Fc_{|C_{b_{D'}}}^{\mathrm{Robba}})+
\chiabs_{b_D}(\Fs)-\chiabs_{b_{D'}}(\Fs)
\end{eqnarray}
Now,  since $D'$ contains $Z$, 
one has $\chi_c(D'-Z)=\chi_c(Y)$.
Moreover, by Proposition \ref{Prop : chirel=Irr}, one has 
$\chiabs_{b_D}(\Fs)=
\chiabs_{b_D}(\Fs_{|C_{b_D}}^{\mathrm{Robba}}) 
+\Irr_{b_D}(\Fs)$ and analogous relations hold for 
$\chiabs_{b_x}(\Fs)$ and $\chiabs_{b_{D'}}(\Fs)$. 
Therefore 
\begin{eqnarray}
\Irr_{D'-Z}(\Fs)&\;=\;&-\sum_{z\in Z}\Irr_z(\Fs)-\Irr_{b_{D'}}(\Fs)\\
&=&\Irr_{Y}(\Fs)+\Irr_{b_D}(\Fs)-\Irr_{b_{D'}}(\Fs)\\
&=&\Irr_{Y}(\Fs)+\chiabs_{b_D}(\Fs)-\chiabs_{b_D}(\Fs_{|C_{b_D}}^{\mathrm{Robba}})-\chiabs_{b_{D'}}(\Fs)+\chiabs_{b_{D'}}(\Fs_{|C_{b_{D'}}}^{\mathrm{Robba}})\;.
\end{eqnarray}
The  claim follows.
\end{proof}

\begin{corollary}
\label{Cor : index-mero disk-Z-with Liouville}
We maintain the setting \ref{Setting : section 4 pseudo disk L/K finite}. 
Assume that
\begin{enumerate}
\item $\Fs$ has log-affine radii along the germ of 
segment $b_D$ at the open boundary of $D$;
\item $\Fs$ is 
free of Liouville numbers along $b_D$ 
(cf. Definition~\ref{def:NLgerm}).
\end{enumerate}
Then, $\Fc$ has finite-dimensional meromorphic 
cohomology groups $\Hdr^i(D(*Z),\Fc)$ and we have 
the index formula
\begin{equation}
\chidr(D(*Z),\Fc)\;=\;
\chi_c(Y)\cdot\mathrm{rank}(\Fc)-\Irr_{Y}(\Fs)\;.
\end{equation}
\end{corollary}
\begin{proof}
By Corollary \ref{Cor. H^i(C)=H^i(C') restr},
the assumptions of Corollary 
\ref{Cor : Index meromorphic disk} are fulfilled.
%
\end{proof}

In the following example we analyze what happens for a 
rank one differential equation with an individual 
singularity on $D$.
\begin{example}
Assume that $D=\{|T|<r_D\}$ as in 
\eqref{section : meromorphic Settings punctured disk}. 
Let $g(T)=\sum_{n\geq n_0}a_nT^n\in \O(D)[T^{-1}]$, 
and consider the differential equation 
$\Fc\;:\;T\frac{d}{dT}(y)=g(T)y$ on $D(*0)$ with 
$\log$-affine radii at $b_D$.

If $\Fc$ is not of type Robba at the open boundary of 
$D$, then $\Fs_{|C_{b_D}}^{\mathrm{Robba}}=0$ and 
the conditions of Corollary 
\ref{Cor : index-mero disk-Z-with Liouville} are 
automatically satisfied. In this case, the cohomology of 
$\Fc$ over $D(*0)$ is finite dimensional.

Assume now that $\Fc$ is of type Robba at $b_D$, in 
particular $\Irr_{b_D}(\Fc)=0$.
Set $g^+(T):=\sum_{n\geq 1}a_nT^n$, 
$g^-(T):=\sum_{n= n_0}^{-1}a_nT^n$.
Then $\Fc$ decomposes, over $\O(D)[T^{-1}]$, as 
\begin{equation}
\Fc\;=\;\Fc^+\otimes\Ns(a_0)\otimes\Fc^-\;,
\end{equation}
where $\Fc^\pm$ and $\Ns(a_0)$ are associated with 
the equations $T\frac{d}{dT}(y)=g^\pm(T)y$ and 
$T\frac{d}{dT}(y)=a_0\cdot y$ respectively.
The concavity of the radius of convergence 
function on the skeleton of $D-\{0\}$ implies
that $\Fc^+$, $\Ns(a_0)$, $\Fc^-$ are all 
of type Robba (see 
\cite[Proposition 1.1, p.501]{Rk1} 
for more details). In this case $\Fc^+$ is moreover 
trivial on $D(*0)$, and there exists a pseudo-annulus 
$C_{b_D}$ in $D-\{0\}$ containing $b_D$ such that 
$\Fc^-_{|C_{b_D}}$ is trivial. Therefore, 
$\chiabs_{b_D}(\Fc) =\chiabs_{b_D}(\Ns(a_0))$ 
and the latter equals $0$ 
if, and only if, the residue $a_{0}$ of $g(T)$ satisfies 
$\mathrm{type}_{b_D}(a_{0})=1$ 
(cf. Definition \ref{Def : type_b}). 

In other words if $\Fc$ is of Robba type at $b_D$, then 
its cohomology on $D(*0)$ is finite dimensional if and 
only if $\mathrm{type}_{b_D}(a_0)=1$ 
(this is true in particular if $a_0$ is non-Liouville).
\end{example}

\section{Comparison results}
\label{App : Some comparison results}
In this section we list some consequence of the index 
Theorem \ref{Thm : Index meromorphic disk} about the 
comparison between formal, 
meromorphic and analytic de Rham cohomologies of a 
differential equation over an open disk~$D$ with a 
meromorphic singularity at $0$. 
Some statements generalize certain foundational 
results of Clark \cite{Clark} and Baldassarri 
\cite{Balda-Turritin}. More precisely, 
the results of \cite{Clark} and Baldassarri 
\cite{Balda-Turritin} hold 
under a ``\emph{boundary condition}'' of non-Liouvilleness of the \emph{formal} exponents at $0$. 
The novelty of our approach is twofold : on the one hand 
our boundary condition consists in a 
systematic use of the absolute index which
does not involves any Liouvilleness 
of the exponents, on the other hand our boundary 
condition arises \emph{at the open boundary of $D$, not 
at $0$}. This allows to obtain finer comparison results. 

Unless specific mention, we 
maintain the settings of Section 
\ref{section : meromorphic Settings punctured disk}. We recall that $D = \{|T|<r_D\}$, $b_D$ and~$b_{0}$ are the germs of segment at the open boundary of~$D$ and out of~0 respectively, $\Fc$ is a differential equation over~$D$ possibly with meromorphic singularities at~0 
and we have $\Fs=\Fc_{|D-\{0\}}$, 
$\Fc_0^\dag=\Fc\otimes K(\{T\})$ and 
$\M=\Fc\otimes K((T))$. 

\subsection{Meromorphic vs. formal theories}
\label{Meromorphic vs. formal theories}
In this section, we compare the meromorphic and the 
formal theories of differential equations.
We maintain the notations of Section 
\ref{section : meromorphic Settings punctured disk}.

\subsubsection{Meromorphic vs. formal cohomologies.}
In this section we compare the meromorphic and formal 
\emph{cohomologies}. The results of this sub-section are 
generalizations of a result of D.~Clark 
\cite{Clark, CLark-2}.
 


We begin by an easy claim that does not require freeness 
on $\Fc$.
\begin{lemma}
\label{Lemma : injectiona and surjection res mero form}
\label{Lemma : mero vs formal coh}
The natural map 
\begin{equation}\label{eq : iso mero formal coh-sec5}
\Hdr^i(D(*0),\Fc)\;\xrightarrow{\;\quad\;}\;
\Hdr^i(K((T)),\M)
\end{equation}
is injective for 
$i=0$ and surjective for $i=1$.

If~$\Fc$ has finite-dimensional de Rham cohomology, then we have
\begin{equation}\label{S4eq : chidr leq chifrom}
\chidr(D(*0),\Fc)\;\leq\; 0\;=\;\chidr(K((T)),\M)\;
\end{equation}
and the following are equivalent:
\begin{enumerate}
\item $\chidr(D(*0),\Fc)=0$;
\item for $i=0,1$, the restriction maps 
\eqref{eq : iso mero formal coh-sec5} are isomorphisms.
\end{enumerate}

\end{lemma}
\begin{proof}
It is clear that the map $\Hdr^0(D(*0),\Fc) \to \Hdr^0(K((T)),\M)$ between the 
vector spaces of formal and convergent solutions is injective.

On the other hand, the restriction morphism
\begin{equation} 
(\Os_{D}[*0])(D) = \O(D)[T^{-1}] \to K((T))
\end{equation}
has dense image. We may see~$\M$ as a differential equation on an open annulus over $K$ endowed with the trivial valuation. By Remark~\ref{rem:logafftrivval}, its radii are log-affine on the skeleton. By Corollary~\ref{Cor. H^i(C)=H^i(C') restr}, $\Hdr^1(K((T)),\M)$ is finite-dimensional and this remains true after base-change to any complete extension of~$K$. Let us consider the commutative diagram
\begin{equation}\label{S1eq : diagram dense}
\xymatrix{
\Fc(D(*0)) \ar[r]^-{\nabla}\ar[d] & (\Fc\otimes_{\Os_{D}}\Omega^1_{D})(D(*0))\ar[d]\ar[r]&
\Hdr^1(D(*0),\Fc)\ar[d]\ar[r] &0\\ 
M\ar[r]^-{\nabla}&M\otimes_{K((T))} K((T))\, dT\ar[r]&
\Hdr^1(K((T)),M)\ar[r]&0
}
\end{equation}
\if{\begin{equation}\label{S1eq : diagram dense}
\begin{tikzcd}
\Fc(D(*0)) \arrow{r}{\nabla}\arrow{d} & (\Fc\otimes_{\Os_{D}}\Omega^1_{D})(D(*0))\arrow{d}\arrow{r}&
\Hdr^1(D(*0),\Fc)\arrow{d}\arrow{r} &0\\ 
M\arrow{r}{\nabla}&M\otimes_{K((T))} K((T))\, dT\arrow{r}&
\Hdr^1(K((T)),M)\arrow{r}&0
\end{tikzcd}
\end{equation}
}\fi
It is possible to put suitable structures, namely Banachoid structures over~$K$ (see~\cite{Banachoid}), on the spaces of the bottom line such that the middle vertical map has dense image, hence the right hand vertical map too. Since $\Hdr^1(K((T)),\M)$ is finite-dimensional and separated, this map is indeed surjective. We refer to the proof of~\cite[Lemma~4.17]{Banachoid} for details.



Assume now that~$\Fc$ has finite-dimensional de Rham 
cohomology. We have just proved that 
$\mathrm{dim}\;\Hdr^1(D(*0),\Fc)
\geq\mathrm{dim}\;\Hdr^1(K((T)),\M)$ and 
$\mathrm{dim}\;\Hdr^0(D(*0),\Fc)
\leq\mathrm{dim}\;\Hdr^0(K((T)),\M)$. The result now follows from Corollary~\ref{Cor. H^i(C)=H^i(C') restr} and a computation of dimensions.
%
\end{proof}
\if{
\begin{lemma}
\label{Lemma : mero vs formal coh}
Assume that $\Fc$ is free as $\O_D[*0]$-module 
and that it has finite dimensional cohomology 
groups $\Hdr^i(D(*0),\Fc)$. 
Then the following are equivalent
\begin{enumerate}
\item $\chidr(D(*0),\Fc)=0$;
\item For $i=0,1$, the restriction maps
\begin{equation}\label{eq : iso mero formal coh-sec5}
\Hdr^i(D(*0),\Fc)\;\xrightarrow{\;\sim\;}\;
\Hdr^i(K((T)),\M)
\end{equation}
are isomorphisms.
\end{enumerate}
\end{lemma}
\begin{proof}
Lemma 
\ref{Lemma : injectiona and surjection res mero form} 
implies that if i) holds, then we must have ii) by a matter 
of dimensions. On the other hand, if ii) holds, then 
$\chidr(D(*0),\Fc)=\chidr(K((T)),\M)$. 
By Theorem \ref{Thm : index of finite opens} 
we have $\chidr(K((T)),\M)=0$ (cf. Section 
\ref{Remark : index annulus trivial valuation}). The 
claim follows.
\end{proof}
}\fi

\begin{corollary}
\label{Corollary : hyp=> resiso mero-form}
Assume that 
\begin{enumerate}
\item $\mathcal{F}$ is free as  
$\O_D[*0]$-module;
\item the radii of $\Fs$ are all $\log$-affine 
along the germ of segment $b_D$ at the open 
boundary of $D$;
\item $\Fs$ is Fredholm at $b_D$ and 
\begin{equation}
\chiabs_{b_D}(\Fs)\;=\;\Irr_{b_D}(\Fs)\;;
\end{equation}
\item $\Irr_{b_{D}}(\Fs) + \Irr_{b_{0}}(\Fs) = 0$.
\end{enumerate}
Then $\Fc$ has finite-dimensional de Rham cohomology, we have $\chidr(D(*0),\Fc)=0$ 
and the restriction maps 
\eqref{eq : iso mero formal coh-sec5} are isomorphisms.
\end{corollary}
\begin{proof}
It is a direct consequence of Theorem 
\ref{Thm : Index meromorphic disk} and Lemma 
\ref{Lemma : mero vs formal coh}.
\end{proof}

\begin{remark}
\label{Rk : Conditions for finiteness D(*0)}
The conclusions of Corollary 
\ref{Corollary : hyp=> resiso mero-form} remain 
the same if we replace the conditions i), ii), iii) by the 
conditions i),ii),iii),iv) of Corollary 
\ref{Cor : Index meromorphic disk}, or alternatively
by the conditions i),ii) of Corollary 
\ref{Cor : index-mero disk-Z-with Liouville}, where of 
course $Z=\{0\}$. 
\end{remark}

\begin{remark}
\label{rk : trivializati Y'=GY then rational}
A direct interesting consequence of the above claim 
asserts, in down to earth terms, that
if $Y'=G(T)Y$, with $G\in M_r(\O(D)[T^{-1}])$, 
is a differential equation satisfying the 
assumptions of Corollary
\ref{Corollary : hyp=> resiso mero-form}, or more 
simply condition i) of Lemma 
\ref{Lemma : mero vs formal coh}, 
then any formal solution 
$Y\in K((T))^r$ is actually convergent outside $0$ 
and lies in $\O(D)[T^{-1}]^r\subseteq K((T))^r$. 
\end{remark}

\begin{remark}
The assumptions of Corollary  
\ref{Corollary : hyp=> resiso mero-form} 
do not imply that the radii of $\Fs$ are all 
log-affine along the whole $\Gamma_C$. 
In particular, the largest 
disk on which the maps 
\eqref{eq : iso mero formal coh-sec5} are isomorphisms
may be bigger than the largest disk $D$ such 
that $\Fs$ has log-affine radii on $\Gamma_{D-\{0\}}$.
For instance, if 
$\Fc=\Fc_1\oplus\Fc_2$, where $\Fc_1$ and $\Fc_2$ 
are rank one meromorphic differential
equations on $D(*0)$ having log-affine radii
along $\Gamma_C$ and if the 
radii of $\Fc_1$ and $\Fc_2$ 
cross at one point $x\in\Gamma_C$, 
then $\Irr_C(\Fc)=0$, but the 
radii of $\Fc$ are not log-affine.
\end{remark}

\begin{remark}\label{Rk :  subdisk D' of D mero-formal}
Let $D'$ be a sub-disk of $D$. Assume that the restrictions maps~\eqref{eq : iso mero formal coh-sec5} are isomorphisms for~$D$ and~$D'$ and $i=0,1$. 
Then, for $i=0,1$, 
the restriction maps
\begin{equation}\label{Rk-eq: res coh mero D(*0)}
\Hdr^i(D(*0),\Fc)\;\xrightarrow{\;\sim\;}\;
\Hdr^i(D'(*0),\Fc_{|D'})\;\xrightarrow{\;\sim\;}\;
\Hdr^i(K((T)),\M)
\end{equation}
are isomorphisms too.

Notice that if the radius of $D'$ is small enough, then 
items i), ii) and iv) of Corollary 
\ref{Corollary : hyp=> resiso mero-form} 
are automatically satisfied by Lemma 
\ref{lem:merosinglinear-1}.
\end{remark}

\begin{remark}
Assume that $K$ is trivially valued. 
If the radius $r_D$ of $D$ satisfies $r_D\leq 1$, then 
$\O(D)[T^{-1}]=K((T))$, therefore the above lemmas 
are trivially true. 
However, if $r_D> 1$, then one has 
$\Irr_C(\Fs)=0$ if and only if $\Fs$ 
has log-affine radii along $\Gamma_C$ and 
$\Fs=\Fs^{\mathrm{Robba}}$. 
Indeed, if $x_{0,1}$ denotes the 
point at the boundary of the annulus 
$\{|T|=1\}$, then all the radii of $\Fs$ 
are solvable at $x_{0,1}$ (cf. Section 
\ref{Remark : index annulus trivial valuation}).
Therefore, if $r_D>1$, the above lemmas only involve 
Robba modules.
\end{remark}

\subsubsection{$K(\{T\})$ vs. $K((T))$.}
In this section, we state some consequences of the 
above results about differential modules over $K(\{T\})$.

The same proof as that of Lemma \ref{Lemma : 
injectiona and surjection res mero form} gives 
the following result.

\begin{lemma}
\label{Lemma : res=iso merodag form}
The natural map 
\begin{equation}
\Hdr^i(K(\{T\}),\Fc_0^\dag)\to\Hdr^i(K((T)),\M)
\end{equation}
is injective for $i=0$ and surjective for $i=1$. 
In particular, if the cohomology groups 
$\Hdr^i(K(\{T\}),\Fc_0^\dag)$ are finite-dimensional, then
the following conditions are equivalent:
\begin{enumerate}
\item $\chidr(K(\{T\}),\Fc_0^\dag)=0$;
\item for $i=0,1$, the restriction maps
\begin{equation}\label{eq : H^i mero = H^i K(T)}
\Hdr^i(K(\{T\}),\Fc_0^\dag)\;\xrightarrow{\;\sim\;}\;
\Hdr^i(K((T)),\M)
\end{equation}
are isomorphisms.
\hfill$\Box$
\end{enumerate}
\end{lemma}

\begin{lemma}\label{Lemma : H^i K(T)=lim-1}
For $i=0,1$, we have a natural isomorphism of $K$-vector spaces
\begin{equation}
\Hdr^i(K(\{T\}),\Fc_0^\dag)
\;\simto\;\varinjlim_{D'}\Hdr^i(D'(\ast0),\Fc_{|D'})\;,
\end{equation}
where $D'$ runs through the set of subdisks of $D$ containing $0$.

\end{lemma}
\begin{proof}
The proof is formally the same as that of Lemma 
\ref{Lemma : H^i Robba=lim}.
\end{proof}

\begin{lemma}\label{Lemma : H^i K(T)=lim-2}
Let $D_1\supset D_2\supset \cdots$ be 
a decreasing sequence of disks such that 
$\bigcap_nD_n=\{0\}$. 
If, for each $n\geq 1$, the cohomology group 
$\Hdr^i(D_n(*0),\Fc_{|D_n})$ is finite-dimensional 
and $\chidr(D_n(*0),\Fc_{|D_n})=0$, 
then, for each $n\geq 1$ and $i=0,1$, 
we have isomorphisms
\begin{equation}
\Hdr^{i}(D_n(*0),\Fc_{|D_n})\;\xrightarrow{\;\sim\;}\;
\Hdr^{i}(K(\{T\}),\Fc_0^\dag)\;\xrightarrow{\;\sim\;}\;
\Hdr^{i}(K((T)),\M)\;.
\end{equation}
\end{lemma}
\begin{proof}
The claim follows from Lemma~\ref{Lemma : mero vs formal coh}, 
Remark \ref{Rk :  subdisk D' of D mero-formal} 
and Lemma \ref{Lemma : H^i K(T)=lim-1}.
\end{proof}

\begin{corollary}
\label{Cor : comparison mero-formal K(t) and K((t))}
Let $D_1\supset D_2\supset \cdots$ be a decreasing 
sequence of disks with $\bigcap_nD_n=\{0\}$. 
Let $b_n$ be the germ of segment at the open 
boundary of $D_n$. 
Assume that, for~$n$ large enough, $\Fs$ is Fredholm at $b_{n}$ and $\chiabs_{b_{n}}(\Fs)=\Irr_{b_{n}}(\Fs)$. Then, for $i=0,1$ and~$n$ large enough, the space 
$\Hdr^i(D_n(*0),\Fc_{|D_n})$ is finite-dimensional 
and we have natural isomorphisms
\begin{equation}
\Hdr^i(D_n(*0),\Fc_{|D_n})
\;\xrightarrow{\;\sim\;}\;
\Hdr^i(K(\{T\}),\Fc_0^\dag)\;
\xrightarrow{\;\sim\;}\;
\Hdr^i(K((T)),\M)\;\;\;.
\end{equation}
In particular, the de Rham cohomology groups 
$\Hdr^i(K(\{T\}),\Fc_0^\dag)$ are finite-dimensional and we have
\begin{equation}
\chidr(K(\{T\}),\Fc_0^\dag)
\;=\;0\;.
\end{equation}
\end{corollary}
\begin{proof}
For~$n$ large enough, $\Fc_{|D_{n}}$ is a free $\O(D_{n})[\ast 0]$-module and, by Lemma \ref{lem:merosinglinear-1}, $\Fs_{|D_{n}}$ has log-affine radii along $\Gamma_{D_n-\{0\}}$. By Theorem \ref{Thm : Index meromorphic disk} and Proposition \ref{Prop : chirel=Irr}, the 
cohomology groups $\Hdr^i(D_n(*0),\Fc_{|D_n})$ are 
finite-dimensional and we have
\begin{equation}
\chidr(D_n(*0),\Fc_{|D_n}) = \Irr_{D_n-\{0\}}(\Fs_{|D_n-\{0\}}) = 0\;.
\end{equation}
The claim then follows from Lemma 
\ref{Lemma : H^i K(T)=lim-2}.
\end{proof}

\subsubsection{Descent of morphisms.}
\label{Descent of morphisms}
In this section, we give conditions to ensure that a morphism between differential modules over 
$K((T))$ comes from a morphism over $\O(D)[T^{-1}]$. This generalizes a result of F.~Baldassarri \cite[Theorem 2]{Balda-Turritin} 
(cf. Section
\ref{Remark : classical exponents and Baldassarri result}).


\begin{lemma}
\label{Lemma: descent of morphisms-1}
Let $\Fc$ and $\Gc$ be differential equations on $D(*0)$ 
(resp. over $K(\{T\})$) and let 
\begin{equation}
\beta\;:\;\Fc\otimes K((T))\;\xrightarrow{\;\quad\;}\;
\Gc\otimes K((T))\;.
\end{equation}
be a morphism of differential equations over $K((T))$. 
Assume that $\Hom(\Fc,\Gc)$ 
has finite-dimensional de Rham cohomology and satisfies
\begin{equation}
\label{S4eq: chidrHom=0 descent meroform}
\chidr(D(*0),\Hom(\Fc,\Gc))\;=\;0\qquad\textrm{(resp. 
$\chidr(K(\{T\}),\Hom(\Fc,\Gc))\;=\;0$)}\;.
\end{equation}
\if{
\begin{enumerate}
\item $\Hom(\Fc,\Gc)=\Fc^*\otimes\Gc$ is 
free as $\O_D[*0]$-module;

\item $\Hom(\Fc,\Gc)$ 
has finite-dimensional de Rham cohomology and satisfies
\begin{equation}
\label{S4eq: chidrHom=0 descent meroform}
\chidr(D(*0),\Hom(\Fc,\Gc))\;=\;0\qquad\textrm{(resp. 
$\chidr(K(\{T\}),\Hom(\Fc,\Gc))\;=\;0$)}\;.
\end{equation}
\end{enumerate} 
}\fi
Then, there exists a unique morphism
\begin{equation}
\alpha\;:\;\Fc\;\xrightarrow{\;\quad\;}\;\Gc
\end{equation}
such that $\beta=\alpha\otimes 1$. Moreover, $\alpha$ is a monomorphism (resp.
epimorphism, isomorphism) if, and only if, so is 
$\beta$.
\end{lemma}
%
%
\begin{proof}
By Lemma \ref{Lemma : mero vs formal coh}, 
one has 
\begin{eqnarray}
\Hom^\nabla(\Fc,\Gc)&\;=\;&
\Hdr^0(D(*0),\Hom(\Fc,\Gc))\\
&\;\cong\;&
\Hdr^0(K((T)),\Hom(\Fc,\Gc)\otimes K((T)))\\
&\;\cong\;&
\Hdr^0(K((T)),\Hom(\Fc\otimes K((T)),\Gc\otimes K((T))))\;.
\end{eqnarray}
Therefore, there exists a unique $\alpha:\Fc\to\Gc$ such that 
$\alpha\otimes 1=\beta$. 

Denote by $\mathcal{K}$ and $\mathcal{C}$ the kernel 
and cokernel of $\alpha$ respectively. 
If $E\subseteq D$ is a closed 
disk containing~$0$, the terms of the sequence
$0\to \mathcal{K}_{|E}\to\Fc_{|E}\to
\Gc_{|E}\to\mathcal{C}_{|E}\to 0$ 
are free as $\O_{E}[*0]$-modules. 
Therefore, $\beta$ is epi or mono if, and 
only if, so is $\Fc_{|E}\to\Gc_{|E}$.

Now, $\mathcal{K}$ and $\mathcal{C}$ are locally free 
over $D(*0)$ by Proposition~\ref{prop:eqcatStein}. 
Therefore they are zero over $E(*0)$ if and only if 
they are zero on the whole $D(*0)$.
\if{If $\alpha$ is an epimorphism or a monomorphism, then 
so is $\alpha\otimes 1=\beta$. Indeed 
$\beta=\alpha_{|E(*0)}\otimes 1$, and all the terms of 
the sequence are free over $E(*0)$.

Assume that $\beta$ is surjective. Then 
$\mathcal{C}\otimes K((T))=0$. This implies that for all 
closed sub-disk of $D$ one has 
$\mathcal{C}_{|E(*0)}=0$, because it is a free 
$\O_E[*0]$-module. 
Therefore, $\mathcal{C}$ is itself zero, and $\alpha$ is 
an epimorphism.

Assume that $\beta$ is a monomorphism. 
Then $\mathcal{K}_{|E(*0)}=0$ because all the 
modules over $E(*0)$ are free. Therefore 
$\mathcal{K}=0$.
}\fi

The case where $\Fc,\Gc$ are differential modules over 
$K(\{T\})$ follows analogously from 
Lemma \ref{Lemma : res=iso merodag form} and the 
fact that the tensor product 
$-\otimes_{K(\{T\})}K((T))$ is fully faithful.
\end{proof}

\if{
\begin{remark}
Notice that $\Fc$ and $\Gc$ are not assumed to be free 
$\O(D)[T^{-1}]$-modules in Lemma 
\ref{Lemma: descent of morphisms-1}. 
\end{remark}
}\fi
It follows from 
Corollary \ref{Corollary : hyp=> resiso mero-form} 
and Remark 
\ref{Rk : Conditions for finiteness D(*0)} that we have the following criterion.

\begin{corollary}
\label{Cor: descent of morphisms-2}
Let $\Fc$ and $\Gc$ be differential equations over 
$D(*0)$ and let 
$\beta:\Gc\otimes K((T))\to\Fc\otimes K((T))$ 
be a morphism. Assume that the differential equation 
$\Hom(\Fc,\Gc)$ satisfies the assumptions of 
Corollary \ref{Corollary : hyp=> resiso mero-form} (resp. 
the assumptions of Remark 
\ref{Rk : Conditions for finiteness D(*0)}).
\if{
\begin{enumerate}
\item $\Hom(\Fc,\Gc)=\Fc^*\otimes\Gc$ is  free as 
$\O_D[*0]$-module;
\item the radii of $\Hom(\Fc,\Gc)$ 
are all $\log$-affine along the germ of segment 
$b_D$ at the open boundary of $D$ (cf. Example 
\ref{Example: liouville of End(F) counterexample}).
\item $\Hom(\Fc,\Gc)$ is Fredholm at $b_D$ and 
(cf. Proposition \ref{Prop : chirel=Irr})
\begin{equation}
\chiabs_{b_D}(\Hom(\Fc,\Gc))\;=\;
\Irr_{b_D}(\Hom(\Fc,\Gc))\;.
\end{equation}
\item $\Irr_C(\Hom(\Fc,\Gc))=0$.
\end{enumerate}
}\fi
Then, there exists a morphism of differential 
equations
$\alpha:\Fc\simto\Gc$ such that 
$\beta=\alpha\otimes 1$. 
Moreover, $\alpha$ is a monomorphism (resp.
epimorphism, isomorphism) if, and only if, so is 
$\beta$.\hfill$\Box$
\end{corollary}
 
\subsubsection{Descent of 
Turrittin-Hukuhara-Levelt 
decomposition.}
\label{Section : Descent Turritin-Balda}
We maintain the notation of Section 
\ref{section : meromorphic Settings punctured disk} 
where $\M=\Fc\otimes K((T))$ and $r$ is its rank.
A well-known result of Turrittin-Hukuhara-Levelt 
\cite{Turritin} (cf. \cite{VS}) proves that
there exist a natural number $n$, a finite extension 
$K'/K$ and rank-one modules 
$\N_1,\ldots,\N_r$ over $K'((T^{\frac{1}{n}}))$ 
such that 
$\M\otimes_{K((T))}K'((T^{\frac{1}{n}}))$ is 
successive extension of $\N_1,\ldots,\N_r$, 
\textit{i.e.} $\N_1,\ldots,\N_r$ 
is a Jordan-Hölder sequence for the differential module 
$\M\otimes_{K((T))}K'((T^{\frac{1}{n}}))$. 

To simplify the exposition, we assume $K'=K$ and 
$n=1$; the general case will be analyzed in 
Remark \ref{Rk : descente on Turrittin}.
\if{
if 
$\mathbb{G}_{m,K'}^{\mathrm{an}}\to
\mathbb{G}_{m,K}^{\mathrm{an}}$ is the 
composition of the projection relative to 
$K'/K$ and the map sending $x$ into $x^n$, then
the inverse image $D'(*0)$ of $D(*0)$ 
is again a disk centered at $0$ 
with radius $r_{D'}=r_D^{1/n}$. 
For our purposes, we can directly work over $D'(*0)$.
}\fi
Assume that $\M$ admits a Jordan-Hölder 
sequence $\N_1,\ldots,\N_r$ 
formed by rank one modules. 
Classical computations show 
that $\M$ is then isomorphic to 
a direct sum of modules of type
\begin{equation}
\N_i\otimes\mathrm{U}_m\;,
\end{equation}
where, for $m\geq 1$, we denote by 
$\mathrm{U}_m$ the standard 
$m$-dimensional unipotent object.
This is the free differential module of rank $m$ 
over $\O(D)[T^{-1}]$ with connection 
$\nabla=\nabla(T\cdot \frac{d}{dT}):
\mathrm{U}_m\to \mathrm{U}_m$ 
given in the basis 
$e_1,\ldots,e_m$ of  $\mathrm{U}_m$ by 
$\nabla(e_i)=e_{i+1}$ for all 
$i=1,\ldots,m$ and 
$\nabla(e_m)=0$. 

It is easily seen that each $\N_i$ may 
be represented, in a basis $n_i\in\N_i$, 
by an equation of the form 
$T\frac{d}{dT}(y)=g_i(T)y$, with 
$g_i(T)\in K[T^{-1}]$. This equation defines a 
differential module $\mathcal{N}_i$ over $K[T,T^{-1}]$
such that $\mathcal{N}_i\otimes K((T))\cong\N_i$.
Therefore, in the basis $n_i\otimes e_j$, the matrix of 
$\nabla(T\frac{d}{dT}):
\N_i\otimes\mathrm{U}_m\to\N_i\otimes\mathrm{U}_m$
is in the Jordan form 
\begin{equation}\label{eq : Jordan Form g_i}
\left(\begin{smallmatrix}
g_i&1&0&\cdots&0\\
0&g_i&1&\cdots&0\\
\cdots&&\cdots&&\cdots\\
0&\cdots&&g_i&1\\
0&\cdots&&0&g_i
\end{smallmatrix}\right)\;.
\end{equation}
The fact that $\M$ is direct sum of the 
$\N_i\otimes \mathrm{U}_m$ implies that whole 
matrix of the connection of $\M$ has such blocks 
along the diagonal and zeroes elsewhere. 
In particular, the matrix lies in $M_r(K[T,T^{-1}])$ and 
it defines a differential module $\mathcal{M}$
over $K[T,T^{-1}]$ such that 
$\mathcal{M}\otimes K((T))\cong\M$. 
The family $\mathcal{N}_1,\ldots,\mathcal{N}_r$ is a 
Jordan-Hölder sequence of $\mathcal{M}$ over 
$K[T,T^{-1}]$. It is not hard to show that, up to 
isomorphisms, $\mathcal{M}$ is the unique differential 
module over $K[T,T^{-1}]$ such that 
\begin{enumerate}
\item $\mathcal{M}\otimes K((T))\cong\M$;
\item $\mathcal{M}$ is extension of (i.e. it has a 
Jordan-Hölder sequence formed by) rank-one modules 
that are regular singular at $\infty$.\footnote{Recall that 
the isomorphism class of a rank one 
differential module over $\mathbb{G}_{m}$ defined by 
an equation $T\frac{d}{dT}(y)=g(T)y$, with 
$g(T)=\sum_ia_iT^i\in K[T,T^{-1}]$, is completely 
determined by the tuple $(a_i)_{i\neq 0}$ and by the 
class of $a_0\in K/\mathbb{Z}$. Such a module is 
regular at $\infty$ (resp. at $0$) if and only if 
$g(T)\in K[T^{-1}]$ (resp $g(T)\in K[T]$), or 
equivalently if its irregularity at infinity 
is equal to $0$ (cf. Section 
\ref{Section : ANVSFORMAL irr}).}
\end{enumerate}

From Lemma \ref{Lemma: descent of morphisms-1} 
(resp. Corollary \ref{Cor: descent of morphisms-2}) 
we obtain the following corollary. It is called 
\emph{Katz's 
canonical extension of $\M$} and denoted also by
\begin{equation}
\mathcal{M}\;=\;\mathrm{Can}(\M)\;.
\end{equation}

\begin{corollary}
\label{Corollary : descent Turrittin}
We maintain notation of section 
\ref{section : meromorphic Settings punctured disk}. Assume as above that $K=K'$ and $n=1$.
If the assumptions of 
Lemma \ref{Lemma: descent of morphisms-1} (resp.
Corollary \ref{Cor: descent of morphisms-2}) are fulfilled 
by the differential module $\mathrm{Hom}(\Fc,\mathcal{M}_{|D})$ or by 
$\mathrm{Hom}(\mathcal{M}_{|D},\Fc)$, then
the formal isomorphism 
$\Fc\otimes K((T))\simto\mathcal{M}\otimes K((T))$ 
descends to an isomorphism 
\begin{equation}\label{eq: sdftgvplq}
\Fc\;\xrightarrow{\;\sim\;}\;\mathcal{M}_{|D}
\end{equation}
over $\O(D)[T^{-1}]$.\hfill$\Box$
\end{corollary}
\begin{remark}
The finite dimensionality of the cohomology groups 
$\Hdr^i(D(*0),-)$ and the condition 
$\chidr(D(*0),-)=0$ are 
fulfilled by $\mathrm{Hom}(\Fc,\mathcal{M}_{|D})$ 
(resp. $\mathrm{Hom}(\mathcal{M}_{|D},\Fc)$) 
if for all $i=1,\ldots,r$ they hold for 
$\Fc^*\otimes(\mathcal{N}_i)_{|D}$ (resp. 
$\Fc\otimes(\mathcal{N}_i)_{|D}^*$). 
Indeed, $\mathcal{M}$ is direct sum of modules 
of type $(\mathcal{N}_i)_{|D}\otimes\mathcal{U}_m$, 
where 
$\mathcal{U}_m$ is the standard $m$-dimensional 
unipotent object over $D(*0)$.
Therefore $\mathrm{Hom}(\Fc,\mathcal{M}_{|D})=
\Fc^*\otimes\mathcal{M}_{|D}$ 
is direct sum of modules of type 
$\Fc^*\otimes (\mathcal{N}_i)_{|D}\otimes\mathcal{U}_m$. 
These last fit into exact sequences 
\begin{equation}
0\to\Fc^*\otimes (\mathcal{N}_i)_{|D} \to
\Fc^*\otimes 
(\mathcal{N}_i)_{|D}\otimes\mathcal{U}_m 
\to
\Fc^*\otimes (\mathcal{N}_i)_{|D}\otimes\mathcal{U}_{m-1} 
\to0
\end{equation}
and the claim follows by induction. 
\end{remark}

\begin{corollary}
\label{Cor : descent turrittin K(T)}
Assume that the conditions of Corollary \ref{Cor : 
comparison mero-formal K(t) and K((t))} are 
fulfilled by 
$\mathrm{Hom}(\Fc\otimes K(\{T\}),
\mathcal{M}\otimes K(\{T\}))$ or 
$\mathrm{Hom}(\mathcal{M}\otimes K(\{T\}),
\Fc\otimes K(\{T\}))$. Then the formal isomorphism 
$\Fc\otimes K((T))\simto\mathcal{M}\otimes K((T))$ 
descends to an isomorphism 
$\Fc\otimes K(\{T\})\simto
\mathcal{M}\otimes K(\{T\})$.\hfill$\Box$
\end{corollary}

\begin{remark}
\label{Rk : descente on Turrittin}
At the beginning of this section we have supposed 
$K'=K$ and $n=1$. 
We now drop this assumption 
and place ourself in the general situation where
$\mathcal{M}$ is defined after pull-back, that is 
over the ring $K'[T^{\frac{1}{n}},T^{-\frac{1}{n}}]$. 
The aim of this remark is to recall that 
$\mathcal{M}$ descends to a differential module over 
$K[T,T^{-1}]$ 
and to prove that the isomorphism obtained in Corollary 
\ref{Corollary : descent Turrittin} descends to an 
isomorphism over $D$.

Without loss of generality we can assume that $K'/K$ is 
Galois. It is a classical result of N.M.~Katz \cite{Katz-Can} 
that $\mathcal{M}$ descends into a differential module 
over $K[T,T^{-1}]$, denoted by $\mathrm{Can}(\M)$, which 
is defined as the fixed points of $\mathcal{M}$ 
by the action of the Galois group 
$\mathrm{Gal}(K'((T^{1/n}))/K((T)) )$ (this last group 
is implicitly identified with the Galois group of the Galois 
covering $[n]:
\mathbb{G}_{m,K'}\to\mathbb{G}_{m,K}$ 
given by the multiplication by $n$).
By Katz's construction, the isomorphism 
\begin{equation}
\beta'\;:\;\M\otimes_{K((T))} K'((T^{1/n}))
\;\xrightarrow{\;\sim\;}\;
\mathcal{M}\otimes_{K'[T^{\frac{1}{n}},
T^{-\frac{1}{n}}]} 
K'((T^{1/n}))
\end{equation} 
descends to 
an isomorphism 
\begin{equation}
\beta\;:\;\M\;\xrightarrow{\;\sim\;}\;
\mathrm{Can}(\M)\otimes_{K[T,T^{-1}]}K((T))\;.
\end{equation}
More generally, $\mathrm{Can}$ is a fully faithful functor 
associating to a differential module 
over $K((T))$ a differential module
over $K[T,T^{-1}]$. This functor is called Katz's 
\emph{canonical extension}.

Now, denote by $D'$ the inverse image of $D$ by 
$[n]^{\mathrm{an}}$. It is an open pseudo-disk centered at 
$0$ too. We claim that the isomorphism 
$\alpha':
\Fc\otimes_{\O(D)[T^{-1}]} 
\O(D')[T^{-1/n}]\xrightarrow{\;\sim\;}
\mathcal{M}_{|D'}$ obtained in Corollary 
\ref{Corollary : descent Turrittin} descends to 
an isomorphism 
\begin{equation}\label{eq : alpha F- Can}
\alpha\;:\;
\Fc\;\xrightarrow{\;\sim\;}\;\mathrm{Can}(\M)_{|D}\;.
\end{equation}
Indeed, it is enough to prove that $\alpha'$ 
commutes with the action of the Galois group, but this 
follows from the fact that its restriction 
$\alpha\otimes 1:
\Fc\otimes K'((T^{1/n}))\simto
\mathcal{M}\otimes K'((T^{1/n}))$ coincides with 
$\beta'$ (by the identification 
$\Fc\otimes K((T))=\M$). This shows that $\alpha'$ 
descends.

Finally we notice that, by Corollary 
\ref{Cor : push-f of inde gen-1}, the assumptions 
of Lemma \ref{Lemma: descent of morphisms-1} or 
Corollary \ref{Cor: descent of morphisms-2} are 
invariants by pull-back. Therefore those assumptions 
can be done directly on 
$\Hom(\Fc,\mathrm{Can}(\M)_{|D})$ or 
$\Hom(\mathrm{Can}(\M)_{|D},\Fc)$. 
\end{remark}

We summarize the above Remark in the following 
\begin{corollary}
\label{Corollary : descent Turrittin - CAN}
We maintain notation of section 
\ref{section : meromorphic Settings punctured disk}.
If the assumptions of 
Lemma \ref{Lemma: descent of morphisms-1} (resp.
Corollary \ref{Cor: descent of morphisms-2}) are fulfilled 
by $\mathrm{Hom}(\Fc,\mathrm{Can}(\M)_{|D})$ or by 
$\mathrm{Hom}(\mathrm{Can}(\M)_{|D},\Fc)$, then
the formal isomorphism 
$\Fc\otimes K((T))\simto\mathrm{Can}(\M)\otimes K((T))$ 
descends to an isomorphism 
\begin{equation}\label{eq: sdftgvplq}
\Fc\;\xrightarrow{\;\sim\;}\;\mathrm{Can}(\M)_{|D}
\end{equation}
over $\O(D)[T^{-1}]$.\hfill$\Box$
\end{corollary}

\subsubsection{Relations with Baldassarri's theorem.}
\label{Remark : classical exponents and Baldassarri result}
We maintain the notations of Sections \ref{Descent of morphisms} and \ref{Section : Descent Turritin-Balda}.
In this section we discuss the relations of Corollary 
\ref{Corollary : descent Turrittin} 
with Baldassarri's theorem 
\cite[Theorem 2]{Balda-Turritin}. We begin by 
recalling the classical notion of formal exponents, and 
state Baldassarri's theorem.

For $f=\sum_{i}a_iT^i$, 
we denote by $\mathrm{res}_0(f):=a_{0}$ the
residue of the differential form $f\cdot \frac{dT}{T}$ and 
by $v_T(f)=\min\{i,\;a_i\neq 0\}$ its $T$-adic valuation.

The classical definition of the exponent of a formal 
differential module $\M$ over 
$K((T))$ involves the \emph{indicial polynomial}. 
However, if the rank one modules $\N_1,\ldots,\N_r$ 
over $K((T))$ are given and if 
$e_i$ is the class modulo $\mathbb{Z}$ of 
$\mathrm{res}_0(g_i)$ 
(cf. \eqref{eq : Jordan Form g_i}), 
then the \emph{formal exponent}  of $\M$ is the 
multi-set $\{e_1,\ldots,e_r\}$ of elements of 
$K/\mathbb{Z}$.\footnote{We 
notice that if we consider the trivial valuation on $K$ 
and if $\M$ is seen as a differential module over the 
pseudo-annulus $\{ 0<|T|<1\}$ over $K$, then the 
formal exponent of the Robba part of $\M$ coincides with 
the exponent of Definition 
\ref{Def : exponent pseudo-an + germ of seg}.}
Moreover, an easy computation shows that the formal exponent 
of $\mathrm{End}(\M)$ is the multi-set 
\begin{equation}
\label{eq : formal exponent of End}
\{e_i-e_j\}_{i,j=1,\ldots,r}\;.
\end{equation}
Theorem \cite[Theorem 
2]{Balda-Turritin} asserts that if the residual field of $K$ 
has positive characteristic\footnote{The 
main ingredient of the proof of Baldassarri is 
the result of Clark \cite{Clark} that gives, under the 
same assumptions as those of Baldassarri's Theorem, 
the equality 
$\Hdr^0(K(\{T\}),
\mathrm{End}(\Fc))\cong\Hdr^0(K((T)),
\mathrm{End}(\M))$. 
Up to this ingredient, the 
proof of Baldassarri works over general base fields. 
Most probably the proof of Clark may be extended to a 
general field too, but we did not investigated in this direction 
(recall that 
if the residual field has characteristic $0$, there are no 
Liouville numbers, therefore no assumptions at all).} 
and if the formal exponent of $\mathrm{End}(\M)$ does 
not 
contain any Liouville number, then the isomorphism 
$\Fc\otimes K((T))\simto\mathcal{M}\otimes K((T))$ 
descends to an isomorphism 
$\Fc\otimes K(\{T\})\simto
\mathcal{M}\otimes K(\{T\})$ over 
$K(\{T\})$. The idea of the proof is that these 
isomorphisms are \emph{solutions} of the differential 
modules associated with the internal $\Hom$'s, 
therefore claim will follows from an 
isomorphisms between the formal and meromorphic 
de Rham cohomologies, which is a result of Clark (cf. \cite{Clark}).

We now discuss the relation with 
Corollary \ref{Corollary : descent Turrittin}. 
The conclusion of this last is stronger than 
the result of Baldassarri since 
it provides more precision about the disk of existence 
$D$ of the isomorphism $\Fc\simto\mathcal{M}_{|D}$. 
Moreover, in Lemma 
\ref{Lemma : formal Liouv imply ch-me Liouv for 
Hom(F,M)} below, 
we show that if the characteristic of the residual field of 
$K$ is positive, the assumptions of Baldassarri's theorem 
imply those of Corollary \ref{Cor : descent turrittin K(T)}. Note however that the proof of Lemma~\ref{Lemma : formal Liouv imply ch-me Liouv for Hom(F,M)} itself uses Baldassarri's theorem.

\begin{remark}
\label{Remark : Assumption about exponents Hom End}
The assumptions of Corollary 
\ref{Corollary : descent Turrittin}
involve $\Hom(\Fc,\mathcal{M}_{|D})$ 
while those of \cite[Theorem 2]{Balda-Turritin} 
involve $\mathrm{End}(\M)$. 
We now discuss this (apparent) discrepancy of 
assumptions.

If $\Fc$ is a differential equation on $D(*0)$ Baldassarri 
defines its formal exponent as the formal exponent of 
$\Fc\otimes K((T))$ (this last has been just defined 
cf. 
\eqref{eq : formal exponent of End}). 
We now notice that the scalar extension 
$-\otimes K((T))$ 
commutes with the internal $\mathrm{Hom}$ and we have
\begin{equation}
\mathrm{End}(\Fc)\otimes K((T))\;\cong\; 
 \End(\M)\;\cong\;\Hom(\Fc,\mathcal{M}_{|D})\otimes K((T))\;.
\end{equation}
Therefore, the formal exponent of 
$\Hom(\Fc,\mathcal{M}_{|D})$ coincides by 
definition with that of 
$\mathrm{End}(\Fc)$ and we may interpret the 
assumption of \cite{Balda-Turritin} as an 
assumption on the formal exponent of 
$\Hom(\Fc,\mathcal{M}_{|D})$.

However, in the context of Corollary 
\ref{Corollary : descent Turrittin} it seems clear 
that no assumptions on $\mathrm{End}(\Fc)$ 
can imply an isomorphism 
$\Fc\simto\mathcal{M}_{|D}$. 
For instance, if $\Fc$ has rank one, then 
$\mathrm{End}(\Fc)=\Fc^*\otimes\Fc$ is 
automatically trivial  as a differential equation over 
$D(*0)$ (hence it satisfies any condition of Lemma 
\ref{Lemma: descent of morphisms-1} and
Corollary \ref{Cor: descent of morphisms-2}, 
or about the exponents), 
while it is easy to provide examples of rank one 
modules $\Fc$ and $\mathcal{M}$ that are non 
isomorphic  as differential equations over 
$D(*0)$, but whose formal 
restrictions to $K((T))$ are isomorphic. 
For instance, an easy example is given by 
the rank equations 
$\mathcal{M}: y'=0$ (trivial) and $\Fc: y'=y$ 
(exponential). 
If $\omega$ is the radius of convergence of $\exp(T)$, 
then $\Fc$ is trivial in the disk $\{|T|<\omega\}$ and 
non-trivial (i.e. non isomorphic to $\mathcal{M}$) over 
any larger disk. 
\end{remark}

\begin{remark}\label{Rk : Liouville assumptions}
Assume that the characteristic $p$
of the residual field  $\widetilde{K}$ is positive.
We notice that 
the definition of formal exponent provided above 
(cf. \eqref{eq : formal exponent of End})
does not agree with that of Christol-Mebkhout 
\cite{Ch-Me-II} at the germ of segment $b_0$ out of $0$
(cf. Definition 
\ref{Def : exponent pseudo-an + germ of seg}). 
The reason is that this last only involves the 
\emph{Robba part} of a module, whose dimension can 
be strictly smaller than that of $\Fc$: the two 
multisets do not have the same number of elements.
More precisely, let 
$C_\varepsilon=\{0<|T|<\varepsilon\}$ be an
annulus with unspecified outer radius $\varepsilon$ 
and let $\Fc$ and $\mathcal{M}$ be as in Section 
\ref{Section : Descent Turritin-Balda}. 
If an \emph{analytic} isomorphism 
$\Fc_{|C_\varepsilon}\simto
\mathcal{M}_{|C_\varepsilon}$ 
is provided over $C_\varepsilon$, then
the definition of \cite{Ch-Me-II} 
of the exponent of 
$\Fc^{\mathrm{Robba}}_{|C_\varepsilon}$ 
consists in the multi-set 
\begin{equation}
\{e_i\;\textrm{ such that }\;
\mathcal{N}_i=\mathcal{N}_i^{\mathrm{Robba}}\;,
\;i=1,\ldots,r\}\;.
\end{equation}
While that of $\mathrm{End}(\Fc)^{\mathrm{Robba}}_{|C_\varepsilon}$ 
is the multi-set
\begin{equation}\label{eq : p-adic exponent end}
\{e_i-e_j\;
\textrm{ such that }\;
\mathcal{N}_i\otimes\mathcal{N}_j^*=
(\mathcal{N}_i\otimes\mathcal{N}_j^*)^{\mathrm{Robba}}
\;,\;i,j=1,\ldots,r\}\;.
\end{equation}
\if{of the residues 
$\mathrm{res}_0(g_i)$ modulo $\mathbb{Z}$
of those $g_i$ verifying 
$\mathcal{N}_i=\mathcal{N}_i^{\mathrm{Robba}}$ 
along $b_0$.}\fi 
One proves that 
$\mathcal{N}_i=\mathcal{N}_i^{\mathrm{Robba}}$ 
if, and only if, $v_T(g_i)\geq 0$ and 
$e_i\in\mathbb{Z}_p$ 
(resp. $\mathcal{N}_i\otimes\mathcal{N}_j^*=
(\mathcal{N}_i\otimes
\mathcal{N}_j^*)^{\mathrm{Robba}}$ if, and only if, $v_T(g_i-g_j)\geq 0$ and 
$e_i-e_j\in\mathbb{Z}_p$). 

In this case, the formal exponent of $\Fc$ (resp. 
$\End(\Fc)$) contains the
Christol-Mebkhout exponent of $\Fc$ at $b_0$ and the 
inclusion may be strict. 
In particular, if the formal exponent does not contain any 
Liouville number, then so does the Christol-Mebkhout 
exponent (indeed Liouville numbers lie in 
$\mathbb{Z}_p$). 
The converse is not true in general.

However, in the general case, if an isomorphism 
$\Fc_{|C_\varepsilon}\simto
\mathcal{M}_{|C_\varepsilon}$ is not given (for 
instance because we do not assume any condition on the 
exponents nor about the indexes), 
the definition of the Christol-Mebkhout exponent is more 
involved, and the relation with the classical 
definition \eqref{eq : formal exponent of End} 
is not clear to us.\footnote{It is known that if $\Fs$ is of type Robba and if the 
Christol-Mebkhout exponent of  
$\mathrm{End}(\Fs)$ along $b_0$
does not contain any Liouville number, then 
$\Fs_{|C_\varepsilon}$ 
decomposes into rank one 
sub-quotients (cf. Theorem 
\ref{Thm : deco in rk 1 Ch-Me Robba}). 
However, it is not clear that this implies that 
$\M$ decomposes over $K((T))$ into rank one 
sub-quotients too. Moreover, in the case where $\M$ 
decomposes over $K((T))$, 
it is not clear that the decomposition 
descends to $K(\{T\})$, and if it does it is not clear 
that it coincides with that of 
$\Fs_{|C_{\varepsilon}}$.}
\end{remark}

We now 
prove that the assumptions of 
\cite[Theorem 2]{Balda-Turritin} imply those 
of Corollary \ref{Cor : descent turrittin K(T)} 
for the differential equation 
$\Hom(\Fc,\mathcal{M}_{|D})\otimes K(\{T\})$.
\begin{lemma}
\label{Lemma : formal Liouv imply ch-me Liouv for Hom(F,M)}
We maintain the assumptions 
established all along Section 
\ref{Section : Descent Turritin-Balda}. Assume that the 
residual field of $K$ has positive characteristic and 
consider the following properties
\begin{enumerate}
\item the formal exponent of 
$\Hom(\Fc,\mathcal{M}_{|D})$ 
(i.e. the formal exponent of $\mathrm{End}(\M)$, 
cf. Remark 
\ref{Remark : Assumption about exponents Hom End}) 
has no Liouville numbers;
\item the Christol-Mebkhout exponent of 
$\Hom(\Fc,\mathcal{M}_{|D})$ at the germ of segment 
$b_0$ out of $0$ (cf. Definition 
\ref{Def : exponent pseudo-an + germ of seg}) 
has no Liouville numbers.
\end{enumerate}
Then, the first property implies the second one. 

In this case 
$\Hom(\Fc,\mathcal{M}_{|D})$ satisfies 
the condition $\Fin_{b_0}$ 
and $\Fc\otimes K(\{T\})\simto
\mathcal{M}\otimes K(\{T\})$. In 
particular the assumptions of 
Corollary \ref{Cor : descent turrittin K(T)} and 
\ref{Cor : comparison mero-formal K(t) and K((t))} are 
fulfilled (cf. Section \ref{section : some situations FIn}).
\end{lemma}
\begin{proof}
\if{there exists sequence of disk 
$D_n$ in $D$, with open boundary $b_n$, 
satisfying $\cap_nD_n=\{0\}$, 
such that for all $n$ the equation 
$H_n:=\Hom(\Fc_{|D_n},
\mathcal{M}_{|D_n})$ verifies 
$\chiabs_{b_n}(H_n)=\Irr_{b_n}(H_n)$. 
}\fi
If i) holds, the assumptions of Baldassarri's result are 
fulfilled, therefore we have an isomorphism 
$\Fc\otimes K(\{T\})\simto
\mathcal{M}\otimes K(\{T\})$. 
In particular, there exists an 
unspecified pseudo-annulus 
$C_\varepsilon=\{0<|T|<\varepsilon\}$ together with an 
isomorphism $\Fc_{|C_\varepsilon}\simto
\mathcal{M}_{|C_\varepsilon}$. 
Now, by Remark \ref{Rk : Liouville assumptions}, 
the Christol-Mebkhout exponent of 
$\Hom(\Fc,\mathcal{M}_{|D})$ at $b_0$ is 
included into the formal exponent (as multi-sets), and 
hence it has no Liouville numbers too. Therefore ii) holds.

\if{Assume now that ii) holds. 
Let $b_\varepsilon$ be the germ of segment 
at the open boundary of the disk 
$D_\varepsilon=\{|T|<\varepsilon\}$. 
By Section \ref{section : some situations FIn} 
there exists a $\varepsilon'$ such that for all 
$\varepsilon\leq \varepsilon'$ the equation 
$H_\varepsilon:=\Hom(\Fc_{|D_\varepsilon},
\mathcal{M}_{|D_\varepsilon})$ verifies 
$\chiabs_{b_\varepsilon}(H_\varepsilon)
=\Irr_{b_\varepsilon}(H_\varepsilon)$. Therefore, 
Corollary \ref{Corollary : descent Turrittin} applies 
and we have an isomorphism 
$\Fc_{|D_\varepsilon}\simto
\mathcal{M}_{|D_\varepsilon}$ over $\O(D_\varepsilon)[T^{-1}]$. 
By Remark \ref{Rk : Liouville assumptions} it follows that 
the formal exponent of $\Hom(\Fc,\mathcal{M}_{|D})$ 
has no Liouville numbers (indeed the 
Christol-Mebkhout exponent of 
$\Hom(\Fc,\mathcal{M}_{|D})$ at $b_0$ is 
included into the formal exponent, and 
any component of the formal exponent that lies in 
$\mathbb{Z}_p$ belongs actually to Christol-Mebkhout 
one).}\fi
The last assertion follows from 
Section \ref{section : some situations FIn}.
\end{proof}
\if{\comm{Pour Jérôme,

c'est terrible ! Si on suppose que l'exposant de Ch-Me 
de $\mathrm{End}(\Fc)$ lelong de $b_0$ n'a pas de 
nombres de Liouville, alors on sait par le Théorème 
\ref{Thm : deco in rk 1 Ch-Me Robba} que $\Fc$ a une 
suite de Jordan-Hölder formée par des modules de rang 
un. Le module $\mathcal{M}$ aussi a par construction la 
même propriété. Mais le fait d'avoir un isomorphisme 
$\Fc\otimes K((T))\simto\mathcal{M}\otimes K((T))$ ne 
semble entrainer un isomorphisme sur 
$\O(C_\varepsilon)$. \\

Le problème est que les modules de rang un fournis par 
le théorème \ref{Thm : deco in rk 1 Ch-Me Robba} ne 
sont pas forcement méromorphes en $0$.\\

Mais je ne suis pas capable de touver un exemple.\\

Toutefois, en rang un c'est vrai: si deux modules de rang 
un sont isomorphes sur $K((T))$ ils le sont aussi sur 
$K(\{T\})$. En effet il est facile de voir, sans aucune 
hypothèse (Liouville ou autre), que les classes 
d'isomorphisme de modules de rang un
sur $K((T))$ sont les mêmes que sur $K(\{T\})$. C'est à 
dire que tout module de rang un s'étend 
automatiquement à $K(\{T\})$. Du coup on ne pourra 
pas trouver un contre-exemple en rang un.\\

Est-ce que par hasard tu vois comment démontrer que 
l'hypothèse Liouville de Ch-Me sur $\mathrm{End}(\Fc)$ 
le long de $b_0$ entraine un isomorphisme 
$\Fc\otimes \O(C_\varepsilon)\simto
\mathcal{M}\otimes \O(C_\varepsilon)$?\\

C'est lié au fait de trouver un module sur $D(*0)$ qui 
soit triviale sur $K((T))$, mais non triviale sur 
$K(\{T\})$.}
}\fi
\subsubsection{The restriction functor.}
\label{Section : The restriction functor}
The above comparison results 
permit to deduce an equivalence of 
categories between certain sub-categories of differential 
equations over $D(*0)$ and $K((T))$.

\begin{definition}
Consider a derivation $d:R\to R$ on a ring $R$. 
We denote by 
\begin{equation}
d-\mathrm{Mod}(R)
\end{equation}
the category whose objects are  
$R$-modules $\N$ together with a connection 
$\nabla:\N\to \N$ satisfying the Leibniz rule with 
respect to $d$ and whose morphisms are $R$-linear 
maps commuting with the connections. 
The group of 
morphisms between two objects $\M$ and $\N$ will be 
denoted by $\Hom^{\nabla}(\M,\N)$.

\if{If $R=K(\{T\})$, we denote by 
\begin{equation}
d-\mathrm{Mod}(K(\{T\}),\Fin)
\end{equation}
the full subcategory of $d-\mathrm{Mod}(K(\{T\}))$ 
formed by differential modules satisfying $\Fin_C$ for 
some unspecified pseudo-annulus 
$C=\{0<|T|<\varepsilon\}$.}\fi
\end{definition}

\begin{definition}
For $\M,\N\in d-\mathrm{Mod}(R)$ we denote 
by $\mathrm{Ext}_{d-\mathrm{Mod}(R)}(\M,\N)$ the 
Yoneda extension 
group (whose elements are equivalence classes 
of exact sequences 
$0\to \N\to \mathrm{E}\to \M\to0$ in 
$d-\mathrm{Mod}(R)$). If the category 
$d-\mathrm{Mod}(R)$ is clear, we simply write 
$\mathrm{Ext}(\M,\N)$.

Let $\mathcal{C}$ be a sub-category of 
$d-\mathrm{Mod}(R)$. We say that $\mathcal{C}$ is 
stable by extensions if
for each exact sequence 
$0\to\M\to\mathrm{E}\to\N\to 0$ 
in $d-\mathrm{Mod}(R)$, where 
$\M$ and $\N$ belong to $\mathcal{C}$, the middle 
term $\mathrm{E}$ belongs to $\mathcal{C}$ too.
\end{definition}

The boundary conditions (as well as the Liouville 
conditions on the exponents) 
are not stable by tensor product nor by internal 
$\Hom$. Hence, we consider a 
full sub-category $\mathcal{C}$ of 
$d-\mathrm{Mod}(\O(D)[T^{-1}])$ such that for all 
$\Fc,\Gc\in\mathcal{C}$ the  
differential equation $\Hom(\Fc,\Gc)$ satisfies the following properties:
\begin{enumerate}
\item $\Hom(\Fc,\Gc)$ is free; 
\item $\Hom(\Fc,\Gc)$ has finite-dimensional de Rham cohomology;
\item $\chidr(D(*0),\Hom(\Fc,\Gc))=0$.
\end{enumerate}

\begin{corollary}\label{Cor : equivalente D(*0)-form}
The restriction functor  
\begin{equation}
Res^{D(*0)}_{K((T))}\;:\;
\mathcal{C}
\;\xrightarrow{\quad}\;
d-\mathrm{Mod}(K((T)))\;.
\end{equation}
is fully faithful. 
Moreover, for $\Fc,\Gc\in\mathcal{C}$, if we denote by $F=\Fc\otimes K((T))$ and 
$G=\Gc\otimes K((T))$ their images in 
$d-\mathrm{Mod}(K((T)))$, we have an isomorphism 
of Yoneda extension groups
\begin{equation}\label{eq : stability by ext mero-form-111}
\mathrm{Ext}_{d-\mathrm{Mod}(\O(D)[T^{-1}])}(\Fc,
\Gc)
\;\xrightarrow{\;\sim\;}\;
\mathrm{Ext}_{d-\mathrm{Mod}(K((T)))}(F,G)\;.
\end{equation}
In particular, 
if $\mathcal{C}$ 
is stable by extensions in 
$d-\mathrm{Mod}(\O(D)[T^{-1}])$, 
then so is its essential image in 
$d-\mathrm{Mod}(K((T)))$. 
\end{corollary}
\begin{proof}
The full-faithfullness follows from Lemma~\ref{Lemma : mero vs formal coh}
applied to the differential equation $\Hom(\Fc,\Gc)$ since
\begin{equation}
\Hom^{\nabla}(\Fc,\Gc)\;=\;
\Hdr^0(\Hom(\Fc,\Gc))\;=\;
\Hdr^0(\Hom(F,G))\;=\;
\Hom^{\nabla}(F,G)\;.
\end{equation}
Let $A$ be one of the rings $\O(D)[T^{-1}]$ 
and $K((T))$ and let $\M,\N\in d-\mathrm{Mod}(A)$.
It is a general fact that, when~$M$ and~$N$ are projective, we have isomorphisms
$\mathrm{Ext}(\M,\N)\cong
\Hdr^1(A,\M^*\otimes \N)\cong
\Hdr^1(A,\Hom(\M, \N))$ (cf. 
\cite[Lemma~5.3.3 and Remark~5.3.4]{Kedlaya-book}). 
The isomorphism \eqref{eq : stability by ext mero-form-111} then follows from Proposition~\ref{prop:eqcatStein} and Lemma 
\ref{Lemma : mero vs formal coh}. 
%
%
%
%
\end{proof}

We now state a variant of the above corollary 
for the field $K(\{T\})$. 
It is clear that the existence of a sequence of disks as in 
Corollary 
\ref{Cor : comparison mero-formal K(t) and K((t))} 
only depends on $\Fc_0^\dag$, and not 
on the particular equation~$\Fc$ defined 
over some disk~$D$ centered at~$0$ satisfying 
$\Fc_0^\dag=\Fc\otimes K(\{T\})$. 
Let $\mathcal{B}$ be a full-subcategory of 
$d-\mathrm{Mod}(K(\{T\}))$ such that for all 
$\Fc_0^\dag,\Gc_0^\dag\in\mathcal{B}$, the equation 
$\Hom(\Fc_0^\dag,\Gc_0^\dag)$ has finite-dimensional 
cohomology groups 
$\Hdr^i(K(\{T\}),\Hom(\Fc_0^\dag,\Gc_0^\dag))$
and satisfies
\begin{equation}
\chidr(K(\{T\}),\Hom(\Fc_0^\dag,\Gc_0^\dag))\;=\;0\;.
\end{equation}

The proof of the following Corollary is similar to that of 
Corollary \ref{Cor : equivalente D(*0)-form} using Lemma~\ref{Lemma : res=iso merodag form} instead of Lemma~\ref{Lemma : mero vs formal coh}.

\begin{corollary}\label{Cor : equivalente merodag-form}
The scalar-extension functor  
\begin{equation}
\mathrm{Res}^{K(\{T\})}_{K((T))}\;:\;
\mathcal{B}
\;\xrightarrow{\quad}\;
d-\mathrm{Mod}(K((T)))\;.
\end{equation}
is fully faithful. Moreover, 
for $\Fc_0^\dag,\Gc_0^\dag\in\mathcal{B}$, if we denote by $F$ and $G$ their images in 
$d-\mathrm{Mod}(K((T)))$, we 
have we have an isomorphis of Yoneda extension groups
\begin{equation}\label{eq : stability by ext mero-form}
\mathrm{Ext}_{d-\mathrm{Mod}(K(\{T\}))}(\Fc_0^\dag,\Gc_0^\dag)
\;\xrightarrow{\;\sim\;}\;
\mathrm{Ext}_{d-\mathrm{Mod}(K((T)))}(F,G)\;.
\end{equation}

In particular, if $\mathcal{B}$ is stable by extensions in 
$d-\mathrm{Mod}(K(\{T\}))$, then so is 
its essential image in $d-\mathrm{Mod}(K((T)))$. 
\hfill$\Box$
\end{corollary}

\subsection{Meromorphic vs. analytic theories}\label{Meromorphic vs. analytic theories}
In this section we compare the meromorphic and the 
analytic theories of differential equations.

We maintain the notations of Section \ref{section : 
meromorphic Settings punctured disk}. In particular, we have $C=D-\{0\}$ and $\Fs=\Fc_{|C}$.
We further consider an open sub-pseudo-annulus 
\begin{equation}\label{eq : C' mero anal section45}
C'\;\subseteq\; C=D-\{0\}
\end{equation}
such that $\Gamma_{C'}\subseteq\Gamma_C$.
We denote by $b_{0,C'},b_{1,C'}$ the germs of segment 
at the boundary of $C'$.

\subsubsection{Meromorphic vs. analytic cohomologies.}

\label{Section :  mero VS anal}

\begin{lemma}
\label{Lemma : injectiona and surjection res mero anal}
The following hold: 
\begin{enumerate}
\item the natural map 
$\Hdr^0(D(*0),\Fc)\to\Hdr^0(C',\Fs_{|C'})$ 
is injective;

\item if $\Hdr^1(C',\Fs_{|C'})$ is finite-dimensional, the 
map $\Hdr^1(D(*0),\Fc)\to\Hdr^1(C',\Fs_{|C'})$ is surjective;

\item if $\Hdr^1(D(*0),\Fc)$ and 
$\Hdr^1(C',\Fs_{|C'})$ are finite-dimensional, then 
\begin{equation}\label{S4eq : chidr leq chi anal}
\chidr(D(*0),\Fc)\;\leq\;\chidr(C',\Fs_{|C'})\;.
\end{equation}
\end{enumerate}
Moreover, in the setting of iii), the following conditions 
are equivalent:
\begin{enumerate}
\item[(a)] $\chidr(D(*0),\Fc)=\chidr(C',\Fs)$;
\item[(b)] for $i=0,1$, the restriction maps
\begin{equation}\label{eq : iso mero anal coh-sec5}
\Hdr^i(D(*0),\Fc)\;\xrightarrow{\;\sim\;}\;
\Hdr^i(C',\Fs_{|C'})
\end{equation}
are isomorphisms.
\end{enumerate}
\end{lemma}
\begin{proof}
The only difficulties lie in point~ii). It follows from \cite[Lemma~4.17]{Banachoid} (using also Corollary~\ref{cor:cohomologypseudoannulusLiouville} and Remark~\ref{rem:logafftrivval} when~$K$ is trivially valued).

The rest of the proof is analogous to that of Lemma~\ref{Lemma : mero vs formal coh}.
%
\end{proof}

\begin{theorem}
\label{Theorem : restriction meromorphic}
Assume that 
\begin{enumerate}
\item $\mathcal{F}$ is a free 
$\O_D[*0]$-module;
\item the radii of $\Fc$ are all log-affine 
at the open boundary of $D$;
\item $\Fs$ is Fredholm at 
$b_{0,C'}$, $b_{1,C'}$ and \eqref{eq:chiabs01} holds;
\item $\Fs$ is Fredholm at $b_D$ and 
satisfies \eqref{eq : chiabs=Irr};


\item $\Irr_{C'}(\Fs)=\Irr_{C}(\Fs)$.
\end{enumerate}
Then, $\Fc$ and~$\Fs$ have finite-dimensional de Rham cohomology on~$D(*0)$ 
and~$C'$ respectively and, for every $i\geq 0$, the natural 
morphism
\begin{equation}
\label{eq : iso D(*0) C mero to anal disk}
\Hdr^i(D(*0),\Fc)\;\xrightarrow{\;\;\sim\;\;}\;
\Hdr^i(C',\Fs_{|C'})\;
\end{equation}
is an isomorphism.
\end{theorem}
\begin{proof}
The claim follows from Theorems~\ref{Thm : Index meromorphic disk} 
and~\ref{Thm : index of finite opens} and Lemma 
\ref{Lemma : injectiona and surjection res mero anal}.
\end{proof}

In the following statement we 
temporarily suspend the notations of Section 
\ref{section : meromorphic Settings punctured disk}

\begin{corollary}
\label{Coro : Mero = analif Liouville}
Let $Y$ be a quasi-smooth $K$-analytic curve and $Z$ 
be a locally finite subset of $K$-rational points of~$Y$. 
Let $\Fc$ be a differential equation on $Y(*Z)$. 
Set $X:=Y-Z$ and $\Fs:=\Fc_{|X}$. 

For each $z\in Z$, let $D_z\subseteq Y$ be 
an open disk centered at $z$. 
We assume that for $z\neq z'$ one has 
$D_z\cap D_{z'}=\emptyset$. 
Denote by $b_{D_z}$ the open boundary of 
$D_z$ and by $b_z$ the 
germ of segment out of 
$z$.

Then the following statement are equivalent
\begin{enumerate}
\item for each $i \ge 0$, we have a natural isomorphism
\begin{equation}
\Hdr^i(Y(*Z),\Fc)\; \xrightarrow[]{\sim}\;
\Hdr^i(X,\Fs)\;.
\end{equation}
\item for each $i \ge 0$, and each $z\in Z$ 
we have a natural isomorphism
\begin{equation}
\Hdr^i(D_z(*\{z\}),\Fc)\; \xrightarrow[]{\sim}\;
\Hdr^i(D_z-\{z\},\Fs)\;.
\end{equation}
\end{enumerate}
In particular, these conditions hold 
if, for all $z\in Z$, one has
\begin{enumerate}
\item[ii-a)] $\Fc$ is free as $\O_{D_z}[*z]$-module;
\item[ii-b)] if $D_z$ is a connected component of $Y$, then $\Fs$ has log-affine radii at $b_{D_z}$;
\item[ii-c)] $\Fs$ is Fredholm at $b_{D_z}$ and $b_z$ and 
satisfies \eqref{eq:chiabs01}. 
\if{(cf. condition ii) of Proposition \ref{Prop : chirel=Irr}):
\begin{equation}
\chiabs_{b_{D_z}}(\Fs)=\Irr_{b_D}(\Fs)\;,\qquad
\chiabs_{b_{z}}(\Fs)=\Irr_{z}(\Fs)\;.
\end{equation}
}\fi
\end{enumerate}
\end{corollary}
\begin{proof}
\if{
By Theorem~\ref{thm:descent}, we can assume 
that~$K$ is algebraically closed and non-trivially valued, 
and that $z$ is a $K$-rational point of $Y$. 

\comm{Manqhe la déscente pour les $\Hdr^i(Y(*Z),\Fc)$ 
...}
}\fi
%
%
%
Clearly,

Set $D:=\bigcup_{z\in Z}D_z$. We have $X\cup D=Y$ 
and $X\cap D=\bigcup_{z\in Z}C_{z}$, where 
$C_z=D_z-\{z\}$. 
The Mayer-Vietoris exact sequences for the meromorphic and analytic de Rham cohomologies fit into a two-line diagram:
\begin{equation}
\xymatrix{
\ar[r]&\ar[d]\Hdr^i(Y(*Z),\Fc)\ar[r]&\ar[d]\Hdr^i(X(*Z),\Fc)\oplus\Hdr^i(D(*Z),\Fc)\ar[r]&\ar[d]\Hdr^i((X\cap D)(*Z),\Fc)\ar[r]&\\
\ar[r]&\Hdr^i(X,\Fs)\ar[r]&\Hdr^i(X,\Fs)\oplus\Hdr^i(D-Z,\Fs)\ar[r]&
\Hdr^i(X\cap D,\Fs)\ar[r]&\\
}
\end{equation}

By Remark~\ref{rk : mero=anal over U}, we have 
$\Hdr^i(X(*Z),\Fc)=\Hdr^i(X,\Fs)$ and
$\Hdr^i((X\cap D)(*Z),\Fc)=\Hdr^i((X\cap D),\Fs)$. 

Assume that ii) holds. First of all, we have 
$\Hdr^i(D(*Z),\Fc)=\oplus_{z\in Z}\Hdr^i(D_z(*z),\Fc)$
and
$\Hdr^i(D-Z,\Fc)=\oplus_{z\in Z}\Hdr^i(C_z,\Fc)$. 
This means that in the above 
diagram, all vertical arrows are isomorphisms up to the 
$\Hdr^i(Y(*Z),\Fc)\to\Hdr^i(X,\Fs)$. By the five lemma, 
it has to be an isomorphism too.
A symmetric argument shows that i) implies ii).

Assume now that ii-a), ii-b), ii-c) hold. We can apply 
Theorem~\ref{Theorem : restriction meromorphic} 
to obtain an isomorphism 
$\Hdr^i(D_z(*z),\Fc)\simto\Hdr^i(C_z,\Fs)$, for all 
$z\in Z$. Item ii) is then satisfied.
\end{proof}

\begin{remark}
\label{rk : trivialization Y'=GY then rational}
An interesting consequence of Corollary   
\ref{Coro : Mero = analif Liouville} is the 
fact that any global \emph{analytic} 
solution of $\nabla$ on $X$ is actually 
\emph{meromorphic} on $Y$ with poles in $Z$.
\end{remark}

Corollary \ref{Coro : Mero = analif Liouville} together 
with item vi) of Section 
\ref{section : some situations FIn} imply 
the following meromorphic analogue of 
Proposition~\ref{Prop : H^i=0 if not solvablegfz}.
\begin{proposition}
\label{Prop : H^i=0 if not solvablegfz mero}
We maintain the notations of Corollary 
\ref{Coro : Mero = analif Liouville}.
Assume that the differential equation 
$\Fs:=\Fc_{|X}$ over $X$ 
satisfies  the conditions of situation~$1$ of 
Proposition~\ref{Prop : H^i=0 if not solvablegfz}. 
Then, for all $i$, we have $\Hdr^i(Y(*Z),\Fc)=0$.
\hfill$\Box$
\end{proposition}

\begin{remark}\label{rem:restrictionLiouvilleD*0}
Let~$D_1$ be a subdisk of~$D$ centered at~0 and set 
$C_1 := D_1-\{0\}$. Assume that the assumptions of 
Theorem \ref{Theorem : restriction meromorphic}  
hold on $D$ and on $D_1$ with respect to the same 
$C'$. 
In this situation, it follows from 
\eqref{eq : iso D(*0) C mero to anal disk}
that, for all $i\geq 0$, the natural restrictions
\begin{equation}
\label{eq : iso restr mero to mero disk anal hyp}
\Hdr^i(D(*0),\Fc)\;\xrightarrow{\;\;\sim\;\;}\;
\Hdr^i(D_1(*0),\Fc_{|D_1})\;
\end{equation}
are isomorphisms. 

\if{

\comm{
Pour Jérôme, comme pour la Remarque 
\ref{Rk :  subdisk D' of D mero-formal}, 
j'ai le problème de savoir si 
on a l'isomorphisme 
\eqref{eq : iso restr mero to mero disk anal hyp} avec 
des hypothèses plus faibles. 
Plus précisément, peut on démontrer ça ?\medskip

Let~$D_1$ be a subdisk of~$D$ centered at~0 and set 
$C_1 := D_1-\{0\}$. Denote by $b_D$ and $b_{D_1}$ the 
respective open boundaries. 
Assume that $\mathcal{F}$ is a free 
$\O(D)[T^{-1}]$-module whose the radii are all 
$\log$-affine along the open boundary of $D$, having 
finite index at $b_D$ and $b_{D_1}$ and satisfying
\eqref{eq : assumption iii) Liouville punctured disk} 
 on $D$ and on $D_1$. Then we have 
\eqref{eq : iso restr mero to mero disk anal hyp}.}

}\fi
\end{remark}

\subsubsection{$D(*0)$ vs. $\mathfrak{R}_b$.}
We maintain the notations of Section 
\ref{section : meromorphic Settings punctured disk}.
In this section, we consider a germ of segment 
$b\subset\Gamma_C$ and the Robba ring 
$\mathfrak{R}_{b}$ at $b$.

\begin{corollary}\label{Comparison Rigid formal-1}
Assume that 
\begin{enumerate}
\item $\Fc$ is free as $\O(D)[T^{-1}]$-module;
\item $\Fc$ has finite-dimensional de Rham cohomology 
on $D(*0)$;
\item there exists a sequence  $C_1\supseteq C_2\supseteq\cdots$ of pseudo-annuli 
having $b$ at their boundary such that 
$\bigcap_nC_n=\emptyset$ and a complete valued extension~$L$ of~$K$ with non-trivial valuation such that, for each $n$, the equation
$(\Fc_{L})_{|(C_n)_{L}}$ has finite-dimensional de Rham cohomology;
\item for each $n$, one has $\chidr(C_n,\Fc_{|C_n})=\chidr(D(*0),\Fc)$.
\end{enumerate}
Then, for each $n$, and $i=0,1$ we have canonical isomorphisms
\begin{equation}\label{eq : identif}
\Hdr^i(D(*0),\Fc)\;\xrightarrow{\;\sim\;}\;
\Hdr^i(C_n,\Fs_{|C_n})\;\xrightarrow{\;\sim\;}\;
\Hdr^i(\mathfrak{R}_{b},\Fs_{|
\mathfrak{R}_{b}})\;.
\end{equation}
If moreover 
$\chidr(D(*0),\Fc)=0$, then 
one also has isomorphisms 
$\Hdr^i(D(*0),\Fc)\simto\Hdr^i(K((T)),\M)$.
\end{corollary}
\begin{proof}
The claim follows from Lemma 
\ref{Lemma : injectiona and surjection res mero anal},
Corollary \ref{Cor : zero index over the robba ring}.
\end{proof}
\begin{remark}
\label{Rk : If the radii are affine L=K}
If the radii of $\Fc$ are log-affine at $b$, 
item iii) of Corollary \ref{Comparison Rigid formal-1} 
can be relaxed by assuming $L=K$ (cf. item ii) of 
Corollary \ref{Cor : zero index over the robba ring}).
\end{remark}

\if{Using Theorems \ref{Thm : Index meromorphic disk}, 
\ref{Theorem : restriction meromorphic}, 
Lemmas \ref{Lemma : mero vs formal coh}, 
\ref{Lemma : H^i K(T)=lim-2} and 
Corollary \ref{Cor : zero index over the robba ring} we 
now obtain a set of boundary conditions that imply the 
conditions of Corollary \ref{Comparison Rigid formal-1}. 

\comment{Ce que tu \'ecris ici n'est pas clair. Pour montrer que les conditions du corollaire qui suit impliquent celle du corollaire \ref{Comparison Rigid formal-1}, il me semble que le th\'eor\`eme \ref{Theorem : restriction meromorphic} suffit.

Je sugg\`ere d'enlever la phrase introductive et d'ajouter une preuve structur\'ee comme suit.

By Theorem~\ref{Theorem : restriction meromorphic}, the conditions i) to v) imply the conditions of Corollary~\ref{Comparison Rigid formal-1}. Point (a) follows from ... Point (b) follows from ...

Je n'\'ecris pas les d\'etails parce que je ne comprends pas vraiment comment d\'eduire les \'enonc\'es des r\'ef\'erences que tu donnes. En particulier, j'ai l'impression qu'il faudrait demander \eqref{eq : chiabs=Irr} en $b_{n}$.
}
\comm{J'applique vraiment le Théorème 
\ref{Theorem : restriction meromorphic} 
(qui est une conséquence du Theorème 
\ref{Thm : Index meromorphic disk} quand même, mais 
puisqu'on s'est fatigué à le demontrer il vaut 
mieux l'utiliser...). 

Je fais une hypothèse de finitude en $b_n$, qui n'est pas 
\eqref{eq : chiabs=Irr}, mais qui est quand même 
forte et entraine le théorème d'indice.

J'utilise aussi le 
Corollary \ref{Cor : zero index over the robba ring} pour 
avoir l'égalité entre $C_n$ et $\mathfrak{R}_b$.

Les Lemmes \ref{Lemma : mero vs formal coh}, 
\ref{Lemma : H^i K(T)=lim-2} interviennent uniquement 
pour demontrer (a) et (b).

Je crois que j'utilise tout ce que j'ai indiqué 
finalement ... et tu as raison qu'il faut une preuve ...
}
\comment{Je ne comprends pas ta r\'eponse. Je ne comprends toujours pas comment on peut d\'emontrer le r\'esultat avec les hypoth\`eses de l'\'enonc\'e.}
\comm{Voila une preuve :

Les points i) à v) sont exactement les hypothèses du théorème 4.6.2 pour avoir pour tout n l'isomorphisme

$H^i(D(*0)) --> H^i(C_n).$

Cela entraine que pour tout n la restriction $H^i(C_n)-->H^i(C_n+1)$ est un isomorphisme.

Par le Corollaire 3.7.3 cela entraine qu'ils sont tous isomorphes à $H^i(R_b)$ (Robba) c'est à dire qu'on a les isomorphismes (4.111).

Maintenant, si $b=b_0$ on a droit de se demander si cela est aussi égal à $H^i( K({T}) )$. C'est la question (a) :

Comme on a supposé $Irr_{C_n}=Irr_C$, et comme les rayons sont affines à l'origine, on a $Irr_{C_n}=0$ pour tout n. 

Donc, par le théorème 4.4.1, on a $chi(D(*0))=0$, et donc les conditions du Lemme 4.5.1 sont satisfaites, et le point (a) est donc vrai.

Si b n'est pas forcement égale à $b_0$, on peut quand même utiliser le 4.5.1 pour avoir 

$H^i( D(*0) )-->H^i( K((T)) )$

mais ce n'est pas automatique comme avant, il faut supposer $Irr_C=0$ pour appliquer le théorème 4.4.1 et déduire que $chi(D(*0))=0$, pour remplir les conditions du 4.5.1.}

}\fi
\begin{corollary}\label{Comparison Rigid formal-2}
Let $C_1\supseteq C_2\supseteq\cdots$ be a sequence 
of pseudo-annuli having $b$ at their boundary such that 
$\bigcap_nC_n=\emptyset$. For each~$n\ge1$, denote by~$b_{n}$ the germ of segment at the open boundary of~$C_n$ that is not~$b$. 

Assume that 
\begin{enumerate}
\item $\Fc$ is a free $\O(D)[T^{-1}]$-module;
\item the radii of $\Fc$ are log-affine at the open 
boundary of $D$;
\item 
$\Fc$ is Fredholm and 
satisfies \eqref{eq : chiabs=Irr} 
at $b_D$;

\item for each $n\geq 1$, $\Fc$ is Fredholm at $b$ and $b_n$ 
and satisfies \eqref{eq:chiabs01}  over $C_n$;

\item for each $n\ge1$, one has 
$\Irr_{C_n}(\Fs)=\Irr_C(\Fs)$.
\end{enumerate}
Then, we have the isomorphisms \eqref{eq : identif}. 

In this situation we have moreover the following facts.
\begin{enumerate}
\item[(a)] If $\Irr_C(\Fs)=0$, then the spaces in 
\eqref{eq : identif} are also isomorphic to 
$\Hdr^i(K((T)),\M)$. 
\item[(b)] If $b=b_0$ is the germ of segment out of 
$0$, the spaces in \eqref{eq : identif} are also 
isomorphic to $\Hdr^i(K(\{T\}),\Fc_{0}^\dag)$.
\end{enumerate}
%
\end{corollary}
\begin{proof}
By Theorem~\ref{Theorem : restriction meromorphic}, 
the conditions i) to v) imply the conditions of 
Corollary~\ref{Comparison Rigid formal-1}. 
The isomorphisms~\eqref{eq : identif} then follow from it and from Corollary \ref{Cor : zero index over the robba ring}.

Let us now prove (a). Assumptions $i)$, $ii)$, and $iii)$ 
ensure that Theorem \ref{Thm : 
Index meromorphic disk} holds and we have the index 
formula \eqref{eq : meromorphic index formula punctured disk-bis}. If 
$\Irr_{C}(\Fs)=0$, then $\chidr(D(*0),\Fc)=0$ and we 
can apply Lemma~\ref{Lemma : mero vs formal coh}. 
Item (a) follows.

Let us now prove point~(b). 
Assume that~$b=b_{0}$.   
By Lemma~\ref{lem:merosinglinear-1}, 
the radii of~$\Fs$ are log-affine on~$\Gamma_{C_{n}}$ 
for~$n$ big enough. It follows by $v)$ that 
$\Irr_{C}(\Fs)=\Irr_{C_n}(\Fs)=0$ for all $n\geq 1$. 
If $D_n:=C_n\cup\{0\}$, one sees that the assumptions 
$i)$ to $v)$ hold as well over $D_n$. Lemma 
\ref{Lemma : H^i K(T)=lim-2} then implies (b).
\end{proof}

\begin{remark}
Assume that $b=b_D$ and that $\Fc$ 
has the property that all its radii approach $1$ as 
$x$ approaches the open boundary $b_D$\footnote{This property is called 
\emph{solvability} in \cite{Ch-Me-III,Ch-Me-IV}.}. 
Then it is not hard to prove (using the concavity along 
$\Gamma_C$ of the partial heights of the convergence 
Newton polygon) that 
the condition $\Irr_C(\Fs)=0$ implies that the radii are log-affine along $\Gamma_C$. 

In Section \ref{Thm : Index meromorphic disk}, 
starting from a differential module 
over $\mathfrak{R}_{b_D}$, we will find a lattice $\Fc$ 
over $\O(D)[T^{-1}]$. It remains an open question to 
know whether one can find such a lattice satisfying 
$\Irr_{C}(\Fc)=0$.
\end{remark}

The following Corollary takes into account differential 
equations over $K(\{T\})$.

\begin{corollary}
\label{Cor : res=iso merodag anal-tre}
Let $\Fc_0^\dag$ be a differential equation over 
$K(\{T\})$ and let $\Fs_0^\dag:=
\Fc_0^\dag\otimes_{K(\{T\})}\mathfrak{R}_0$.
Let $D_1\supset D_2\supset\cdots$ be a sequence of 
disks with $\bigcap_nD_n=\{0\}$ and let 
$C_n:=D_n-\{0\}$. Let $b_{n}$ be the germ of 
segment at the open boundary of $D_n$ and $b_0$ be the 
germ of segment out of $0$.
Assume that $\Fc_0^\dag$ comes from an equation 
$\Fc$ over $D_1(*0)$.

Consider the following conditions.
\begin{enumerate}
\item There exists $n_0\geq 1$ 
such that, 
for each $n\geq n_0$, the cohomology groups 
$\Hdr^i(D_{n}(*0),\Fc_{|D_n})$ and 
$\Hdr^i(C_{n},\Fc_{|C_n})$ 
are finite-dimensional and one has $\chidr(D_n(*0),\Fc_{|D_n})=
\chidr(C_n,\Fs_{|C_n})=0$.
\item There exists $n_0'\geq 1$ such that, 
for each $n\geq n'_0$, $\Fc$ is Fredholm at both 
$b_{n}$ and~$b_0$ and it satisfies 
\eqref{eq : chiabs=Irr} on both germs of segments.
\end{enumerate}
Then, ii) implies i). Moreover, if i) holds, then the cohomology groups 
$\Hdr^i(K(\{T\}),\Fc_0^\dag)$ and 
$\Hdr^i(\mathfrak{R}_0,\Fs_0^\dag)$
are finite-dimensional and, for each $n\geq n_0$ and $i=0,1$, we have isomorphisms
\begin{equation}
\Hdr^i(K((T)),\M)\xleftarrow{\sim}
\Hdr^i(K(\{T\}),\Fc_0^\dag)\xleftarrow{\sim}
\Hdr^i(D_n(*0),\Fc_{|D_n})\xrightarrow{\sim}
\Hdr^i(C_n,\Fs_{|C_n})\xrightarrow{\sim}
\Hdr^i(\mathfrak{R}_0,\Fs_0^\dag).
\end{equation}
\end{corollary}
\begin{proof}
Assume that ii) holds. Then i) follows from Theorems~\ref{Thm : Index meromorphic disk} and~\ref{Thm : index of finite opens} together with Lemma~\ref{lem:merosinglinear-1} to ensure that the radii are log-affine on the skeleta for~$n$ big enough.

Assume that i) holds. The result now follows from Lemmas~\ref{Lemma : H^i K(T)=lim-2}, \ref{Lemma : injectiona and surjection res mero anal} and Corollary~\ref{Cor : zero index over the robba ring} (together with Lemma~\ref{lem:merosinglinear-1} again to ensure log-affinity.)
\end{proof}

\subsubsection{Descent of morphisms.}
We maintain the notations of Section 
\ref{section : meromorphic Settings punctured disk}.
Let $C'$ be as in  
\eqref{eq : C' mero anal section45}.
\begin{corollary}
Let $\Fc,\Gc$ be differential equations over $D(*0)$. Let 
\begin{equation}
\beta\;:\;\Fc_{|C'}\to\Gc_{|C'}
\end{equation}
be a morphism. Assume that 
\begin{enumerate}
\item $\Hom(\Fc,\Gc)$ is free as $\O_D[*0]$-module;
\item $\Hdr^1(D(*0),\Hom(\Fc,\Gc))$ and 
$\Hdr^1(C',\Hom(\Fc_{|C'},\Gc_{|C'}))$ are finite-dimensional;
\item $\chidr(D(*0),\Hom(\Fc,\Gc))=
\chidr(C',\Hom(\Fc_{|C'},\Gc_{|C'}))$.
\end{enumerate}
Then there exists a unique morphism $\alpha:\Fc\to\Gc$ 
over $D(*0)$ such that $\beta=\alpha\otimes 1$. 
Moreover, $\alpha$ is a monomorphism (resp.
epimorphism, isomorphism) if, and only if, so is 
$\beta$.
\end{corollary}
\begin{proof}
The proof is similar to that of Lemma \ref{Lemma: 
descent of morphisms-1} and by using Lemma 
\ref{Lemma : injectiona and surjection res mero anal}.
\if{
one has 
\begin{equation}
\Hom^\nabla(\Fc,\Gc)\;=\;
\Hdr^0(D(*0),\Hom(\Fc,\Gc))\;=\;
\Hdr^0(C',\Hom(\Fc_{|C'},\Gc_{|C'}))\;=\;
\Hom^\nabla(\Fc_{|C'},\Gc_{|C'})\;.
\end{equation}
Therefore, there exists $\alpha:\Fc\to\Gc$ such that 
$\alpha\otimes 1=\beta$. 

Denote by $\mathcal{K}$ and $\mathcal{C}$ the kernel 
and cokernel of $\alpha$ respectively. Let $C=D-\{0\}$. 
The terms of the exact sequence
$0\to \mathcal{K}_{|C}\to\Fc_{|C}\to
\Gc_{|C}\to\mathcal{C}_{|C}\to 0$ 
are free as $\O(C)$-modules. Therefore, 
$\alpha_{|C}$ is epi or mono if and only if so is 
$\beta=(\alpha_{|C})_{|C'}$.
Now, $\mathcal{K}$ and $\mathcal{C}$ are locally free 
$\O_D[*0]$-modules, 
therefore they are zero over $C$ if and only if 
they are zero on the whole $D(*0)$.
}\fi
\end{proof}

\if{
\begin{remark}
Notice that $\Fc$ and $\Gc$ are not assumed to be free 
$\O(D)[T^{-1}]$-modules in Lemma 
\ref{Lemma: descent of morphisms-1}. 
\end{remark}
}\fi
It follows from 
Theorem \ref{Theorem : restriction meromorphic}
that we have the following criterion.

\begin{corollary}
\label{Cor: descent of morphisms-3}
Let $\Fc$ and $\Gc$ be differential equations over 
$D(*0)$ and let 
$\beta:\Fc_{|C'}\to\Gc_{|C'}$ 
be a morphism. Assume that 
\begin{enumerate}
\item $\Hom(\Fc,\Gc)$ is a free 
$\O_D[*0]$-module;
\item the radii of $\Hom(\Fc,\Gc)$ are all 
$\log$-affine at the open boundary $b_D$ of $D$;
\item  $\Hom(\Fc,\Gc)$ is Fredholm and satisfies 
\eqref{eq : chiabs=Irr} at $b_D$;
\item  $\Hom(\Fc,\Gc)$ is Fredholm at $b_{0,C'}$ 
and at $b_{1,C'}$ and 
\eqref{eq:chiabs01} holds over $C'$;
\item $\Irr_C(\Hom(\Fc,\Gc))=
\Irr_{C'}(\Hom(\Fc_{|C'},\Gc_{|C'}))$.
\end{enumerate}
Then, there exists a unique morphism of differential 
equations
$\alpha:\Fc\to\Gc$ such that 
$\beta=\alpha\otimes 1$. 
Moreover, $\alpha$ is a monomorphism (resp.
epimorphism, isomorphism) if, and only if, so is 
$\beta$.\hfill$\Box$
\end{corollary}

\subsubsection{The restriction functors.}
In this section we compare the categories of differential 
equations over $D(*0)$, $K(\{T\})$, $K((T))$ and 
$\mathfrak{R}_0$ under some conditions on the indexes.

Let $D$ be an open disk centered at $0$, $C:=D-\{0\}$ 
and let $b$ be a germ of segment in $\Gamma_C$.

Let $\mathcal{A}$ be a full sub-category of the category 
of differential equations over $D(*0)$ with the property 
that for all $\Fc,\Gc\in\mathcal{A}$ the differential 
equation $\mathcal{H}:=\mathrm{Hom}(\Fc,\Gc)$ 
satisfies items i), ii), iii) and iv) of Corollary 
\ref{Comparison Rigid formal-1} 
(with respect to an 
unspecified sequence of open pseudo-annuli $\{C_n\}_n$). 

Denote by $\mathcal{A}'$ a full sub-category of 
$\mathcal{A}$ with the property that for all 
$\Fc,\Gc\in\mathcal{A}'$ the equation $\mathcal{H}$ 
satisfies moreover $\chidr(D(*0),\mathcal{H})=0$.

\begin{corollary}\label{Cor : equivalente D(*0)-Robba}
The restriction functor  
\begin{equation}
Res^{D(*0)}_{\mathfrak{R}_b}\;:\;
\mathcal{A}
\;\xrightarrow{\quad}\;
d-\mathrm{Mod}(\mathfrak{R}_b)\;.
\end{equation}
is fully faithful. 
Moreover, for $\Fc,\Gc\in\mathcal{A}$, 
if we denote by $\Fc_b=\Fc\otimes \mathfrak{R}_b$ and 
$\Gc_b=\Gc\otimes \mathfrak{R}_b$ 
their images in 
$d-\mathrm{Mod}(\mathfrak{R}_b)$, we 
have an isomorphism 
of Yoneda extension groups
\begin{equation}\label{eq : stability by ext mero-form-1}
\mathrm{Ext}_{d-\mathrm{Mod}(\O(D)[T^{-1}])}(\Fc,
\Gc)
\;\xrightarrow{\;\sim\;}\;
\mathrm{Ext}_{d-\mathrm{Mod}(\mathfrak{R}_b)}(\Fc_b,\Gc_b)\;.
\end{equation}

In particular, 
if $\mathcal{A}$ 
is stable by extensions in 
$d-\mathrm{Mod}(\O(D)[T^{-1}])$, 
then so is its essential image in 
$d-\mathrm{Mod}(\mathfrak{R}_b)$. 

Moreover the restriction functors
\begin{equation}
d-\mathrm{Mod}(K((T)))\;\xleftarrow{\;\;\;\;}\;
\mathcal{A}'\;\xrightarrow{\;\;\;\;}\;
d-\mathrm{Mod}(\mathfrak{R}_b)
\end{equation}
are both fully faithful.
\end{corollary}
\begin{proof}
The proof is similar to that of Corollary 
\ref{Cor : equivalente D(*0)-form} by using 
Corollary \ref{Comparison Rigid formal-1} instead of Lemma~\ref{Lemma : injectiona and surjection res mero form}.
\end{proof}

Let $\mathcal{D}$ be a full sub-category of 
$d-\mathrm{Mod}(K(\{T\}))$ satisfying the property 
that if $\Fc_0^\dag,\Gc_0^\dag\in\mathcal{D}$, 
then there exists an 
unspecified sequence of open disks 
$D_1\supset D_2\supset\cdots$, with
$\bigcap_nD_n=\{0\}$, such that if $C_n:=D_n-\{0\}$ 
and if the differential equation 
$\mathrm{Hom}(\Fc_0^\dag,\Gc_0^\dag)$ comes from 
a differential equation~$\mathcal{H}$ over $D_1(*0)$, 
then for all $n$ the cohomology groups 
$\Hdr^i(D_{n}(*0),\mathcal{H}_{|D_n})$ and 
$\Hdr^i(C_{n},\mathcal{H}_{|C_n})$ 
are finite-dimensional and  one has 
\begin{equation}
\chidr(D_n(*0),\mathcal{H}_{|D_n})\;=\;
\chidr(C_n,\mathcal{H}_{|C_n})\;=\;0\;.
\end{equation}
For a differential equation $\Fc_0^\dag$ over $\O(D)[T^{-1}]$ we set as usual 
$F:=\Fc_0^\dag\otimes K((T))$, 
$\Fs_0^\dag=\Fc_0^\dag\otimes\mathfrak{R}_0$. 

\begin{corollary}\label{Cor : equivalente mero-AN}
The scalar-extension functors  
\begin{equation}
d-\mathrm{Mod}(K((T)))\;\xleftarrow{\quad}\;
\mathcal{D}
\;\xrightarrow{\quad}\;
d-\mathrm{Mod}(\mathfrak{R}_0)\;.
\end{equation}
are fully faithful.
Moreover, for $\Fc,\Gc\in\mathcal{D}$, we have an 
identity of Yoneda extension groups
\begin{equation}\label{eq : stability by ext mero-form}
\mathrm{Ext}_{d-\mathrm{Mod}(K((T)))}
(F,G)
\;\xleftarrow{\;\sim\;}\;
\mathrm{Ext}_{d-\mathrm{Mod}(K(\{T\}))}
(\Fc_0^\dag,
\Gc_0^\dag)
\;\xrightarrow{\;\sim\;}\;
\mathrm{Ext}_{d-\mathrm{Mod}(\mathfrak{R}_0)}
(\Fs_0^\dag,\Fs_0^\dag)\;.
\end{equation}
\end{corollary}
\begin{proof}
The result follows from 
Corollary~\ref{Cor : res=iso merodag anal-tre}.
\end{proof}

\subsection{Formal vs. convergent decompositions.}
\label{section : Formal VS Convergent decompositions.}
To any differential module over~$K(\{T\})$, we can 
associate two Newton polygons: the formal one, attached 
to its image in $d-\mathrm{Mod}(K((T)))$, and the 
convergence one, attached to its image in 
$d-\mathrm{Mod}(\mathfrak{R}_0)$. These polygons 
are related by the fact that the formal one is 
(up to a transformation) the 
derivative of the convergence one (cf. 
Proposition~\ref{Prop : Irr-form=Irr-x-1}). 
This does not depend on any  
assumption at the boundary. 

In this section we show the relations between these two 
decompositions under some assumptions.
Namely, we maintain the assumptions of Section 
\ref{section : meromorphic Settings punctured disk}
and assume moreover that there exists a disk $D'$ containing $0$ 
and an isomorphism of $\O(D')[T^{-1}]$-differential 
modules 
\begin{equation}\label{eq : descend Can hyp-section 4}
\Fc_{|D'}\;\xrightarrow{\;\sim\;}\;
\mathrm{Can}(\M)_{|D'}\;,
\end{equation}
where $\mathrm{Can}(\M)$ 
denotes Katz's \emph{canonical extension} of $\M$.
For instance, by Lemma \ref{Lemma: descent of morphisms-1}, we may assume that 
$\mathrm{Hom}(\Fc^\dag_0,
\mathrm{Can}(\M)^\dag_0)$ 
has finite-dimensional de Rham 
cohomology over $K(\{T\})$ and that
\begin{equation}
\chidr(K(\{T\}),\mathrm{Hom}(\Fc^\dag_0,
\mathrm{Can}(\M)^\dag_0))\;=\;0\;.
\end{equation} 


When~$K$ is trivially valued, we have 
$K((T))=K(\{T\})=\mathfrak{R}_0$ and 
$\M=\Fc_0^\dag=\Fs_0^\dag$. 
In this situation, by \cite[Section 5.7]{NP-III}, the 
decomposition of~$\M$ by the slopes of the formal 
polygon coincides with its decomposition by the radii 
(\textit{i.e.} by the slopes of its convergence polygon). 


Since $\mathrm{Can}$ is a functor, the decomposition of 
$\M$ by the slopes of the \emph{formal} Newton 
polygon gives a decomposition of $\Fc_0^\dag$, that 
we will call \emph{the formal decomposition of 
$\Fc_0^\dag$}.
Corollary \ref{Cor : Formal VS conv deco} below 
compares the formal decomposition of $\Fc_0^\dag$ 
over $K(\{T\})$ with the decomposition by the radii of 
$\Fs^\dag_0$ over $\mathfrak{R}_0$ (with no 
assumptions on the valuation of~$K$).

\begin{definition}
We endow  
$\mathbb{Q}\times\mathbb{R}_{>0}$ with the 
lexicographic order: 
$(s_1,\alpha_1)\leq (s_2,\alpha_2)$ if  
$s_1<s_2$, or 
$s_1=s_2 \textrm{ and }\alpha_1\leq \alpha_2$.

\end{definition}

For all $\rho$ let 
$x_{0,\rho}$ be the Berkovich point at the boundary of 
the disk $\{|T|\leq \rho\}$. 
The differential module~$\Fc_{0}^\dag$ comes from a 
differential module~$\Fc$ over $\Os(D)[T^{-1}]$ for 
some open disk~$D$ centered at~0. By 
Lemma~\ref{lem:merosinglinear-1} and 
\cite{NP-III}, 
there exists $\eps \in \ERRE_{>0}$ and, for every $i=1,
\ldots,r=\mathrm{rank}(\Fc_0^\dag)$, $(s_{i},\alpha_i) 
\in \QQ \times \ERRE_{>0}$ such that 
the $i$-th radius of $\Fs_0^\dag$ 
along $]0,x_{0,\varepsilon}[$ is
\begin{equation}
\R_{i}(\Fs_0^\dag,x_{0,\rho})\;=\; 
\alpha_i\cdot\rho^{s_i}\;,\qquad \forall\;\rho\in
]0,\varepsilon[\;.
\end{equation}
Notice that, since by definition one has
$\R_{1}(\Fs_0^\dag,x_{0,\rho})\leq
\R_{2}(\Fs_0^\dag,x_{0,\rho})\leq
\cdots\leq\R_{r}(\Fs_0^\dag,x_{0,\rho})$, 
where $r$ is the rank of $\Fc_0^\dag$, then
we must have 
\begin{equation}
s_1\;\leq\; s_2\;\leq\;\cdots\;\leq\; s_r\;.
\end{equation}
The decomposition of $\Fs_0^\dag$ by the radii
over $\mathfrak{R}_0$ can be written as follows: 
\begin{equation}\label{eq : deco conv around 0}
\Fs_0^\dag\;=\;
\bigoplus_{(s,\alpha)\in
\mathbb{Q}\times \mathbb{R}_{>0}}
\Fs_0^\dag(s,\alpha)\;,
\end{equation}
where all the radii of $\Fs(s,\alpha)$ are all equal to 
$\alpha\cdot \rho^s$ along $]0,x_{0,\varepsilon}[$.

\begin{definition}
Set
\begin{equation}
\Fs_0^\dag(s)\;:=\;\bigoplus_{s'=s}\Fs_0^\dag(s',\alpha')\;,
\end{equation}
where the sum runs on all the factors of 
\eqref{eq : deco conv around 0} such that $s'=s$.
The decomposition
\begin{equation}
\label{eq : deco by the derived convergence polygon}
\Fs_0^\dag\;=\;
\bigoplus_{s\in\mathbb{Q}}\Fs_0^\dag(s)\;,
\end{equation}
is called decomposition of $\Fs_0^\dag$ 
\emph{by the derivative of it convergence Newton 
polygon}.
\end{definition}
\begin{remark}\label{Remark : K trval deco M}
Assume that $K$ is trivially valued. Then 
$K((T))=\mathfrak{R}_0$ and we have the above 
decomposition also for $\M$. In this case, it follows 
from an easy computation \cite[Section 5.7]{NP-III} 
that $\alpha_i=1$ for all $i=1,\ldots,r$. 
Hence, there is no distinction between the 
decomposition \eqref{eq : deco conv around 0} 
of $\M$ by the radii 
(i.e. by the slopes of 
its convergence polygon) and its decomposition 
\eqref{eq : deco by the derived convergence polygon} 
by the derivative of its convergence Newton polygon. 
Both coincide with its formal decomposition, \textit{i.e.} 
the decomposition by the slopes of its formal Newton 
polygon (which is indeed, up to a 
transformation, the derivative of the 
convergence Newton polygon by 
Proposition~\ref{Prop : Irr-form=Irr-x-1}). 
\end{remark}

\begin{corollary}\label{Cor : Formal VS conv deco}

The decomposition 
\eqref{eq : deco by the derived convergence polygon} 
of $\Fs_0^\dag$ by the derived convergence Newton 
polygon coincides with the decomposition of 
$\Fs_0^\dag$ induced by that of 
$\Fc_0^\dag$ by the formal Newton polygon.
\if{The following properties hold:
\begin{enumerate}
\item The decomposition of $\M$ by the slopes of its 
formal Newton polygon descends into a decomposition 
of $\Fc_0^\dag$ 
(cf. \eqref{eq : descend Can hyp-section 4}).
\item The decomposition \eqref{eq : deco by the derived convergence polygon} of $\Fs_0^\dag$ by the 
derived convergence Newton polygon descends 
into a decomposition of $\Fc_0^\dag$.
\item These two decompositions of $\Fc_{0}^\dag$ 
coincide.
\end{enumerate}
}\fi
\end{corollary}
\begin{proof}
The claim follows immediately from the fact that 
the derivative of the convergence Newton polygon of 
$\Fs_0^\dag$ coincides with the formal Newton polygon 
of $\M$ (up to a transformation). See Proposition 
Proposition~\ref{Prop : Irr-form=Irr-x-1}.
\end{proof}

\appendix

\section{Local Liouville conditions.}  
\label{Liouville condition}  

%

In the literature, exponents are defined only over standard 
annulus and differential modules that are free over $\O$. 
In this section, we extend the definition to open 
pseudo-annuli and germ of segments, and we allow non-freeness. Moreover, we allow differential equations with 
non-affine radii.
We also recall and adapt to our setting some classical 
definitions and results that are mainly due to 
Christol-Mebkhout \cite{Ch-Me-II}, 
Dwork \cite{Dwork-Exponents}, \cite{DGS} 
and Kedlaya \cite{Kedlaya-draft}.

\subsection{Exponents}
\subsubsection{Oriented pseudo-annuli}

\begin{definition}
Let~$C$ be an open pseudo-annulus. An 
\emph{orientation} of~$C$ is the datum of 
a germ of segment~$b$ in the open 
boundary of~$C$. Once an orientation~$b$ is chosen, 
we sometimes call \emph{upper germ} (resp. 
\emph{lower germ}) of~$C$ the germ~$b$ (resp. the 
germ in the open boundary of~$C$ that is not~$b$).

The pair $(C,b)$ is called an \emph{oriented open 
pseudo-annulus}.
\end{definition}

It is convenient to fix a choice of orientation when we have an annulus embedded in the affine line.

\begin{definition}\label{Def : standard orientation}
Fix a coordinate~$T$ on $\mathbb{A}^{1,\an}_{K}$. Let $C = \{r_{1} < |T| < r_{2}\}$, with $0\le r_{1} < r_{2} \le +\infty$, be a standard open pseudo-annulus. The \emph{standard orientation} of~$C$ is the germ $b \in \partial^o C$ pointing away from~$\infty$, \emph{i.e.} the germ represented by $]x_{r_{2}},x_{r_{2}-\eps}[$ for $\eps$ small enough.

\end{definition}

Note that an isomorphism of open pseudo-annuli induces a bijection between their open boundaries.

\begin{definition}
\label{Definition oriented pseudo-annuli preserved}
Let~$(C,b)$ and $(C',b')$ be oriented open pseudo-annuli and let $f : C \to C'$ be an isomorphism. We say that~$f$ \emph{preserves the orientation} if it sends~$b$ to~$b'$ and that~$f$ \emph{reverses the orientation} otherwise.
\end{definition}

Note that, if~$f$ is an automorphism of an open pseudo-annulus~$C$, then it preserves the orientation for some choice of orientation if it does for the other. Therefore, we can speak about orientation preserving automorphisms without actually choosing an orientation.

\begin{remark}
Let~$(C,b)$ be an oriented open pseudo-annulus. 

Let~$C'$ be an open sub-pseudo-annulus of~$C$ such that $\Gamma_{C'} \subseteq \Gamma_{C}$. There exists a unique germ~$b'$ in the open boundary of~$C'$ pointing in the same direction as the germ~$b$. We call it the induced orientation on~$C'$.

Let~$L$ be a complete valued extension of~$K$. Let~$C''$ be a connected component of~$\pi^{-1}_{L/K}(C)$. It is an open pseudo-annulus whose open boundary contains precisely one germ~$b''$ above~$b$. We call it the induced orientation on~$C''$.
\end{remark}

The following statement ensures that the 
isomorphism class of a Robba module is stable by 
infinitesimal isomorphisms. 
This is an example of infinitesimal deformation.

\begin{proposition}[\protect{\cite[Theorem 4.3.1]{Inf-Def}}]\label{prop:deformation}
Let $C$ be an open pseudo-annulus. 
Let $\Fs$ be a finite differential equation over $C$ of 
Robba type (i.e. $\Fs=\Fs^{\mathrm{Robba}}$, 
cf. Definition \ref{Def.: Rrobba part}). 
Let $\sigma:C\simto C$ be a 
$K$-automorphism of $C$. Assume that, for each complete valued extension~$L$ of~$K$, each connected component of $C_{L} - \Gamma_{C_{L}}$ is globally fixed by~$\sigma_{L}$.
Then, we have an isomorphism of differential equations 
$\sigma^*\Fs\cong\Fs$.
\end{proposition}
\begin{proof}
By definition, such a $\sigma$ is 
an \emph{infinitesimal automorphism} with respect to 
the empty pseudo-triangulation of~$C$ in the sense of 
\cite[Definition 3.0.2]{Inf-Def}. 
Moreover, since $\Fs=\Fs^{\mathrm{Robba}}$, it 
trivially satisfies the $\sigma$-compatibility condition 
\cite[Section 4.2]{Inf-Def}. 
Hence, it follows from \cite[Theorem 4.3.1]{Inf-Def} that 
we have an isomorphism $\sigma^*(\Fs)\cong\Fs$. 
\end{proof}

\begin{remark}\label{rem:deformation}
Assume that~$C$ is a standard open pseudo-annulus: $C = \{r< |T|<s\}$. Then, the condition of the proposition is satisfied if, and only if, $\sigma$ is of the form $T \mapsto T+h(T)$ with $|h(x_{\rho})| < \rho$ for each $\rho \in (r,s)$.

In general, let~$\Omega$ be a spherically complete valued extension of~$K$ that is algebraically closed and with value group~$\ERRE_{+}$. Let $C_{1},\dotsc,C_{n}$ be the connected components of~$C_{\Omega}$. They are standard open annuli and the condition of the proposition is satisfied if, and only if, for each $i\in\{1,\dotsc,n\}$, $(\sigma_{\Omega})_{|C_{i}}$ is of the form above.
\end{remark}

\subsubsection{Type of a number,
Liouville numbers.}


We denote by $|.|$ the absolute value of $K$, and by 
$x_\rho$ the point at the boundary of the disk 
$\{|T|\leq \rho\}$. 
We consider the open pseudo-annulus 
\begin{equation}\label{eq : C notation liouville}
C\;:=\;\{r_1<|T|<r_2\}\;,\qquad r_{1},r_{2} \in [0,+\infty]\;,\qquad r_1<r_2\;.
\end{equation}
Denote by $b_1$ and $b_2$ the germs of segments at 
the open boundary of $C$ represented, 
for $\eps$ small enough, by 
$]x_{r_1},x_{r_1+\eps}[$ and 
$]x_{r_2-\eps},x_{r_2}[$ respectively.

\begin{definition}\label{Def : type_b}
Let $e\in K$. We set
\begin{eqnarray}
\mathrm{type}_{b_2}(e,|.|)&\;:=\;&
\liminf_{n\to+\infty}|e+n|^{1/n}\;,\;\\
\mathrm{type}_{b_1}(e,|.|)&\;:=\;&
\liminf_{n\to+\infty}|e-n|^{1/n}\;,\;\\
\mathrm{type}(e,|.|)&\;:=\;&\min\Bigl(
\;\mathrm{type}_{b_1}(e,|.|)\;,
\;\mathrm{type}_{b_2}(e,|.|)\;\Bigr)\;.\label{annexe-liouvile (C.4)}
\end{eqnarray}
For all germ of segment $b$ in $\Gamma_C$ that is 
oriented as $b_i$ we set
\begin{equation}
\mathrm{type}_b(e,|.|)\;:=\;
\mathrm{type}_{b_i}(e,|.|)\;.
\end{equation}
We often write $\mathrm{type}_{b}(e)$ or 
$\mathrm{type}(e)$ if no confusion is 
possible.
\end{definition}

\begin{lemma}
\label{Lemma : Liouville are Transcendental}
Let $b$ be a germ of segment in $\Gamma_C$. 
The following properties hold.
\begin{enumerate}
\item For all $e\in K$, one has 
$\mathrm{type}_{b_1}(e,|.|)=
\mathrm{type}_{b_2}(-e,|.|)$, and 
$\mathrm{type}(e,|.|)=
\mathrm{type}(-e,|.|)$. 
\item For all $e\in K$ and $n\in\mathbb{Z}$, one has 
$\mathrm{type}_b(e+n,|.|)=\mathrm{type}_b(e,|.|)$.
\item If $\psi:C\simto C$ is an isomorphism then
\begin{equation}
\mathrm{type}_{\psi(b)}(e)\;=\;\left\{
\begin{array}{ll}
\mathrm{type}_{b}(e)&\textrm{ if $\psi$ preserves 
the orientation of $C$,}\smallskip\\
\mathrm{type}_{b}(-e)&\textrm{ if $\psi$ reverses 
the orientation of $C$.}
\end{array}
\right.
\end{equation}
\item For all $e\in K$, one has $\mathrm{type}(e,|.|)
\leq 1$. 
\item If the restriction of $|.|$ to $\mathbb{Z}$ is the 
trivial absolute value, then $\mathrm{type}(e,|.|)=1$ 
for all $e\in K$.
\item If the restriction of $|.|$ to $\mathbb{Z}$ is a 
$p$-adic absolute value and if we have either 
$e\notin\mathbb{Z}_p$ or $e\in 
\mathbb{Z}_p\cap
\mathbb{Q}^{\mathrm{alg}}$, then 
$\mathrm{type}(e,|.|)=1$.
\end{enumerate}
\end{lemma}
\begin{proof}
The evoked equalities in i), ii) and iii) 
follow easily from the definition.

iv) Let $e\in K$. For all $n\in\Z$, we have $|e\pm n|^{1/n}\leq \max(1,|e|)^{1/n}$. We deduce that 
$\mathrm{type}(e)\leq 1$. 

v) Assume that $|.|$ is trivial on $\mathbb{Z}$. In particular, $\mathbb{Z}$ is complete, hence closed in~$K$. If $e\in \mathbb{Z}$, then the claim is clear. If $e\notin \mathbb{Z}$, then its distance $d:=\inf_{n\in\mathbb{Z}} (|e-n|)$ to~$\mathbb{Z}$ is non-zero. It follows that $\mathrm{type}(e,|.|)\ge 1$, hence $\mathrm{type}(e,|.|)=1$, by iv).

vi) Assume that $|.|$ is $p$-adic on $\mathbb{Z}$. For all $e\notin\mathbb{Z}_p$, we have $d>0$, hence $\mathrm{type}(e)=1$, as before. For $e\in\mathbb{Z}_p\cap\mathbb{Q}^{\mathrm{alg}}$, a proof is given in
\cite[Prop. 11.3.4]{Ch-Ro} or 
\cite[Ch.VI, Prop. 1.1]{DGS}. 
%
%
\end{proof}

\begin{definition}[Liouville numbers]
Let $e\in K$. We say that $e$ is Liouville with respect to $|.|$ 
if $\mathrm{type}(e)<1$.
If $\mathrm{type}(e)=1$, 
we say that $e$ is non-Liouville with respect to $|.|$.
\end{definition}


Lemma \ref{Lemma : Liouville are Transcendental} 
shows that Liouville numbers only arise in a $p$-adic 
context and that they are transcendental  
numbers in $\mathbb{Z}_p$. 

\if{It also  
allows to extend the definition of 
$\mathrm{type}_b$ to any oriented good germ of 
segments in $X$.
\begin{definition}
Let $e\in K$. Let $b$ be a good germ of segment in $X$ 
represented by a pseudo-annulus $C\subseteq X$. 
Let~$\Omega$ be a spherically complete and 
algebraically closed field extension of~$K$ such that 
$|\Omega|=\mathbb{R}_{\geq 0}$. 
By~\cite[Proposition~3.2]{Liu}), $C_\Omega$ may 
be identified with an analytic domain of 
$\mathbb{P}^{1,\mathrm{an}}_\Omega$. 
It is hence a finite disjoint union 
$C_\Omega=C_1\sqcup\ldots\sqcup C_n$ of standard 
open pseudo-annuli over~$\Omega$ (see Definition 
\ref{Def : standard pseudo-annulus}). 
Let $b_1,\ldots,b_n$ be the inverse images of $b$ in 
$C_\Omega$. We chose a coordinate function 
of each $C_i$ that identifies $C_i$ with a standard  $b_i$ 
with a germ of segment in the affine line that is oriented 
as $b_2$.
\end{definition}
}\fi
\subsubsection{Modules of type $\Ns(e)$.}
\label{Section : N(e)}

\if{\begin{proof}
\comment{Je change les $K$ en $L$ pour la coh\'erence. L'ancienne version est en commentaire en dessous.}

If $L$ is trivially valued, then $\mathrm{type}(e)=1$ and~$\O(C)$ is one of the rings $L((T))$, 
$L[T,T^{-1}]$ or $L((T^{-1}))$. The claim then 
follows from a direct computation (similar to 
\cite[Théorème 11.3.2]{Ch-Ro}).

Assume that $L$ is not trivially valued. 
By Theorem~\ref{thm:descent}, 
we can assume that $L$ is algebraically closed and spherically 
complete and that $|L|=\mathbb{R}$. 
Set $L_e:=T\frac{d}{dT}-e= 
T\circ(\frac{d}{dT}-\frac{e}{T})$. 
Since the multiplication by $T$ is invertible in 
$\O(C)$, 
$\frac{d}{dT}-\frac{e}{T}:\O(C)\to\O(C)$ 
has finite index if, and only if, 
so has $L_e:\O(C)\to\O(C)$. 
Now, by Proposition \ref{Prop : truncation compact}, 
any differential operator has the compactness property 
of Definition \ref{Def. Compactness property}, so, by 
Proposition \ref{Prop : chi=sumchigen}, $L_e$ 
has finite index if, and only if, it has finite generalized 
indexes on $\O(D_0)$ and $T^{-1}\O(D_1)$ 
(cf. notations as in \eqref{eq : deco annulus ML}). In 
other words, $L_e$ has finite index if and only if 
the indexes of the truncated operators 
$p_k\circ L_e\circ i_k$ (cf. \eqref{eq : u_k (def)}) are finite. 
Since $L_e:\O(C)\to\O(C)$ stabilizes 
$\O(D_0)$ and $T^{-1}\O(D_1)$, the truncated 
operators $p_k\circ L_e\circ i_k$ coincide 
with the restrictions $(L_e)_{|\O(D_0)}$ and 
$(L_e)_{|T^{-1}\O(D_1)}$. 
Now, a classical direct computation 
(see \cite[Théorème 11.3.2]{Ch-Ro} or 
\cite[Section 4.19]{Ro-I}) 
shows that the indexes of $(L_e)_{|\O(D_0)}$ and 
$(L_e)_{|T^{-1}\O(D_1)}$ are finite if, and only if, 
$\mathrm{type}(e)=1$.

%
\end{proof}
}\fi
We maintain the notation 
\eqref{eq : C notation liouville}. 
For $e\in K$, we denote by 
\begin{equation}\label{eq : N(e)}
\Ns(e)
\end{equation}
the rank one differential module associated with the 
differential equation 
$\frac{d}{dT}(y)=\frac{e}{T}\cdot y$ over~$C$. 
Clearly, we have $\Ns(e)\otimes\Ns(e')=\Ns(e+e')$,
$\Ns(e)^*=\Ns(-e)$ and $\Ns(e+n) \simeq \Ns(e)$ for 
all $n\in\mathbb{Z}$. 

\begin{notation}
We set 
\begin{equation}
E(K) = \begin{cases}
K^\circ \textrm{ if } \mathrm{char}(\tilde{K}) = 0;\\
\Z_{p} \textrm{ if } \mathrm{char}(\tilde{K}) = p>0.
\end{cases}
\end{equation}
\end{notation}

\begin{lemma}\label{lem:NeRobba}
Let $e\in K$. Then, the radius of convergence function 
$x\mapsto\R_{1}(x,\Ns(e))$
of~$\Ns(e)$ is constant on $C$. 

It is identically equal to~1 on~$\Gamma_{C}$ (i.e. 
$\Ns(e) = \Ns(e)^\mathrm{Robba}$) if, and only if 
$e\in E(K)$.

Moreover, in this case, if~$f$ is a $K$-linear automorphism of~$C$, we have $f^*(\Ns(e)) \simeq \Ns(e)$ if~$f$ preserves the orientation and $f^*(\Ns(e)) \simeq \Ns(-e)$ otherwise.
\end{lemma}
\begin{proof}
The Taylor solution of the equation 
$\frac{d}{dT}(Y)=\frac{e}{T}\cdot Y$ at the generic 
point $t_x$ is $Y(T,t_x)=
\sum_{s\geq 0}\tbinom{e}{s}t_x^{-s}
(T-t_x)^s$, 
where 
$\tbinom{e}{s}=\frac{e(e-1)(e-2)\cdots(e-s+1)}{s!}$. 
It follows that 
$\R_{1}(x,\Ns(e))=
\liminf_s\tbinom{e}{s}^{-1/s}$, which is a constant 
function on $C$.

Now, the exact computation of the radius is estimated in 
\cite[Chapter IV, Proposition 7.3]{DGS} in the case of 
positive residual characteristic (and it was known since 
Robba \cite{RoIV}).

If the residual characteristic is $0$ we may argue as 
follows. The absolute value on induced on $\mathbb{Z}$ 
by that of $K$ is trivial. Therefore, if $|e|>1$, then 
$|\tbinom{e}{s}|=|e|^s$ and the radius equals 
$|e|^{-1}<1$. On the other hand, if $|e|\leq 1$, 
then $e$ belongs at most to an individual 
unit open disk centered at an integer. 
Therefore, $|\tbinom{e}{s}|$ is eventually constant and 
the radius is $1$.

Let us prove the last part of the statement. Let~$f$ be a 
$K$-linear automorphism of~$C$. 

Assume that $f$ preserves the orientation.
We can write $f(T)=q(T+h(T))$ with $q\in K$, $|q|=1$, 
$h\in\O(C)$ and $|h(x_\rho)|<\rho$ for all 
$\rho\in]r_1,r_2[$. Therefore $f$ is the composition of 
$f_1(T)=qT$ and $f_2:=f\circ f_1^{-1}$. It is easy to show directly that we have
$f_{1}^*\Ns(e)\cong\Ns(e)$. By Remark~\ref{rem:deformation}, the automorphism~$f_{2}$ satisfies the assumptions of 
Proposition~\ref{prop:deformation} 
(i.e. it is an infinitesimal automorphism 
\cite[Definition 3.0.2]{Inf-Def}), hence we have $f_{2}^*\Ns(e)\cong\Ns(e)$. The result follows.


Now assume that $f$ reverses the orientation. Then there exists a $K$-linear automorphism~$g$ of~$C$ of the form $g : T \mapsto a T^{-1}$ for some $a\in K$ such that $g \circ f$ preserves the orientation. For the previous case, we have $(g\circ f)^*(\Ns(e)) \simeq \Ns(e)$. A direct computation shows that we have $(g^{-1})^*(\Ns(e)) \simeq \Ns(-e)$ and the result follows.
\end{proof}

%
%
%
%
%
%
%
%

\begin{lemma}\label{Lemma : Liouville iff index O}
\label{Lemma : H^i N(e)}
Let $e\in K$ and $b$ be a germ of segment in 
$\Gamma_C$. Then
\begin{enumerate}
\item $\Ns(e)$ is Fredholm at $b$ (cf. Definition 
\ref{def:Fredholmnabla})
if and only if 
$\mathrm{type}_b(e)=1$. 
In this case, we have $\chiabs_b(\Ns(e))=0$;
\item $\Ns(e)$ has 
finite dimensional de Rham cohomology over $C$ 
if, and only if, 
$\mathrm{type}(e)=1$. 
In this case, we have $\chidr(C,\Ns(e))=0$.
\item One has
\begin{equation}
\mathrm{dim}\;
\Hdr^0(C,\Ns(e))
\;=\;\left\{
\begin{array}{ll}
1&\textrm{if $e\in\mathbb{Z}$}
\;;\\
0&\textrm{if $e\notin\mathbb{Z}$}\;.
\end{array}
\right.
\end{equation} 
and
\begin{equation}
\qquad\qquad\qquad\qquad\mathrm{dim}\;
\Hdr^1(C,\Ns(e))\;=\;\left\{
\begin{array}{ll}
1&\textrm{if \;$e\in\mathbb{Z}$}\;;\\
0&\textrm{if \;$e\notin\mathbb{Z}$ and if $e$ is non-Liouville}\;;\\
+\infty&\textrm{if $e$ is Liouville}\;.
\end{array}
\right.
\end{equation}
\end{enumerate}
\end{lemma}
\begin{proof}
Items i) and ii) result from a direct computation. 
See \cite[Théorème 11.3.2]{Ch-Ro} or 
\cite[Section 4.19]{Ro-I}. Item iii) follows from ii) and 
from straightforward computations.
\end{proof}

\if{\begin{lemma}
Let $e\in K$. Then, we have
\begin{equation}
\mathrm{dim}\;
\Hdr^0(C,\Ns(e))
\;=\;\left\{
\begin{array}{ll}
1&\textrm{if $e\in\mathbb{Z}$}
\;;\\
0&\textrm{if $e\notin\mathbb{Z}$}
\end{array}
\right.
\end{equation}
and 
\begin{equation}
\qquad\qquad\qquad\qquad\mathrm{dim}\;
\Hdr^1(C,\Ns(e))\;=\;\left\{
\begin{array}{ll}
1&\textrm{if \;$e\in\mathbb{Z}$}\;;\\
0&\textrm{if \;$e\notin\mathbb{Z}$ and if $e$ is non-Liouville}\;;\\
+\infty&\textrm{if $e$ is Liouville}\;.
\end{array}
\right.\qquad\qquad\Box
\end{equation}
\end{lemma}
\begin{proof}
It is not difficult to check that the differential equation~$\Ns(e)$ admits a non-zero global solution if, and only if, $e\in\mathbb{Z}$. As a consequence, if $e\in\mathbb{Z}$, then $\Ns(e)$ is trivial and the claim holds. 

If $e\notin\mathbb{Z}$, then we have $\Hdr^0(C,\Ns(e))=0$. The rest of the result follows from a standard computation. It could also be deduced from Lemma~\ref{Lemma : Liouville iff index O}.

\comment{J'ai modifi\'e la r\'edaction.}


\end{proof}
}\fi

\begin{lemma}\label{Lemma : Hom and Ext of N(e)}
Let $e,e'\in K$. 
Denote by 
$\mathrm{Hom}^{\nabla}(\Ns(e),\Ns(e'))$ the 
space of $\O(C)$-linear homomorphisms commuting 
with the connections and by $\mathrm{Ext}^{1}(\Ns(e),\Ns(e'))$ 
Yoneda's group of extensions 
(whose elements are equivalence 
classes of exact sequences 
$0\to \Ns(e')\to E\to \Ns(e)\to0$). Then, we have
\begin{equation}
\mathrm{dim}\;
\mathrm{Hom}^{\nabla}(\Ns(e),\Ns(e'))
\;=\;\left\{
\begin{array}{ll}
1&\textrm{if $e'-e\in\mathbb{Z}$}
\;;\\
0&\textrm{if $e'-e\notin\mathbb{Z}$}
\end{array}
\right.
\end{equation}
and 
\begin{equation}
\mathrm{dim}\;
\mathrm{Ext}^{1}(\Ns(e),\Ns(e'))
\;=\;
\left\{
\begin{array}{ll}
1&\textrm{if \;$e'-e\in\mathbb{Z}$}\;;\\
0&\textrm{if \;$e'-e\notin\mathbb{Z}$, and if $e'-e$ is non-Liouville}\;;\\
+\infty&\textrm{if \;$e'-e$ is Liouville}\;.
\end{array}
\right.
\end{equation}
\end{lemma}
\begin{proof}
Since $\Ns(e-e')=\Ns(s)\otimes\Ns(e')^\vee$, we have 
classical isomorphisms 
\begin{eqnarray}
\Hdr^0(C,\Ns(e'-e))&\;=\;&
\mathrm{Hom}^\nabla(\Ns(e),\Ns(e'))\;;\\
\Hdr^1(C,\Ns(e'-e))&=&
\mathrm{Ext}^1(\Ns(e),\Ns(e'))\;,
\end{eqnarray} 
(cf. \cite[Lemma~5.3.3 and 
Remark~5.3.4]{Kedlaya-book}). 
The claim then follows 
from Lemma \ref{Lemma : H^i N(e)}.
\end{proof}

\begin{lemma}\label{lem:invariantextension}
Let~$\Fs$ be a differential equation on~$C$ with log-affine radii along~$\Gamma_{C}$. Set 
$r':=\mathrm{rank}(\Fs^{\mathrm{Robba}})$. Assume that $\Fs^{\mathrm{Robba}}$ may be written as extension of $\Ns(e_{1}),\dotsc,\Ns(e_{r'})$ for $e_{1},\dotsc,e_{r'} \in K$. Then, we have $e_{1},\dotsc,e_{r'} \in R(K)$. 

Moreover, if $\Fs^{\mathrm{Robba}}$ may be written as extension of $\Ns(e'_{1}),\dotsc,\Ns(e'_{r'})$ for $e'_{1},\dotsc,e'_{r'} \in K$, then, there exists a permutation $\sigma\in\mathfrak{S}_{r'}$ such that, for each $i\in\{1,\dotsc,r'\}$, we have $e'_{\sigma(i)} - e_{i} \in \Z$.
\end{lemma}
\begin{proof}
The first part of the lemma follows from Lemma~\ref{lem:NeRobba} and the second from Lemma~\ref{Lemma : Hom and Ext of N(e)}.
\end{proof}

\subsubsection{The group of exponents.}

Assume that the residue field of $K$ has positive characteristic~$p$. Let~$m$ be a positive integer. 
In this setting, Christol and Mebkhout 
introduce in \cite[Sections 4 and 5]{Ch-Me-II} 
a certain equivalence relation 
$\stackrel{\mathfrak{E}}{\sim}$ on 
$(\mathbb{Z}_p/\mathbb{Z})^m$ and define the 
$p$-adic group of exponents of rank $m$ as
\begin{equation}
\mathfrak{E}_m
\;:=\;(\mathbb{Z}_p/\mathbb{Z})^m/
\stackrel{\mathfrak{E}}{\sim}\;.
\end{equation}
We omit the definition of 
$\stackrel{\mathfrak{E}}{\sim}$ which is technical 
and not essential for our purposes.

\begin{definition}
Let  $\mathfrak{e}\in\mathfrak{E}_m$
and let $e=(e_1,\ldots,e_m)\in\mathbb{Z}_p^m$ be a 
lift of $\mathfrak{e}$. 
\begin{itemize}
\item If, for all $i=1,\ldots,m$, the number $e_i$ is non-Liouville, 
we say that $\mathfrak{e}$ is non-Liouville.
\item If, for all $i,j=1,\ldots,m$, the difference 
$e_i-e_j$ is non-Liouville, we say that 
$\mathfrak{e}$ has non-Liouville differences.
\end{itemize}
By \cite[Def.4.3-1, Prop. 4.3-4, Thm. 4.4-7]{Ch-Me-II} 
(see also \cite[Definition 3.4.18]{Kedlaya-draft})
these properties only depend of 
$\mathfrak{e}$ and not of the particular choice of the 
lift~$e$.
\end{definition}
\begin{remark}
Unfortunately, Liouville numbers do not behave well with 
respect to the usual operations and do not form a group 
in general. In particular, there are elements of  
$\mathfrak{E}_m$ 
satisfying one point of the above definition but not 
both.
\end{remark}

\begin{lemma}[\protect{%
\cite[Prop. 4.4-10]{Ch-Me-II}}]
\label{Lemma : DNL implies multiset}
Assume that $\mathfrak{e}\in\mathfrak{E}_m$ has 
non-Liouville differences. Then there exists $(\bar{e}_1,\ldots,
\bar{e}_m)\in(\mathbb{Z}_p/\mathbb{Z})^m$ such that the class of $\mathfrak{e}$ for $\stackrel{\mathfrak{E}}{\sim}$ in 
$(\mathbb{Z}_p/\mathbb{Z})^m$ is given by all
the vectors of the form 
$(\bar{e}_{\sigma(1)},\ldots,\bar{e}_{\sigma(m)})$ 
where $\sigma$ runs through the group of permutations of $\{1,\dotsc,m\}$.

\if{That is the orbit of 
$(\bar{e}_{1},\ldots,\bar{e}_{m})$ 
by the action of the group of all permutations
$\mathfrak{S}_m$ acting on 
$(\mathbb{Z}_p/\mathbb{Z})^m$ by permutation of 
the coordinates.}\fi

In other words, we can identify $\mathfrak{e}$ with a 
multiset of $m$ elements of 
$\mathbb{Z}_p/\mathbb{Z}$.\footnote{i.e. 
a set of elements of $\mathbb{Z}_p/\mathbb{Z}$ 
counted with multiplicities such that the sum of the 
multiplicities equals $m$.}
\hfill$\Box$
\end{lemma}

\begin{remark}
In the situation of Lemma \ref{Lemma : DNL implies 
multiset}, we will sometimes talk about the exponent, 
\emph{i.e.} the multiset, and sometimes about the 
exponents (plural), \emph{i.e.} the elements of this 
multiset.
\end{remark}

%
%
%
%
%

\subsubsection{The exponent of a differential module of 
type Robba in positive residue characteristic.}
\label{section : Liouville conditions}


%
%
%


In this section, we assume that the residue field of~$K$ has positive characteristic~$p$. 

Let us consider the affine $K$-analytic line $\mathbb{A}^{1,\an}_{K}$ and fix a coordinate~$T$ on it. The definition that follows depends on it.

Let $C := \{r_{1} < |T| < r_{2}\}$, with $0< r_{1} < r_{2} < +\infty$, be an open annulus.
Let~$\Fs$ be a free differential equation over $C$ such that the radii of $\Fs$ are $\log$-affine along~$\Gamma_C$. Set $r' := \mathrm{rank}(\Fs^{\mathrm{Robba}})$. In this setting, Christol and Mebkhout define the exponent 
$\mathfrak{e}(\Fs^{\mathrm{Robba}})\in\mathfrak{E}_{r'}$ 
of $\Fs^{\mathrm{Robba}}$ (see \cite[Definition 5.3-6]{Ch-Me-II} or \cite[Definition 3.4.11]{Kedlaya-draft}).

\begin{remark}\label{rem:indepexp}
Let $j : K\to L$ be an isometric extension of complete 
valued fields. It induces a morphism 
$\pi_{L/K} : \E{1}{L} \to \E{1}{K}$ and we get a 
differential equation~$\pi_{L/K}^*(\Fs)$ over the open 
annulus~$\pi_{L/K}^{-1}(C)$. It follows from the 
definition that  we have 
\begin{equation}\label{eq : inv expo scalar ext}
\mathfrak{e}(\pi_{L/K}^*(\Fs)^{\mathrm{Robba}}) = \mathfrak{e}(\Fs^{\mathrm{Robba}})\;.
\end{equation}
More precisely, $j$ may be extended to a morphism $j : \wKa \to \widehat{L^\alg}$. Using Kedlaya's definition, the invariance boils down to the fact that, if~$\zeta$ is a root of unity in~$\wKa$ and $A \in \Z_{p}$, then $j(\zeta)$ is a root of unity and $j(\zeta^A) = j(\zeta)^A$.


\end{remark}

\begin{proposition}
\label{prop:orientationannulus}
Let $C$ be as above and let~$f$ be a $K$-linear automorphism of~$C$. Let $\Fs$ be a differential equation over $C$ with 
log-affine radii along $\Gamma_C$. 
Then $f^*(\Fs)^{\mathrm{Robba}}=
f^*(\Fs^{\mathrm{Robba}})$.

If $f$ preserves (resp. reverses) the orientation, then
the exponents of $(f^*\Fs)^{\mathrm{Robba}}$ 
coincide with (resp. the opposite of) those of 
$\Fs^{\mathrm{Robba}}$. 
\end{proposition}
\begin{proof}
The radii are invariant by isomorphisms, so 
$f^*(\Fs)^{\mathrm{Robba}}=
f^*(\Fs^{\mathrm{Robba}})$. 

By \eqref{eq : inv expo scalar ext} we can 
assume that $K$ is algebraically closed, spherically 
complete and that $|K|=\mathbb{R}_{\geq 0}$. In this 
case $\Fs$ is free. By 
Remark~\ref{Lemma: resriction of NL}, 
we can assume that $C$ is an annulus. 

In this case, if $f$ preserves the orientation, 
the equality of the exponents follows from 
\cite[Proposition 5.5-4]{Ch-Me-II}. 

Assume that $f$ reverses the orientation. 
Then, we can write 
$f=f_1\circ f_2$, where $f_2$ preserves the orientation 
and $f_1$ is of the form $x\mapsto ax^{-1}$ with $a\in K^*$. 
Since the claim holds when $f$ preserves the orientation, 
we can assume that $f=f_1$. In this case, $f$ is an 
isometric automorphism of $\O(C)$, therefore 
the properties of existence and of convergence required 
in \cite[Definition 13.5.2]{Kedlaya-book} are 
transported by pull-back. 
The only property that changes 
concerns the action of the $p^n$-th root of unity $\xi$ 
appearing in the definition. This action is given by 
$x\mapsto \xi\cdot x$ on $C$ and 
it produces an isomorphism 
$\xi^*:\Fs\simto\xi^*\Fs$ by 
deformation of the connection as in \cite{Inf-Def}. 
Explicitly, one has  $\xi^*(m)=
\sum_{i\geq 0}\frac{(\xi m-m)^i}{i!}\nabla^i(m)$, for 
all $m\in\Fs$. Moreover, we have 
$(\xi^{-1})^*=(\xi^*)^{-1}$. 
Now, the multiplication by $\xi$ satisfies 
$\xi\circ f_1=f_1\circ\xi^{-1}$ as an endomorphism of 
$\O(C)$.  Therefore, the definition of the exponent of 
$f_1^*(\Fs)$ involves the pull-back by $(\xi^{-1})^*$ 
instead that of $\xi^*$ and this produces the effect that 
the exponent changes sign. The claim follows.
\end{proof}

We now extend the definition of exponent to oriented virtual open annuli.

\begin{definition}[Exponent of $\Fs$]
\label{def:exponentcharp}
Let $C$ be an oriented virtual open annulus over $K$. Let~$\Fs$ be a 
differential equation over $C$ with $\log$-affine radii
along~$\Gamma_C$. Set 
$r':=\mathrm{rank}(\Fs^{\mathrm{Robba}})$.

Let $\Omega$ be a complete valued field extension of~$K$ that is algebraically 
closed, spherically complete and such that $|\Omega|
=\mathbb{R}_{\geq 0}$. Denote by $\pi_{\Omega/K}:C_\Omega\to C$ the projection morphism.
Let~$C'$ be a connected component of~$C_{\Omega}$ and endow it with the orientation induced by that of~$C$. It is an oriented open annulus on which~$\pi_{\Omega/K}^*(\Fs)$ is free. Choose an embedding~$\varphi$ of~$C$ into the affine line such that the image of the upper germ of~$C$ points away from~$\infty$ and identify~$C$ with its image. We define the exponent of~$\Fs^{\mathrm{Robba}}$ as
\begin{equation}\label{eq : gtrd}
\mathfrak{e}(\Fs^{\mathrm{Robba}}) \;:=\; \mathfrak{e}((\pi_{\Omega/K}^{*}(\Fs)_{|C'})^{\mathrm{Robba}}) \in\mathfrak{E}_{r'}\;.
\end{equation}
\end{definition}

\begin{remark}
Assume that $C = \{r_{1} < |T| < r_{2}\}$ as in the beginning of the section. If we endow~$C$ with the standard orientation (see 
Definition~\ref{Def : standard orientation}), then, in the previous definition, one may choose~$\varphi$ to be the inclusion into~$\mathbb{A}^{1,\an}_{K}$, hence the definition coincides with that of Christol and Mebkhout.
\end{remark}

\begin{lemma}
\label{Lemma : exp lies in Z_p}
Definition \ref{def:exponentcharp} does not depend on the choices of~$\Omega$, $C'$ and~$\varphi$. 
\end{lemma}
\begin{proof}
The independence of~$\varphi$ follows from Proposition~\ref{prop:orientationannulus}, so we need only consider what happens when~$\Omega$ and~$C$ change.

Let $(\Omega_{1},C'')$ be another choice. We can find a 
larger field $\Omega_{2}$ with the same properties 
containing both $\Omega_1$ and $\Omega$. By Remark~\ref{rem:indepexp}, we have $\mathfrak{e}((\pi_{\Omega_{2}/K}^{*}(\Fs)_{|C'_{\Omega_{2}}})^{\mathrm{Robba}}) = \mathfrak{e}((\pi_{\Omega/K}^{*}(\Fs)_{|C'})^{\mathrm{Robba}})$ and $\mathfrak{e}((\pi_{\Omega_{2}/K}^{*}(\Fs)_{|C''_{\Omega_{2}}})^{\mathrm{Robba}}) = \mathfrak{e}((\pi_{\Omega_{1}/K}^{*}(\Fs)_{|C''})^{\mathrm{Robba}})$. It follows that we may assume that~$\Omega_{1}=\Omega$.

Let us embed~$\wKa$ into~$\Omega'$. The projections~$C'_{0}$ and~$C''_{0}$ of~$C'$ and~$C''$ are two connected components of~$\pi^{-1}_{\wKa/K}(C)$. It follows that there exists a $K$-linear isometric automorphism of~$\wKa$ sending~$C'_{0}$ to~$C''_{0}$. By~\cite[Corollary~2.18]{NP-II}, it extends to a $K$-linear isometric automorphism of~$\Omega$ that sends~$C'$ to~$C''$ and the result follows from Remark~\ref{rem:indepexp} again, using the fact that $\sigma^*(\Fs) = \Fs$.
\end{proof}

\begin{remark}\label{rem:exponentscalarextensionannulus}
It follows readily from the definition that the exponent is invariant under extension of scalars. More precisely, in the setting of Definition~\ref{def:exponentcharp}, for each complete valued extension~$L$ of~$K$ and each connected component~$C'$ of $\pi_{L/K}^{-1}(C)$ with the induced orientation, we have
\begin{equation}
\mathfrak{e}(\pi^*_{L/K}(\Fs)_{|C'}^{\mathrm{Robba}})\;=\; \mathfrak{e}(\Fs^{\mathrm{Robba}})\;.
\end{equation} 
Il also follows from Proposition~\ref{prop:orientationannulus} that, if we reverse the orientation of~$C$, then the exponent is changed into its opposite.
\end{remark}

\begin{lemma}\label{Lemma: resriction of NL}
Let $C$ be an oriented virtual open annulus over $K$. 
Let~$\Fs$ be a differential equation over $C$ with 
$\log$-affine radii along~$\Gamma_C$. 
Let $C'\subseteq C$ be a virtual open annulus such that 
$\Gamma_{C'} \subseteq \Gamma_C$ and endow it with 
the induced orientation. Then, the 
exponent of $\Fs$ coincides with the exponent of its 
restriction $\Fs_{|C'}$:
\begin{equation}
\mathfrak{e}(\Fs^{\mathrm{Robba}})\;=\;
\mathfrak{e}(\Fs_{|C'}^{\mathrm{Robba}})\;.
\end{equation}

\end{lemma}
\begin{proof}
See \cite[after D\'efinition~5.3-6]{Ch-Me-II} or 
\cite[Theorem~3.4.16]{Kedlaya-draft}.
\end{proof}

\begin{definition}[Exponent, case of pseudo-annuli and 
good germs of segments]
\label{Def : exponent pseudo-an + germ of seg}
Let $C$ be an oriented pseudo-annulus and let $\Fs$ be a 
differential equation with log-affine radii along 
$\Gamma_C$. Let~$C'$ be a relatively compact open sub-pseudo-annulus of~$C$ such that $\Gamma_{C'}\subset \Gamma_C$ and endow it with the induced orientation. By \cite[Lemma 1.1.28]{NP-IV},
$C'$ is a virtual open annulus and we define the exponent of 
$\Fs^{\mathrm{Robba}}$ as 
\begin{equation}
\mathfrak{e}(\Fs^{\mathrm{Robba}})\;:=\;
\mathfrak{e}(\Fs_{|C'}^{\mathrm{Robba}})\;.
\end{equation}
By Lemma \ref{Lemma: resriction of NL}, it does not depend on the choice of $C'$. 


Let $b$ be a good germ of segment in a curve $X$ and let 
$\Fs$ be a differential equation on $X$ with log-affine 
radii along $b$. Let~$C$ be an open pseudo-annulus in~$X$ whose skeleton represents~$b$ and such that $\Fs_{|C}$ has 
log-affine radii along $\Gamma_C$. Endow~$C$ with the orientation~$b$. Then, we define the exponent of 
$\Fs^{\mathrm{Robba}}$ at~$b$ as $\mathfrak{e}(\Fs_{|C}^{\mathrm{Robba}})$.
By Lemma \ref{Lemma: resriction of NL}, it does not depend on the choice of~$C$.

\end{definition}

\begin{remark}\label{rem:exponentscalarextensiongerm}
The exponent over a pseudo-annulus or a good germ of segment is invariant under extension of scalars, in the sense of Remark~\ref{rem:exponentscalarextensionannulus}. The exponent over a pseudo-annulus is changed into its opposite when the orientation of the pseudo-annulus is reversed.
\end{remark}

\subsubsection{The exponent of a differential module of 
type Robba in residue characteristic~0.} 
\label{section : Liouville conditions char 0}
In this section, we assume that the residue field of~$K$ 
has characteristic~$0$ (this includes in particular the 
case where $K$ is trivially valued).

Let us consider the affine $K$-analytic line $\mathbb{A}^{1,\an}_{K}$ and fix a coordinate~$T$ on it. The definition that follows depends on it. The theory of exponents will be based on the following result.

\begin{theorem}[\protect{\cite[Theorem 
3.3.6]{Kedlaya-draft}}]
\label{Thm : Kedlaya exponent char 0}
Let $C:=\{r_1<|T|<r_2\}$, with 
$0\le r_1<r_2\le \infty$, be a standard open pseudo-annulus.

Let~$\Fs$ be a free differential equation over~$C$ such that the radii of $\Fs$ are $\log$-affine along $\Gamma_C$. Set 
$r':=\mathrm{rank}(\Fs^{\mathrm{Robba}})$.
Then, there exists a basis of 
$\Fs^{\mathrm{Robba}}$ in which 
the associated 
differential equation has the form 
\begin{equation}
\frac{d}{dT}-\frac{1}{T}\cdot B\;,\qquad 
\textrm{with}\quad B\in M_{r'\times r'}(K^{\circ})\;.
\end{equation}
\hfill$\Box$
\end{theorem}

\begin{remark}\label{rem:indepcoordinatechar0}
Let $e_{1},\dotsc,e_{r'} \in K^\circ$ be the eigenvalues of the matrix~$B$ of the theorem. Let~$K'$ be a complete valued extension of~$K$ in which the matrix~$B$ can be put in Jordan form. Then, for each connected component~$C'$ of~$\pi^{-1}_{K'/K}(C)$, the differential equation $\pi_{K'/K}^*(\Fs)^{\mathrm{Robba}}_{|C'}$ may be written as an extension of the modules $\Ns(e_{1}),\dotsc,\Ns(e_{r'})$. 

By Lemma~\ref{lem:invariantextension}, the multiset $\{e_{1},\dotsc,e_{r'}\}$ of elements $K^\circ/\Z$ is independent of the choices.
\end{remark}

\begin{definition}[Exponent of $\Fs$]
\label{def:exponentchar0}
Let $C$ be an oriented standard open pseudo-annulus over $K$ 
and let~$\Fs$ be a free
differential equation over $C$ such 
that the radii of $\Fs$ are all $\log$-affine 
along~$\Gamma_C$. Set 
$r'=\mathrm{rank}(\Fs^{\mathrm{Robba}})$.

Choose an embedding~$\varphi$ of~$C$ into the affine 
line such that the image of the upper germ of~$C$ 
points away from~$\infty$ and identify~$C$ with its 
image. We define the exponent 
$\mathfrak{e}(\Fs^{\mathrm{Robba}})$ 
of $\Fs^{\mathrm{Robba}}$ as 
the finite multisubset of 
$
\bigl(K^\circ/\mathbb{Z}\bigr)^{r'}/\mathfrak{S}_{r'}$
formed by the eigenvalues of a matrix 
$B\in M_{r'\times r'}(K^\circ)$ associated with
$\Fs^{\mathrm{Robba}}$ by Theorem 
\ref{Thm : Kedlaya exponent char 0}.
\end{definition}

By Remark~\ref{rem:indepcoordinatechar0}, the exponent is independent of the choice of~$\varphi$.

\begin{remark}\label{lem:exponentinvariantchar0}
It follows from the definition that the exponent is 
invariant under restriction and extension of scalars. More 
precisely, in the setting of 
Definition~\ref{def:exponentchar0}, for each open 
sub-pseudo-annulus~$C'$ of~$C$ such that 
$\Gamma_{C'} \subseteq \Gamma_C$ endowed with the 
induced orientation, we have
\begin{equation}
\mathfrak{e}(\Fs_{|C'}^{\mathrm{Robba}})\;=\;
 \mathfrak{e}(\Fs^{\mathrm{Robba}})
\end{equation}
and, for each complete valued extension~$L$ of~$K$ and each connected component~$C''$ of $\pi_{L/K}^{-1}(C)$ endowed with the induced orientation, we have
\begin{equation}\label{eq : strets}
\mathfrak{e}(\pi^*_{L/K}(\Fs)_{|C''}^{\mathrm{Robba}})\;=\; \mathfrak{e}(\Fs^{\mathrm{Robba}})\;.
\end{equation} 
\end{remark}

%

Since, in residue characteristic~0, exponents do not seem to be invariant by the Galois action (compare with Remark~\ref{rem:indepexp}), we will not try to define them in a more general setting.

\subsection{Liouville conditions.}
We now explain what is an equation free of Liouville 
numbers. We firstly define this notion over  open 
pseudo-annuli (cf. Definition 
\ref{Def : Free of LN over R}), then over good germs of 
segments in $X$ on which the radii 
of~$\Fs$ are log-affine (cf. Definition \ref{def:NLgerm}).

We maintain the notations of the above sections.

\begin{definition}[Liouville conditions, case of pseudo-annuli]
\label{Def : Free of LN over R}
Let~$C$ be an oriented open pseudo-annulus. Let $\Fs$ be a differential equation on~$C$ whose radii are $\log$-affine along~$\Gamma_C$. 

We say that~$\Fs$ is \emph{free of Liouville numbers} 
along $\Gamma_C$ if either~$\widetilde{K}$ has characteristic~0 or~$\widetilde K$ has characteristic $p>0$ and the following conditions hold:
\begin{enumerate}
\item the exponent $\mathfrak{e}(\Fs^{\mathrm{Robba}})$ is 
non-Liouville;
\item the exponent $\mathfrak{e}(\Fs^{\mathrm{Robba}})$ has 
non-Liouville differences.
\end{enumerate}

\if{
We say that $\Fs$ is \emph{strongly free of Liouville 
numbers} (along $\Gamma_C$) if 
either~$\widetilde{K}$ has characteristic~0 
or~$\widetilde K$ has characteristic $p>0$, $\Fs$ is free 
of Liouville numbers and, in addition, there exists an 
open sub-pseudo-annulus~$C'$ of~$C$ with 
$\Gamma_{C'}\subseteq\Gamma_C$ such that
\begin{enumerate}
\item[iii)] the radii of $\End(\Fs)$ are all $\log$-affine 
along $\Gamma_{C'}$;
\item[iv)] endowing~$C'$ with the induced orientation, the restriction 
$\End(\Fs)_{|C'}=\End(\Fs_{|C'})$ is 
free of Liouville numbers along $\Gamma_{C'}$.
\end{enumerate}
}\fi

We say that \emph{$\Fs$ is strongly free of Liouville numbers 
along $\Gamma_C$} if both~$\Fs$ and $\End(\Fs)$ have log-affine radii along 
$\Gamma_C$ and are free of Liouville 
numbers along it.

%
\end{definition}

\begin{remark}
Assume that~$K$ has residual characteristic~0 and 
that~$C$ is a standard open pseudo-annulus. 
In this case, the above definition states that any 
differential equation is free of Liouville numbers 
as soon as it has log-affine radii along $\Gamma_C$. 
This is consistent with 
the fact that, in this setting, 
there are no Liouville numbers by item v) of 
Lemma~\ref{Lemma : Liouville are Transcendental}.
\end{remark}

The following results follows readily from Lemma~\ref{Lemma: resriction of NL} and Remark~\ref{rem:exponentscalarextensiongerm}.

\begin{lemma}\label{lem:indepLpseudo}
Let~$C$ be an oriented open pseudo-annulus. Let $\Fs$ be a differential equation on~$C$ whose radii are $\log$-affine along~$\Gamma_C$. 

Let~$C' \subseteq C$ be an open pseudo-annulus such that $\Gamma_{C'} \subseteq \Gamma_{C}$ and endow it with the induced orientation. Then, $\Fs$ is 
free of Liouville numbers along 
$\Gamma_C$ if, and only if, $\Fs_{|C'}$  is 
free of Liouville numbers along $\Gamma_{C'}$.

Let~$L$ be a complete valued extension of~$K$ and let~$C''$ be a connected component of $\pi_{L/K}^{-1}(C)$. Endow it with the induced orientation. Then, $\Fs$ is  free 
of Liouville numbers along 
$\Gamma_C$ if, and only if, $\pi^*_{L/K}(\Fs)_{|C''}$  is  free 
of Liouville numbers along $\Gamma_{C''}$.
\hfill$\Box$
\end{lemma}

We now state the principal notion introduced 
in this appendix: being free of Liouville numbers 
at a germ of segment $b$. Notice that we do not require 
log-affineness of the radii along $b$.

\begin{definition}[Liouville condition, 
case of good germs]
\label{def:NLgerm}
Let~$b$ be a good germ of segment in~$X$. 
We say that~$\Fs$ is free 
of Liouville numbers along~$b$ if, for each open 
pseudo-annulus~$C$ whose skeleton represents~$b$, 
there exists an 
open sub-pseudo-annulus $C'\subseteq C$ with 
$\Gamma_{C'}\subseteq\Gamma_C$ (possibly 
not representing $b$) such that~$\Fs$ has log-affine radii along~$\Gamma_{C'}$ and is free of Liouville numbers along it.

We say that \emph{$\Fs$ is strongly free of Liouville numbers along~$b$} if both $\Fs$ and $\End(\Fs)$ are free of Liouville numbers along it.\footnote{We notice that 
the annulus $C'$ on which $\Fs$ is free of Liouville 
numbers is possibly not the same as the one on which $\End(\Fs)$ 
is free of Liouville numbers. They may be disjoint, and 
we do not exclude the case where there are no loci on 
which both $\Fs$ and $\End(\Fs)$ are free of Liouville 
numbers.}

%
%
\if{Let~$b$ be a good germ of segment in~$X$. 
Assume that all the radii of~$\Fs$ are log-affine 
on~$b$. 
We say that~$\Fs$ is free (resp. strongly free) 
of Liouville numbers along~$b$ if there exists an open 
pseudo-annulus~$C$ whose skeleton represents~$b$ 
such that
\begin{enumerate}
\item all the radii of $\Fs$ are $\log$-affine 
on~$\Gamma_{C}$;
\item endowing~$C$ with the orientation~$b$, 
$\Fs_{|C}$ is free (resp. strongly free) of Liouville 
numbers along~$\Gamma_C$.
\end{enumerate}
This implies the same property on every open 
sub-pseudo-annulus $C'$ of $C$, with 
$\Gamma_{C'}\subseteq\Gamma_C$.

Now, $\Fs$ is strongly free of Liouville numbers at $b$ if 
moreover \emph{for all} open pseudo-annulus~$C$ as above we 
can find a open sub-annulus $C''\subseteq C$, possibly 
not representing $b$, with 
$\Gamma_{C''}\subseteq \Gamma_C$, such that 
$\End(\Fs)$ is free of Liouville numbers along 
$\Gamma_{C''}$.
}\fi
\end{definition}



In particular, it makes sense to say that~$\Fs$ 
is free of Liouville numbers on~$b$, for any germ of 
segment~$b$ out of a point $x\in X$. 

\begin{remark}\label{Rk : uncontrolled end}
In the \cite[Section 3.4.2]{NP-V} we will 
use in an essential 
way the freeness of Liouville numbers of the differential 
equation $\End(\Fs)$. We notice that, even if $\Fs$ has 
log-affine radii along 
$\Gamma_C$, the same property need not hold for the 
differential equation $\End(\Fs)$. The 
relation between the radii of $\Fs$ and those of 
$\End(\Fs)=\Fs\otimes\Fs^*$ is unfortunately unclear.

Notice moreover that, $\End(\Fs)$ always has a 
non-trivial submodule generated by the identity 
endomorphism, so if the radii of $\End(\Fs)$ are 
log-affine along $\Gamma_C$ we always have 
$\End(\Fs)^{\mathrm{Robba}}\neq 0$ (even in the 
case where $\Fs^{\mathrm{Robba}}=0$). More 
precisely, if the radii of $\End(\Fs)$ are log-affine along 
$\Gamma_C$, we have
\begin{equation}
\End(\Fs^{\mathrm{Robba}})
\;\subseteq\;
\End(\Fs)^{\mathrm{Robba}}
\end{equation}
and the inclusion is strict whenever 
$\Fs\neq\Fs^{\mathrm{Robba}}$. Moreover applying  
 the decomposition from Definition~\ref{Def.: Rrobba part}
to $\Fs$ and
$\End(\Fs)=\Fs^*\otimes\Fs$, one sees that
$\End(\Fs^{\mathrm{Robba}})$ is a non-trivial 
direct summand of $\End(\Fs)$, and hence also of
$\End(\Fs)^{\mathrm{Robba}}$.
\end{remark}

\begin{remark}\label{rem:automaticLiouville}
\begin{enumerate}
\item It follows from Lemma~\ref{lem:indepLpseudo} 
that the property of being free or strongly free of 
Liouville numbers along a good germ of segment is 
invariant under extension of scalars.
\item Definition \ref{def:NLgerm} generalizes  
Definition \ref{Def : Free of LN over R} in the following 
sense.
Let $b$ be a good germ of segment represented by 
an open pseudo-annulus $C$ and assume that $\Fs$ 
is free of Liouville numbers along $b$. 
If the radii of $\Fs$ are all log-affine along $\Gamma_C$, 
then $\Fs$ is free of Liouville numbers all along 
the whole $\Gamma_C$ by Lemma 
\ref{Lemma: resriction of NL}.

\item We recall that $\Fs$ is 
automatically free of Liouville numbers at $b$ 
in the following cases:
\begin{enumerate}
\item $b$ is a germ of segment out of a point $x$ on 
which $\Fs$ has spectral non solvable radii. Indeed, in 
that case, by continuity, the radii of $\Fs$ are spectral 
log-affine and spectral non solvable along $b$, hence 
$\Fs^{\mathrm{Robba}}=0$.

In this case, as observed in Remark 
\ref{Rk : uncontrolled end}, 
the radii of $\End(\Fs)$ are never all spectral 
non-solvable and we cannot say anything about its 
freeness of Liouville numbers.

\item $b$ is the germ of segment at the open boundary of a virtual open disk~$D$ (with empty pseudo-triangulation) and $\Fs$ is a differential equation 
over $D$ whose radii are constant along $b$. In this case, if $C$ is any virtual open 
annulus representing $b$, the Robba part 
$(\Fs_{|C})^{\mathrm{Robba}}$ of $\Fs_{|C}$ 
is a trivial differential equation 
(see Lemma \ref{Lemma : Free of Liouville over a disk 
with constant radii}) and the result follows. 


In this situation, we cannot say anything about the 
affineness of the radii of $\End(\Fs)$ nor about its freeness 
of Liouville numbers.
\end{enumerate}
\end{enumerate}
\end{remark}

\if{\begin{remark}
\label{rk : non st by restr}
In the situation of  Lemma \ref{Lemma: resriction of NL}, if~$\Fs_{|C}$ is free of Liouville numbers along~$\Gamma_{C}$, then, by Remark~\ref{rem:indepLpseudo}, it is also free of Liouville numbers for every open pseudo-annulus $C' \subseteq C$ whose skeleton represents~$b$. Beware that this property does not hold anymore for the strong Liouville condition, since the definition involves another pseudo-annulus whose skeleton may not represents~$b$. In particular, although it is a property of the germ, it may not be stable by localization to a neighborhood of this germ.
\end{remark}
}\fi

\if{
\comm{J'ai ENCORE modifiéé la remarque conclusive 
suivante, ça me genait d'affirmer un truc que je n'avais 
pas démontré jusqu'au bout ... j'ai alors dit que je sais le 
démontrer seulement dans le cadre free of Liouville... et 
que le cas général devrait marcher pareil ...}
\comment{Je ne comprends pas. Le fait que l'orientation 
change le signe est d\'emontr\'e, non~? On le fait en 
citant CM.}
\comm{Non, on ne l'a démontré que pour le modules du 
type $\Ns(e)$ ... 
Ch-Me ne font que le cas qui preserve l'orientation 
malheureusement... je penses raiment que ça ne doit pas 
être tellement difficile, mais je n'ai pas envie de le 
faire...}
\begin{remark}
Although it is not relevant for the sequel, we point out 
here that we guess that it might be proven that Definition 
\ref{Def : Free of LN over R} 
is invariant by change of orientation of $C$. 
The strategy of proof we have in mind consists in proving 
that the pull-back by an automorphism reversing the 
orientation changes the exponent into its opposite and 
being non-Liouville is stable by change of sign by item i) 
of Lemma \ref{Lemma : Liouville are Transcendental}.

This proof works for modules of rank one and also 
for modules that becomes successive extension of rank 
one modules after an extension of the scalars. 
In particular, it works for modules that are free or 
strongly free of Liouville numbers with log-affine radii 
along the skeleton of a pseudo-annulus (cf. Theorem 
\ref{Thm : deco in rk 1 Ch-Me Robba}) which is all we 
need in this paper.
\end{remark}
}\fi


We finally explain how to take into account the case where the 
equation has some meromorphic singularities. Let $X$ be a quasi-smooth $K$-analytic curve, $Z$ a 
locally finite set of rigid points and $\Fc$ a differential 
equation on $X$ with meromorphic singularities at $Z$.
Set $Y:=X-Z$ and $\Fs:=\Fc_{|Y}$. 

A good germ of segment $b$ of $X$ can be represented 
by an open pseudo-annulus $C_b$ not intersecting~$Z$ if 
and only if $b$ is a good germ of segment of $Y$ (in 
other words $Z$ does not accumulate at $b$). In this case, the restriction 
$\Fc_{|C_b}=\Fs_{|C_b}$ has no 
meromorphic singularities and the notions 
developed in the previous section apply.

As a general principle, when speaking about the 
exponents and Liouville conditions, we say that~$\Fc$ has 
a given property if~$\Fs$ has that property. In particular, 
in the paper we will speak freely about the exponents of 
$\Fc$ (that are by definition those of $\Fs$) and 
the fact that $\Fc$ is free or strongly free of Liouville 
numbers (which means that $\Fs$ is).


\subsection{Decomposition theorem of type Fuchs.} 

In this section we provide some statements about 
Liouville conditions that will be systematically used in 
the sequel.
%
%
%
%
%
%
%

The following theorem extends the major result of 
\cite{Ch-Me-II} to the case of pseudo-annuli.

\begin{theorem}
\label{Thm : deco in rk 1 Ch-Me Robba}
Assume that $K$ is spherically complete, 
algebraically closed and that $|K|=\mathbb{R}_{\ge0}$.

Let $0\leq r_1<r_2\leq +\infty$ and let $\Fs$ be a differential equation over the open 
pseudo-annulus $C:=\{r_1<|T|<r_2\}$, 
with log-affine radii along $\Gamma_C$. Assume that the 
exponent of $\Fs^{\mathrm{Robba}}$ has non-Liouville differences. Then $\Fs^{\mathrm{Robba}}$ 
is a successive extension of rank one differential modules.

More precisely, choose a multiset $\mathfrak{e}'$ 
of~$(K^\circ/\Z)^r$ lifting 
$\mathfrak{e}(\Fs^{\mathrm{Robba}})$. Choose a 
subset~$\mathfrak{e}''$ of~$K^\circ$ that contains 
exactly one representative of each element of 
$\mathfrak{e}'$. Then, we have
\begin{equation}
\Fs^{\mathrm{Robba}}\;=\;
\bigoplus_{e\in\mathfrak{e}''}
E(e)\;,
\end{equation}
where $E(e)$ is a successive extension of the rank one 
module $\Ns(e)$ (cf. \eqref{eq : N(e)}) with dimension equal to the multiplicity of $e \mod \Z$ in $\mathfrak{e}'$.
\end{theorem}
\begin{proof}
If the residue characteristic of $K$ is $0$, then the claim 
follows from Remark~\ref{rem:indepcoordinatechar0}.



If the residue characteristic of $K$ is positive, then the 
claim is proved in \cite{Ch-Me-II} when 
$0<r_1<r_2<+\infty$. The general case follows easily.
%
%
%
\end{proof}

%
\begin{corollary}
\label{Cor. H^i(C)=H^i(C') restr}
Let $C$ be an open pseudo-annulus and let $\Fs$ be 
a differential equation with log-affine radii along 
$\Gamma_C$ that is free of Liouville numbers. 
Then $C$ has finite-dimensional de Rham cohomology 
and we have 
\begin{equation}\label{eq : chi=0 pseudo-annulus}
\chidr(C,\Fs)\;=\;0\;.
\end{equation}
For each germ of segment $b$ in the skeleton 
$\Gamma_C$, $\Fs$ and $\Fs^\mathrm{Robba}$ are Fredholm at~$b$ and one has
\begin{equation}\label{eq : chiabs=0 pseudo-annulus, app}
\chiabs_b(\Fs)\;=\;\Irr_b(\Fs)\;\textrm{ and }\;\;\chiabs_b(\Fs^{\mathrm{Robba}})\;=\;0\;.
\end{equation}
In particular, $\Fs$ satisfies~$\Fin_{b}$.

Moreover, if $C'\subseteq C$ is an inclusion of open 
pseudo-annuli such that 
$\Gamma_{C'}\subseteq\Gamma_C$, then,
for $i=0,1$, the natural restriction
\begin{equation}\label{eq : res C to C' log-affine}
\Hdr^i(C,\Fs)\;\xrightarrow{\;\sim\;}\;
\Hdr^i(C',\Fs_{|C'})
\end{equation}
is an isomorphism. 
\end{corollary}
\begin{proof}
We can assume that $K$ is algebraically closed, spherically complete and that $|K|=\mathbb{R}_{\geq 0}$. This follows from Theorem~\ref{thm:descent} for the part about cohomology and from the definitions for the part about the indexes. We may also assume that $\Fs=\Fs^{\mathrm{Robba}}$ by Propositions~\ref{Cor : Coh M = Coh MRobbaanna} and~\ref{Prop : chirel=Irr}.

By Theorem 
\ref{Thm : deco in rk 1 Ch-Me Robba}, 
$\Fs$ is then extension of rank one 
modules of type $\Ns(e)$. We may assume that~$\Fs$ has rank one, 
by the five lemma for the part about cohomology and by Lemma~\ref{Lemma : additivity on exact sequence} for the part about the indexes. In this case, the results follow from Lemma \ref{Lemma : H^i N(e)}. 
%
%


Let us now prove the final statement. The map \eqref{eq : res C to C' log-affine} is 
clearly injective for $i=0$ and surjective for $i=1$ by Lemma~\ref{Lemma : H^1 surjectif}. Since the indexes are 
zero on $C$ and $C'$, the map 
\eqref{eq : res C to C' log-affine} is necessarily an 
isomorphism.
%
\end{proof}

\if{
\comment{Je ne comprends pas ce que la d\'efinition fait l\`a.}

\begin{corollary}
\label{Cor : Ext injective}
Let $C'\subseteq C$ be an inclusion of open 
pseudo-annuli such that 
$\Gamma_{C'}\subseteq\Gamma_C$. 
Let $\Fs,\Gs$ be two differential equations on $C$ 
such that 
\begin{enumerate}
\item $\Fs^*\otimes\Gs$ has 
log-affine radii along $\Gamma_C$;
\item $\Fs^*\otimes\Gs$ is free of Liouville numbers.
\end{enumerate}
Then, the restriction map between Yoneda's groups 
of extensions
\begin{equation}\label{eq : res C to C' log-affine EXT}
\mathrm{Ext}^1(\Fs,\Gs)\;\xrightarrow{\;\sim\;}\;
\mathrm{Ext}^1(\Fs_{|C'},\Gs_{|C'})
\end{equation}
is bijective.
\end{corollary}
\begin{proof}
The claim follows from Corollary 
\ref{Cor. H^i(C)=H^i(C') restr} since 
$\mathrm{Ext}^1(\Fs,\Gs)=
\Hdr^1(C,\Fs^*\otimes\Gs)$ and similarly for~$C'$
(cf. \cite[Lemma 5.3.3]{Kedlaya-book}).

\comment{C'est assez abusif~: Kedlaya n'est pas dans ce cadre-l\`a. Pour bien faire les choses, je pense qu'il faudrait faire un changement de base, puis reprendre les arguments de la preuve de la proposition~\ref{Prop : H^i=0 if not solvablegfz}.

Ceci dit, j'ai l'impression que tu ne vas utiliser cela que pour des couronnes relativement compactes (auquel cas on a bien un faisceau libre, etc.).}
\end{proof}
\begin{remark}\label{Remark : exp Fotimes G}
Notice that the exponents of $\Fs^*\otimes\Gs$ are not 
related to those of $\Fs$ and $\Gs$. Namely, it is easy 
to find an example of rank one  
modules $\Fs$ and $\Gs$ that are \emph{not of Robba 
type} (so they do not have exponents) 
such that $\Fs^*\otimes\Gs$ is of \emph{Robba type} 
with arbitrary exponents. 

Nevertheless, 
if $\Fs$ and $\Gs$ are both of \emph{Robba type}, 
then the 
exponents of $\Fs^*\otimes\Gs$ are given by all the 
differences $e'-e$ where  
$e\in\mathfrak{e}(\Fs^{\mathrm{Robba}})$, 
$e'\in\mathfrak{e}(\Gs^{\mathrm{Robba}})$. Indeed, by Remark~\ref{rem:indepLpseudo}, 
we can reduce~$C$ and work over an open annulus.

\comment{J'ai mis une r\'ef\'erence \`a la remarque plut\^ot qu'au lemme  \ref{Lemma: resriction of NL}.}

 In this case, Theorem \ref{Thm : deco in rk 1 Ch-Me Robba} 
 shows that $\Fs$ and $\Gs$ are 
successive extensions of rank one differential modules of 
the form $\Ns(e)$ and it is then easy to compute the 
exponents of $\Fs^*\otimes\Gs$.
\end{remark}

\emph{Continuation of proof of 
Theorem \ref{Thm : deco in rk 1 Ch-Me Robba}.} We now prove Theorem~\ref{Thm : deco in rk 1 Ch-Me Robba} for an arbitrary open pseudo-annulus~$C$. By Remark~\ref{Rk : pseudo-annuli}, we may assume that it is of the form $C = \{r_{1}< |T|<r_{2}\}$, with $0\leq r_1<r_2\leq+\infty$. 

\comment{Je ne comprends pas. C'est une ancienne version, non~? Tu as suppos\'e que~$C$ est de cette forme dans l'\'enonc\'e du th\'eor\`eme...}

Consider an increasing sequence of open annuli 
$\{C_n\}_n$ such that $C=\cup_n C_n$. 
For all $n$, we have an isomorphism 
\begin{equation}
\psi_n\;:\;
\Fs^{\mathrm{Robba}}_{|C_n}\;
\xrightarrow{\;\sim\;}\;
\bigoplus_{e\in\mathfrak{e}(\Fs^{\mathrm{Robba}})}E_n(e)\;.
\end{equation}
It is enough to prove that we can find such a family of $\psi_n$'s that is compatible with the 
restrictions $C_{n+1}\to C_n$.

By Lemma \ref{Lemma: resriction of NL}, the exponents 
are independent of~$n$ and, by Lemma \ref{Lemma : H^i N(e)}, if $e\neq e' \pmod{\mathbb{Z}}$, then, 
for all $n,k\geq 0$, we have 
$\Hom^\nabla(E_{n+k}(e)_{|C_n},E_{n}(e')) = 0$,

\comment{J'ai enlev\'e la version avec les Ext qui me semble inutile ici. \c{C}a demande peut-\^etre quand m\^eme des pr\'ecisions, non~? C'est $E(e)$, pas $N(e)$.}

\comm{Par le théorème de Jordan-Holder l'image de $E_n(e)$ dans $E_m(e')$ a un unique constituant simple (avec multiplicité) qui est N(e) ou 0. Or si N(e) n'est pas isomorphe à N(e') ca doit être 0.

Le problème que je vois ici, c'est plutôt celui de montrer que une extension non triviale sur C reste non triviale après restriction à une sous couronne C'. C'est pour cela que j'ai introduit le lemme 2.1.26.}

\comment{Oui, d'accord, mais alors il faut citer ce lemme ici.}

hence $\psi_n$ has to send $E_{n+k}(e)_{|C_n}$ into $E_n(e)$, 
for all $e$. We may now assume that 
$\mathfrak{e}(\Fs^{\mathrm{Robba}})$ 
is constituted by a single element $e$. 

Now, by Remark \ref{Remark : exp Fotimes G} and 
\eqref{eq : res C to C' log-affine EXT} it follows that 
$E_{n+k}(e)_{|C_n}$ has to be is isomorphic to 
$E_n(e)$. 

\comment{Je ne comprends pas l'argument. Mais est-ce que ce n'est pas d\'ej\`a clair de toute fa\c{c}on~?}

The claim follows.

\end{proof}
}\fi





The following result is now a direct consequence of Corollary~\ref{cor:cohomologypseudoannulus}.

\begin{corollary}\label{cor:cohomologypseudoannulusLiouville}
Let $C$ be an open pseudo-annulus and let $b_0$ and 
$b_1$ be the germs of segment at its open boundary. 
Let $\Fs$ be a differential equation over $C$. Assume that $\Fs$ is free of 
Liouville numbers at $b_0$ and $b_1$.

Then, the following assertions are equivalent:
\begin{enumerate}[a)]
\item for each $i\ge 0$, $\Hdr^i(C,\Fs)$ is finite dimensional;
\item the total height of~$\Fs$ is log-affine along~$b_{0}$ and~$b_{1}$.
\end{enumerate}

Moreover, when these properties hold, we have 
\begin{equation}\label{eq:chidrpseudo-annulus}
\chidr(C,\Fs)
\;=\;\Irr_{b_0}(\Fs)+
\Irr_{b_1}(\Fs)\;.
\end{equation}
\hfill$\Box$
\end{corollary}

\subsection{Some useful result.}
In this section we state some useful results that are 
frequently used in the paper.

\begin{lemma}
\label{Lemma : Free of Liouville over a disk with constant radii}
Let~$D$ be an open pseudo-disk. Denote 
by~$b$ its germ of segment at infinity. Let~$\Fs$ be a 
differential equation over~$D$ of rank~$r$. 
Assume that, for every $i\in\{1,\dotsc,r\}$, the $i$-th 
radius is log-affine on $b$ and we have 
$\partial_{b} \R_{i}(-,\Fs) = 0$. 
Then, $\Fs$ is free of Liouville numbers along~$b$.
\end{lemma}
\begin{proof}
Let $C$ be an open pseudo-annulus whose 
skeleton represents $b$ and on which all the radii of~$\Fs$ are constant. 
In particular, the radii $\R_{i}(-,\Fs)$ 
(before localization to $C$) are either 
spectral non-solvable on $b$ or over-solvable on $b$.
We deduce that the Robba part 
$(\Fs_{|C})^{\mathrm{Robba}}$ is the restriction 
to~$C$ of the maximal submodule of~$\Fs$ that is 
trivial over $D$. Hence 
$(\Fs_{|C})^{\mathrm{Robba}}$ is the trivial 
equation and $\Fs$ is free of Liouville numbers.
\end{proof}

%

We have the following interesting consequence.

\begin{corollary}
Let $\Fs$ be a differential equation on a quasi-smooth 
$K$-analytic curve $X$. Let $\Gamma$ be a subgraph of~$X$ such that the radii of~$\Fs$ are locally constant on $X\setminus \Gamma$. Then, the following assertions are 
equivalent:
\begin{enumerate}
\item $\Fs$ is free of Liouville numbers along 
all good germs of segments in $X$;
\item $\Fs$ is free of Liouville numbers along all good germs 
of segments inside $\Gamma$.
\hfill$\Box$
\end{enumerate}
\end{corollary}
%

\begin{lemma}\label{Sub-quotients-Liouville}
Let $C$ be an open pseudo-annulus and let $\Fs$ 
 be a differential equation on $C$. Assume that~$\Fs$ (resp. $\End(\Fs)$) is free 
of Liouville numbers along 
$\Gamma_C$. Let $\Fs'$ be a sub-quotient of $\Fs$. 
Then, $\Fs'$ (resp. $\End(\Fs')$) 
is free of Liouville numbers along $\Gamma_C$.

\if{If, moreover, $\End(\Fs)$ is also 
free of Liouville numbers along $\Gamma_C$, 
then so is $\End(\Fs')$. 

More precisely, if $C'\subseteq C$ is a virtual open sub-
annulus with $\Gamma_{C'} \subseteq \Gamma_{C}$ 
and where $\End(\Fs)$ satisfies properties iii) and iv) of 
Definition \ref{Def : Free of LN over R}, then 
$\End(\Fs')$ satisfies the same properties over~$C'$.
}\fi
\end{lemma}
\begin{proof}
We may assume that $K$ has positive residue characteristic.
Moreover, it is clear that if $\Fs'$ is a sub-quotient of $\Fs$ then 
$\End(\Fs')=(\Fs')^*\otimes\Fs'$ is a 
sub-quotient of $\End(\Fs)=\Fs^*\otimes\Fs$, so it is enough to prove the result for~$\Fs$.

Along~$\Gamma_{C}$, all the radii are spectral, hence the family of radii of any sub-quotient belongs to the family of radii of the original module (see \cite[Theorem 10.6.2]{Kedlaya-book}). This shows that the radii of $\Fs'$ 
are all $\log$-affine along $\Gamma_C$. 
The claim now follows from Lemma
\ref{Lemma : ex-seq-exp} below.
\end{proof}
\if{
\comm{Là, j'ai reperé un erreur : la conclusion du lemme 
suivant n'est pas un if and only if, mais seulement un 
"alors" le problème concerne les différences, si les 
différences de deux exposants de $\Fs_1$ n'est pas 
Liouville, et si la même chose vaut pour $\Fs_2$, rien ne 
dit que la différence d'un exposant de $\Fs_1$ et d'un 
autre de $\Fs_2$ ne soit pas Liouville ...}

\comment{OK. J'ai reformul\'e la derni\`ere partie. J'ai 
aussi ajout\'e l'hypoth\`ese de caract\'eristique $>0$.}
\comm{J'ai déplacé plus bas l'hypothèse de 
caract\'eristique $>0$}
\comment{\c Ca ne me semble pas une bonne id\'ee. Les 
exposants ne sont pas d\'efinis dans cette g\'en\'eralit\'e 
en caract\'eristique r\'esiduelle nulle.}
\comm{C'est parce que la définition qu'on a donné n'est 
pas complète ? Par le futur on aura forcement envie de 
parler d'exposants sur une pseudo-couronne au bord ... 
je penses toujours que c'est un erreur de ne pas avoir 
donné la définition, même si elle n'est pas très 
intrinsèque ... pour pas se fatiguer maintennat on va se 
fatiguer toute la vie à distinguer des  cas ...}
\comment{Ce n'est pas une question de se fatiguer ou 
pas, c'est plus profond que \c ca. Si je comprends bien ce 
que tu veux faire, la d\'efinition n'est pas invariante par 
extension des scalaires, par exemple.}
\comm{Pourquoi tu dis ça ? En caract nulle ce sont les 
valeurs propres de la matrice $B$, ils sont donc les zéros 
du polynôme caractéristique, et c'est un ensemble stable 
par Galois. Je 
crois que c'est ça que Kedlaya veut dire par 
``\emph{galois invariant exponent}'' dans le Théorème 
3.3.6 de son papier de 2015. C'est clair que ce n'est pas 
un multi-set de $K$, mais je penses qu'il est bien défini 
est stable par extension de $K$. Fais moi comprendre 
mieux ce qui te trouble stp...}
\comment{Si tu parles d'une couronne, alors c'est d\'efini 
dans ce papier. Si tu parles d'une couronne virtuelle, 
alors il n'y a pas de coordonn\'ee et je ne sais pas de 
quelle matrice tu parles.}
\comm{Je ne comprends pas pourquoi tu ne peux pas 
définir l'exposant sur une couronne virtuelle 
comme dans la définition 
\ref{def:exponentcharp} et ensuite reproduite 
pas par pas la preuve du Lemme 
\ref{Lemma : exp lies in Z_p}. Tout ce qu'il te faut serait 
uniquement un analogue de 
\eqref{eq : inv expo scalar ext} en caractéristique 
résiduelle nulle et cela me semble 
faisable comme je te l'indiquait ci plus haut en regardant 
les valeurs propres de la matrice $B$. Il me semble qu'on trouve un truc du type 
\begin{equation}\label{hmlokhtgtyght}
\mathfrak{e}(\pi_{L/K}^*(\Fs)^{\mathrm{Robba}}) = j(\mathfrak{e}(\Fs^{\mathrm{Robba}}))
\end{equation}
où $j:\wKa\to\widehat{L^{\mathrm{alg}}}$ 
serait un prolongement de $j:K\to L$.
Maintenant l'exposant est un multi-set dans $\wKa$ 
stable globalement par Galois 
(car c'est les zéros du polynôme caractéristique de la 
matrice $B$), donc 
\eqref{hmlokhtgtyght} montre qu'il est 
invariant par changement de base (à moins de 
permutations, mais pour les 
multisets les permutations ne font rien)... 
Pourquoi ça ne marcherait pas ? }
\comment{Ce qui ne marche pas, c'est le passage d'une composante connexe $C'$ à une composante connexe $C''$. Ces deux composantes sont définies sur une extension $K'$ de $K$. Pour chacune on a une matrice que l'on peut choisir à coefficients dans $K'$ d'après Kedlaya. Mais l'action de Galois qui envoie $C'$ sur $C''$ va changer les valeurs propres.

Et si tu dis que tu choisis une composante connexe et que ça ne te dérange pas, tu as quand même un problème parce que quand tu vas changer de base, tes deux composantes vont réapparaître.}

\comm{Oui, je vois à quoi tu penses, mais ce n'est pas 
génant pour moi. Ce que je te propose est de définir 
l'exposant de $\Fs$ sur une pseudo-couronne $C$ 
comme celui de $(\Fs_{\wKa})_{|C'}$ où $C'$ est une 
composante connexe de $C_{\wKa}$ \emph{MODULO 
ACTION DE GALOIS}.\\

En effet, le pull-back par un élément $\sigma$ de Galois 
permute les composantes connexes, mais 
$\sigma^*(\Fs)\cong\Fs$, du coup si 
$\sigma:C''\simto C'$ alors la matrice $B$ sur $C'$ se 
tranforme par pull-back dans $\sigma^{-1}(B)$ sur $C''$ 
et le multi-setde $E$ de ses valeurs propres sur $C'$ 
devient $\sigma^{-1}(E)$ sur $C''$.\protect{\footnote{Il est important de 
remarquer que un exposant est un élément de l'orbite par 
Galois de l'ensemble $E$ et que cela est différent de 
considerer un ensemble dont chaque élément represente 
une orbite d'un élément de $E$. Autrement dit, il faut 
que l'action de Galois agisse sur tous les éléments de 
l'ensemble $E$ simultanement.}}\\

Cette définition n'est pas genant pour moi. \\

Après, pour exprimer des propriétés comme celles du 
Lemme \ref{Lemma : ex-seq-exp} ci plus bas on va dire 
que une fois fixé la composante connexe $C'$ le lemme a 
un sens, et l'action de Galois transporte les unclusions 
des multi-sets. Et à partir de la on peut supposer 
$K=\wKa$...\\

Qu'en penses tu ?}
\comment{Je pense que \c ca ne marche toujours pas. Si tu pars d'une couronne virtuelle sur $K$, tu vas d\'efinir un exposant modulo $Gal(\wKa/K)$. Mais que devient cet exposant si on change de base \`a un corps $L$ o\`u la couronne virtuelle devient un nombre fini de couronnes~? Tu ne retrouves pas exactement les exposants (qui sont bien d\'efinis \`a ce niveau) mais un truc bizarre. En plus, \c ca a l'air de d\'ependre du $K$ choisi alors que \c ca ne devrait pas.

En gros, je suis d'accord qu'il est possible de donner la d\'efinition que tu veux, mais je n'en vois pas l'int\'er\^et et je pense qu'elle a de mauvaises propri\'et\'es.}

}\fi
\begin{lemma}[\protect{\cite[Proposition 5.4-3, and 
Théorèmes 5.4-5, 5.4-6]{Ch-Me-II}}]
\label{Lemma : ex-seq-exp}
Let $0\to\Fs_1\to\Fs_2\to\Fs_3\to 0$ be an exact 
sequence of differential equation over an open pseudo-annulus~$C$. If $\Fs_2$ has $\log$-affine radii along 
$\Gamma_C$, then so have $\Fs_1$ and $\Fs_3$. In 
this case, we have an exact sequence
\begin{equation}
0\to\Fs_1^{\mathrm{Robba}}\to
\Fs_2^{\mathrm{Robba}}\to
\Fs_3^{\mathrm{Robba}}\to 0
\end{equation}
and the exponent of $\Fs_2$ is the union of 
those of $\Fs_1$ and 
$\Fs_3$. 

In particular, if $K$ has positive residue characteristic, then: 
\begin{enumerate}
\item The exponent of $\Fs_2^{\mathrm{Robba}}$ is 
non-Liouville if, and only if, so are the exponents of 
$\Fs_1^{\mathrm{Robba}}$ and 
$\Fs_3^{\mathrm{Robba}}$.
\item If the exponent of $\Fs_2^{\mathrm{Robba}}$ 
has non-Liouville differences, then so have 
the exponents of 
$\Fs_1^{\mathrm{Robba}}$ and 
$\Fs_3^{\mathrm{Robba}}$.
\item If each element of the exponent of $\Fs_1^{\mathrm{Robba}}$ belongs to the exponent of $\Fs_3^{\mathrm{Robba}}$ or the other way round and if the exponents of 
$\Fs_1^{\mathrm{Robba}}$ and 
$\Fs_3^{\mathrm{Robba}}$ both have non-Liouville 
differences, then so has $\Fs_2^{\mathrm{Robba}}$. 
\hfill$\Box$
\end{enumerate}
\end{lemma}

\subsection{A characterization of the exponents.}
\label{A cohomological characterization of the 
exponents}


Here, we recall a well-known cohomological characterization of the exponents that is often 
useful. 

Let $C := \{r_{1}< |T|<r_{2}\}$, with $0\leq r_1<r_2\leq+\infty$, 
and let 
$\Ns(e)$ be as in \eqref{eq : N(e)}.
For $k\geq 0$, we consider the 
differential module
\begin{equation}
\Ls_k\;:=\;
\bigoplus_{i=0}^k\O(C)\cdot\log(T)^i\;,
\end{equation}
where $\log(T)$ is a formal symbol and the connection is given by the action of $d/dT$ on $\log(T)^i$ as one expects. The union 
\begin{equation}
\Ls\;:=\;\bigcup_k\Ls_k
\end{equation}
is a differential ring (algebraically 
isomorphic to the ring of polynomials in the 
indeterminate $\log(T)$ with coefficients in $\O(C)$).

The following characterization follows from Theorem~\ref{Thm : deco in rk 1 Ch-Me Robba} and Lemma~\ref{Lemma : H^i N(e)}.

\begin{lemma}
\label{Lemma : cohomological characterization}
Let $C:=\{r_1<|T|<r_2\}$, with 
$0\leq r_1<r_2\leq +\infty$. 
Let $\Fs$ be a differential equation with $\log$-affine 
radii along $\Gamma_C$ such that 
the exponent $\mathfrak{e}(\Fs^{\mathrm{Robba}})$ 
has non-Liouville differences. Then, an element
$e\in K$ belongs to the multiset 
$\mathfrak{e}(\Fs^{\mathrm{Robba}})$
if, and only if, one has 
\begin{equation}\label{eq : efpq}
\Hdr^0(C,\Fs^{\mathrm{Robba}}
\otimes\Ns(-e))\;\neq\; 0\;.
\end{equation}
In this case the multiplicity of~$e$ in the multiset 
$\mathfrak{e}(\Fs^{\mathrm{Robba}})$ equals the 
dimension 
\begin{equation}\label{eq : indfil}
\mathrm{dim}\;\Hdr^0(C,\Fs^{\mathrm{Robba}}\otimes
\Ns(-e)\otimes\Ls)\;=\;
\lim_{k\to\infty}\mathrm{dim}\;
\Hdr^0(C,\Fs^{\mathrm{Robba}}\otimes
\Ns(-e)\otimes\Ls_k)\;.
\end{equation}
The space $\Hdr^0(C,\Fs^{\mathrm{Robba}}\otimes \Ns(-e)\otimes\Ls)$ is also the space of solutions of 
$\Fs^{\mathrm{Robba}}\otimes\Ns(-e)$ with values in 
the differential ring $\Ls$.
\end{lemma}
\begin{proof}
By Theorem~\ref{thm:descent},
we can assume that $K$ is algebraically closed, 
spherically complete, and $|K|=\mathbb{R}_{\geq 0}$.
The first part of the claim, concerning \eqref{eq : efpq}, 
then follows from Theorem 
\ref{Thm : deco in rk 1 Ch-Me Robba}.

We now prove the second part of the claim. 
Replacing $\Fs^{\mathrm{Robba}}$ with 
$\Fs^{\mathrm{Robba}}\otimes\Ns(-e)$ we can assume 
that $e=0$. In this case the block $E(0)$ appearing in 
Theorem \ref{Thm : deco in rk 1 Ch-Me Robba} is 
represented in a basis by the equation $Y'=GY$, where 
$G$ is a standard nilpotent matrix in the Jordan form.
The solutions of this equations are linear combinations of 
$\{1,\log(T),\log(T)^2,\ldots\}$. So 
$E(0)$ is trivialized by the differential 
ring $\Ls$. In other words $E(0)\otimes\Ls$ is a trivial 
differential module over $\Ls$.
The limit expression \eqref{eq : indfil} follows from the 
fact that $\Hdr^0$ commutes with inductive limits.
\end{proof}
\begin{remark}
\label{Remark : exponents of Fotimes L_k}
Sometimes the dimension of 
$\Hdr^0(C,\Fs^{\mathrm{Robba}}\otimes
\Ns(-e)\otimes\Ls_k)$ can be computed with 
an index formula, and it happens that 
it is necessary to check the Liouville 
condition on $\Fs^{\mathrm{Robba}}\otimes
\Ns(-e)\otimes\Ls_k$. For this, we observe that 
$\Ls_k$ is a successive extension of the trivial 
rank one equation $y'=0$, hence 
$\Fs^{\mathrm{Robba}}\otimes
\Ns(-e)\otimes\Ls_k$ is a successive extension of 
$\Fs^{\mathrm{Robba}}\otimes
\Ns(-e)$. 
For this reason, 
$\Fs^{\mathrm{Robba}}\otimes
\Ns(-e)\otimes\Ls_k$ is free of 
Liouville numbers along $\Gamma_C$ 
if, and only if, so is $\Fs^{\mathrm{Robba}}\otimes
\Ns(-e)$ (cf. item (iii) of Lemma \ref{Lemma : ex-seq-exp}).
Notice moreover that, by Lemma 
\ref{Lemma : cohomological characterization},
the exponents of $\Fs^{\mathrm{Robba}}\otimes
\Ns(-e)$ are those of $\Fs^{\mathrm{Robba}}$ 
to which we subtract $e$ component by component. 
\end{remark}

\if{The arguments of the proof of 
Lemma~\ref{lem:basicnbgh} also work in the presence 
of meromorphic singularities (cf. Definition 
\ref{Def : mero-overc-Hdr^i}).


\begin{lemma}
\label{Lemma : Independence on U mero over}
Let~$P$ be a quasi-smooth $K$-analytic curve and 
let~$Z$ be locally finite set of rigid points of~$P$. 
Let $\Fc$ be an overconvergent differential equation 
on~$P$ with poles on~$Z$. Assume that $\Fc$ is free of 
overconvergent Liouville numbers at every point of 
$\partial P$. Then, for every elementary 
neighborhood~$U$ of~$P$ in~$P'$ that is adapted to 
$\Fc'$ and for every $i\ge 0$, we have a canonical 
isomorphism
\begin{equation}
\qquad\qquad\qquad\quad
\Hdr^i(U(*Z),\Fc'_{|U})\;\xrightarrow[]{\sim}\; \Hdr^i(P^\dag(*Z),\Fc)  \;.
\qquad\qquad\qquad\Box
\end{equation}
\end{lemma}
}\fi

\bibliographystyle{amsalpha}
\bibliography{NP}

\def\cprime{$'$}
\providecommand{\bysame}{\leavevmode\hbox to3em{\hrulefill}\thinspace}
\providecommand{\MR}{\relax\ifhmode\unskip\space\fi MR }
\providecommand{\MRhref}[2]{%
  \href{http://www.ams.org/mathscinet-getitem?mr=#1}{#2}
}
\providecommand{\href}[2]{#2}
\begin{thebibliography}{{Dwo}97}

\bibitem[AB01]{Andre-Balda-Book}
Yves Andr{\'e} and Francesco Baldassarri, \emph{De {R}ham cohomology of
  differential modules on algebraic varieties}, Progress in Mathematics, vol.
  189, Birkh\"auser Verlag, Basel, 2001. \MR{1807281 (2002h:14031)}

\bibitem[AC18]{Caro-Abe}
Tomoyuki Abe and Daniel Caro, \emph{Theory of weights in {{\(p\)}}-adic
  cohomology}, Am. J. Math. \textbf{140} (2018), no.~4, 879--975 (English).

\bibitem[Ado76]{Adolphson}
A.~Adolphson, \emph{An index theorem for {$p$}-adic differential operators},
  Trans. Amer. Math. Soc. \textbf{216} (1976), 279--293. \MR{0387284 (52
  \#8127)}

\bibitem[And04]{Andre-Comparison}
Yves Andr{\'e}, \emph{Comparison theorems between algebraic and analytic de
  {Rham} cohomology (with emphasis on the {{\(p\)}}-adic case)}, J. Th{\'e}or.
  Nombres Bordx. \textbf{16} (2004), no.~2, 335--355 (English).

\bibitem[AS63]{Atiyah-Singer-announcement}
Michael~F. Atiyah and I.~M. Singer, \emph{The index of elliptic operators on
  compact manifolds}, Bull. Am. Math. Soc. \textbf{69} (1963), 422--433
  (English).

\bibitem[AS68]{Atiyah-Singer}
Michael~F. Atiyah and Isadore~Manuel Singer, \emph{The index of elliptic
  operators. {I}}, Ann. Math. (2) \textbf{87} (1968), 484--530 (English;
  Russian).

\bibitem[Bal82]{Balda-Turritin}
Francesco Baldassarri, \emph{Differential modules and singular points of
  $p$-adic differential equations}, Adv. Math. \textbf{44} (1982), 155--179.

\bibitem[Bal87]{Balda-Comparaison}
F.~Baldassarri, \emph{Comparaison entre la cohomologie alg\'ebrique et la
  cohomologie {$p$}-adique rigide \`a coefficients dans un module
  diff\'erentiel. {I}. {C}as des courbes}, Invent. Math. \textbf{87} (1987),
  no.~1, 83--99. \MR{862713 (88d:14014)}

\bibitem[Bal88]{Balda-Comparison-II}
\bysame, \emph{Comparaison entre la cohomologie alg{\'e}brique et la
  cohomologie p-adique rigide {\`a} coefficients dans un module
  diff{\'e}rentiel. {II}: {Cas} des singularit{\'e}s r{\'e}guli{\`e}res {\`a}
  plusieures variables. ({Comparison} between the algebraic cohomology and the
  rigid p-adic cohomology with coefficients in a differential module. {II}:
  {Case} of regular singularities with several variables)}, Math. Ann.
  \textbf{280} (1988), no.~3, 417--439 (French).

\bibitem[Bal10]{Balda-Inventiones}
Francesco Baldassarri, \emph{Continuity of the radius of convergence of
  differential equations on {$p$}-adic analytic curves}, Invent. Math.
  \textbf{182} (2010), no.~3, 513--584. \MR{2737705 (2011m:12015)}

\bibitem[Ber86]{Berthelot-rigide}
Pierre Berthelot, \emph{G{\'e}om{\'e}trie rigide et cohomologie des
  vari{\'e}t{\'e}s alg{\'e}briques de {{\(caract\acute eristique\quad p\)}}.
  (rigid geometry and cohomology of algebraic varieties of
  {{\(characteristic\quad p\)}})}, M{\'e}m. Soc. Math. Fr., Nouv. S{\'e}r.
  \textbf{23} (1986), 7--32 (French).

\bibitem[Ber90]{Ber}
Vladimir~G. Berkovich, \emph{Spectral theory and analytic geometry over
  non-{A}rchimedean fields}, Mathematical Surveys and Monographs, vol.~33,
  American Mathematical Society, Providence, RI, 1990.

\bibitem[BV07]{DV-Balda}
F.~Baldassarri and L.~Di Vizio, \emph{Continuity of the radius of convergence
  of $p$-adic differential equations on berkovich spaces}, arXiv, 2007,
  \url{http://arxiv.org/abs/0709.2008}, pp.~1--22.

\bibitem[Car15]{Caro}
Daniel Caro, \emph{On the preservation of the overconvergence for the direct
  image of a proper and flat morphism}, Ann. Sci. {\'E}c. Norm. Sup{\'e}r. (4)
  \textbf{48} (2015), no.~1, 131--169 (French).

\bibitem[CD94]{Ch-Dw}
G.~Christol and B.~Dwork, \emph{Modules diff\'erentiels sur des couronnes},
  Ann. Inst. Fourier (Grenoble) \textbf{44} (1994), no.~3, 663--701.
  \MR{MR1303881 (96f:12008)}

\bibitem[Chi88a]{Chiarellotto-Comparison-2}
Bruno Chiarellotto, \emph{A comparison theorem in p-adic cohomology}, Ann. Mat.
  Pura Appl. (4) \textbf{153} (1988), 115--131 (English).

\bibitem[Chi88b]{Chiarellotto-Comparison}
\bysame, \emph{Sur le th{\'e}or{\`e}me de comparaison entre cohomologie de {De}
  {Rham} alg{\'e}brique et p-adique rigide. ({On} the comparison between the
  algebraic {De} {Rham} and the rigid p-adic cohomology)}, Ann. Inst. Fourier
  \textbf{38} (1988), no.~4, 1--15 (French).

\bibitem[Chr83]{Ch}
Gilles Christol, \emph{Modules diff\'erentiels et \'equations diff\'erentielles
  {$p$}-adiques}, Queen's Papers in Pure and Applied Mathematics, vol.~66,
  Queen's University, Kingston, ON, 1983.

\bibitem[Chr10]{Ch-Irrobba}
Gilles Christol, \emph{Exposants $p$-adiques et solutions dans les couronnes},
  2010, \url{https://webusers.imj-prg.fr/~gilles.christol/exposants.pdf},
  pp.~1--14.

\bibitem[{Cla}66]{Clark}
D.N. {Clark}, \emph{{A note on the p-adic convergence of solutions of linear
  differential equations.}}, {Proc. Am. Math. Soc.} \textbf{17} (1966),
  262--269 (English).

\bibitem[CM93]{Ch-Me-I}
G.~Christol and Z.~Mebkhout, \emph{Sur le th\'eor\`eme de l'indice des
  \'equations diff\'erentielles {$p$}-adiques. {I}}, Ann. Inst. Fourier
  (Grenoble) \textbf{43} (1993), no.~5, 1545--1574. \MR{1275209 (95j:12009)}

\bibitem[CM97]{Ch-Me-II}
\bysame, \emph{Sur le th\'eor\`eme de l'indice des \'equations
  diff\'erentielles {$p$}-adiques. {II}}, Ann. of Math. (2) \textbf{146}
  (1997), no.~2, 345--410. \MR{1477761 (99a:12009)}

\bibitem[CM00]{Ch-Me-III}
\bysame, \emph{Sur le th\'eor\`eme de l'indice des \'equations
  diff\'erentielles {$p$}-adiques. {III}}, Ann. of Math. (2) \textbf{151}
  (2000), no.~2, 385--457. \MR{1765703 (2001k:12014)}

\bibitem[CM01]{Ch-Me-IV}
\bysame, \emph{Sur le th\'eor\`eme de l'indice des \'equations
  diff\'erentielles {$p$}-adiques. {IV}}, Invent. Math. \textbf{143} (2001),
  no.~3, 629--672. \MR{1817646 (2002d:12005)}

\bibitem[CM02]{Astx}
Gilles Christol and Zoghman Mebkhout, \emph{\'{E}quations diff\'erentielles
  {$p$}-adiques et coefficients {$p$}-adiques sur les courbes}, Ast\'erisque
  (2002), no.~279, 125--183, Cohomologies $p$-adiques et applications
  arithm{\'e}tiques, II. \MR{1922830 (2003i:12014)}

\bibitem[CR94]{Ch-Ro}
G.~Christol and P.~Robba, \emph{\'{E}quations diff\'erentielles {$p$}-adiques},
  Actualit\'es Math\'ematiques., Hermann, Paris, 1994, Applications aux sommes
  exponentielles. [Applications to exponential sums].

\bibitem[Cre98]{Crew-Finiteness}
Richard Crew, \emph{Finiteness theorems for the cohomology of an overconvergent
  isocrystal on a curve}, Ann. Sci. \'Ecole Norm. Sup. (4) \textbf{31} (1998),
  no.~6, 717--763. \MR{1664230 (2000a:14023)}

\bibitem[CT12]{Caro-Tsuzuki}
Daniel Caro and Nobuo Tsuzuki, \emph{Overholonomicity of overconvergent
  {{\(F\)}}-isocrystals over smooth varieties}, Ann. Math. (2) \textbf{176}
  (2012), no.~2, 747--813 (English).

\bibitem[Del70]{Deligne-Reg-Sing}
Pierre Deligne, \emph{\'{E}quations diff\'erentielles \`a points singuliers
  r\'eguliers}, Lecture Notes in Mathematics, Vol. 163, Springer-Verlag,
  Berlin, 1970. \MR{MR0417174 (54 \#5232)}

\bibitem[DGS94]{DGS}
B.~Dwork, G.~Gerotto, and F.~J. Sullivan, \emph{An introduction to
  {$G$}-functions}, Annals of Mathematics Studies, vol. 133, Princeton
  University Press, Princeton, NJ, 1994.

\bibitem[DMR07]{Correspondance-Malgrange-Ramis}
Pierre Deligne, Bernard Malgrange, and Jean-Pierre Ramis, \emph{Singularit\'es
  irr\'eguli\`eres}, Documents Math\'ematiques (Paris) [Mathematical Documents
  (Paris)], 5, Soci\'et\'e Math\'ematique de France, Paris, 2007,
  Correspondance et documents. [Correspondence and documents]. \MR{2387754
  (2009d:32033)}

\bibitem[Duc]{Duc}
Antoine Ducros, \emph{La structure des courbes analytiques},
  \url{http://www.math.jussieu.fr/~ducros/livre.html}.

\bibitem[{Dwo}97]{Dwork-Exponents}
Bernard~M. {Dwork}, \emph{{On exponents of $p$-adic differential modules.}},
  {J. Reine Angew. Math.} \textbf{484} (1997), 85--126 (English).

\bibitem[Gro66]{Grothendieck-DR-coh}
A.~Grothendieck, \emph{On the {De} {Rham} cohomology of algebraic varieties},
  Publ. Math., Inst. Hautes {\'E}tud. Sci. \textbf{29} (1966), 95--103
  (English).

\bibitem[Har75]{Hartshorne-de-Rham}
Robin Hartshorne, \emph{On the {De} {Rham} cohomology of algebraic varieties},
  Publ. Math., Inst. Hautes {\'E}tud. Sci. \textbf{45} (1975), 5--99 (English).

\bibitem[HTT08]{HTT-Dmodules}
Ryoshi Hotta, Kiyoshi Takeuchi, and Toshiyuki Tanisaki, \emph{{$D$}-modules,
  perverse sheaves, and representation theory}, Progress in Mathematics, vol.
  236, Birkh\"auser Boston, Inc., Boston, MA, 2008, Translated from the 1995
  Japanese edition by Takeuchi. \MR{2357361 (2008k:32022)}

\bibitem[Kat87a]{Katz-Can}
Nicholas~M. Katz, \emph{On the calculation of some differential {G}alois
  groups}, Invent. Math. \textbf{87} (1987), no.~1, 13--61. \MR{MR862711
  (88c:12010)}

\bibitem[Kat87b]{Katz-cyclic-vect}
\bysame, \emph{A simple algorithm for cyclic vectors}, Amer. J. Math.
  \textbf{109} (1987), no.~1, 65--70. \MR{878198 (88b:13001)}

\bibitem[Ked06a]{Kedlaya-finiteness}
Kiran~S. Kedlaya, \emph{Finiteness of rigid cohomology with coefficients}, Duke
  Math. J. \textbf{134} (2006), no.~1, 15--97. \MR{2239343 (2007m:14021)}

\bibitem[Ked06b]{Kedlaya-Weil-II}
\bysame, \emph{Fourier transforms and {{\(p\)}}-adic `{Weil} {II}'}, Compos.
  Math. \textbf{142} (2006), no.~6, 1426--1450 (English).

\bibitem[Ked08]{Kedlaya-practice}
\bysame, \emph{{{\(p\)}}-adic cohomology: from theory to practice}, \(p\)-adic
  geometry. Lectures from the 2007 10th Arizona winter school, Tucson, AZ, USA,
  March 10--14, 2007, Providence, RI: American Mathematical Society (AMS),
  2008, pp.~175--203 (English).

\bibitem[Ked09]{Ked-p-adic-cohomology}
\bysame, \emph{{$p$}-adic cohomology}, Algebraic geometry---{S}eattle 2005.
  {P}art 2, Proc. Sympos. Pure Math., vol.~80, Amer. Math. Soc., Providence,
  RI, 2009, pp.~667--684. \MR{2483951 (2010e:14014)}

\bibitem[Ked10]{Kedlaya-book}
\bysame, \emph{p-adic differential equations}, Cambridge Studies in Advanced
  Mathematics, vol. 125, Cambridge Univ. Press, 2010.

\bibitem[Ked15]{Kedlaya-draft}
\bysame, \emph{Local and global structure of connections on nonarchimedean
  curves}, Compos. Math. \textbf{151} (2015), no.~6, 1096--1156. \MR{3357180}

\bibitem[Ked22]{Kedlaya-book-2}
Kiran~Sridhara Kedlaya, \emph{{{\(p\)}}-adic differential equations (to
  appear)}, 2nd edition ed., Camb. Stud. Adv. Math., vol. 199, Cambridge:
  Cambridge University Press, 2022 (English).

\bibitem[KS17]{Kedlaya-Shiho}
Kiran~S. Kedlaya and Atsushi Shiho, \emph{Corrigendum to: ``{Local} and global
  structure of connections on nonarchimedean curves''}, Compos. Math.
  \textbf{153} (2017), no.~12, 2658--2665 (English).

\bibitem[Laz62]{Lazard}
Michel Lazard, \emph{Les z\'eros des fonctions analytiques d'une variable sur
  un corps valu\'e complet}, Inst. Hautes \'Etudes Sci. Publ. Math. (1962),
  no.~14, 47--75. \MR{0152519 (27 \#2497)}

\bibitem[Liu87]{Liu}
Qing Liu, \emph{Ouverts analytiques d'une courbe alg\'ebrique en g\'eom\'etrie
  rigide}, Ann. Inst. Fourier (Grenoble) \textbf{37} (1987), no.~3, 39--64.
  \MR{916273 (89c:14032)}

\bibitem[LS07]{Le-Stum-Book}
Bernard Le~Stum, \emph{Rigid cohomology}, Camb. Tracts Math., vol. 172,
  Cambridge: Cambridge University Press, 2007 (English).

\bibitem[Mal74]{Malgrange-Irreg}
Bernard Malgrange, \emph{Sur les points singuliers des \'equations
  diff\'erentielles}, Enseignement Math. (2) \textbf{20} (1974), 147--176.
  \MR{0368074 (51 \#4316)}

\bibitem[Man65]{Man}
Y.~I. Manin, \emph{Moduli fuchsiani}, Ann. Scuola Norm. Sup. Pisa (3)
  \textbf{19} (1965), 113--126.

\bibitem[Meb89]{Mebkhout-book}
Z.~Mebkhout, \emph{Syst{\`e}mes diff{\'e}rentiels. {Le} formalisme des six
  op{\'e}rations de {Grothendieck} pour les {{\(D_X\)}}-modules
  coh{\'e}rents.}, Trav. Cours, vol.~35, Paris: Hermann, 1989 (French).

\bibitem[Mon72]{Monsky-finiteness-alg}
Paul Monsky, \emph{Finiteness of {De} {Rham} cohomology}, Am. J. Math.
  \textbf{94} (1972), 237--245 (English).

\bibitem[Pon00]{Pons}
Emilie Pons, \emph{Non-solvable differential modules. {Radii} of convergence
  and indexes}, Rend. Semin. Mat. Univ. Padova \textbf{103} (2000), 21--45
  (French).

\bibitem[PP13a]{NP-III}
Andrea Pulita and Jérôme Poineau, \emph{{The convergence Newton polygon of a
  $p$-adic differential equation III : local and global decomposition
  theorems}}, arxiv, 2013, \url{http://arxiv.org/abs/1308.0859}, pp.~1--81.

\bibitem[PP13b]{NP-IV}
\bysame, \emph{{The convergence Newton polygon of a $p$-adic differential
  equation IV : controlling graphs}}, arxiv, 2013,
  \url{http://arxiv.org/abs/1308.0859?????}, pp.~1--81.

\bibitem[PP15]{NP-II}
J\'er\^ome {Poineau} and Andrea {Pulita}, \emph{{The convergence Newton polygon
  of a $p$-adic differential equation. II: Continuity and finiteness on
  Berkovich curves.}}, {Acta Math.} \textbf{214} (2015), no.~2, 357--393
  (English).

\bibitem[PP22a]{Banachoid}
J\'er\^ome Poineau and Andrea Pulita, \emph{Banachoid spaces},
  \url{https://www-fourier.ujf-grenoble.fr/~pulitaa/Publications/Banachoid.pdf},
  2022.

\bibitem[PP22b]{NP-V}
Andrea Pulita and Jérôme Poineau, \emph{The convergence newton polygon of a
  $p$-adic differential equation v : Local index theorems}, arxiv, 2022,
  \url{https://arxiv.org/abs/????}, pp.~1--95.

\bibitem[Pul07]{Rk1}
Andrea Pulita, \emph{Rank one solvable {$p$}-adic differential equations and
  finite abelian characters via {L}ubin-{T}ate groups}, Math. Ann. \textbf{337}
  (2007), no.~3, 489--555. \MR{MR2274542}

\bibitem[Pul14]{HDR}
\bysame, \emph{Equations diffrentielles p-adiques}, Mémoire d'habilitation à
  diriger des recherches, Université Montpellier II,
  \url{https://hal.archives-ouvertes.fr/tel-03667006}, 2014.

\bibitem[{Pul}15]{NP-I}
Andrea {Pulita}, \emph{{The convergence Newton polygon of a $p$-adic
  differential equation. I: Affinoid domains of the Berkovich affine line.}},
  {Acta Math.} \textbf{214} (2015), no.~2, 307--355 (English).

\bibitem[Pul16]{Inf-Def}
Andrea Pulita, \emph{Infinitesimal deformation of $p$-adic differential
  equations on berkovich curves.}, To appear in Math. Annalen (2016), 39 pages.

\bibitem[Ram78]{Ramis-Devissage-Gevrey}
J.-P. Ramis, \emph{D\'evissage {G}evrey}, Journ\'ees {S}inguli\`eres de {D}ijon
  ({U}niv. {D}ijon, {D}ijon, 1978), Ast\'erisque, vol.~59, Soc. Math. France,
  Paris, 1978, pp.~4, 173--204. \MR{542737 (81g:34010)}

\bibitem[Rob75]{Ro-I}
P.~Robba, \emph{On the index of {$p$}-adic differential operators. {I}}, Ann.
  of Math. (2) \textbf{101} (1975), 280--316. \MR{0364243 (51 \#498)}

\bibitem[Rob76]{Ro-II}
\bysame, \emph{On the index of {$p$}-adic differential operators. {II}}, Duke
  Math. J. \textbf{43} (1976), no.~1, 19--31. \MR{0389910 (52 \#10739)}

\bibitem[Rob84]{RoIII}
P.~Robba, \emph{Indice d'un op\'erateur diff\'erentiel {$p$}-adique. {III}.
  application to twisted exponential sums}, Asterisque \textbf{119-120} (1984),
  191--266.

\bibitem[Rob85]{RoIV}
\bysame, \emph{Indice d'un op\'erateur diff\'erentiel {$p$}-adique. {IV}. {C}as
  des syst\`emes. {M}esure de l'irr\'egularit\'e dans un disque}, Ann. Inst.
  Fourier (Grenoble) \textbf{35} (1985), no.~2, 13--55.

\bibitem[SB22]{Scholze-Bhatt}
Peter Scholze and Bhargav Bhatt, \emph{Prisms and prismatic cohomology}, 2022,
  \url{https://arxiv.org/abs/1905.08229}, pp.~1--125.

\bibitem[Sch02]{Schneider}
Peter Schneider, \emph{Nonarchimedean functional analysis}, Springer Monographs
  in Mathematics, Springer-Verlag, Berlin, 2002. \MR{1869547 (2003a:46106)}

\bibitem[Ser56]{GAGA}
Jean-Pierre Serre, \emph{Algebraic geometry and analytic geometry}, Ann. Inst.
  Fourier \textbf{6} (1956), 1--42 (French).

\bibitem[{Set}97]{CLark-2}
Minoru {Setoyanagi}, \emph{{Note on Clark's theorem for $p $-adic
  convergence.}}, {Proc. Am. Math. Soc.} \textbf{125} (1997), no.~3, 717--721
  (English).

\bibitem[Tsu98]{Tsu-swan}
Nobuo Tsuzuki, \emph{The local index and the {S}wan conductor}, Compositio
  Math. \textbf{111} (1998), no.~3, 245--288. \MR{MR1617130 (99g:14021)}

\bibitem[Tsu09]{Tsuzuki-rigide}
\bysame, \emph{Rigid cohomology}, Sugaku (Mathematics) \textbf{61} (2009),
  64--82.

\bibitem[Tur55]{Turritin}
H.~Turritin, \emph{Convergent solutions of ordinary homogeneous differential
  equations in the neighborhood of an irregular singular point}, Acta Math
  \textbf{93} (1955), 27--66.

\bibitem[vdPS03]{VS}
M.~van~der Put and M.~F. Singer, \emph{Galois theory of linear differential
  equations}, vol. 328, Springer-Verlag, Berlin, 2003.

\bibitem[You92]{Young}
Paul~Thomas Young, \emph{Radii of convergence and index for {$p$}-adic
  differential operators}, Trans. Amer. Math. Soc. \textbf{333} (1992), no.~2,
  769--785. \MR{1066451 (92m:12015)}

\end{thebibliography}

\end{document}